\def\E{\ifmmode{\mathbb E}\else{$\mathbb E$}\fi}
\def\N{\ifmmode{\mathbb N}\else{$\mathbb N$}\fi}
\def\R{\ifmmode{\mathbb R}\else{$\mathbb R$}\fi}
\def\Q{\ifmmode{\mathbb Q}\else{$\mathbb Q$}\fi}
\def\C{\ifmmode{\mathbb C}\else{$\mathbb C$}\fi}
\def\H{\ifmmode{\mathbb H}\else{$\mathbb H$}\fi}
\def\Z{\ifmmode{\mathbb Z}\else{$\mathbb Z$}\fi}
\def\P{\ifmmode{\mathbb P}\else{$\mathbb P$}\fi}
\def\T{\ifmmode{\mathbb T}\else{$\mathbb T$}\fi}
\def\SS{\ifmmode{\mathbb S}\else{$\mathbb S$}\fi}
\def\DD{\ifmmode{\mathbb D}\else{$\mathbb D$}\fi}
\newcommand{\e}{\varepsilon}
\newcommand{\ben}{\begin{enumerate}}
\newcommand{\een}{\end{enumerate}}
\newcommand{\be}{\begin{equation}}
\newcommand{\ee}{\end{equation}}
\newcommand{\bea}{\begin{eqnarray}}
\newcommand{\eea}{\end{eqnarray}}
\newcommand{\beastar}{\begin{eqnarray*}}
\newcommand{\eeastar}{\end{eqnarray*}}
\newcommand{\bc}{\begin{center}}
\newcommand{\ec}{\end{center}}
\theoremstyle{theorem}
\newtheorem{thm}{Theorem}[section]
\newtheorem{cor}[thm]{Corollary}
\newtheorem{lem}[thm]{Lemma}
\newtheorem{prop}[thm]{Proposition}
\theoremstyle{definition}
\newtheorem{defn}[thm]{Definition}
\newtheorem{rem}[thm]{Remark}
\newtheorem{choice}[thm]{Choice}
\newtheorem{cond}[thm]{Condition}
\newtheorem{hypo}[thm]{Hypothesis}
\newtheorem*{thm*}{Theorem}
\numberwithin{equation}{section}
\def\R{{\mathbb R}}
\def\E{{\mathbb E}}
\def\Z{{\mathbb Z}}
\def\C{{\mathbb C}}
\def\R{{\mathbb R}}
\def\P{{\mathbb P}}
\def\N{{\mathbb N}}
\def\11{{\mathbb I}}
\def\C{\mathbb{C}}
\def\Z{\mathbb{Z}}
\def\T{\mathbb{T}}
\def\Q{\mathbb{Q}}
\def\E{\ifmmode{\mathbb E}\else{$\mathbb E$}\fi}
\def\N{\ifmmode{\mathbb N}\else{$\mathbb N$}\fi}
\def\R{\ifmmode{\mathbb R}\else{$\mathbb R$}\fi}
\def\Q{\ifmmode{\mathbb Q}\else{$\mathbb Q$}\fi}
\def\C{\ifmmode{\mathbb C}\else{$\mathbb C$}\fi}
\def\H{\ifmmode{\mathbb H}\else{$\mathbb H$}\fi}
\def\Z{\ifmmode{\mathbb Z}\else{$\mathbb Z$}\fi}
\def\P{\ifmmode{\mathbb P}\else{$\mathbb P$}\fi}
\def\SS{\ifmmode{\mathbb S}\else{$\mathbb S$}\fi}
\def\DD{\ifmmode{\mathbb D}\else{$\mathbb D$}\fi}
\def\R{{\mathbb R}}
\def\E{{\mathbb E}}
\def\Z{{\mathbb Z}}
\def\C{{\mathbb C}}
\def\R{{\mathbb R}}
\def\N{{\mathbb N}}
\def\MM{{\mathcal M}}
\def\e{\varepsilon}
\def\CB{{\mathcal B}}
\def\darr#1{\raise1.5ex\hbox{$\leftrightarrow$}
\mkern-16.5mu #1}
\def\roughly#1{\raise.3ex\hbox{$#1$\kern-.75em
\lower1ex\hbox{$\sim$}}}
\def\opname#1{\mathop{\kern0pt{\rm #1}}\nolimits}
\def\func{\operatorname}
\begin{document}
\def\mq{\mathfrak{q}}
\def\mp{\mathfrak{p}}
\def\mH{\mathfrak{H}}
\def\mh{\mathfrak{h}}
\def\ma{\mathfrak{a}}
\def\ms{\mathfrak{s}}
\def\mm{\mathfrak{m}}
\def\mn{\mathfrak{n}}
\def\mz{\mathfrak{z}}
\def\mw{\mathfrak{w}}
\def\Hoch{{\tt Hoch}}
\def\mt{\mathfrak{t}}
\def\ml{\mathfrak{l}}
\def\mT{\mathfrak{T}}
\def\mL{\mathfrak{L}}
\def\mg{\mathfrak{g}}
\def\md{\mathfrak{d}}
\def\mr{\mathfrak{r}}

\title[Partial collapsing and adiabatic gluing]{Partial collapsing degeneration of
Floer trajectories and adiabatic gluing}
\author{Yong-Geun Oh}
\author{Ke Zhu}
\date{April 13, 2022}
\address{Center for Geometry and Physics, Institute for Basic Science (IBS),
77 Cheongam-ro, Nam-gu, Pohang-si, Gyeongsangbuk-do, Korea 790-784 \&
POSTECH, Gyeongsangbuk do, Korea}
\email{yongoh1@postech.ac.kr}
\thanks{The first named author is supported by the IBS project \# IBS-R003-D1.}
\address{Department of Mathematics and Statistics, Minnesota State
University, Mankato, MN 56001 }
\email{ke.zhu@mnsu.edu}

\begin{abstract}
In the present paper, we study partial collapsing degeneration of
Hamiltonian-perturbed Floer trajectories for an adiabatic $\varepsilon$%
-family and its reversal adiabatic gluing, as the prototype of the partial
collapsing degeneration of $2$-dimensional (perturbed) $J$-holomorphic maps
to $1$-dimensional gradient segments. We consider the case when the Floer
equations are $S^1$-invariant on parts of their domains whose \emph{%
adiabatic limits} have positive lengths as $\varepsilon \to 0$, which we call
\emph{thimble-flow-thimble} configurations. The main gluing theorem we prove
also applies to the case with Lagrangian boundaries such as in the problem
of recovering holomorphic disks out of pearly configurations. In particular,
our gluing theorem gives rise to a new direct proof of the chain isomorphism property
between the Morse-Bott version of Lagrangian intersection Floer complex of $L$ by
Fukaya-Oh-Ohta-Ono and the \emph{pearly complex} of $L$
by Lalonde and Biran-Cornea (for monotone Lagrangian submanifolds).
It also provides another proof
of the present authors' earlier proof of the isomorphism property of the PSS
map without involving the target rescaling and the scale-dependent gluing.
\end{abstract}

\keywords{Floer trajectory equation, partial collapsing degeneration,
thimble-flow-thimble moduli space, adiabatic gluing, exponential decay
estimates, three-interval method}
\maketitle
\tableofcontents

\section{Introduction}

The `partial collapsing degeneration' of holomorphic curves or $S^1$-invariant
Hamiltonian Floer trajectories and its reversal process naturally arise in
many moduli problems in symplectic geometry. (See \cite{fukaya:homotopy},
\cite{oh:newton}, \cite{PSS}, \cite{mschwarz0}, \cite{mundet-tian}, \cite%
{biran-cor-2}, \cite{Ek}, \cite{SW}, \cite{AB} for example).
However its
reversal process, recovering smooth Floer trajectories out of a mixture of 2
dimensional configuration and 1 dimensional one is a much harder problem and
has not been much studied. The main purpose of the present article
is to study this partial collapsing degeneration and its reversal process, which we call the
\emph{adiabatic gluing}. We take the methodology we adopt in the present
article as the prototype of the gluing problem for the partial collapsing degeneration of
smooth $2$-dimensional cylindrical objects to a singular hetero-dimensional
$2$-dimensional pearly objects.

\begin{rem}
Mathematical content of the present work is essentially the same as that of our previous arXiv
posting arXiv:1103.3525. An immediate consequence of our gluing theorem was Theorem
\ref{thm:monotone-intro} below. (There was another proof given by Fukaya-Oh-Ohta-Ono via the
homological perturbation theory \cite{fooo:canonical}.) Since the methodology of our gluing construction
may have other applications related to partial collapsing degeneration of pseudoholomorphic
curves, we have decided to rewrite the paper and make its fine details more accessible by
considerably improving its presentation, organization and readability.
\end{rem}

In the present authors' earlier work \cite{oh-zhu}, we study the adiabatic
degeneration and its reversal process, scale-dependent gluing problem, of
maps $u:{\mathbb{R}}\times S^{1}\rightarrow M$ satisfying the following
1-parameter ($0<\varepsilon <\varepsilon _{0}$) family of Floer equations:
\begin{equation}
(du+P_{K_{\varepsilon }}(u))_{J_{\varepsilon }}^{(0,1)}=0\quad
\mbox{ or
equivalently }\,{\overline{\partial }}_{J_{\varepsilon
}}(u)+(P_{K_{\varepsilon }})_{J_{\varepsilon }}^{(0,1)}(u)=0,  \label{eq:KJe}
\end{equation}
when the degenerating Hamiltonian
\begin{equation*}
K_{\varepsilon }:{\mathbb{R}}\times S^{1}\times M\rightarrow {\mathbb{R}}
\end{equation*}
is $S^1$-invariant, i.e., $t$-independent on a part of its domain so that it
has the form
\begin{equation}
K_{\varepsilon }(\tau ,t,x)=%
\begin{cases}
\kappa _{\varepsilon }^{+}(\tau )\cdot H(t,x)\quad & \mbox{for }\,\tau \geq
R(\varepsilon ) \\
\kappa_{\varepsilon }^0(\tau )\cdot \varepsilon f(x)\quad & \mbox{for }%
\,|\tau |\leq R(\varepsilon ) \\
\kappa _{\varepsilon }^{-}(\tau )\cdot H(t,x)\quad & \mbox{for }\,\tau \leq
-R(\varepsilon )%
\end{cases}
\label{eq:KR}
\end{equation}%
where $\kappa _{\varepsilon }^{\pm }$ and $\kappa_{\varepsilon }^0$ are
suitable cut-off functions (See Section \ref{sec:floer} for the precise
definition).

Roughly speaking, the partial collapsing degeneration arises because $K_{\varepsilon }
$ restricts to Morse function $\varepsilon f$ on longer and longer cylinder $%
[-R(\varepsilon ),R(\varepsilon )]\times S^{1}$ in ${\mathbb{R}}\times S^{1}$%
. A basic assumption that we had put on \cite{oh-zhu} was that $%
R(\varepsilon )$ satisfies
\begin{equation}  \label{eq:length=0}
\lim_{\varepsilon \rightarrow 0}\varepsilon R(\varepsilon )=0.
\end{equation}

What we mean by \emph{partial-collapsing limit} in the present paper describes the
configuration arising in the limit when the left hand side is positive.

Motivated by these, we introduce the following

\begin{defn}[Adiabatic length]
Assume the limit in \eqref{eq:length=0} exists. We call the limit
\begin{equation*}
\lim_{\varepsilon \rightarrow 0}\varepsilon R(\varepsilon )
\end{equation*}
the \emph{adiabatic length} of the $\varepsilon$-family.
\end{defn}

The main purpose of the present paper is to prove an adiabatic gluing result
when the adiabatic length of the $\varepsilon$-family is positive, i.e.,
\begin{equation*}
\lim_{\varepsilon \rightarrow 0}\varepsilon R(\varepsilon )=l
\end{equation*}%
for $l >0$. Under this assumption, it was proved in \cite{oh:dmj} and \cite%
{mundet-tian} that as $\varepsilon \rightarrow 0$, a degenerating sequence
of Floer trajectories converges to a \textquotedblleft \emph{%
thimble-flow-thimble\textquotedblright } configuration denoted by
$$
(u_{-},\chi ,u_{+}).
$$
Here $u_{\pm }$ are the maps whose domains are the thimble-like domains
which we realize as subsets of $\R^3$ by
\bea
\Theta_+& =  & D_+ \cup C_+ \label{eq:Theta+}\\
\Theta_- & =: & D_- \cup C_-  \label{eq:Theta-}
\eea
where
\beastar
D_\pm & : =  &\{(x,y,z) \in \R^3 \mid x^2 + y^2 + z^2 = 1, \, \pm z \leq 0\} \\
C_\pm & : = & \{(x,y,z) \in \R^3 \mid x^2 + y^2 = 1, \, \pm z \geq 0 \}
\eeastar
respectively. (The domains $\Theta_\pm$ are only $C^1$  along  the seam
as  they are. With a slight abuse of notation, we denote  their smoothings by the same letters.)

We then require $u_\pm$ to  satisfy the equation on $\Theta_\pm$ respectively
which are similar to \eqref{eq:KJe} with $K_{\varepsilon }$ replaced by $K_{\pm }$ defined by
\begin{equation}
K_+ =
\begin{cases}
0 & \text{near $o_{+}$ on $D_+$} \\
H_{+}(t,x)\,dt & \text{on $C_+$}
\end{cases}
\label{eq:Kplus}
\end{equation}
and
\begin{equation}
K_- =
\begin{cases}
H_{-}(t,x)\,dt & \text{on $C^-$}  \\
0 & \text{near $o_{-}$ on $D_-$} .
\end{cases}
\label{eq:Kminus}
\end{equation}%

Here $H_{\pm }:S^{1}\times M\rightarrow {\mathbb{R}}$ are two Hamiltonian
functions independent of the variable $\tau $, and $\chi $ is a gradient
trajectory of $f$ that satisfies $\dot{\chi}+\text{grad}f(\chi )=$ on $[-l,l ]
$ such that
\begin{equation}
u_{-}(o_{-})=\chi (-l),\quad \chi (l)=u_{+}(o_{+}).
\label{eq:nodalmathching}
\end{equation}

\subsection{Statements of problem and the main result}

Let $\dot{\Sigma}_+$ be the Riemann sphere with one marked point $o_+$ and
one positive puncture $e_+$. We choose analytical charts at $o_+$ and at $e_+
$ on some neighborhoods $O_+$ and $E_+$ respectively, so that conformally $%
O_+\backslash o_+ \cong (-\infty,0]\times S^1$, and $E_+\backslash e_+\cong
[0,+\infty)\times S^1$. We use $t$ for the $S^1$ coordinate and $\tau$ for
the ${\mathbb{R}}$ coordinate. Then $\{-\infty\}\times S^1$ and $%
\{+\infty\}\times S^1$ correspond to $o_+$ and $e_+$ respectively.

Let $z_\pm:S^1 \to M$ be a nondegenerate periodic orbit of $H_\pm$ and
consider a finite energy solution $u_\pm: \dot \Sigma \to M$ of the Floer
equation \eqref{eq:Kplus}, \eqref{eq:Kminus} associated to $K_\pm$
respectively. By the finite energy condition and since $K_\pm \equiv 0$ near
$o_\pm$, $u_\pm$ extend smoothly across $o_\pm$ and can be regarded as a
smooth map defined on ${\mathbb{C}}$ that is holomorphic near the origin $0
\in {\mathbb{C}}$.

Now we consider the lifting $[z_\pm,w_\pm]$ of $z_\pm$ and introduce the
main moduli spaces of our interest in a precise term, where $w_\pm: D^2 \to M
$ are discs with $\partial w_\pm = z_\pm$.

\begin{defn}\label{defn:pie(z)} For given embedded loop $z: S^1 \to M$, we
denote by $\pi_2(z)$ the set
of relative homotopy classes of maps $w: D^2 \to M$ with
$w(\partial D^2) = z$.
\end{defn}

Let $J_{\pm }$ be a pair of domain-dependent almost complex structures on $M$
and let $u_{\pm }:{\mathbb{R}}\times S^{1}\rightarrow M$ be solutions of %
\eqref{eq:KJe} with $K_{\varepsilon }$ replaced by $K_{\pm }$ given in %
\eqref{eq:Kplus}, \eqref{eq:Kminus} in class $A_{\pm }\in \pi _{2}(z_{\pm })$
with a marked point $o_{\pm }\in S^{2}$ respectively, and $\chi
:[-l,l]\rightarrow M$ is a gradient segment of the Morse function $f$
connecting the two points $u_{+}(o_{+})$ and $u_{-}\left( o_{-}\right) $. We
assume $J_{\pm }$ satisfy $J_{\pm }\equiv J_{0}$ near the marked points $o_{\pm }
$ respectively and generic in that $u_{\pm }$ are Fredholm regular. We also
assume that the pair $(J_{0},f)$ is generic in that the configuration $%
(u_{-},\chi ,u_{+})$ satisfies the \textquotedblleft
thimble-flow-thimble\textquotedblright\ transversality defined in \cite%
{oh-zhu} Proposition 5.2. We will recall this definition below.

We note that $\Theta_\pm$ are conformally isomorphic to $S^2 \setminus \{e_\pm, o_\pm\}$
where $e_\pm$ correspond to $\pm \infty$ in $C_\pm$ respectively. Under this identification,
one may regard the standard coordinates $(\tau,t)$ on $C_\pm$ the cylindrical coordinates
on a punctured neighborhood of $e_\pm \in S^2$ respectively.

For each given pair
$$
(A_-,A_+) \in \pi_2(z_-) \times \pi_2(z_+),
$$
we define the moduli spaces
\begin{eqnarray*}
\mathcal{M}\left( K_{\pm },J_{\pm };z_{\pm };A_{\pm }\right) &=&\Big\{\left(
u_{\pm },o_{\pm }\right) |u_{\pm }:S^{2}\setminus \{e_{\pm },o_{\pm
}\}\rightarrow M \mbox{ solutions of \eqref{eq:KJe}} \\
&{}&\quad \mbox{ with $K_{\varepsilon }$ replaced by $K_{\pm }$ given in %
	\eqref{eq:Kplus}, \eqref{eq:Kminus}
respectively}, \\
&{}& \quad  [A_\pm \# u_\pm] = 0 \, \text{ \rm in }\,
\pi_2(M) \Big\}
\end{eqnarray*}%
and consider the evaluation maps
\begin{equation*}
ev_{\pm }:\mathcal{M}\left( K_{\pm },J_{\pm };z_{\pm };A_{\pm }\right)
\rightarrow M,\text{ \ }u_{\pm }\rightarrow ev_{\pm }\left( o_{\pm }\right) .
\end{equation*}

\begin{defn}
\label{tft-trans} We say that the configuration $(u_{-},\chi ,u_{+})$
satisfies the \textquotedblleft thimble-flow-thimble\textquotedblright\
transversality if the map
\begin{eqnarray*}
\phi _{f}^{2l}\circ ev_{-}\times ev_{+} &:&\mathcal{M}\left(
K_{-},J_{-};z_{-};A_{-}\right) \times \mathcal{M}\left(
K_{+},J_{+};z_{+};A_{+}\right) \rightarrow M\times M \\
\left( u_{-},u_{+}\right) &\rightarrow &\left( \phi _{f}^{2l}u_{-}\left(
o_{-}\right) ,u_{+}\left( o_{+}\right) \right)
\end{eqnarray*}%
is transversal to the diagonal $\triangle \subset M\times M$, where $\phi
_{f}^{2l}$ is the time-$2l$ flow of the Morse function $f$.
\end{defn}

\begin{figure}[hbt!]
\centering
\begin{tikzpicture}	
		\draw (4,0) ellipse (0.5 and 1);
		\draw (4,1) arc
		[
		start angle=90,
		end angle=270,
		x radius=2,
		y radius =1
		] ;
		
		\node [right] at (2.5,0)  {$u_+$};
		\node [right] at (4.5,0)  {$z_+$};
		
		\draw (-4,0) ellipse (0.5 and 1);
		\draw (-4,-1) arc
		[
		start angle=270,
		end angle=450,
		x radius=2,
		y radius =1
		] ;
		
		\node [left] at (-2.5,0)  {$u_-$};
		\node [left] at (-4.5,0)  {$z_-$};
		
		\draw [->] (-2,0) to [out=30, in=210 ] (2,0);
		\node [above] at (0,0) {$\chi$}; 			
	\end{tikzpicture}
\caption{thimble-flow-thimble}
\end{figure}
The \textquotedblleft thimble-flow-thimble\textquotedblright\ transversality
can be achieved by a generic choice of the pair $(J_{0},f)$ (Corollary \ref%
{achieve-tft-trans}).

\begin{defn}\label{defn:pi2z+z-}
Let $(z_-,z_+)$ be a pair of nonintersecting embedded loops. We define
$$
\pi_2(z_-,z_+)
$$
to be the relative homotopy classes of maps $w: [0,1] \times S^1 \to M$
satisfying
$$
w(0,t) = z_-(t), \quad w(1,t) = z_+(t).
$$
\end{defn}

We note that for each given pair $(A_{-},A_{+})\in \pi _{2}(z_{-})\times \pi
_{2}(z_{+})$ we consider and \emph{fix} a class $B \in \pi _{2}(z_{-},z_{+})$
such that
\begin{equation}  \label{eq:A-AA+}
A_-\# B \# A_+ = 0 \quad \mbox{in }\, \pi_2(M).
\end{equation}
(Such $B \in \pi _{2}(z_{-},z_{+})$ is unique modulo the action of $\pi_1$).
We define

\begin{equation*}
{\mathcal{M}}^{\text{\rm tft}}: = {\mathcal{M}}^{\text{\rm tft}}(K_{-},J_{-};f;K_{+},J_{+};z_-,z_+;B)
\end{equation*}
 to be the fiber product
\begin{equation}
\mathcal{M}\left( K_{-},J_{-};z_{-};A_{-}\right) {}_{\phi _{f}^{2l}\circ
ev_{-}}\times _{ev_{+}}\mathcal{M}\left( K_{+},J_{+};z_{+};A_{+}\right) .
\label{eq:K-fK+}
\end{equation}
Here the superscript \textquotedblleft tft" stands for \textquotedblleft
thimble-flow-thimble". This definition makes sense thanks to the exponential
convergence of finite energy solution of \eqref{eq:KJe} as $\tau \to \pm \infty$ when $z_\pm$ are
nondegenerate periodic orbits.

We denote by
\begin{equation*}
{\mathcal{M}}^{\varepsilon }:={\mathcal{M}}(K_{\varepsilon },J_{\varepsilon
};z_{-},z_{+};B)
\end{equation*}%
the moduli space of solutions (resolved Floer trajectories) to \eqref{eq:KJe}%
,
\begin{figure}[hbt!]
\centering
\begin{tikzpicture}
\draw (4,0) ellipse (0.5 and 1);
\draw (1.7,0.1) to [out=30, in=180] (4,1);
\draw (1.7,-0.1) to [out=30, in=180] (4,-1);
\node [right] at (4.5,0)  {$z_+$};

\draw (-4,0) ellipse (0.5 and 1);
\draw (-1.7,0.1) to [out=210, in=0] (-4,1);
\draw (-1.7,-0.1) to [out=210, in=0] (-4,-1);
\node [left] at (-4.5,0)  {$z_-$};

\draw (-1.7,0.1) to [out=30, in=210 ] (1.7,0.1);
\draw (-1.7,-0.1) to [out=30, in=210 ] (1.7,-0.1);
\node [above] at (0,0.3) {$u_{\varepsilon}$};		
		\end{tikzpicture}
\caption{resolved Floer trajectory}
\end{figure}
and consider
\begin{equation*}
{\mathcal{M}}_{(0,\varepsilon _{0}]}^{\text{\textrm{para}}%
}:=\bigcup_{0<\varepsilon \leq \varepsilon _{0}}{\mathcal{M}}^{\varepsilon }.
\end{equation*}%

\begin{rem} We mention the canonical identification
$$
 {\mathcal{M}}(K_{\varepsilon },J_{\varepsilon};[z_{-},w_-],[z_{+},w_+] )
 = {\mathcal{M}}(K_{\varepsilon },J_{\varepsilon};z_{-},z_{+};B)
$$
is independent of the pairs $(w_-,w_+)$ satisfying $[w_- \# B \# w_+] = 0$ in $\pi_2(M)$.
\end{rem}

In this paper, we prove the following main theorem:

\begin{thm}
${\mathcal{M}}_{(0,\varepsilon _{0})}^{\text{\rm para}}$ can be embedded into a
manifold with boundary $\overline{\mathcal{M}}_{[0,\varepsilon _{0})}^{\text{\rm para}}$
whose boundary is diffeomorphic to ${\mathcal{M}}^{\text{\rm tft}}$, i.e., there is a diffeomorphism
\begin{equation*}
Glue:[0,\varepsilon _{0})\times {\mathcal{M}}^{\text{\rm tft}}\rightarrow \overline{\mathcal{M}}_{[0,\varepsilon _{0})}^{\text{\rm para}}
\end{equation*}such that the following diagram commutes:
\begin{equation*}
\xymatrix{\MM^{\text{\rm para}}_{(0,\e_0)} \ar@{^{(}->}[r] \ar[d] &
\overline\MM^{\text{\rm para}}_{[0,\e_0)} \ar[r]^{Glue^{-1}}\ar[d] & [0,\e_0)
\times \MM^{\text{\rm tft}} \ar[dl]\\ (0,\e_0)
\ar@{^{(}->}[r] & [0,\e_0) &{} }.
\end{equation*}
\end{thm}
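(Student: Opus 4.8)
The plan is to run the standard three-step gluing scheme --- pre-gluing, a uniform right inverse for the linearized operator, and Newton--Picard iteration --- adapted to the adiabatic regime, where the ``neck'' joining the two (perturbed) $J$-holomorphic caps is not a holomorphic cylinder but the reparametrized gradient cylinder of $f$ on $[-R(\e),R(\e)]\times S^1$ with $\e R(\e)\to\ell$. As a first step I would fix, for each $(u_-,\chi,u_+)\in\MM^{dfd}(K_-,J_-;f;K_+,J_+;B)$ and each small $\e>0$, an approximate solution $u_\e^{\mathrm{app}}$ of \eqref{eq:KJe}: keep $u_\pm$ outside fixed coordinate neighborhoods $O_\pm$ of $o_\pm$, insert the reparametrized gradient segment $\tau\mapsto\chi(\e\tau)$ on the central cylinder, and interpolate with cut-off functions on the two overlap annuli. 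The immersion hypothesis at $o_\pm$ is used already here: it lets one write $u_\pm$ near $o_\pm$, in coordinates adapted to $\chi$, as a small graph over the gradient direction, so that the interpolation is unambiguous and the resulting error $\overline{\partial}_{J_\e}u_\e^{\mathrm{app}}+(P_{K_\e})_{J_\e}^{(0,1)}(u_\e^{\mathrm{app}})$ --- supported in the overlap annuli and on the portion of the neck where the $\e f$-gradient flow does not exactly solve the Floer equation --- is of order $O(\e)$ or better in the $\e$-weighted Sobolev norm of \cite{oh-zhu}.

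The heart of the argument is the construction of a right inverse $Q_\e$ of the linearization $D_\e:=D_{u_\e^{\mathrm{app}}}$ of \eqref{eq:KJe} with $\|Q_\e\|$ bounded uniformly in $\e\in(0,\e_0)$. I would do this by the three-interval method: cover the long neck by overlapping intervals of fixed conformal length, on each of which $D_\e$ is a small perturbation of the model first-order operator $\partial_\tau+A$ along $\chi$, with $A$ a rescaled Hessian of $f$ --- invertible in the weighted norm precisely because $\chi$ is Fredholm regular as a gradient segment --- and then patch the local inverses; on the two caps $D_\e$ is a small perturbation of the fixed Fredholm-regular operators $D_{u_\pm}$. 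The kernel/cokernel bookkeeping of the patched operator is governed exactly by the fiber product \eqref{eq:K-fK+}: the ``disc-flow-disc'' transversality of Definition \ref{dfd-trans} is what makes the glued operator surjective with kernel of the expected parametrized dimension, uniformly in $\e$. The immersion condition enters a second, decisive time here --- it guarantees that the one-dimensional evaluation data of $\ker D_{u_\pm}$ at $o_\pm$ and the one-dimensional cokernel of the neck ODE meet transversally, so that no residual cokernel survives the gluing; without it, surjectivity genuinely fails, which is the phenomenon illustrated by the $\mathbb{CP}^n$ example of the abstract.

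With $\|Q_\e\|$ uniform and the error $O(\e)$, the quantitative implicit function theorem produces a unique genuine solution $u_\e=\exp_{u_\e^{\mathrm{app}}}(\xi_\e)$ with $\xi_\e\in\mathrm{im}\,Q_\e$ and $\|\xi_\e\|\to0$; letting $(u_-,\chi,u_+)$ and $\e$ vary defines the map $Glue$. Smooth dependence of the pre-glued map, the error term, and the right inverse on all parameters --- including at $\e=0$, where $Glue$ restricts to the identity on $\MM^{dfd}$ --- shows $Glue$ is a smooth embedding, and injectivity for small $\e$ follows from the uniqueness clause of the implicit function theorem. It then remains to see that $\MM^\e$ is exhausted near the boundary: given $u^{(k)}\in\MM^{\e_k}$ with $\e_k\to0$, the thick-thin decomposition (Gromov-type compactness together with the neck analysis of \cite{oh:dmj}, \cite{MT}) --- splitting the domain into ``thick'' pieces on which the maps stay genuinely two-dimensional and ``thin'' pieces on which they collapse onto a gradient trajectory --- shows $u^{(k)}$ converges to some $(u_-,\chi,u_+)\in\MM^{dfd}$; then for large $k$, $u^{(k)}$ lies in the small ball around $u_{\e_k}^{\mathrm{app}}(u_-,\chi,u_+)$ on which the implicit-function solution is unique, forcing $u^{(k)}=Glue(\e_k,(u_-,\chi,u_+))$. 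Assembling these statements produces the manifold-with-boundary $\overline{\MM}^{para}_{[0,\e_0)}$ and the commuting diagram.

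The step I expect to be the main obstacle is the uniform right inverse across the adiabatic limit, and inside it the matching of the one-dimensional gradient-segment linearization with the two-dimensional map linearization at the join points: the three-interval method controls the long neck, but this matching is exactly an immersion/transversality statement, so the immersion hypothesis at the joint points is genuinely necessary --- not a technical convenience --- for surjectivity of the gluing. A secondary difficulty is making the compactness half of the surjectivity argument (the thick-thin convergence with uniform neck control) sharp enough to land inside the ball where uniqueness holds.
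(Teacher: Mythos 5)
Your overall scheme (pre-gluing, uniform right inverse, quantitative implicit function theorem, then surjectivity by compactness and local uniqueness) matches the paper, but you misidentify where and why the immersion hypothesis enters, and this is not a cosmetic slip --- it is the crux the paper is at pains to isolate.

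First, the immersion condition plays no role in the pre-gluing or in obtaining a uniform right inverse. The approximate solution \eqref{eq:uappe} is built purely by exponential-map interpolation with cut-offs; no graph coordinates adapted to $\chi$ are needed. More importantly, the surjectivity of the linearized operator (absence of residual cokernel after patching) is governed entirely by the \emph{disc-flow-disc transversality} of Definition \ref{dfd-trans}, proved generic in Corollary \ref{achieve-dfd-trans} and Proposition \ref{family-dfd-trans}: $E(u)$ is onto iff $\phi_f^{2l}\circ ev_-\times ev_+$ is transverse to the diagonal. The linearized gradient operator on the neck is an $n$-dimensional first-order ODE with a two-point boundary condition coming from the fiber product \eqref{eq:K-fK+}, not a ``one-dimensional cokernel'' meeting a one-dimensional evaluation datum. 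The paper's combined right inverse $Q^{app;\varepsilon}_{para}$ is built by Fourier decomposition on the neck (zero mode handled by the Morse linearization, higher modes by an explicit one-sided boundary condition), glued to $Q_\pm$ with cut-offs, together with a careful interplay of power and exponential weights --- not by three-interval patching of local inverses. The three-interval method appears only later, in Section \ref{sec:surjectivity}, where it is used to establish exponential decay of the error $\xi_j$ on the neck.

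The actual --- and only --- place the immersion hypothesis is used is the surjectivity of the \emph{gluing map itself} (Proposition \ref{prop:surjective}). The Gromov--Hausdorff condition in Definition \ref{defn:adiabatic}(2) controls the image of the thin part, not its degree; a multiple cover of a thin cylinder would have the same Hausdorff limit $\chi([-\ell,\ell])$ as the simple one. Immersion at $u_\pm(o_\pm)$ forces the thin part of any nearby solution to be simple, so that the adiabatic limit lands in $\mathcal{M}^{dfd}$ with the correct multiplicity, and the local-uniqueness clause of the implicit function theorem then pins it to the glued solution. In your write-up this compactness-side role is absent, and the transversality role you assign to immersion is instead doing double duty for what disc-flow-disc transversality already provides; if you tried to carry out your plan literally, you would find nothing for the immersion hypothesis to do in the right-inverse step and would be missing the argument that rules out multiply-covered necks. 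A smaller inaccuracy: the $\overline\partial$-error estimate \eqref{eq:error} gives $O(\varepsilon^{1/p})$ in the weighted norm, not $O(\varepsilon)$; this is harmless for the Newton iteration but worth recording correctly since the weight $\beta_{\delta,\varepsilon}$ grows like $\varepsilon^{1-p}$ on the neck.
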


The statement of this theorem is the analog to the corresponding theorem,
Theorem 10.19 \cite{oh-zhu} for the case $l =0$. However the gluing analysis
and its off-shell framework are of nature very different from those of \cite%
{oh-zhu}: In \cite{oh-zhu}, we need to \emph{rescale the target} near the
nodal point of nodal Floer trajectories, while in the present paper we do
not need to rescale the target but we need to \emph{renormalize} the domain
variable $(\tau ,t)$ to $(\tau /\varepsilon ,t/\varepsilon )$ to obtain the
limiting gradient flow of $f$ arising from the thin part of the degeneration
of ${\mathcal{M}}^{\varepsilon }$ as $\varepsilon \rightarrow 0$ as done in
\cite{FO}. The length $l $ of the limiting gradient trajectory is determined
by the limit $\lim_{\varepsilon \rightarrow 0}\varepsilon R(\varepsilon )=l $%
, i.e, by how fast the length of thin part of degenerating Floer
trajectories increases relative to the speed as $\varepsilon \rightarrow 0$.
We do not need to rescale the target $M$ unlike the case in \cite{oh-zhu}.
This is because the presence of gradient line with \emph{non-zero} length
carries enough information to recover all the solutions in ${\mathcal{M}}%
^{\varepsilon }$ \emph{with immersed join points} nearby the elements from ${%
\mathcal{M}}(K_{-}|f|K_{+};B)$ in Gromov-Hausdorff topology, as
long as $\varepsilon $ is sufficiently small. On the other hand, as $%
l\rightarrow 0$ the Sobolev constant in present paper blows up, which
obstructs the implicit function theorem (see Remark \ref{Sobolev-blow-up}),
and makes blowing up target manifold $M$ appear to be necessary as in \cite%
{oh-zhu}. \emph{In both cases, we need to assume that the Floer trajectories
either at the nodal points or at the join point of the gradient trajectories
are immersed.} It appears that without this assumption gluing analysis is
much more delicate. The immersion condition can be achieved for a generic
Floer datum under the small index conditions (Proposition \ref%
{immersed-joint}) which naturally enter in the construction of various
operations in Floer theory.

Incidentally we would like to point out that the main theorem can be also
used to give another proof of isomorphism property of the PSS map which uses
a piecewise smooth cobordism different from that of \cite{oh-zhu}. Namely we
directly go from \textquotedblleft thimble-flow-thimble\textquotedblright\
configurations to resolved Floer trajectories by the above mentioned
adiabatic gluing theorem \emph{without passing through the nodal Floer
trajectories} unlike the one proposed in \cite{PSS} and realized in \cite%
{oh-zhu}. We also remark that the full gradient trajectory (i.e. $l=\infty $%
) case can be treated by the same method of this paper, which is in fact
easier, because we do not need to put the power weight $\rho \left( \tau
\right) $ on $\chi $ (see Remark \ref{weights}).

\subsection{Further ramifications and relation to other results}

Some remarks on the relationship with other relevant literature are now in
order. We would like to point out that our gluing analysis in the present
paper and in \cite{oh-zhu} can be applied to many other contexts where
similar thick-thin decomposition occurs (including $l=0$ case) during the
adiabatic degeneration arises. One of the very first instances where
importance of this kind of gluing analysis was mentioned is the papers \cite%
{fukaya:homotopy}, \cite{oh:newton}. An analogous analysis was also carried
out by Fukaya and the senior author in \cite{FO} when there is no
non-constant bubbles around.

\subsubsection{The case where $H= 0$: The Hamiltonian pearl complex}

We note that our gluing analysis covers the special case where $H = 0$. In
this case, the thimble itself from the moduli space $\mathcal{M}\left(
K_{\pm },J_{\pm };z_{\pm };A_{\pm }\right)$ entering in the
`thimble-flow-thimble' configuration $(u_-,\chi,u_+)$

\begin{figure}[hbt!]
\centering
\begin{tikzpicture}
		\draw (4,0) ellipse (0.5 and 1);
		\draw (4,1) arc
		[
		start angle=90,
		end angle=270,
		x radius=2,
		y radius =1
		] ;
		
		\node [right] at (2.5,0)  {$u_+$};
		\node [right] at (4.5,0)  {$z_+$};
		
		\draw (-4,0) ellipse (0.5 and 1);
		\draw (-4,-1) arc
		[
		start angle=270,
		end angle=450,
		x radius=2,
		y radius =1
		] ;
		
		\node [left] at (-2.5,0)  {$u_-$};
		\node [left] at (-4.5,0)  {$z_-$};
		
		\draw [dotted, ->] (-2,0) to [out=30, in=210 ] (2,0);
		\node [above] at (0,0) {$\chi$}; 	
	\end{tikzpicture}
\caption{left and right thimbles}
\end{figure}
\noindent is replaced by the configuration either of $\text{\textrm{flow-sphere}}$ or of $%
\text{\textrm{sphere-flow}}$ where the left or the right flow-trajectory is
the semi-infinite gradient trajectories either issued at (or ending at) a
critical point of the Morse function $f$ (See Subsection \ref%
{subsec:ham-pearl} for details).

\begin{figure}[hbt!]
\centering
\begin{tikzpicture}
		
		\draw (3,0) ellipse (0.25 and 1);
		\draw (3,0) ellipse (1 and 1);		
		\node [above] at (3,0)  {$u_+$};
		
		\draw [->] (4,0) to [out=30, in=210] (5,0);
		\draw [fill] (5,0) circle [radius=0.02];
		\node [above] at (4.5,0) {$\chi_{\infty}$};
		\node [below] at (5,0) {$q$};

		
		\draw (-3,0) ellipse (0.25 and 1);
		\draw (-3,0) ellipse (1 and 1);		
		\node [above] at (-3,0)  {$u_-$};
		
		\draw [->] (-5,0) to [out=30, in=210] (-4,0);
		\draw [fill] (-5,0) circle [radius=0.02];
		\node [above] at (-4.5,0) {$\chi_{-\infty}$};
		\node [below] at (-5,0) {$p$};

		\draw [dotted, ->] (-2,0) to [out=30, in=210 ] (2,0);
		\node [above] at (0,0) {$\chi$};
		\end{tikzpicture}
\caption{flow-sphere and sphere-flow}
\end{figure}

The gluing analysis with non-constant bubbles around for the equation
\begin{equation*}
\frac{\partial u}{\partial \tau }+J_{0}\left( \frac{\partial u}{\partial t}%
-X_{\varepsilon f}(u)\right) =0
\end{equation*}%
as $\varepsilon >0$ small was pursued by Fukaya and the senior author at
that time but left unfinished since then, partly because the analysis turned
out to be much more nontrivial than they originally expected. The gluing
analysis given in the present paper can be similarly applied also to this
case \emph{under the assumption that the join points are immersed}. Section %
\ref{sec:variants} of the present paper briefly outlines the necessary
modification for this case in this setting.

\subsubsection{The case where $H= 0$: The Lagrangian pearl complex}

The Lagrangian counterpart of the pearl complex has been greatly studied by
Biran-Cornea who have found many interesting applications to the symplectic topology
of (monotone) Lagrangian submanifolds. (See \cite{biran-cor-1,biran-cor-2,biran-cor-4} to name a few.
Also see \cite{biran-cor-3,biran-cor-5} for a survey on the pearl complex and its applications.)
There has been also an interesting calculation by Cho-Hong-Lau \cite{cho-hong-lau} via the pearl complex
which enters in the homological mirror symmetry between the Fukaya category of toric
Fano manifolds and the matrix factorization.

One outcome of our analysis and the dimension counting argument
proves the following equivalence theorem which had been anticipated in \cite{oh:newton}.
We state the theorem only for the case where we do not need any
additional argument except some immediate translations of our gluing
analysis presented in this paper without using the virtual machinery. This
is related to the $A_{\infty }$-structure defined in \cite{FOOO}.

\begin{thm}
\label{thm:monotone-intro} Let $(M,\omega )$ be a monotone symplectic manifold and
$L\subset (M,\omega ) $ be a monotone Lagrangian submanifold. Fix a Darboux
neighborhood $U$ of $L$ and identify $U$ with a neighborhood of the zero
section in $T^{\ast }L$. Consider $k+1$ Hamiltonian deformations of $L$ by
autonomous Hamiltonian functions $F_{0},\ldots ,F_{k}:M\rightarrow {\mathbb{R%
}}$ such that
\begin{equation*}
F_{i}=\chi f_{i}\circ \pi
\end{equation*}%
where $f_{0},f_{1},\ldots f_{k}:L\rightarrow {\mathbb{R}}$ are generic Morse
functions, $\chi $ is a cut-off function such that $\chi =1$ on $U$ and
supported nearby U. Now consider
\begin{equation*}
L_{i,\varepsilon }=\func{Graph}(\varepsilon df_{i})\subset U\subset M,\quad
i=0,\ldots ,k.
\end{equation*}%
Assume transversality of $L_{i}$'s of the type given in \cite{FO}. Consider
the intersections $p_{i}\in L_{i}\cap L_{i+1}$ such that
\begin{equation*}
\mathop{\kern0pt{\rm dim}}\nolimits{\mathcal{M}}(L_{0},\ldots
,L_{k};p_{0},\ldots ,p_{k})=0,\,1.
\end{equation*}%
Then when $\varepsilon $ is sufficiently small, the moduli space ${\mathcal{M%
}}(L_{0,\varepsilon },\ldots ,L_{k,\varepsilon };p_{0},\ldots ,p_{k})$ is
diffeomorphic to the moduli space of pearl complex defined in \cite%
{oh:newton,oh:imrn}, and \cite{biran-cor-1,biran-cor-2}.
\end{thm}

The following is an immediate corollary of this theorem.

\begin{cor}
Under the same hypothesis on $(M,\omega )$ and $L$, the $A_\infty$-structure
proposed in \cite{fukaya:homotopy,oh:newton,oh:imrn}, which also coincides
with the quantum structure defined in \cite{biran-cor-1}, is isomorphic to
the $A_{\infty }$-structure defined in \cite{FOOO}.
\end{cor}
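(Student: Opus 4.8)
The plan is to deduce the Corollary from the preceding Theorem by the standard device of reading an algebraic identity off a diffeomorphism of moduli spaces. First I would recall how the $A_{\infty}$-structure of \cite{FOOO} attached to $L$ is built: one fixes Hamiltonian perturbation data, forms the compactified and coherently oriented moduli spaces ${\mathcal M}(L_0,\dots ,L_k;p_0,\dots ,p_k)$ of pseudo-holomorphic polygons with Lagrangian boundary conditions, defines $m_k$ by a signed count of the zero-dimensional such moduli spaces (including the curvature term $m_0$ in the monotone setting), and reads the $A_{\infty}$-relations off the codimension-one boundary strata of the one-dimensional ones. Under the monotonicity hypotheses on $(M,\omega)$ and $L$ and the transversality assumed in the Theorem, these index-$0$ and index-$1$ moduli spaces are regular and need no virtual perturbation, so each $m_k$ is an honest signed count; the same holds, by construction, for the pearl/quantum $A_{\infty}$-structure of \cite{fukaya,oh:newton,oh:imrn,biran-cornea}, whose moduli spaces of pearl trajectories are again transversally cut out in dimensions $0$ and $1$.

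Next I would compute the $A_{\infty}$-structure of \cite{FOOO} with the distinguished Floer datum $F_i=\chi\, f_i\circ\pi$, $L_{i,\varepsilon}=\func{Graph}(\varepsilon df_i)$, with $\varepsilon$ small. The Theorem supplies, for every tuple $(p_0,\dots ,p_k)$ whose polygon moduli space has dimension $0$ or $1$, a diffeomorphism of ${\mathcal M}(L_{0,\varepsilon},\dots ,L_{k,\varepsilon};p_0,\dots ,p_k)$ onto the corresponding pearl moduli space; together with the matching of virtual dimensions (the dimension-counting argument alluded to before the Theorem) this identifies the index-$0$ and index-$1$ strata of the two theories. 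The step that requires care is to check that the identification is compatible with the coherent orientations on both sides and carries the stratification of the Floer-polygon compactification (strip-breaking, disk- and sphere-bubbling) onto the broken-pearl stratification of the pearl compactification; this is built into the adiabatic gluing map $Glue$ of the main theorem applied end-by-end at the strip-like ends, once one notes that monotonicity excludes the bad (negative-Maslov disk and sphere) bubbles and that the immersion condition at the join points holds generically in the relevant low-index strata (Proposition \ref{immersed-joint}). Granting this, the zero-dimensional moduli spaces have equal signed cardinalities, so the operations $m_k$ of \cite{FOOO} computed with this datum coincide with the pearl operations for every $k$, and the one-dimensional moduli spaces then match term by term, so the two sets of $A_{\infty}$-relations are the same identities. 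Hence the two $A_{\infty}$-structures are literally equal for this choice of auxiliary data.

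Finally, because both $A_{\infty}$-structures are independent, up to $A_{\infty}$-isomorphism, of the auxiliary choices entering their definition --- the structure of \cite{FOOO} by the invariance theorems of \cite{FOOO}, and the pearl/quantum structure by those of \cite{biran-cornea,oh:imrn} --- the equality established for the distinguished datum $(F_i,L_{i,\varepsilon})$ propagates to an $A_{\infty}$-isomorphism between the structures attached to arbitrary data. This is the content of the Corollary; the coincidence with the Biran--Cornea quantum structure is not reproved here, being subsumed in ``the pearl structure'' as already recorded in the Theorem.

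The main obstacle I anticipate is precisely the compatibility check in the second paragraph: upgrading the open diffeomorphism furnished by the Theorem to one of compactified moduli spaces that respects corners, strata and orientations. Concretely this means iterating the adiabatic gluing construction over all boundary and corner strata, verifying coherence of the several gluing maps along their overlaps, comparing the sign conventions on the Floer and pearl sides, and confirming that the immersion hypothesis on the join points --- generic under the small-index conditions of Proposition \ref{immersed-joint} --- is in force on every stratum contributing to the operations $m_k$ and to their $A_{\infty}$-relations.
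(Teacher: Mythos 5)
Your proposal is correct and follows the paper's intended route: identify the index-$0$ and index-$1$ moduli spaces defining the two $A_{\infty}$-structures via the Theorem's diffeomorphism for the distinguished datum $(F_i,L_{i,\varepsilon})$, then invoke the independence of both structures, up to $A_{\infty}$-isomorphism, from the auxiliary choices. The paper states this as ``an immediate corollary'' of the Theorem and does not spell out the compactification/orientation compatibility that you rightly flag as the technically delicate step, so your write-up is, if anything, more careful than the paper on that point.
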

A complete proof of this result had been given in \cite{fooo:canonical} for general
compact \emph{weakly unobstructed Lagrangian submanifolds} via
the homological perturbation theory by reducing the filtered $A_\infty$-structure on the Bott-Morse
Floer chain complex $C^*(L) \otimes \Lambda_{0,\textrm{nov}}$ to one on
the Morse complex $CM^*(f) \otimes \Lambda_{0,\textrm{nov}}$.

The following special case of this corollary is worthwhile to mention
separately.

\begin{thm}[Theorem \protect\ref{thm:isomorphism}]
\label{thm:isomorphism-intro} Let $(M,\omega)$ and $L$ be as in Theorem \ref%
{thm:monotone-intro}. Then there exists some $\varepsilon_0 > 0$ such that the map
\begin{equation*}
\Theta_\varepsilon:(CF_*(L,L), \partial) \to (C(f,\rho,J), d)
\end{equation*}
induced by the diffeomorphism between the two relevant moduli spaces
defining the boundary maps for the two complexes induces a chain
isomorphism. In particular it the induces an isomorphism in homology
\begin{equation}\label{eq:QH=FH}
QH_*(L) \cong HF_*(L,L)
\end{equation}
for all $0 < \varepsilon \leq \varepsilon_0$.
\end{thm}

In \cite{biran-cor-1,biran-cor-2}, Biran and Cornea directly utilized \textquotedblleft
disk-flow-disk\textquotedblright\ configurations to define what so called
\emph{pearl complex} for embedded monotone Lagrangians (For \emph{immersed}
Lagrangians with certain positivity condition on the index of the
non-embedded points, \cite{AB} similarly defined Floer cohomology by pearly
trajectories). Theorem \ref{thm:isomorphism-intro} was anticipated in \cite%
{oh:newton} and stated as a part of theorem in \cite[Theorem 2.1]{biran-cor-3}.
 (See the first paragraph
\cite[Section 1.2.5]{biran-cor-2} for relevant remark.)

\bigskip In \cite{BO}, Bourgeois and Oancea studied the Floer equation of
autonomous Hamiltonian $H$ under the assumption that its 1-periodic orbits
are transversally nondegenerate on a symplectic manifold with contact type
boundary, in relation to the study of the linearized contact homology of a
fillable contact manifold and symplectic homology of its filling. Their
construction is based on Morse-Bott techniques for Floer trajectories where
a function of the type $\varepsilon f$ is used on the Morse-Bott set of
\emph{parameterized }periodic orbits of $H$ to break the $S^{1}$-symmetry of
the orbit space, whose loci consist of \emph{isolated}, \emph{unparameterized%
}, \emph{nonconstant} periodic orbits. The Hamiltonian pearl complex case
treated in our subsection \ref{subsec:ham-pearl} can be put in a similar
Morse-Bott setting for the Hamiltonian $H=0$, to break the $M$-symmetry of
the orbit space, whose loci consists of constant orbits (i.e. all points on $%
M$), by adding a small Morse function $\varepsilon f$ on $M$ itself.
Some gluing result involving the  configuration of disk-flow-disk also appeared in the Legendrian contact homology setting in \cite{EES}.

More recently,  Diogo and Lisi \cite{DL} studied the symplectic homology of complements of
smooth divisors under certain monotonicity assumptions. In relation to their study,
they use a Morse-Bott degeneration comparing split Floer cylinders with cascades to $J$%
-holomorphic curves. The configuration appearing in their study is similar to resolving \textquotedblleft
thimble-flow-thimble\textquotedblright\ configurations to Floer trajectories
in our case. But their method is very different from ours. (See \cite[Part 3]{DL} for details.)

\textbf{Acknowledgment:} The first named author would like to thank Kenji
Fukaya for having much discussion on this kind of gluing analysis after \cite%
{FO} was completed when they had attempted to push the gluing analysis further
to include the thick part without much success at that time.

The second named author would like to thank the math departments of Chinese
University of Hong Kong and University of Minnesota for the excellent research
environment, where major part of his research on this paper was carried out.
He also thanks Xiaowei Wang for constant encouragement.

\section{Set-up of the Floer equation}

\label{sec:invariant}

In this section, we formulate the set-up for the general Floer's perturbed
Cauchy-Riemann equation on compact Riemann surface with a finite number of
punctures. This requires a coordinate-free framework of the equation. Such a
description was given for example in \cite{Se} and \cite{MS}.

\subsection{Punctures with analytic coordinates}

\label{subsec:punctures}

We start with the description of positive and negative \emph{punctures}. Let
$\Sigma$ be a compact Riemann surface with a marked point $p \in \Sigma$.
Consider the corresponding punctured Riemann surface $\dot \Sigma$ with an
analytic coordinates $z: D\setminus \{p\} \to {\mathbb{C}}$ on a
neighborhood $D \setminus \{p\} \subset \dot \Sigma$. By composing $z$ with
a linear translation of ${\mathbb{C}}$, we may assume $z(p) = 0$.

We know that $D \setminus \{p\}$ is conformally isomorphic to both $%
[0,\infty) \times S^1$ and $(-\infty,0] \times S^1$.

\begin{enumerate}
\item We say that the pair $(p;(D,z))$ has a \emph{incoming cylindrical end}
(with analytic chart) if we have
\begin{equation*}
D = z^{-1}(D^2(1))
\end{equation*}
and are given by the biholomorphism
\begin{equation*}
(\tau, t) \in S^1 \times (-\infty,0] \mapsto e^{2\pi(\tau + it)} \in D^2(1)
\setminus \{0\} \mapsto z^{-1} \in D\setminus \{p\}.
\end{equation*}
We call the corresponding puncture $p \in \Sigma$ a \emph{positive puncture}.

\item We say that the pair $(p;(D,z))$ has a \emph{outgoing cylindrical end}
(with analytic chart) if we have
\begin{equation*}
D = z^{-1}(D^2(1))
\end{equation*}
and are given by the biholomorphism
\begin{equation*}
(\tau,t) \in S^1 \times [0,\infty) \mapsto e^{-2\pi(\tau + it)} \in D^2(1)
\setminus \{0\} \mapsto z^{-1} \in D \setminus \{p\}.
\end{equation*}
In this case, we call the corresponding puncture $(p;(D,z))$ a \emph{%
negative puncture} (with analytic chart).
\end{enumerate}

\subsection{Hamiltonian perturbations}

\label{eq:perturb}

Now we describe the Hamiltonian perturbations in a coordinate free fashion.

Let $\Sigma$ be a compact Riemann surface and $\dot \Sigma$ denote $\Sigma$
with a finite number of punctures and analytic coordinates. We denote by ${%
\mathcal{J}}_{0,\omega}$ the set of almost complex structures that are
cylindrical near the puncture with respect to the given analytic charts $z =
e^{\pm 2\pi(\tau + it)}$. Define ${\mathcal{J}}_\Sigma$ or ${\mathcal{J}}%
_{\dot\Sigma}$ to be the set of maps $J: \Sigma, \, \dot \Sigma \to {%
\mathcal{J}}_{0,\omega}$ respectively.

\begin{defn}
We call $K \in \Omega^1(\Sigma,C^\infty(M))$ \emph{cylindrical} at the
puncture $p \in \Sigma$ with analytic chart $(D,z)$, if it has the form
\begin{equation*}
K(\tau,t) = H(t)\, dt
\end{equation*}
in $D \setminus \{p\}$. We denote by ${\mathcal{K}}_{\dot\Sigma}$ the set of
such $K$'s.
\end{defn}

One important quantity associated to the one-form $K$ is a two-form, denoted
by $R_K$, and defined by
\begin{equation}  \label{eq:R_K}
R_K\left(\xi_1,\xi_2\right) = \xi_1[K(\xi_2)] - \xi_2[K(\xi_1)] -
\left\{K(\xi_2), K(\xi_1)\right\}
\end{equation}
for two vector fields $\xi_1, \, \xi_2$, where $\xi_1[(K(\xi_2)]$ denotes
directional derivative of the function $K(\xi_2)(z,x)$ with respect to the
vector field $\xi_1$ as a function on $\Sigma$, holding the variable $x \in
M $ fixed. It follows from the expression that $R_K$ is tensorial on $\Sigma$
which corresponds to the curvature of the Hamiltonian fibration equipped
with the Hamiltonian connection $K$.

The Hamiltonian-perturbed Cauchy-Riemann equation has the form
\begin{equation}
(du+P_{K}(u))_{J}^{(0,1)}=0\quad \mbox{ or equivalently }\,{\overline{%
\partial }}_{J}(u)+(P_{K})_{J}^{(0,1)}(u)=0  \label{eq:KJ}
\end{equation}%
on $\Sigma $ in general. In cylindrical coordinate $\left( \tau ,t\right) $
near the puncture $p$
\begin{equation*}
\left( P_{K}\right) ^{\left( 0,1\right) }\left( u\right) \left( \frac{%
\partial }{\partial \tau }\right) =-JX_{H}\left( u\right) .
\end{equation*}

For each given such a pair $(K,J)$, it defines a perturbed nonlinear
Cauchy-Riemann operator by
\begin{equation*}
{\overline \partial}_{(K,J)} u := {\overline \partial}_Ju + P_K(u)^{(0,1)}_J
= (du +P_K(u))^{(0,1)}_J.
\end{equation*}
Let $(\mathfrak{p, q)}$ be a given set of positive punctures $\mathfrak{p =
\{p_1, \cdots, p_k\}}$ and with negative punctures $\mathfrak{q = \{ q_1,
\cdots, q_l\}}$ on $\Sigma$. For each given Floer datum $(K,J)$ and a
collection $\vec z = \{z_*\}_{* \in \mathfrak{p \cup q}}$ of asymptotic
periodic orbits $z_*$ attached to the punctures $* = p_i$ or $* = q_j$, we
consider the perturbed Cauchy-Riemann equation
\begin{equation}  \label{eq:KJ-asymp}
\begin{cases}
{\overline \partial}_{(K,J)}(u) = 0 \\
u(\infty_*,t) = z_*(t).%
\end{cases}%
\end{equation}
Our main interest will lie in the case where $(|\mathfrak{p}|, |\mathfrak{q}%
|)$ is either $(1,0), \, (0,1)$ or $(1,1)$ in the present paper.

One more ingredient we need to give the definition of the
Hamiltonian-perturbed moduli space is the choice of an appropriate energy of
the map $u$. For this purpose, we fix a metric $h_\Sigma$ which is
compatible with the structure of the Riemann surface and which has the
cylindrical ends with respect to the given cylindrical coordinates near the
punctures, i.e., $h_\Sigma$ has the form
\begin{equation}  \label{eq:gSigma}
h_\Sigma = d\tau^2 + dt^2
\end{equation}
on $D_* \setminus \{*\}$. We denote by $dA_\Sigma$ the corresponding area
element on $\Sigma$.

Here is the relevant energy function

\begin{defn}[Energy]
For a given asymptotically cylindrical pair $(K,J)$, we define
\begin{equation*}
E_{(K,J)}(u) = \frac{1}{2}\int_\Sigma |du + P_K(u)|_J^2\, dA_\Sigma
\end{equation*}
where $|\cdot|_J$ is the norm of $\Lambda^{(0,1)}(u^*TM) \to \Sigma$ induced
by the metrics $h_\Sigma$ and $g_J: = \omega(\cdot, J \cdot)$.
\end{defn}

Note that this energy depends only on the conformal class of $h_\Sigma$,
i.e., depends only on the complex structure $j$ of $\Sigma$ and restricts to
the standard energy for the usual Floer trajectory moduli space given by
\begin{equation*}
E_{(H,J)} = \frac{1}{2}\int_{C_*} \left(\left|{\frac{\partial u}{\partial
\tau}}\right|_J^2 + \left|{\frac{\partial u}{\partial t}} -
X_H(u)\right|_J^2\right) \, dt\, d\tau
\end{equation*}
in the cylindrical coordinates $(\tau,t)$ on the cylinder $C_*$
corresponding to the puncture $*$. $E_{(K,J)}(u)$ can be bounded by a more
topological quantity depending only on the asymptotic orbits, or more
precisely their liftings to the \emph{universal covering space} of ${%
\mathcal{L}}_0(M)$, where the latter is the contractible loop space of $M$.
As usual, we denote such a lifting of a periodic orbit $z$ by $[z,w]$ where $%
w:D^2 \to M$ is a disk bounding the loop $z$.

We also consider the \emph{real blow-up} of $\dot \Sigma \subset \Sigma$ at
the punctures and denote it by $\overline\Sigma$ which is a compact Riemann
surface with boundary
\begin{equation*}
\partial \overline \Sigma = \coprod_{* \in \mathfrak{p} \cup \mathfrak{q}}
S^1_*
\end{equation*}
where $S^1_*$ is the exceptional circle over the point $*$. We note that
since there is given a preferred coordinates near the point $*$, each circle
$S^1_*$ has the canonical identification
\begin{equation*}
\theta_*: S^1_* \to {\mathbb{R}}/{\mathbb{Z}} = [0,1] \mod 1.
\end{equation*}

We note that for a given asymptotic orbits $\vec z$, one can define the
space of maps $u : \dot \Sigma \to M $ which can be extended to $\overline
\Sigma$ such that $u \circ \theta_* = z_*(t)$ for $* \in \mathfrak{p \cup q}$%
. Each such map defines a natural homotopy class $B$ relative to the
boundary. We denote the corresponding set of homotopy classes by $\pi_2(\vec
z)$. When we are given the additional data of bounding discs $w_*$ for each $%
z_*$, then we can form a natural homology (in fact a homotopy class),
denoted by $B \# \left(\coprod_{* \in \mathfrak{p} \cup \mathfrak{q}}
[w_*]\right) \in H_2(M)$, by `capping-off' the boundary components of $B$
using the discs $w_*$ respectively.

\begin{defn}
Let $\{[z_*,w_*]\}_{* \in \mathfrak{p \cup q}}$ be given. We say $B \in
\pi(\vec z)$ is \emph{admissible} if it satisfies
\begin{equation}  \label{eq:Bsharpws}
B \# \left(\coprod_{* \in \mathfrak{p} \cup \mathfrak{q}} [w_*]\right) = 0
\quad \mbox{in }\, H_2(M,{\mathbb{Z}})
\end{equation}
\end{defn}

where
\begin{equation*}
\# : \pi_2(\vec z) \times \prod_{* \in \mathfrak{p} \cup \mathfrak{q}}
\pi_2(z_*) \to H_2(M,{\mathbb{Z}})
\end{equation*}
is the natural gluing operation of the homotopy class from $\pi_2(\vec z)$
and those from $\pi_2(z_*)$ for $* \in \mathfrak{p} \cup \mathfrak{q}$. Now
we are ready to give the definition of the Floer moduli spaces.

\begin{defn}
Let $(K,J)$ be a Floer datum over $\Sigma$ with punctures $\mathfrak{p, \, q}
$, and let $\{[z_*,w_*]\}_{* \in \mathfrak{p\cup q}}$ be the given
asymptotic orbits. Let $B \in \pi_2(\vec z)$ be a homotopy class admissible
to $\{[z_*,w_*]\}_{* \in \mathfrak{p\cup q}}$. We define the moduli space
\begin{equation}  \label{eq:Mz*w*}
{\mathcal{M}}(K,J;\{[z_*,w_*]\}_*) = \{u: \dot \Sigma \to M \mid u \,
\mbox{ satisfies (\ref{eq:KJ-asymp}) and
$[u]\# (\coprod_{* \in \mp \cup \mq} [w_*]) =0$ }\}.
\end{equation}
\end{defn}

We note that the moduli space ${\mathcal{M}}(K,J;\{[z_{\ast },w_{\ast
}]\}_{\ast })$ is a finite union of the moduli spaces
\begin{equation*}
{\mathcal{M}}(K,J;\vec{z};B)\text{, with }B\#\left( \coprod_{\ast \in
\mathfrak{p}\cup \mathfrak{q}}[w_{\ast }]\right) =0.
\end{equation*}

\section{Adiabatic family of Floer moduli spaces}
\label{sec:floer}

In this section, we give the precise description of one-parameter family of
perturbed Cauchy-Riemann equations in a coordinate-free form parameterized
by $\varepsilon > 0$ such that as $\varepsilon \to 0$ the equation becomes
degenerate in a suitable sense which we will make precise.

We consider two genus-zero closed Riemann surfaces $\Sigma_{\pm}$ with two marked points
$$
 \{o_\pm, e_\pm \}.
$$
We denote by $\dot \Sigma_\pm$ the one-punctured surfaces given by
\be\label{eq:dotSigma+-}
\dot \Sigma_\pm = \Sigma_\pm \setminus \{e_\pm\}
\ee
respectively.  Then we consider the one-form $K_\pm$ such that
\begin{equation}  \label{eq:Kpm}
K_\pm(\tau,t,x) = \kappa^\pm(\tau) H_t(x)
\end{equation}
in terms of the given analytic coordinates on a punctured neighborhood
$U_\pm \setminus \{*\}\subset \dot \Sigma_\pm$ of the relevant puncture $*$ where
$U_\pm$ a disk-type neighborhood of $*$ in the associated compact surface $%
\Sigma$.

Now we consider a one-parameter family $(K_\varepsilon,J_\varepsilon)$ with $%
R = R(\varepsilon) \to \infty$,
\begin{equation}  \label{eq:length}
\varepsilon R(\varepsilon) \to l
\end{equation}
with $l \geq 0$ as $\varepsilon \to 0$. More precise description of $%
(K_\varepsilon, J_\varepsilon)$ is in order.

To define this family, we fix any two continuous functions $R(\varepsilon )$
and $S(\varepsilon )$ that satisfies
\begin{equation}
\varepsilon R(\varepsilon )\rightarrow l ,\,\varepsilon S(\varepsilon
)\rightarrow 0  \label{eq:RSe}
\end{equation}%
as $\varepsilon \rightarrow 0$.

We will make the following choice for the convenience of our exposition
later, which will appear frequently in our calculations.

\begin{choice}
\label{choice:RS-epsilon} We fix any $p>2$ and $0<\delta <1$. Then we take
\begin{equation}
R(\varepsilon )=\frac{l}{\varepsilon },\quad S(\varepsilon )=\frac{1}{2\pi }%
\ln \left(1+\frac{l}{\varepsilon }\right)  \label{eq:ReSe}
\end{equation}%
and set
\begin{equation*}
\tau \left( \varepsilon \right) =R\left( \varepsilon \right) +\frac{p-1}{%
\delta }S\left( \varepsilon \right).
\end{equation*}%
\end{choice}

\begin{rem}
Here are some explanations on the above choices we have made.

\begin{itemize}
\item The choice $p>2$ is to get the Sobolev embedding
\begin{equation*}
W^{1,p}\hookrightarrow C^{0}
\end{equation*}
on the Riemann surface.

\item The choice $0<\delta <1$ is to get rid of the $0$-spectrum of $i\frac{%
\partial }{\partial t}$ on $S^{1}$. Any such choice of $p$ and $\delta $
will be suffice for our analysis.

\item The choice of $\tau \left( \varepsilon \right) $ is not canonical as
it depends on $p$ and $\delta $, but when $p\rightarrow 2 $ and $\delta
\rightarrow 1$, $\left\vert \tau \left( \varepsilon \right) -R\left(
\varepsilon \right) \right\vert $ is close to $S\left( \varepsilon \right) $.
\end{itemize}
\end{rem}

Then we decompose ${\mathbb{R}}$ into
\begin{equation*}
-\infty <-\tau \left( \varepsilon \right) -1<-\tau \left( \varepsilon
\right) <-R(\varepsilon )<R(\varepsilon )<\tau \left( \varepsilon \right)
<\tau \left( \varepsilon \right) +1<\infty .
\end{equation*}

We choose neighborhoods $U_{\pm }$ of $e_{\pm }$ and analytic charts $%
\varphi _{\pm }:U_{\pm }\rightarrow {\mathbb{C}}$ and the associated
coordinates $z=e^{2\pi (\tau +it)}$ so that
\begin{equation*}
\varphi _{+}(U_{+}\setminus \{e_{+}\})\cong \lbrack 0,\infty )\times
S^{1},\,\quad \varphi _{-}(U_{-}\setminus \{e_{-}\})\cong (-\infty ,0]\times
S^{1},
\end{equation*}%
we fix a function
\begin{equation}
\kappa ^{+}(\tau )=%
\begin{cases}
0 \quad & 0 \leq \tau \leq 1 \\
1\quad & {} \tau \geq 2%
\end{cases}
\label{eq:beta}
\end{equation}%
and let $\kappa ^{-}(\tau )=\kappa ^{+}(-\tau )$.

We define a glued surface with a cylindrical coordinates, denoted by
\begin{equation*}
\dot\Sigma: = \dot \Sigma_- \# \dot \Sigma_+
\end{equation*}
by identifying $o_- \in \dot \Sigma_-$ and $o_+ \in \dot \Sigma_+$. We denote by $o$
the corresponding nodal point of $\dot \Sigma$.
Then $\dot \Sigma$ carries two natural punctures $(e_-,e_+)$. We pick an
embedded path passing a point $o \in \dot \Sigma_+ \cap \dot \Sigma_-$ and connecting $%
e_-, e_+$. We fix the unique cylindrical coordinates $\dot\Sigma \cong {%
\mathbb{R}} \times S^1$ mapping the path to ${\mathbb{R}} \times \{0\}$ and $%
o$ to $(0,0)$.

\begin{choice}[Cut-off functions]
\label{choice:cut-off-functions} With this cylindrical coordinates, we
consider $\varepsilon >0$ and a family of cut-off functions defined by $%
\kappa _{\varepsilon }^{+}(\tau )=\kappa ^{+}(\tau -\tau (\varepsilon )+1)$
and $\kappa _{\varepsilon }^{-}(\tau )=\kappa _{\varepsilon }^{+}(-\tau )$.
It is easy to see
\begin{equation}
\kappa _{\varepsilon }^{+}(\tau )=%
\begin{cases}
1\,\quad \, & \mbox{for }\,\tau \geq \tau (\varepsilon )+1 \\
0\quad \, & \mbox{for }\,0\leq \tau \leq \tau (\varepsilon )%
\end{cases}%
,\quad \kappa _{\varepsilon }^{-}\left( \tau \right) =%
\begin{cases}
1\,\quad  & \mbox{for }\,\tau \leq -\tau (\varepsilon )-1 \\
0\quad  & \mbox{for }\,\tau (\varepsilon )\leq \tau \leq 0%
\end{cases}
\label{eq:betaR}
\end{equation}%
and $\kappa _{\varepsilon }^{0}(\tau )$ is a smooth cut-off function such
that
\begin{equation}
\kappa _{\varepsilon }^{0}(\tau )=%
\begin{cases}
1\quad  &
\mbox{for $\left\vert \tau \right\vert \leq R\left( \varepsilon
\right) $} \\
0\quad  &
\mbox{for $\left\vert \tau \right\vert \geq R\left(
\varepsilon \right) +1$},%
\end{cases}
\label{eq:beta0}
\end{equation}%
$\left\vert \kappa _{\varepsilon }^{0}(\tau )\right\vert \leq 1,\left\vert
(\kappa _{\varepsilon }^{0})^{\prime }(\tau )\right\vert \leq 2$.
\end{choice}

Using these cut-off functions, we now introduce the relevant $\varepsilon$%
-family of Floer data.

\begin{choice}[Floer data $(K_\protect\varepsilon, J_\protect\varepsilon)$]
\label{choice:floer-data} We define $(K_{\varepsilon },J_{\varepsilon })$ to
be the glued family
\begin{eqnarray}
K_{\varepsilon }(\tau ,t) &=&%
\begin{cases}
\kappa _{\varepsilon }^{+}(\tau )\cdot H_{t}\quad & \tau \geq R(\varepsilon )
\\
\kappa _{\varepsilon }^{0}(\tau )\cdot \varepsilon f\quad & |\tau |\leq
R(\varepsilon ) \\
\kappa _{\varepsilon }^{-}(\tau )\cdot H_{t}\quad & \tau \leq -R(\varepsilon
).%
\end{cases}
\label{eq:Kepsilon} \\
J_{\varepsilon }^{\pm }(\tau ,t,x) &=&%
\begin{cases}
J^{\kappa _{\varepsilon }^{+}(\tau )}(t,x)\quad & \tau \geq R(\varepsilon )
\\
J_{0}(x)\quad & |\tau |\leq R(\varepsilon ) \\
J^{\kappa _{\varepsilon }^{-}(\tau )}(t,x)\quad & \tau \leq -R(\varepsilon ).%
\end{cases}
\label{eq:Jepsilon}
\end{eqnarray}
\end{choice}

\begin{figure}[hbt!]
\begin{tikzpicture} [scale=0.9]		
		\filldraw[thin,color=red!20, fill=red!5] (-2,1)--(2,1)--(2,-1)--(-2,-1);	
		\filldraw[thin,color=blue!20, fill=blue!5] (-6,1)--(-4,1)--(-4,-1)--(-6,-1);
		\filldraw[thin,color=blue!20, fill=blue!5] (6,1)--(4,1)--(4,-1)--(6,-1);

		\draw (0,0) ellipse (0.5 and 1);
		\filldraw[ fill=red!5] (2,0) ellipse (0.5 and 1); 	
		\filldraw[ fill=red!5](-2,0) ellipse (0.5 and 1);
		
		\filldraw[ fill=blue!5] (4,0) ellipse (0.5 and 1);
		\filldraw[ fill=blue!5] (6,0) ellipse (0.5 and 1);
		
		\filldraw[ fill=blue!5] (-4,0) ellipse (0.5 and 1);
		\filldraw[ fill=blue!5] (-6,0) ellipse (0.5 and 1);
		
		\draw [thick] (-6,1) -- (6,1) coordinate ;
		\draw [thick] (-6,-1) -- (6,-1) coordinate ;
		
		\draw [blue] (-6,0)--(-4.5,0);   \draw [dotted] (-4.5,0)--(-4,0); 
		\draw [blue] [<-] (6,0)--(4.5,0);   \draw [dotted] (4.5,0)--(4,0); 
		\node [below right] at (6,0) {$\tau$};
		\draw [dashed] (-2,0) to (-4,0); 
		\draw [dashed] (2,0) to (4,0);  
		
		\draw [red] (-2,0) -- (2,0) ;  
		\node [above, blue] at (-5,0) {$\kappa_{\varepsilon}^{-}H_t$}; \node [below, blue] at (-5,0) {$J_{\varepsilon}^-$};
		\node [above, blue] at (5,0) {$\kappa_{\varepsilon}^{+}H_t$}; \node [below, blue] at (5,0) {$J_{\varepsilon}^+$};
		\node [above left, red] at (0,0) {$\varepsilon f$};
		\node [below left, red] at (0,0) {$J_0$};
		
		\draw [fill] (2,0) circle [radius=0.03]; \draw [fill] (4,0) circle [radius=0.03];
		\draw [fill] (-2,0) circle [radius=0.03]; \draw [fill] (-4,0) circle [radius=0.03];
		\node [above] at (-4,0) {$-\tau(\varepsilon)$}; \node [above] at (-2,0) {$-l/{\varepsilon}$};
		\node [above] at (4,0) {$\tau(\varepsilon)$}; \node [above] at (2,0) {$l/{\varepsilon}$};	
	\end{tikzpicture}
\caption{Floer data}
\end{figure}

We will vary $R=R(\varepsilon )$ depending on $\varepsilon $ and study the
family of equations
\begin{equation}
(du+P_{K_{\varepsilon }}(u))_{J_{\varepsilon }}^{(0,1)}=0  \label{eq:duPK}
\end{equation}%
as $\varepsilon \rightarrow 0$.

\section{Adiabatic convergence}

\label{sec:adiabatic}

By definition of $K_{\varepsilon }$ and $J_{\varepsilon }$, as $\varepsilon
\rightarrow 0$, on the domain
\begin{equation*}
\lbrack -R(\varepsilon ),R(\varepsilon )]\times S^{1}
\end{equation*}%
we have $K_{\varepsilon }(\tau ,t)\equiv \varepsilon f$ and $J_{R}(\tau
,t)\equiv J_{0}$, and so \eqref{eq:KJe} becomes
\begin{equation*}
\frac{\partial u}{\partial \tau }+J_\varepsilon \left( \frac{\partial u}{%
\partial t}-\varepsilon X_{f}(u)\right) =0.
\end{equation*}%
Furthermore we require the Floer data $(K_\varepsilon,J_\varepsilon)$ to
satisfy
\begin{equation*}
K_{\varepsilon }(\tau ,t)\equiv H_{\pm }\,dt, \quad J_{\varepsilon }(\tau
,t)\equiv J_{\pm }
\end{equation*}
on
\begin{equation*}
{\mathbb{R}}\times S^{1}\setminus \lbrack -\tau (\varepsilon )-1,\tau
(\varepsilon )+1]\times S^{1}
\end{equation*}%
(\ref{eq:duPK}) is cylindrical at infinity, i.e., invariant under the
translation in $\tau $-direction at infinity.

Note that on any fixed compact set $B\subset {\mathbb{R}}\times S^{1}$, we
will have
\begin{equation*}
B\subset \lbrack -R(\varepsilon ),R(\varepsilon )]\times S^{1}
\end{equation*}%
for all sufficiently small $\varepsilon $. And as $\varepsilon \rightarrow 0$,
the equation (\ref{eq:duPK}) converges to ${\overline{\partial }}%
_{J_{0}}u=0$ on $B$ in that $J\rightarrow J_{0}$ and $K_{\varepsilon
}\rightarrow 0$ in $C^{\infty }$-topology. On the other hand, after
translating the region $(-\infty ,-R(\varepsilon ]$ to the right (resp. $%
[R(\varepsilon ),\infty )$ to the left) by $R(\varepsilon )
+ S(\varepsilon)$ in $\tau $-direction, (\ref{eq:duPK}) converges to
\begin{equation*}
\frac{\partial u}{\partial \tau }+J_{+}\left( \frac{\partial u}{\partial t}%
-X_{H_{+}}(u)\right) =0
\end{equation*}%
on $(-\infty ,0]\times S^{1}$ (resp. on $[0,\infty )\times S^{1}$) and ${%
\overline{\partial }}_{J_{0}}u=0$ on $[0,R(\varepsilon )]\times S^{1}$
(resp. on $[-R(\varepsilon ),0]\times S^{1}$).

Now we are ready to state the meaning of the \emph{adiabatic convergence}
for a sequence $u_{n}$ of solutions $(du_{n}+P_{K_{\varepsilon
_{n}}})_{J_{\varepsilon _{n}}}^{(0,1)}=0$ as $n\rightarrow \infty $.

After taking away bubbles, we can achieve the following derivative bound
which we will assume from now on.

\begin{hypo}
\label{hypo:C1bound} Let $\varepsilon_n \to 0$ and assume
\begin{equation}
\|du_{n}\|_{C^0} < C <\infty  \label{eq:|du|<C}
\end{equation}%
where we take the norm $\|du_{n}\|_{C^0}$ with respect to the metric $g$ on $%
M$.
\end{hypo}

\emph{From now on, we will always assume that we have both energy bound, and
this derivative bound so that bubble does not occur as $\varepsilon \to 0$.}

Put
\begin{equation*}
\Theta_\varepsilon := \left[-R(\varepsilon), R(\varepsilon)\right] \times
S^1.
\end{equation*}
For any compact subset $B \subset {\mathbb{R}} \times S^1$, we have
\begin{equation*}
B \subset \Theta_\varepsilon
\end{equation*}
eventually as $\varepsilon \to 0$.

\begin{defn}
Let $\{u_n\}$ be a sequence satisfying Hypothesis \ref{hypo:C1bound}. Let $B
\subset {\mathbb{R}} \times S^1$ be a precompact subdomain. We define the
\emph{local energy} of a sequence $\{u_n\}$ by
\begin{equation*}
E_{J,B}(u) := \limsup_{n \to \infty} \int_B |du_n|_J^2 dt \, d\tau.
\end{equation*}
\end{defn}

There are two cases to consider :

\begin{enumerate}
\item there exists a precompact subdomain $B \subset \R \times S^1$
and a subsequence $n_{i}$ and $c>0$ such that
$E_{J,B}(u_{n_{i}})>c>0$ for all sufficiently large $i$,
\item $\limsup_{n \to \infty} E_{J,\Theta_{\varepsilon_n}}(u_n) = 0$.
\end{enumerate}
For the case (1), standard argument produces a non-constant limit which
becomes a pseudoholomorphic sphere. To describe the situation (2) in a
precise manner, which is the case of our interest, we introduce the notion
of adiabatic convergence to a thimble-flow-thimble trajectory $(u_-,\chi,u_+)
$ in the next subsection. This will describe a neighborhood basis of $%
(u_-,\chi,u_+)$ at `infinity' of
\begin{equation*}
{\mathcal{F}} ^{1,p}(K_{\varepsilon_n},J_{\varepsilon_n};z_-,z_+;B)
\end{equation*}
We first recall the definition of Hausdorff distance of subsets $C, \, D
\subset X$
\begin{equation*}
d_{\text{\textrm{H}}}(C,D) = \sup_{x \in A}d(x,D) + \sup_{y \in D} d(C,y).
\end{equation*}

\subsection{Adiabatic deformations of domain}

\label{subsec:domain}

To provide a rigorous definition of the \emph{adiabatic convergence} we deal with,
we need to explain the precise setting and the way how we degenerate the domain
$$
\dot \Sigma = \dot \Sigma_- \# \dot \Sigma_+
$$
of the maps in our interest to the union of two spheres joined by
a line segment of length $2l $ parameterized by the interval $[-l ,l ]$.

For the aforementioned open subsets
$U_{\pm }\subset \Sigma _{\pm }$,  we identify $U_{\pm }\setminus \{o_{\pm }\}$ with the open
punctured unit disk $D^{2}\setminus \{0\}$ using the  analytic charts $\varphi
_{\pm }:U_{\pm }\rightarrow D^{2}\subset {\mathbb{C}}$ and the associated
coordinates $z$.
We first consider the annular
domain of $U_{\pm }\setminus \{o_{\pm }\}$:
\begin{eqnarray*}
\text{\textrm{Ann}}_{\alpha } &:= & \{z\in D^{2}\mid |\alpha |^{3/4}\leq
|z|\leq |\alpha |^{1/4}\} \\
& = & \{z\in D^{2}\mid |R_{\alpha }|^{-3/2}\leq |z|\leq |R_{\alpha
}|^{-1/2}\}.
\end{eqnarray*}
The choice of the power is dictated by that of Fukaya-Ono's deformation
given in \cite{fukaya-ono} in which $|\alpha |=R_{\alpha }^{-2}$ for $%
|\alpha |$ sufficiently small.

We note that the conformal modulus of $\text{\textrm{Ann}}_{\alpha }$ is $%
\Vert \alpha \Vert ^{-1/2}=R_{\alpha }$.

\begin{choice}[Conformal modulus $\protect\alpha(\protect\varepsilon)$]
Let
\begin{equation*}
S(\varepsilon )=\frac{1}{2\pi }\ln \left( 1+\frac{l }{\varepsilon }\right)
\end{equation*}
be the function given in Choice \ref{choice:RS-epsilon}. For each given $%
\varepsilon >0$, we choose $\alpha (\varepsilon )$ so that
\begin{equation*}
\frac{p-1}{\delta }S(\varepsilon )=\ln \Vert \alpha (\varepsilon )\Vert
^{-1/2}.
\end{equation*}
\end{choice}

Then we choose a biholomorphism
\begin{equation*}
\varphi _{\varepsilon }^{-}:\text{\textrm{Ann}}_{\alpha (\varepsilon
)}\rightarrow \lbrack -\tau (\varepsilon ),-R(\varepsilon )]\times S^{1}.
\end{equation*}%
Similarly we define
\begin{equation*}
\varphi _{\varepsilon }^{+}:\text{\textrm{Ann}}_{\alpha (\varepsilon
)}\rightarrow \lbrack R(\varepsilon ),\tau (\varepsilon )]\times S^{1}.
\end{equation*}

We denote
\begin{equation*}
C(\varepsilon )=[-\tau (\varepsilon ),\tau (\varepsilon )]\times S^{1}
\end{equation*}%
with the standard metric $g_{C(\varepsilon )}$. We also equip each of $\Sigma_\pm$ with
a K\"ahler metric, denoted by $g_\pm$, respectively.

\begin{choice}[Gluing domains]\label{choice:gluing-domains}
We take a family of glued surfaces
\begin{equation*}
\Sigma _{\varepsilon }=\left( \Sigma _{-}\setminus D^{2}(|\alpha
(\varepsilon )|^{3/4})\right) \cup _{\varphi _{\varepsilon }^{-}}\left(
[-\tau (\varepsilon ),\tau (\varepsilon )]\times S^{1}\right) {}_{(\varphi
_{\varepsilon }^{+})^{-1}}\cup \left( \Sigma _{+}\setminus D^{2}(|\alpha
(\varepsilon )|^{3/4})\right) .
\end{equation*}
Then through the identifications $\varphi _{\pm }:U_{\pm }\rightarrow D^{2}$%
, this family give rise to the following $\varepsilon $-parameterized family
of resolved cylinders $(\Sigma _{\varepsilon }^{\text{\textrm{adia}}%
},g_{\varepsilon }^{\text{\textrm{adia}}})$ equipped with the metric
provided by
\begin{equation*}
g_{\varepsilon }^{\text{\textrm{adia}}}=%
\begin{cases}
g_{+}\quad & \mbox{on }\,\Sigma _{+}\setminus U_{+}(|\alpha (\varepsilon )|)
\\
g_{C(\varepsilon )}\quad & \mbox{on }\,C(\varepsilon ) \\
g_{-}\quad & \mbox{on }\,\Sigma _{-}\setminus U_{-}(|\alpha (\varepsilon )|)%
\end{cases}%
\end{equation*}%
and suitably interpolated in between.
\end{choice}

Note that the conformal structure of $\Sigma _{\varepsilon }$ is
degenerating on the annuli region $C(\varepsilon ) $ when $\varepsilon
\rightarrow 0$, and any given compact subset $K_\pm \subset \Sigma _{\pm
}\setminus \{o_{\pm }\}$ is covered respectively by $\Sigma _{\pm }\setminus U_{\pm
}(\delta )$ for a sufficiently small $\delta >0$.

\subsection{Definition of adiabatic convergence}

Now we involve maps defined on the resolved domains $(\Sigma_\varepsilon^{%
\text{\textrm{adia}}},g_\varepsilon^{\text{\textrm{adia}}})$ and provide a
precise definition of adiabatic convergence or of adiabatic topology near
the thimble-flow-thimble moduli space
\begin{equation*}
{\mathcal{M}}^{\text{\textrm{tft}}}_{l}(z_-;f;z_+;B)
\end{equation*}
inside the off-shell spaces
\begin{equation*}
\overline {\mathcal{F}}^{\text{\textrm{para}}}(z_-;f;z_+;B) = \bigcup_{l
\geq l_0}{\mathcal{F}}^l(z_-;f;z_+,;B).
\end{equation*}

\begin{defn}[Adiabatic convergence]
\label{defn:adiabatic-convergence} \label{defn:adiabatic} Let $%
\{\varepsilon_j\}$ be sequence with $\varepsilon_j \to 0$ as $j \to \infty$.

We say a sequence $u_j$ of maps in ${\mathcal{F}}^{1,p}(K_{%
\varepsilon_j},J_{\varepsilon_j};z_-,z_+;B)$ \emph{$\{\varepsilon_j\}$%
-adiabatically converges} to a thimble-flow-thimble trajectory $%
(u_-,\chi,u_+)$ if the following hold:

\begin{enumerate}
\item $\lim_{j\rightarrow \infty }E_{J,\Theta _{\varepsilon _{j}}}(u_{j})=0.$

\item $\lim_{n\rightarrow \infty }d_{\text{\textrm{H}}}(u_{j}([-R(%
\varepsilon _{j}),R(\varepsilon _{j})]\times S^{1}),\chi ([-l,l])=0$, where $%
d_{\text{\textrm{H}}}$ is the Gromov-Hausdorff metric.

\item $u_{j}|_{\Sigma _{\pm }\setminus U_{\pm }(\zeta )}\rightarrow u_{\pm }$
in $C^{\infty }$ for any given $0<\zeta <1$, or equivalently,

$u_{j}(\cdot \pm \tau \left( \varepsilon _{j}\right) ,\cdot )\rightarrow
u_{\pm }$ in $C^{\infty }$ on any domain $\pm \lbrack \frac{1}{2\pi }\ln
\zeta ,\infty )\times S^{1}.$

\item $\lim_{\zeta \rightarrow 0}$ $\lim_{j\rightarrow \infty }\func{diam}%
\left( u_{j}|_{\varphi ^{\pm }\left( \text{\textrm{Ann}}_{\alpha
(\varepsilon _{j})}\right) \setminus \pm \left[ \frac{1}{2\pi }\ln \zeta
,\tau \left( \varepsilon \right) \right] \times S^{1}}\right) =0$, or
equivalently,

$\lim_{\zeta \rightarrow 0}$ $\lim_{j\rightarrow \infty }\func{diam}\left(
u_{j}\left( \pm \left[ R\left( \varepsilon _{j}\right) ,\tau \left(
\varepsilon _{j}\right) +\frac{1}{2\pi }\ln \zeta \right] \times
S^{1}\right) \right) =0.$
\end{enumerate}
\end{defn}

In fact, we can turn this adiabatic convergence into a topology by
describing a neighborhood basis of the topology at $\varepsilon =0$.

\begin{defn}[Neighborhood basis at infinity]
For any $\varepsilon ,\zeta >0$ we define
\begin{eqnarray}
&{}&d_{\text{\textrm{adia}}}^{\varepsilon ,\zeta }(u,(u_{-},\chi ,u_{+}))
\notag \\
:= &&\max \Big\{E_{J,\Theta _{\varepsilon }}(u),d_{\text{\textrm{H}}%
}(u([-R(\varepsilon ),R(\varepsilon )]\times S^{1}),\chi ([l,l]),  \notag \\
&&\func{diam}\left( u_{j}\left( \pm \left[ R\left( \varepsilon \right) ,\tau
\left( \varepsilon \right) +\frac{1}{2\pi }\ln \zeta \right] \times
S^{1}\right) \right) ,  \notag \\
&{}&d_{C_{\Sigma _{\pm }\setminus U_{\pm }(\zeta )}^{\infty }}(u(\cdot -\tau
\left( \varepsilon \right) ,u_{-}),d_{C_{\Sigma _{\pm }\setminus U_{\pm
}(\zeta )}^{\infty }}(u(\cdot +\tau \left( \varepsilon \right) ,\cdot
),u_{+})\Big\}  \label{eq:adiadist}
\end{eqnarray}%
Then the sequence $u_{j}$ \emph{$\{\varepsilon _{j}\}$-adiabatically
converges} to a thimble-flow-thimble trajectory $(u_{-},\chi ,u_{+})$ if and
only if
\begin{equation*}
\lim_{\zeta \rightarrow 0}\lim_{j\rightarrow \infty }{}d_{\text{\textrm{adia}%
}}^{\varepsilon _{j},\zeta }(u_{j},(u_{-},\chi ,u_{+}))=0.
\end{equation*}%
We define the open set $V_{\zeta ,\delta }^{\varepsilon }$ in ${\mathcal{F}}%
^{1,p}(K_{\varepsilon },J_{\varepsilon };z_{-},z_{+};B)$ by%
\begin{equation}
V_{\zeta ,\delta }^{\varepsilon }=\left\{ u\in {\mathcal{F}}%
^{1,p}(K_{\varepsilon },J_{\varepsilon };z_{-},z_{+};B\, )\Big|\, d_{\text{%
\textrm{adia}}}^{\varepsilon ,\zeta }(u,(u_{-},\chi ,u_{+}))<\delta \right\}
\text{.}  \label{eq:Ve}
\end{equation}
\end{defn}

For a given sequence $u_{j}$ satisfying (2), we consider the
reparameterization
\begin{equation*}
\overline{u}_{j}(\tau ,t)=u_{j}\left( \frac{\tau }{\varepsilon _{j}},\frac{t%
}{\varepsilon _{j}}\right)
\end{equation*}%
on the domain $[-\varepsilon _{j}R(\varepsilon _{j}),\varepsilon
_{j}R(\varepsilon _{j})]\times {\mathbb{R}}/2\pi \varepsilon _{j}{\mathbb{Z}}
$. A straightforward calculation shows that $\overline{u}_{j}$ satisfies
\begin{equation*}
\frac{\partial \overline{u}_{j}}{\partial \tau }+J_{0}\left( \frac{\partial
\overline{u}_{j}}{\partial t}-X_{f}(\overline{u}_{j})\right) =0
\end{equation*}%
or equivalently
\begin{equation*}
\frac{\partial \overline{u}_{j}}{\partial \tau }+J_{0}\frac{\partial
\overline{u}_{j}}{\partial t}+\func{grad}_{J_{0}}f(\overline{u}_{j})=0
\end{equation*}%
on $[-\varepsilon _{j}R(\varepsilon _{j}),\varepsilon _{j}R(\varepsilon
_{j})]\times {\mathbb{R}}/2\pi \varepsilon _{j}{\mathbb{Z}}$. For the
simplicity of notation, we will sometimes denote
\begin{equation*}
R_{j}=R(\varepsilon _{j}).
\end{equation*}%
The following result was proved in Part II of \cite{oh:dmj}. For symplectic
manifolds with Hamiltonian $S^{1}$ action, a similar result was also
obtained by Mundet i Riera and Tian for twisted holomorphic maps (See
Theorem 1.3 \cite{mundet-tian}), and by G. Xu \cite{Xu} when the twisted
holomorphic maps have Lagrangian boundary condition.

\begin{thm}[\protect\cite{oh:dmj}, \protect\cite{mundet-tian}]
\label{thm:centrallimit} Suppose
\begin{equation*}
l = \lim_{j \to \infty}\varepsilon_jR(\varepsilon_j), \quad \lim_{j \to
\infty}E_{J,\Theta_{R_j}}(u_j) = 0.
\end{equation*}
Then there exists a subsequence, again denoted by $u_j$, such that the
reparameterized maps $\overline u_j$ satisfy the following:

\begin{enumerate}
\item Consider the supremum
\begin{equation*}
\func{width} \overline u_j|_{[-l_j,l_j] \times S^1]}: = \sup_{\tau \in [-l_j,l_j]}
\func{diam} \func{Im} \overline u_j|_{\{\tau\} \times S^1}, \quad l_j : = \varepsilon_j R(\varepsilon_j)
\end{equation*}
Then $\func{width} \overline u_j|_{[-l_j,l_j] \times S^1]} \to 0$ and in
particular the center of mass of $\overline u_j:[-l_j,l_j] \times S^1$ defines a
smooth path
\begin{equation*}
\func{cm}(\overline u_j): [-l_j,l_j] \to M
\end{equation*}
and we can uniquely write
\begin{equation*}
\overline u_j(\tau,t) = \exp_{\func{cm}(\overline u_j)}(\tau) \xi_j(\tau,t)
\end{equation*}
so that $\int_{S^1} \xi_j(\tau,t)\,dt = 0$ for all $\tau \in [-l_j,l_j]$.
\item The path $\func{cm}(\overline u_j)$ converges to a gradient trajectory
$\chi:[-l,l] \to M$ satisfying $\dot \chi + \func{grad}_J f(\chi) = 0$ and $%
\xi_j \to 0$ in $C^\infty$-topology.
\end{enumerate}
\end{thm}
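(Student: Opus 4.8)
The statement is Theorem~1.3 of \cite{MT}, also obtained in Part~II of \cite{oh:dmj}; I record only the architecture of the argument and the point where its difficulty is concentrated.

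\textbf{Reductions and the soft estimates.}
The reparametrization $\overline u_{j}(\tau ,t)=u_{j}(\tau /\varepsilon _{j},t/\varepsilon _{j})$ is a conformal dilation, so the Dirichlet energy is preserved: $\int_{[-\varepsilon _{j}R_{j},\varepsilon _{j}R_{j}]\times \R/2\pi \varepsilon _{j}\Z}|d\overline u_{j}|^{2}=E_{J,\Theta _{R_{j}}}(u_{j})\to 0$, and, as computed just above, $\overline u_{j}$ solves $\partial _{\tau }\overline u_{j}+J_{0}\partial _{t}\overline u_{j}+\func{grad}_{J_{0}}f(\overline u_{j})=0$ on that domain. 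On the thin cylinder $[-R_{j},R_{j}]\times S^{1}$ the unscaled map solves $\partial _{\tau }u_{j}+J_{0}(\partial _{t}u_{j}-\varepsilon _{j}X_{f}(u_{j}))=0$, i.e.\ an inhomogeneous Cauchy--Riemann equation ${\overline \partial}_{J_{0}}u_{j}=g_{j}$ with $\Vert g_{j}\Vert _{C^{1}}=O(\varepsilon _{j})$ by the standing bound $|du_{j}|<C$. For every $z\in \lbrack -R_{j}+1,R_{j}-1]\times S^{1}$ the unit disc $D_{1}(z)$ lies in $\Theta _{R_{j}}$, so $E(u_{j};D_{1}(z))\le E_{J,\Theta _{R_{j}}}(u_{j})$ falls below the small-energy threshold once $j$ is large, and the mean value inequality for this equation yields
\begin{equation*}
\sup_{[-R_{j}+1,R_{j}-1]\times S^{1}}|du_{j}|\;\le \;C\bigl(\sqrt{E_{J,\Theta _{R_{j}}}(u_{j})}+\varepsilon _{j}\bigr)\;\longrightarrow \;0 .
\end{equation*}
Hence $\func{width}\,\overline u_{j}|_{\{\tau \}\times S^{1}}\le 2\pi \sup |du_{j}|\to 0$ uniformly in $\tau $, so for $j$ large the image of each circle $\{\tau \}\times S^{1}$ lies in a geodesically convex ball, the Riemannian center of mass $\func{cm}(\overline u_{j})(\tau )$ is well defined and, by the implicit function theorem, smooth in $\tau $, and one may write uniquely $\overline u_{j}(\tau ,t)=\exp _{\func{cm}(\overline u_{j})(\tau )}\xi _{j}(\tau ,t)$ with $\int_{S^{1}}\xi _{j}(\tau ,t)\,dt=0$ and $\Vert \xi _{j}\Vert _{C^{0}}\le \sup_{\tau }\func{width}\,\overline u_{j}|_{\{\tau \}\times S^{1}}\to 0$. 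This gives statement~(1) and the $C^{0}$ half of statement~(2).

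\textbf{The main estimate, and why it is not soft.}
Everything else hinges on one quantitative fact: the $S^{1}$-nonconstant part of $\overline u_{j}$ decays exponentially along the neck \emph{at a rate independent of $j$}, so that in fact $\sup_{[-R_{j}+1,R_{j}-1]\times S^{1}}|du_{j}|=O(\varepsilon _{j})$, not merely $o(1)$, hence $\sup |d\overline u_{j}|=O(1)$ on compact subsets of $(-\ell ,\ell )\times \R/2\pi \varepsilon _{j}\Z$. The mean value inequality alone cannot deliver this: on the thin part the energy is spread over a cylinder of length $\sim 2\ell /\varepsilon _{j}$, so $E_{J,\Theta _{R_{j}}}(u_{j})$, and therefore the bound above, is only $o(1)$; and when one averages the rescaled equation over $S^{1}$ one meets error terms of the shape $\varepsilon _{j}^{-1}\int_{S^{1}}\bigl(J_{0}(u_{j})-J_{0}(\func{cm})\bigr)\,\partial _{t}u_{j}\,dt$, plus quadratic terms in $\xi _{j}$, which are only $O(\varepsilon _{j}^{-1}\sup |\partial _{t}u_{j}|^{2})$ and a priori do not vanish. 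The remedy is the \emph{three-interval method} of \cite{MT} (compare \cite{oh:dmj}): splitting $[-R_{j},R_{j}]$ into unit subintervals and letting $g_{j}(n)$ be the $L^{2}$-norm over the $n$-th piece of the mean-zero (in $t$) component of $u_{j}$ (or of its first derivatives), one derives a discrete convexity inequality $g_{j}(n)\le \lambda \bigl(g_{j}(n-1)+g_{j}(n+1)\bigr)$ with a fixed $\lambda <\tfrac{1}{2}$, the gap producing $\lambda <\tfrac{1}{2}$ being precisely the absence of $0$-spectrum of $i\,\partial /\partial t$ on mean-zero functions on $S^{1}$; this forces $g_{j}$ to decay geometrically from both ends of the neck at a $j$-independent rate, so the mean-zero component is $O(e^{-cR_{j}})$ on the interior. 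The complementary ($t$-independent) component $c_{j}(\tau )=\tfrac{1}{2\pi }\int_{S^{1}}u_{j}(\tau ,\cdot )$ then obeys, up to the now-controlled errors, the ordinary differential equation $\dot c_{j}=-\varepsilon _{j}\func{grad}_{J_{0}}f(c_{j})$, whence $|\dot c_{j}|=O(\varepsilon _{j})$; adding the two components gives $|du_{j}|=O(\varepsilon _{j})+O(e^{-cR_{j}})=O(\varepsilon _{j})$ and hence $|d\overline u_{j}|=O(1)$ on the relevant subcylinder. \emph{This step is the technical heart of the theorem}: a single global elliptic/Sobolev estimate over the neck is useless because its conformal modulus diverges with $j$, and the discrete three-interval scheme is exactly the maximum-principle-type device insensitive to the length of the neck --- the same circle of ideas whose constants are noted to become delicate, and which for $\ell =0$ forces the blow-up of the Sobolev constant mentioned in the introduction.

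\textbf{Conclusion.}
In the unscaled picture, where the subcylinders $[n-1,n+1]\times S^{1}$ have fixed geometry, interior elliptic estimates with $j$-independent constants upgrade the $L^{2}$-decay of the mean-zero part to $C^{\infty}$-decay; translated back, this gives $\xi _{j}\to 0$ in $C^{\infty}$ on compact subsets, and together with $|d\overline u_{j}|=O(1)$ and the boundedness of $\func{grad}f$ it gives that $\func{cm}(\overline u_{j})$ is bounded in $C^{\infty}_{\mathrm{loc}}$. Passing to a subsequence, $\func{cm}(\overline u_{j})\to \chi $ in $C^{\infty}$ on compact subsets of $(-\ell ,\ell )$. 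Averaging the rescaled equation over $S^{1}$, using $\int_{S^{1}}\xi _{j}\,dt=0$ to annihilate the linear term and the decay estimate to kill the quadratic and $\partial _{t}$-error terms, one gets in the limit $\dot \chi +\func{grad}_{J}f(\chi )=0$. Finally a diagonal argument over an exhaustion of $(-\ell ,\ell )$ by intervals $[-\ell +\delta ,\ell -\delta ]$ produces one subsequence valid throughout; since $\varepsilon _{j}\to 0$, the estimates on $[-R_{j}+1,R_{j}-1]\times S^{1}$ cover, in the limit, the whole closed interval, and one obtains the smooth path $\chi :[-\ell ,\ell ]\to M$ of the statement, with $\overline u_{j}(\tau ,t)=\exp _{\func{cm}(\overline u_{j})(\tau )}\xi _{j}(\tau ,t)$ and $\xi _{j}\to 0$ in $C^{\infty}$.
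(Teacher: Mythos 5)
The paper does not prove Theorem~\ref{thm:centrallimit}; it quotes it from Part~II of \cite{oh:dmj} and from Theorem~1.3 of \cite{MT}, so there is no ``paper's own proof'' against which to compare in the strict sense. What can be compared is whether your reconstruction matches the strategy of those references and of the related machinery the paper does deploy later (the three-interval method of Section~\ref{sec:surjectivity}), and on that count your sketch is faithful: the soft step (small-energy regularity plus the mean-value inequality to get $\sup|du_{j}|\to 0$ and hence well-definedness of the center-of-mass path), the identification of the real obstruction (the conformal modulus of the neck diverges, so a single global elliptic estimate over $\Theta_{R_j}$ is useless, and the averaged equation produces $O(\varepsilon_j^{-1})$ error terms), the three-interval convexity inequality with $\lambda<\tfrac12$ coming from the spectral gap of $i\partial_t$ on mean-zero functions, and the reduction of the zero-mode to the gradient ODE are exactly the ingredients used in \cite{MT} and in \cite{oh:dmj}. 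One small imprecision worth flagging: the exponential decay the three-interval lemma yields is measured from the two ends of the neck, so the correct statement is that the mean-zero part at distance $d$ from $\{\pm R_j\}$ is $O(e^{-cd})$ times its boundary size; your shorthand ``$O(e^{-cR_j})$ on the interior'' is accurate at the center but overstates the bound near the ends, and the $C^\infty$-convergence of $\xi_j$ up to $\tau=\pm\ell$ in fact uses the thick-part convergence $u_j(\cdot\mp\tau(\varepsilon_j),\cdot)\to u_\pm$ to control the boundary input, a point your sketch elides. This does not invalidate the architecture, but it is the one place where the argument genuinely needs the full adiabatic convergence package rather than the neck estimates alone.
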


Under the assumption $\lim_{j\rightarrow \infty }E_{J,\Theta _{Rj}}(u_{j})=0$%
, after taking away bubbles, on $(-\infty ,K]\times S^{1}$ of any fixed $K$,
the translated sequences $u_{j}(\cdot -\tau \left( \varepsilon _{j}\right)
,\cdot ):{\mathbb{R}}\times S^{1}\rightarrow M$ of solutions $u_{j}$ of (\ref%
{eq:duPK}) as above converge to $u_{-}:{\mathbb{R}}\times S^{1}\rightarrow M$
that satisfies the equation
\begin{equation*}
\frac{\partial u_{-}}{\partial \tau }+J_{-}\left( \frac{\partial u_{-}}{%
\partial t}-X_{H_{-}}(u_{-})\right) =0
\end{equation*}%
in compact $C^{\infty }$-topology, where $H_{\pm }$ are the Hamiltonians
\begin{equation*}
H_{\pm }(\tau ,t,x)=\kappa ^{\pm }(\tau )H(t,x).
\end{equation*}%
Similar statement holds for $u_{j}(\cdot +R(\varepsilon _{j})+S\left(
\varepsilon _{j}\right) ,\cdot )$ at $+\infty $.

\section{Fredholm theory of Floer trajectories near gradient segments}

\label{sec:Qinmiddle}In this section, we study Floer trajectories near a
gradient segment $\chi $. Since $\chi $ itself is a Floer trajectory with $%
S^{1}$ symmetry, transversality is hard to achieve. We will set up
appropriate Banach manifold hosting $\chi $ such that $\chi $ is a
transversal Floer trajectory. During this section we let $J$ be a $t$%
-independent almost complex structure compatible with $\omega $.

\subsection{The Banach manifold set up}

For the gluing purpose, we treat $\chi $ as a $t$-independent Floer
trajectory $u_{\chi }:[-l,l]\times S^{1}\rightarrow M$ of the equation
\begin{equation}
\frac{\partial u}{\partial \tau }+J(u)\left( \frac{\partial u}{\partial t}%
-X_{f}(u)\right) =0.  \label{floer}
\end{equation}

\subsubsection{Center of mass}

We define
\begin{equation}
{\mathcal{B}}_{\chi }=\{u\in W^{1,p}([-l,l]\times S^{1},M)\mid \text{dist}%
\left( u\left( \tau ,t\right) ,\chi \left( \tau \right) \right) <d\text{ for
all }\tau ,t.\},  \label{B-mfd:FixEnds}
\end{equation}%
where $d>0$ is a small constant that for any loop $w:S^{1}\rightarrow \left(
M,g\right) $ whose image has diameter less than $d$, the center of mass of
the loop $w$ is well defined, and is denoted by cm$\left( w\right) $.
Therefore for any $u\in {\mathcal{B}}_{\chi }$, its \emph{center of mass
curve}
\begin{equation*}
\text{cm}\left( u\right) :\left[ -l,l\right] \rightarrow M
\end{equation*}
is well defined and is close to $\chi \left( \tau \right) $. Here
\begin{equation*}
\tau \mapsto \int_{S^1}u(\tau,t) \, dt
\end{equation*}
is the \emph{center of mass} of the loops $t \mapsto u(\tau ,t)$, and is
well defined for $u$ close enough to $\chi $: The center of mass of a loop $%
\gamma:S^1 \to M$ is defined to be the unique point $m_\gamma \in M$ such
that
\begin{equation*}
\int_0^1 \func{dist}^2(m,\gamma(t)) \, dt
\end{equation*}
is the minimum. (See \cite{katcher} for the detailed exposition on the
center of mass in general).

\begin{rem}
The center of mass of a curve in a Riemannian manifold is well defined
whenever the diameter of the curve is sufficiently small. In particular the
condition $\text{diam}(u(\tau ,t))<C\varepsilon $ enables us to define the
center of mass of the curve $u(\tau ,t)\;(t\in S^{1})$ when $\varepsilon $
is sufficiently small. Therefore we can also use a Darboux chart containing
the image of $u(\tau ,t)$ and may identify the curve $t\mapsto u(\tau ,t)$
as one in ${\mathbb{C}}^{n}$. With this understood, we will sometimes denote
the center of mass $\overline{u}(\tau )$ just as the average $%
\int_{S^{1}}u(\tau ,t)\,dt$.
\end{rem}

\subsubsection{Linearization operator and its Fourier expansion}

We choose a $J$-linear connection and take a trivialization
of $u_{\chi }^{\ast }TM$ using the associated parallel transport denoted by
\begin{equation*}
\Phi :u_{\chi }^{\ast }TM\rightarrow \left[ -l,l\right] \times S^{1}\times
\mathbb{C}^{n}
\end{equation*}
over $[-l,l]\times S^{1}$. In this trivialization, the linearization of the
above equation has the form
\begin{equation}
D\xi =\frac{\partial \xi }{\partial \tau }+J_{0}\frac{\partial \xi }{%
\partial t}+A\left( \tau \right) \xi  \label{L-floer}
\end{equation}%
for any vector field $\xi :[-l,l]\times S^{1}\rightarrow {\mathbb{C}}^{n}$
where $J_{0}$ is the standard complex structure on ${\mathbb{C}}^{n}$,
independent of $(\tau ,t)$, and
\begin{equation*}
A=:\xi \rightarrow \left( \nabla _{\chi ^{\prime }}\Phi +\nabla _{\Phi
}\nabla f(\chi (\tau ))\right) \xi
\end{equation*}%
is a 0-th order linear differential operator. If we change $f$ to $%
\varepsilon f$, then $\chi $ is changed to $\chi _{\varepsilon }:=\chi
\left( \varepsilon \cdot \right) $, $\Phi $ is changed to $\Phi
_{\varepsilon }:=\Phi \left( \varepsilon \cdot ,\cdot \right) $, and $A$ is
changed to $A_{\varepsilon }:=\varepsilon A$. So without loss of any
generality, we may assume that $A$ is as small as we want by considering the
Morse function $\varepsilon f$ for some small $\varepsilon$. We denote the
linearization operator of \eqref{floer} for the function $\varepsilon f$ by $%
D^\varepsilon_{\Phi}$ which has the form
\begin{equation*}
D^\varepsilon_{\Phi}\xi =\frac{\partial \xi }{\partial \tau }+J_{0}\frac{%
\partial \xi }{\partial t}+A_{\varepsilon }\left( \tau \right) \xi
\end{equation*}

We are going to construct a Banach manifold hosting Floer trajectories
nearby $u_{\chi }$ that matches $u_{\pm }$, on which good estimate of the
right inverse $Q$ of $D$ can be obtained. The idea is to use the Fourier series
(of variable $t\in S^{1}$) to decompose the variation vector field $\xi $
into the \textquotedblleft zero mode" part and the \textquotedblleft higher
mode" part. For the zero mode, it is tied with the Fredholm theory of the
linearized gradient operator $L$ of Morse function $f$ that%
\begin{equation*}
L\xi =\frac{\partial \xi }{\partial \tau }+A\left( \tau \right) \xi .
\end{equation*}%
For the higher mode part, it has no zero spectrum so uniform bound of the
right inverse can be obtained. This observation was used in \cite{fukaya-ono}
to compute Floer homology by Morse homology of $f$ using $S^{1}$-invariant
Kuranish structure. In our \textquotedblleft
thimble-flow-thimble\textquotedblright\ gluing, only the boundary value of
the \textquotedblleft zero mode" on gradient segments enters the matching
condition from the \textquotedblleft thimble-flow-thimble" transversality
(definition \ref{tft-trans}), while matching of the \textquotedblleft higher
modes" is not needed, thanks to the uniform bound of the right inverse on
higher mode space.

\begin{rem}
For this analysis via the Fourier decomposition on the region $[-l,l] \times
S^1$, it is important to assume $J = J_0$ to be $t$-independent after the
trivialization of $\chi ^{\ast }TM\rightarrow \lbrack -l,l]$ so that the
operator $L$ becomes $S^1$-invariant. 
\end{rem}

\subsubsection{Weighted Sobolev norms: along $\protect\chi$}

Along the gradient flow $\chi \left( \tau \right) $, we decompose any
section $\xi \in \Gamma (W^{1,p}(u_{\chi }^{\ast }TM)$) into
\begin{equation*}
\xi = \xi_0 + \widetilde \xi
\end{equation*}
where
\begin{equation}
\xi _{0}(\tau ):=\int_{S^{1}}\xi (\tau ,t)dt  \label{VecField:flow}
\end{equation}%
is the time average of $\xi$, i.e., the zero mode, and
\begin{equation}
\tilde{\xi}(\tau ,t):=\xi (\tau ,t)-\xi _{0}(\tau ).
\label{VecField:reduced}
\end{equation}%
is the part of higher Fourier modes.

Then we define the Banach norm of $\xi \in T_{u_{\chi }}{\mathcal{B}}_{\chi
} $ to be
\begin{equation}
\Vert \xi \Vert _{\left( W_{\rho}^{1,p}\left[ -l,l\right] \times
S^{1}\right) }=\left\Vert \xi _{0}\right\Vert _{W^{1,p}([-l,l])}+\Vert
\tilde{\xi}\Vert _{W_{\rho }^{1,p}([-l,l]\times S^{1})},  \label{B-norm}
\end{equation}%
where
\begin{eqnarray*}
\left\Vert \xi _{0}\right\Vert _{W^{1,p}([-l,l])}^{p} &=&\int_{-l}^{l}\left(
|\xi _{0}|^{p}+|\nabla _{\tau }\xi _{0}|^{p}\right) d\tau , \\
\Vert \tilde{\xi}\Vert _{W_{\rho }^{1,p}([-l,l]\times S^{1})}^{p}
&=&\int_{S^{1}}\int_{-l}^{l}\left( |\tilde{\xi}|^{p}+|\nabla \tilde{\xi}%
|^{p}\right) \left( 1+\left\vert \tau \right\vert \right) ^{\delta }d\tau dt
\end{eqnarray*}%
We call $\rho \left( \tau \right) :=\left( 1+\left\vert \tau \right\vert
\right) ^{\delta }$ the weighting function for the above weighted $W_{\rho
}^{1,p}$\ norm. \ We remark that the norm $\Vert \tilde{\xi}\Vert _{W_{\rho
}^{1,p}([-l,l]\times S^{1})}$ is equivalent to the norm $\Vert \left(
1+\left\vert \tau \right\vert \right) ^{\frac{\delta }{p}}\tilde{\xi}\Vert
_{W^{1,p}([-l,l]\times S^{1})}$.

Similarly for any section $\eta \in \Gamma (W^{1,p}(u_{\chi }^{\ast
}TM)\otimes _{J}\Lambda ^{0,1}\left( [-l,l]\times S^{1}\right) $, we
decompose it into $\eta = \eta_0 + \widetilde \eta$ with
\begin{eqnarray*}
\eta _{0}(\tau ) &:&=\int_{S^{1}}\eta (\tau ,t)dt, \\
\tilde{\eta}(\tau ,t) &:&=\eta (\tau ,t)-\eta _{0}(\tau ).
\end{eqnarray*}%
and define
\begin{eqnarray*}
\left\Vert \eta _{0}\right\Vert _{L^{p}([-l,l])}^{p} &=&\int_{-l}^{l}|\eta
_{0}|^{p}d\tau , \\
\left\Vert \tilde{\eta}\right\Vert _{L_{\rho }^{p}([-l,l]\times S^{1})}^{p}
&=&\left\Vert \left( 1+\left\vert \tau \right\vert \right) ^{\frac{\delta }{p%
}}\tilde{\eta}\left( \tau ,t\right) \right\Vert _{L^{p}([-l,l]\times S^{1})}
\\
\left\Vert \eta \right\Vert _{L_{\rho}^{p}([-l,l]\times S^{1})}
&=&\left\Vert \eta _{0}\right\Vert _{L^{p}([-l,l])}+\left\Vert \tilde{\eta}%
\right\Vert _{L_{\rho }^{p}([-l,l]\times S^{1})}
\end{eqnarray*}

\subsubsection{Weighted Sobolov norms: along $u\in {\mathcal{B}}_{\protect%
\chi }$}

Along the general elements $u\in {\mathcal{B}}_{\chi }$ that are close
enough to $\chi $ in $C^0$-topology, (which are \textquotedblleft
thin\textquotedblright\ cylinders), we define the norm $\Vert \xi \Vert
_{W_{\rho}^{1,p}}$ for $\xi \in W^{1,p}\left( \Gamma \left( u^{\ast
}TM\right) \right) $ as the following: let
\begin{equation*}
\bar{\xi}\left( \tau ,t\right) =\func{Pal}_{\text{cm}\left( u\right)
}^{u}\xi \left( \tau ,t\right) ,
\end{equation*}%
where $\func{Pal}_{\text{cm}\left( u\right) }^{u}$ is the parallel transport
from $u\left( \tau ,t\right) $ to cm$\left( u\right) \left( \tau \right) $
along the shortest geodesic. Then along cm$\left( u\right) $, similar to $%
\chi \left( \tau \right) $ case, we decompose
\begin{equation*}
\bar{\xi}\left( \tau ,t\right) =\left( \bar{\xi}\right) _{0}\left( \tau
\right) +\widetilde{\bar{\xi}}\left( \tau ,t\right)
\end{equation*}%
and pull back to define
\begin{equation*}
\xi _{0}=\func{Pal}_{u}^{\text{cm}\left( u\right) }\left( \bar{\xi}\right)
_{0}\text{, \ }\tilde{\xi}=\func{Pal}_{u}^{\text{cm}\left( u\right) }%
\widetilde{\bar{\xi}}
\end{equation*}%
where $\func{Pal}_{u}^{\text{cm}\left( u\right) }$ is the parallel transport
from cm$\left( u\right) \left( \tau \right) $ to back $u\left( \tau
,t\right) $ along the shortest geodesic. Therefore we have well defined
decomposition%
\begin{equation*}
\xi \left( \tau ,t\right) =\xi _{0}\left( \tau \right) +\tilde{\xi}\left(
\tau ,t\right) .
\end{equation*}%
for $W^{1,p}\left( \Gamma \left( u^{\ast }TM\right) \right) $. Then we
define
\begin{eqnarray*}
\left\Vert \xi _{0}\right\Vert _{W^{1,p}\left( \left[ -l,l\right] \right) }
&=&\left\Vert \left( \bar{\xi}\right) _{0}\right\Vert _{W^{1,p}\left( \left[
-l,l\right] \right) } \\
\left\Vert \tilde{\xi}\right\Vert _{W_{\rho }^{1,p}\left( \left[ -l,l\right]
\times S^{1}\right) } &=&\left\Vert \widetilde{\bar{\xi}}\right\Vert
_{W_{\rho }^{1,p}\left( \left[ -l,l\right] \times S^{1}\right) } \\
\left\Vert \xi \right\Vert _{W_{\rho}^{1,p}\left( \left[ -l,l\right] \times
S^{1}\right) } &=&\left\Vert \xi _{0}\right\Vert _{W^{1,p}\left( \left[ -l,l%
\right] \right) }+\left\Vert \tilde{\xi}\right\Vert _{W_{\rho }^{1,p}\left( %
\left[ -l,l\right] \times S^{1}\right) }.
\end{eqnarray*}%
For $\eta \in L^{p}(u^{\ast }TM\otimes _{J}\Lambda ^{0,1}([-l,l]\times
S^{1}))$, the norm
\begin{equation*}
\left\Vert \eta \right\Vert _{L_{\rho}^{p}([-l,l]\times S^{1})}=\left\Vert
\eta _{0}\right\Vert _{L^{p}([-l,l])}+\left\Vert \tilde{\eta}\right\Vert
_{L_{\rho }^{p}([-l,l]\times S^{1})}
\end{equation*}%
is defined similarly by using parallel transport to cm$\left( u\right) $ and
decomposition $\eta =\eta _{0}+\tilde{\eta}$. Let
\begin{equation*}
L_{\rho}^{p}(u^{\ast }TM\otimes _{J}\Lambda ^{0,1}([-l,l]\times
S^{1}))=\left\{\eta \in L^{p}\left( [-l,l]\times S^{1}\right) \, \Big|\,
\left \Vert \eta \right\Vert _{L_{\rho}^{p}([-l,l]\times S^{1})}<\infty
\right\} .
\end{equation*}

\subsubsection{The nonlinear Floer operator as a Fredholm section}

We define the Banach bundle
\begin{equation*}
{\mathcal{L}}^{(0,1)}_{J,\chi} \to \mathcal{B}_\chi
\end{equation*}
where
\begin{equation*}
{\mathcal{L}}_{J,\chi }^{(0,1)} : =\bigcup\limits_{u\in {\mathcal{B}}_{\chi
}}L_{\rho}^{p}(u^{\ast }TM\otimes _{J}\Lambda ^{0,1}([-l,l]\times S^{1})).
\end{equation*}

Then the Floer operator
\begin{equation*}
{\overline{\partial }}_{J_{0},f}:u\mapsto \left( {\partial }_{\tau }u+J_{0}{%
\partial }_{t}u+A\left( \tau \right) u\right) \otimes \left( d\tau -it\right)
\end{equation*}
gives a Fredholm section $\mathcal{F}$ of the Banach bundle ${\mathcal{L}}%
_{\chi, J}^{(0,1)}\rightarrow {\mathcal{B}}_{\chi }$ whose covariant
linearization
\begin{equation*}
D_u({\overline{\partial }}_{J_{0},f})
\end{equation*}
with respect to the connection $\nabla$ is given by
\begin{equation}  \label{eq:covariant-linearization}
D^\varepsilon_\Phi : \Omega^0_{1,p}(u^*TM) \to \Omega^{(0,1)}_{J;p}(u^*TM)
\end{equation}
in the trivialization $\Phi$ for each fixed $\varepsilon$. Here we write
\begin{eqnarray*}
\Omega^0_{1,p}(u^*TM)& : = & W^{1,p}([-l,l] \times S^1,M) \\
\Omega^{(0,1)}_{J;p}(u^*TM) & : = & L_{\rho}^{p}(u^{\ast }TM\otimes
_{J}\Lambda ^{0,1}([-l,l]\times S^{1})).
\end{eqnarray*}

\subsection{Decomposition into zero mode and the higher modes}

For any section $\xi \in W^{1,2}(u_{\chi }^{\ast }TM)$, we take the Fourier
expansion
\begin{equation*}
\xi (\tau ,t)=\sum_{-\infty }^{\infty }a_{k}(\tau )e^{2\pi ikt},
\end{equation*}%
where $a_{k}(\tau )$ are vectors in $T_{\chi (\tau )}M$. It is easy to see
\begin{equation*}
\xi _{0}(\tau )=a_{0}(\tau ),\qquad \tilde{\xi}\left( \tau \right)
=\sum_{k\neq 0}a_{k}(\tau )e^{2\pi ikt}.
\end{equation*}

\begin{defn}[$V_{0}$ and $\widetilde{V}$]
Let $V_{0}$ and $\widetilde{V}$ be the $L^{2}$-completions of the spans of
zero Fourier mode and of higher Fourier modes respectively, then we have the
$L^{2}$-decomposition
\begin{equation*}
W^{1,2}(u_{\chi }^{\ast }TM)=V_{0}\oplus \widetilde{V}
\end{equation*}%
where we still denote by $V_{0},\,\widetilde{V}$ the intersections
\begin{equation*}
V_{0}\cap W^{1,2}(u_{\chi }^{\ast }TM),\,\widetilde{V}\cap W^{1,2}(u_{\chi
}^{\ast }TM)
\end{equation*}%
respectively.
\end{defn}

We observe that $V_{0}$ and $\widetilde{V}$ are invariant subspaces of
operator
\begin{equation*}
D=\frac{\partial }{\partial \tau }+J_{0}\frac{\partial }{\partial t}+A(\tau
)
\end{equation*}
and so $D$ splits into
\begin{equation*}
D=D_{0}\oplus \widetilde{D}:V_{0}\oplus \widetilde{V}\rightarrow V_{0}\oplus
\widetilde{V}\text{,}
\end{equation*}%
where $\widetilde{D}=D|_{\widetilde{V}}:V_{0}\rightarrow V_{0}$, and $%
D_{0}=D|_{V_{0}}:\widetilde{V}\rightarrow \widetilde{V}$. Notice that $D_{0}=%
\frac{\partial }{\partial \tau }+A\left( \tau \right) $ is exactly the
linearized gradient operator $L$ of $f$.

For the construction of the right inverse $Q$ of $D$, we use Fourier
expansions of $\eta $. For any given $\eta $, we write
\begin{equation*}
\eta (\tau ,t)=\sum_{-\infty }^{\infty }b_{k}(\tau )e^{i2\pi kt}.
\end{equation*}%
Now the equation $D\xi =\eta $, i.e. $\partial _{\tau }\xi +J_{0}\partial
_{t}\xi +A(\tau )\xi =\eta $ splits into
\begin{equation}
a_{k}^{\prime }(\tau )+\left( A(\tau )-2\pi k\right) a_{k}\left( \tau
\right) =b_{k}(\tau )\qquad \text{for all }\;k\in {\mathbb{Z}}
\label{rightinverse}
\end{equation}%
Especially when $k=0$, it becomes
\begin{equation}  \label{eq:b0}
a_{0}^{\prime }\left( \tau \right) +A(\tau )a_{0}\left( \tau \right)
=b_{0}\left( \tau \right) .
\end{equation}

We note $\left( \ref{eq:b0}\right) ~$is exactly the linearized gradient flow
equation. We can always solve $\left( \ref{rightinverse}\right) $
\begin{equation}
a_{k}(\tau )=e^{-\int_{0}^{\tau }\left( A(\mu )-2\pi k\right) d\mu }\left[
\int_{0}^{\tau }b_{k}(s)e^{\int_{0}^{s}\left( A(\mu )-2\pi k\right) d\mu
}ds+C_{k}\right]  \label{sol}
\end{equation}%
by the variation of constants with $C_{k}$ arbitrary constant. Any choice of
$C_{k}$ will produce a right inverse of $D$.

However for the resulting right inverse to carry uniform bound independent
of $\varepsilon > 0$, we need to impose a good boundary condition on $a_k$,
which in turn requires us to make a good choice of the free constants $C_{k}$%

\begin{cond}[Boundary condition]
\label{cond:boundary-conditions} We put the following boundary conditions

\begin{itemize}
\item For $k=0$, we put
\begin{equation}
a_{0}(\pm l)= \xi_{0}(\pm l)\in d(ev_{\pm })\left( T_{(u_{\pm },o_{\pm })}%
\mathcal{M}_{2}\left( K_{\pm },J_{\pm };A_{\pm }\right) \right) ,
\label{BdryCond:0Mode}
\end{equation}%
where $ev_{\pm }:{\mathcal{M}}_{2}\left( K_{\pm },J_{\pm };A_{\pm }\right)
\rightarrow M$ are the evaluation maps
\begin{equation*}
ev_{\pm }(u_{\pm },o_{\pm })=u_{\pm}(o_{\pm }).
\end{equation*}

\item For $k\neq 0$, we impose one-point boundary condition
\begin{equation}
a_{k}(l)=0\;\text{ if }k>0;\qquad a_{k}(-l)=0\;\text{ if }k<0.
\label{BdryCond:HigherMode}
\end{equation}
\end{itemize}
\end{cond}

The thimble-flow-thimble transversality condition, Proposition \ref%
{family-tft-trans}, enables us to solve this two point boundary problem %
\eqref{BdryCond:0Mode}.

The boundary condition can be always satisfied since the equation %
\eqref{eq:b0} is a first order linear ODE or one can choose $C_{k}$
arbitrarily. In fact the one-point boundary condition uniquely determines $%
a_{k}$ for $k\neq 0$:
\begin{equation}
a_{k}(\tau )=%
\begin{cases}
-\int_{\tau }^{l}b_{k}(s)e^{\int_{\tau }^{s}\left( A(\mu )-2\pi k\right)
d\mu }ds\; & \text{ if }k>0, \\
\int_{-l}^{\tau }b_{k}(s)e^{\int_{\tau }^{s}\left( A(\mu )-2\pi k\right)
d\mu }ds\; & \text{ if }k<0.%
\end{cases}
\label{sol:ak}
\end{equation}

This constructs the right inverse $Q$ of $D$ by applying the above Fourier
expansion to the value $Q(\eta) =: \xi$ of the operator $Q$. It follows from %
\eqref{eq:b0}  we see $b_{0}\left( \tau \right) =0$ if and only if $%
a_{0}\left( \tau \right) =0$, thus $Q$ also splits into
\begin{equation}  \label{eq:Q}
Q = Q_0 \oplus \widetilde Q: V_0 \oplus \widetilde V \to V_0 \oplus
\widetilde V
\end{equation}
where $Q_{0}=Q|_{V_{0}}$ and $\widetilde{Q}=Q|_{\widetilde{V}}$.

In fact, the above discussion shows that if the image of $Q$ becomes the
subspace
\begin{equation*}
W_{0}^{1,2}(u_{\chi }^{\ast }TM):=\{\xi \in W_{0}^{1,2}(u_{\chi }^{\ast
}TM)\mid a_{k}(l )=0\,\mbox{for }\,k>0,\quad a_{k}(-l )=0,,\mbox{for }\,k<0\}
\end{equation*}%
then restriction of $D$ to the subspace is an isomorphism with its inverse
given by $Q$.

\begin{rem}
The boundary condition \eqref{BdryCond:HigherMode} is geared more for the $%
L^{2}$ estimate of $Q$ rather than matching with the $J$-holomorphic spheres
$u_{\pm }$. We can't put two-point boundary condition for each $a_{k}(k\neq
0) $, since we have only one free constant $C_{k}$ in \eqref{sol}. There are
lots of choices of the right inverse $Q$, with various operator norm bounds,
but for the uniform $L^{2}$ estimate of $Q$ our choice seems to be the most
optimal one, which can be seen in the following estimate \eqref{KeyIdentify}.
\end{rem}

\subsection{$L^{2}$ estimate of the right inverse for the higher modes}

We estimate $\Vert \tilde{\xi}\Vert _{2}$. For $k\neq 0$,
\begin{eqnarray}
&&\left( 2\pi k\right) ^{2}\int_{-l}^{l}\left\vert a_{k}(\tau )\right\vert
^{2}d\tau  \notag \\
&\leq &\int_{-l}^{l}(\left\vert a_{k}^{\prime }\right\vert ^{2}+\left( 2\pi
k\right) ^{2}\left\vert a_{k}(\tau )\right\vert ^{2}d\tau  \notag \\
&=&\int_{-l}^{l}\left\vert a_{k}^{\prime }-2\pi ka_{k}\right\vert ^{2}d\tau
+2\pi k\int_{-l}^{l}2a_{k}\cdot a_{k}^{\prime }d\tau  \notag \\
&=&\int_{-l}^{l}\left\vert b_{k}\left( \tau \right) -A(\tau )a_{k}\left(
\tau \right) \right\vert ^{2}d\tau +2\pi k\int_{-l}^{l}\frac{d}{d\tau }%
\left\vert a_{k}(\tau )\right\vert ^{2}d\tau  \notag \\
&=&\int_{-l}^{l}\left\vert b_{k}\left( \tau \right) -A(\tau )a_{k}\left(
\tau \right) \right\vert ^{2}d\tau +2\pi k(\left\vert a_{k}(l)\right\vert
^{2}-\left\vert a_{k}(-l)\right\vert ^{2})  \notag \\
&\leq &2\left( \int_{-l}^{l}\left\vert b_{k}\right\vert ^{2}d\tau +\delta
^{2}\int_{-l}^{l}\left\vert a_{k}\right\vert ^{2}d\tau \right) +2\pi
k(\left\vert a_{k}(l)\right\vert ^{2}-\left\vert a_{k}(-l)\right\vert ^{2})
\label{KeyIdentify}
\end{eqnarray}%
provided $\left\vert A\left( \tau \right) \right\vert _{\infty }<\delta $.
By the boundary condition \eqref{BdryCond:HigherMode} of $a_{k}$, the second
summand of the last inequality is never positive, and noting $\delta
<1-1/p<1 $, so we get
\begin{equation*}
\int_{-l}^{l}\left\vert a_{k}\left( \tau \right) \right\vert ^{2}d\tau \leq
\frac{2}{\left( 2\pi k\right) ^{2}-2\delta ^{2}}\int_{-l}^{l}\left\vert
b_{k}\left( \tau \right) \right\vert ^{2}d\tau \leq \int_{-l}^{l}\left\vert
b_{k}\left( \tau \right) \right\vert ^{2}d\tau \text{ when }k\neq 0,
\end{equation*}%
Summing over $k\neq 0$ we get
\begin{equation}
\Vert \tilde{\xi}\Vert _{L^{2}\left[ -l,l\right] }^{2}\leq \Vert \tilde{\eta}%
\Vert _{L^{2}\left[ -l,l\right] }^{2}.  \label{Estimate:L2}
\end{equation}

From \eqref{BdryCond:HigherMode} and \eqref{KeyIdentify}, we also get
\begin{equation*}
0\leq (\left( 2\pi k\right) ^{2}-2\delta ^{2})\int_{-l}^{l}\left\vert
a_{k}\right\vert ^{2}\leq 2\int_{-l}^{l}\left\vert b_{k}\right\vert
^{2}+2\pi k(0-\left\vert a_{k}(-l)\right\vert ^{2})
\end{equation*}%
for $k>0$ and similar inequality for $k<0$. Hence
\begin{equation*}
|a_{k}(-l)|\leq \sqrt{\frac{1}{k\pi }}\Vert b_{k}\Vert _{L^{2}([-l,l])}\;%
\text{ if }k>0;\qquad |a_{k}(l)|\leq \sqrt{\frac{-1}{k\pi }}\Vert b_{k}\Vert
_{L^{2}([-l,l])}\;\text{ if }k<0.
\end{equation*}%
Squaring and summing over $k\neq 0$ we also have
\begin{equation}
\sum_{k>0}ka_{k}^{2}(-l)\leq \frac{1}{\pi }\Vert \tilde{\eta}\Vert
_{2}^{2},\qquad \sum_{k<0}|k|a_{k}^{2}(l)\leq \frac{1}{\pi }\Vert \tilde{\eta%
}\Vert _{2}^{2}.  \label{eq:sumkak}
\end{equation}

\begin{rem}
It would be nice if we can get $C^{0}$ estimate $\tilde{\xi}(\pm l,t)$ in
terms of $\Vert \tilde{\eta}\Vert _{2}$, but it seems that we can at most
get the $W^{\frac{1}{2},2}$ norm estimate of $\tilde{\xi}(\pm l,t)$ by the
above summation inequalities. However, we will later derive the $C^{0}$
estimate of $\tilde{\xi}$ by $W^{1,p}$ estimate and Sobolev embedding.
\end{rem}

\begin{rem}
\bigskip If we extend $\eta $ to be $0$ outside $\left[ -l,l\right] $, then
we can think $\eta $ is on the full gradient trajectory $\chi ,$
corresponding to the case where $l=\infty $. Then we can still use the above
method to construct $\widetilde{\xi }$ with slightly different boundary
condition%
\begin{equation*}
a_{k}(\infty )=0\;\text{ if }k>0;\qquad a_{k}(-\infty )=0\;\text{ if }k<0.
\end{equation*}%
(Here $a_{k}(\pm \infty )$ makes sense, since in their defining integrals $%
b_{k}\left( \tau \right) $ is compactly supported). For such $\widetilde{\xi
}$ we gain stronger inequality
\begin{equation*}
\Vert \tilde{\xi}\Vert _{L^{2}\left( -\infty ,\infty \right) }^{2}\leq \Vert
\tilde{\eta}\Vert _{L^{2}\left[ -l,l\right] }^{2}.
\end{equation*}%
We choose this $\tilde{\xi}$ in the remaining part of our paper, because the
above stronger inequality is needed to construct the approximate right
inverse.
\end{rem}

\subsection{$L^{p}$ estimate of the right inverse}

We start with estimating the $W^{1,p}$-norm $\|\xi _{0}\|_{W^{1,p}}$ of the
zero modes. Notice that $\xi _{0}$ satisfies the linearized gradient flow
equation%
\begin{equation*}
\frac{\partial }{\partial \tau }\xi _{0}\left( \tau \right) +A(\tau )\xi
_{0}\left( \tau \right) =\eta _{0}\left( \tau \right) .
\end{equation*}%
Note the gradient flow $\chi $ can always be extended to a full gradient
flow connecting two critical points, whose Fredholm theory and
transversality has been well established. Therefore by extending $\eta $ to
be $0$ outside $\left[ -l,l\right] $ and using the right inverse for $%
L_{\chi }$ on the full gradient trajectory $\chi $, $\xi _{0}$ is always
solvable and%
\begin{equation*}
\left\Vert \xi _{0}\right\Vert _{W^{1,p}\left( -\infty ,\infty \right) }\leq
C_{p}\left\Vert \eta _{0}\right\Vert _{L^{p}[-l,l]},
\end{equation*}%
especially
\begin{equation}
\left\Vert \xi _{0}\right\Vert _{W^{1,p}[-l,l]}\leq C_{p}\left\Vert \eta
_{0}\right\Vert _{L^{p}[-l,l]}  \label{eq:xi0leta0}
\end{equation}%
for some constant $C_{p}\left( p\geq 2\right) .$

Now assume $p\geq 2$ and examine the $L^p$-norm estimate of the higher
modes. Without loss of generality we assume $l$ is an integer greater than $1
$ (The general case $l\geq l_{0}>0$ is similar). Let $B_{m}=[m-1/2,m+1/2]%
\times S^{1}$ and $B_{m}^{+}=[m-1,m+1]\times S^{1}$. Given $\eta $ on $[-l,l]
$, we extend it on $[-l-1,l+1]$ by letting $\eta =0$ outside $[-l,l]$. Using
the method in the above section we construct $Q(\eta )$, then restrict it on
$[-l,l]$.

By the elliptic regularity we can improve $L^{2}$ estimate to $L^{p}$
estimate for the higher modes
\begin{equation*}
\widetilde \xi = \xi - \xi_0,
\end{equation*}
if we enlarge the region a bit. In our case the inequality is of the form
\begin{equation}
\Vert \tilde{\xi}\Vert _{W^{1,p}(B_{m})}\leq C(\Vert D\tilde{\xi}\Vert
_{L^{p}(B_{m}^{+})}+\Vert \tilde{\xi}\Vert _{L^{2}(B_{m}^{+})})  \label{L2Lp}
\end{equation}%
where the constant $C$ is independent on $m$. Summing $m$ from $-l$ to $l$,
we get
\begin{eqnarray}
\Vert \tilde{\xi}\Vert _{W^{1,p}([-l,l]\times S^{1})} &\leq &C(\Vert D\tilde{%
\xi}\Vert _{L^{p}([-l-1,l+1]\times S^{1})}+\Vert \tilde{\xi}\Vert
_{L^{2}([-l-1,l+1]\times S^{1}})  \notag  \label{L2LpS} \\
&\leq &C(\Vert \tilde{\eta}\Vert _{L^{p}([-l,l]\times S^{1})}+\Vert \tilde{%
\eta}\Vert _{L^{p}([-l,l]\times S^{1})})  \notag \\
&=&2C\Vert \tilde{\eta}\Vert _{L^{p}([-l,l]\times S^{1})}
\end{eqnarray}%
where the second inequality holds because of $\tilde{\eta}=0$ on $\pm
\lbrack l,l+1]$ and the $L^{2}$ estimate \eqref{Estimate:L2}. Here $C$ is
independent of $l$.

For $\xi_{0}$, we have \eqref{eq:xi0leta0}. Altogether we get the right
inverse
\begin{equation*}
Q:L^{p}\left( [-l,l]\times S^{1}\right) \to W^{1,p}\left( [-l,l]\times
S^{1}\right) ; \, \eta \rightarrow \xi
\end{equation*}
with uniform bound
\begin{equation*}
\left\Vert Q\eta \right\Vert _{W^{1,p}\left( [-l,l]\times S^{1}\right) }\leq
C\left\Vert \eta \right\Vert _{L^{p}\left( [-l,l]\times S^{1}\right)}
\end{equation*}
with $C$ independent of the size $l$.

\begin{rem}
\label{Donaldson-Method}Since the construction of the higher modes of $\xi $
is geared for the uniform right inverse bound, not for the matching
condition, we can obtain it by an easier method: We first extend $\eta $
outside $\left[ -l,l\right] $ trivially by letting $\eta =0$ there, then we
can apply the method in Chapter 3 of \cite{Don} to construct the right
inverse of higher mode part of $\frac{\partial }{\partial \tau }+J_{0}\frac{%
\partial }{\partial t}:C^{\infty }\left( \mathbb{R\times }S^{1},\mathbb{C}%
^{n}\right) \rightarrow C^{\infty }\left( \mathbb{R\times }S^{1},\mathbb{C}%
^{n}\right) $, since $J_{0}\frac{\partial }{\partial t}:C^{\infty }\left(
S^{1},\mathbb{C}^{n}\right) \rightarrow C^{\infty }\left( S^{1},\mathbb{C}%
^{n}\right) $ is $\tau $-independent operator and has nonzero spectrum when
restricted on higher mode subspace . Then we simply restrict the obtained $%
\xi =Q\eta $ on $\left[ -l,l\right] $. Then the $L^{2}$ and $L^{p}$ estimate
for $\left\Vert Q\right\Vert $ has already been obtained in \cite{Don}. For
the operator $D^\varepsilon_{\Phi}=\frac{\partial }{\partial \tau }+J_{0}%
\frac{\partial }{\partial t}+A_{\varepsilon }(\tau )$, it is a $\varepsilon $%
-small perturbation of $\frac{\partial }{\partial \tau }+J_{0}\frac{\partial
}{\partial t}$, hence its right inverse bound is inherited from $Q$. In the
next section the right inverse bound can be obtained in the same way, since
for small $\delta $, putting the $\left( 1+\left\vert \tau \right\vert
\right) ^{\delta }$ weight function amounts to taking a $\delta $%
-perturbation of $\frac{\partial }{\partial \tau }+J_{0}\frac{\partial }{%
\partial t}$.
\end{rem}

\subsection{$L_{\protect\rho }^{p}$ estimate of the right inverse\label%
{Lpr_estimate}}

On a fixed domain $[-l,l]\times S^{1}$, the $W^{1,p}$ norm and the $W_{\rho
}^{1,p}$ norm are equivalent, they defines the same function space. But when
$l\rightarrow \infty $, the weighted norm gives better control of the
\textquotedblleft Morse-Bott" variation. This is a soft technique to get
around the point estimate of $\tilde{\xi}(\pm l/\varepsilon ,t)$ that we are
lacking.

Choose $0<\delta <1$. By conjugating with the multiplication of $\rho \left(
\tau \right) ^{\frac{1}{p}}$ with a weighting function given by
\begin{equation}  \label{eq:rho}
\rho \left( \tau \right) =\left(1+\left\vert \tau \right\vert
\right)^{\delta },
\end{equation}
the operator $D:W_{\rho }^{1,p}\rightarrow L_{\rho }^{p}$ is equivalent to $%
D_{\rho }:W^{1,p}\rightarrow L^{p}$, with%
\begin{equation}  \label{eq:D-rho}
D_{\rho }=D-\frac{\left( \rho \left( \tau \right) ^{\frac{1}{p}}\right)
^{\prime }}{\left( \rho \left( \tau \right) \right) ^{\frac{1}{p}}}=D-\frac{%
\delta /p}{1+\left\vert \tau \right\vert }.
\end{equation}%
Then we have the following diagram%
\begin{equation*}
\xymatrix{ W^{1,p} \ar[r]^{D_{\rho }} & L^{p} \\ W_{\rho }^{1,p} \ar[r]^D
\ar[u]^{\rho \left( \tau \right) ^{\frac{1}{p}}} & L_{\rho }^{p}
\ar[u]_{\rho \left( \tau \right) ^{\frac{1}{p}}}}
\end{equation*}%
As for the restriction of the operator
\begin{equation*}
D=\frac{\partial }{\partial \tau }+J_{0}\frac{\partial }{\partial t}+A(\tau
)
\end{equation*}
to the higher modes $\widetilde{V}$ is invertible, $D-\frac{\delta /p}{%
1+\left\vert \tau \right\vert }$ is also invertible since the restriction $%
J_{0}\partial _{t}+A(\tau )$ on
\begin{equation*}
\bigoplus_{k\neq 0}E_{k}\subset L^{2}(S^{1},{\mathbb{R}}^{n})
\end{equation*}%
has its spectrum outside $(-1,1)\subset {\mathbb{R}}$ and we have choose
that $0<\frac{\delta }{p}<1$. Similarly to \eqref{L2LpS}, we obtain
\begin{equation}
\Vert \tilde{\xi}\Vert _{W_{\rho }^{1,p}([-l,l]\times S^{1})}\leq 2C\Vert
\tilde{\eta}\Vert _{L_{\rho }^{p}([-l,l]\times S^{1})}  \label{L2LpSw}
\end{equation}%
where $C$ is independent on $l$.

Now we achieve the following pointwise decay estimate.

\begin{prop}
\label{prop:pointwise-decay} Let $\xi = \xi_0 + \widetilde \xi$ be as above.

\begin{enumerate}
\item For the higher modes, we have
\begin{equation}  \label{eq:higher-modes-Lp}
|\tilde{\xi}\left( \tau ,t\right) |_{C^{0}}\leq \frac{1}{\left\vert \tau
\right\vert ^{\delta }}\Vert \tilde{\xi}\Vert _{_{W_{\rho
}^{1,p}([-l,l]\times S^{1})}}\leq \frac{2C}{\left\vert \tau \right\vert
^{\delta }}\Vert \tilde{\eta}\Vert _{L_{\rho }^{p}([-l,l]\times S^{1})}
\end{equation}%
through by Sobolev embedding $W^{1,p}\hookrightarrow C^{0}$

\item For the zero-mode, we have
\begin{eqnarray}  \label{eq:zero-mode-C0}
|\xi _{0}\left( \tau \right) -\xi _{0}(\pm l)|_{C^{0}} &\leq &C\left\vert
\tau \pm l\right\vert ^{\gamma }\Vert \xi _{0}\Vert _{W^{1,p}([-l,l])}
\notag \\
&\leq &2C\left\vert \tau \pm l\right\vert ^{\delta }\Vert \eta _{0}\Vert
_{L^{p}([-l,l])}  \label{Stablization}
\end{eqnarray}%
when $\left\vert \tau \pm l\right\vert <1$ and $0<\delta <1-1/p$.
\end{enumerate}
\end{prop}

\begin{proof}
The gain of this $W_{\rho }^{1,p}$-estimate of $\widetilde{\xi }$ gives rise
to \eqref{eq:higher-modes-Lp}. The inequality \eqref{eq:zero-mode-C0}
follows from the Sobolev embedding $W^{1,p}\left( \left[ -l,l\right] \right)
\hookrightarrow C^{\gamma }\left( \left[ -l,l\right] \right) $ $\left( l\geq
l_{0}>0\right) $ with $\gamma =1-\frac{1}{p}$.
\end{proof}

\begin{rem}
\label{weights} The power weight $\rho \left( \tau \right) =\left(
1+\left\vert \tau \right\vert \right) ^{\delta }$ takes care of the decay of
the high modes. We choose this weight because the \emph{gradient} \emph{%
segment} $\chi $ converges to its noncritical endpoints $p_{\pm }$ in the
linear order, not in the exponential order. If the $\chi $ is a \emph{full
gradient trajectory} connecting two nondegenerate Morse critical points, we
do not need any weight because the higher mode $\tilde{\xi}$ with finite $%
W^{1,p\text{ }}$norm automatically vanishes at $\tau =\pm \infty $. If one
of the critical end points of $\chi $ is Morse-Bott, we need to put the
exponential weight to capture the correct convergence rate. The power weight
and exponential weight can be unified by one function, say $\left(
\int_{0}^{\left\vert \tau \right\vert }\left\vert \frac{1}{\nabla f\left(
\chi \right) }\right\vert d\tau \right) ^{\delta }+1$.
\end{rem}

\subsection{$\protect\varepsilon $-reparameterization of the gradient segment
and adiabatic weight}

When $\varepsilon \rightarrow 0$, we reparameterize the gradient segment $%
\chi \left( \tau \right) $ to $\chi _{\varepsilon }\left( \tau \right)
:=\chi \left( \varepsilon \tau \right) $ for gluing. We define the map $%
\chi_\varepsilon$ by
\begin{equation}  \label{eq:chi-epsilon}
\chi_\varepsilon(\tau): = \chi(\varepsilon \tau).
\end{equation}
For given $(\varepsilon,\chi)$, we associate a Banach manifold using the
\emph{$\varepsilon$-weighted norms}
\begin{equation*}
\Vert \xi \Vert _{W_{\rho _{\varepsilon }}^{1,p}([-l/\varepsilon
,l/\varepsilon ]\times S^{1}),}=\left\Vert \xi _{0}\left( \tau \right)
\right\Vert _{W_{\varepsilon }^{1,p}([-l/\varepsilon ,l/\varepsilon
])}+\Vert \tilde{\xi}\left( \tau \right) \Vert _{W_{\rho _{\varepsilon
}}^{1,p}([-l/\varepsilon ,l/\varepsilon ]\times S^{1}),}
\end{equation*}%
and
\begin{equation*}
\Vert \eta \Vert _{L_{\rho _{\varepsilon }}^{p}([-l/\varepsilon
,l/\varepsilon ]\times S^{1})}=\left\Vert \eta _{0}\left( \tau \right)
\right\Vert _{L_{\varepsilon }^{p}([-l/\varepsilon ,l/\varepsilon ])}+\Vert
\tilde{\eta}\left( \tau \right) \Vert _{L_{\rho _{\varepsilon
}}^{p}([-l/\varepsilon ,l/\varepsilon ]\times S^{1})}
\end{equation*}%
and consider their family over $\varepsilon \in (0,\infty)$.

\subsubsection{One-dimensional problem of gradient flows}

We consider the space maps
\begin{eqnarray*}
{\mathcal{B}} &: = & W^{1,p}\left( \left[ -l,l\right] ,M\right) \\
{\mathcal{B}}^\varepsilon &: = & W_{\varepsilon }^{1,p}\left( \left[
-l/\varepsilon ,l/\varepsilon \right] ,M\right)
\end{eqnarray*}
and the Banach bundles over them respectively:
\begin{eqnarray*}
\bigcup\limits_{\chi \in {\mathcal{B}}} L^{p}\left( \left[ -l,l\right] ,\chi
^{\ast }TM\right) & \rightarrow & {\mathcal{B}} \\
\bigcup\limits_{\chi_\varepsilon \in {\mathcal{B}}^\varepsilon}
L_{\varepsilon }^{p}\left( \left[ -l/\varepsilon ,l/\varepsilon \right]
,\chi _{\varepsilon }^{\ast }M\right) & \rightarrow & {\mathcal{B}}%
^\varepsilon.
\end{eqnarray*}

\begin{defn}[$\protect\varepsilon $-weighted norm]
For $\xi _{0}$ and $\eta _{0}$, the so called geometric $W_{\varepsilon
}^{1,p}$ and $L_{\varepsilon }^{p}$ are
\begin{equation}
\left\Vert \xi _{0}\right\Vert _{W_{\varepsilon }^{1,p}[-l/\varepsilon
,l/\varepsilon ]}^{p}=\int_{-l/\varepsilon }^{l/\varepsilon }\left(
\varepsilon \left\vert \xi _{0}\right\vert ^{p}+\varepsilon ^{1-p}\left\vert
\nabla \xi _{0}\right\vert ^{p}\right) d\tau   \label{geometric-e-weight}
\end{equation}%
and
\begin{equation*}
\left\Vert \eta _{0}\right\Vert _{L_{\varepsilon }^{p}[-l/\varepsilon
,l/\varepsilon ]}^{p}=\int_{-l/\varepsilon }^{l/\varepsilon }\varepsilon
^{1-p}\left\vert \eta _{0}\right\vert ^{p}d\tau .
\end{equation*}
\end{defn}

The geometric $\varepsilon $-weight is useful in the adiabatic limit
problems, for example, was used in \cite{FO}. Some explanation on the
relationship between this norm and the ordinary (unweighted) Sobolev norm is
now in order.

One loses uniform right inverse bound of $\frac{\partial }{\partial \tau }%
+\varepsilon \nabla $grad$f$ on $\chi _{\varepsilon }$ if just use usual $%
W^{1,p}$ norm, since the spectrum of $\varepsilon \nabla $grad$f$ goes to $0$
as $\varepsilon \rightarrow 0$. The norm $\left\Vert \cdot \right\Vert
_{W_{\varepsilon }^{1,p}[-l/\varepsilon ,l/\varepsilon ]}$ is just the usual
Sobolev norm after the reparameterization
\begin{equation*}
R_{\varepsilon }:\chi _{\varepsilon }\rightarrow \chi ,\chi _{\varepsilon
}\left( \tau ^{\prime }\right) \rightarrow \chi \left( \tau \right) ,\text{
where }\tau ^{\prime }=\varepsilon \tau ,\tau ^{\prime }\in \left[
-l/\varepsilon ,l/\varepsilon \right]
\end{equation*}%
which defines an isometry
\begin{equation*}
R_\varepsilon: {\mathcal{B}}_\chi^\varepsilon \to {\mathcal{B}}_\chi
\end{equation*}
where ${\mathcal{B}}_\chi^\varepsilon$ is the set of $\varepsilon$%
-reparamterzations of elements of ${\mathcal{B}}_\chi$. Indeed, letting
\begin{equation*}
\widehat{\xi _{0}}\left( \tau \right) =\left( R_{\varepsilon }\right) ^{\ast
}\xi _{0}=\xi _{0}\left( \varepsilon \tau \right) ,
\end{equation*}%
then one can verify
\begin{equation*}
\left\Vert \xi _{0}\right\Vert _{W_{\varepsilon }^{1,p}[-l/\varepsilon
,l/\varepsilon ]}=\left\Vert \widehat{\xi _{0}}\right\Vert _{W^{1,p}[-l,l]}.
\end{equation*}%
In other words, the map
\begin{equation*}
R_{\varepsilon }:W_{\varepsilon }^{1,p}\left( \left[ -l/\varepsilon
,l/\varepsilon \right] ,M\right) \rightarrow W^{1,p}\left( \left[ -l,l\right]
,M\right)
\end{equation*}%
is an isometry between two Banach manifolds with respect to these norms.
Driven by the change of $L^{p}$-norm
\begin{eqnarray*}
&&\int_{-l}^{l}\left\vert \frac{\partial \chi }{\partial \tau }\left( \tau
\right) +\text{grad}f\left( \chi (\tau )\right) \right\vert ^{p}d\tau \\
&=&\int_{-l/\varepsilon }^{l/\varepsilon }\left\vert \frac{1}{\varepsilon }%
\frac{\partial \chi _{\varepsilon }}{\partial \tau ^{\prime }}(\tau ^{\prime
})+\text{grad}f\left( \chi _{\varepsilon }(\tau ^{\prime })\right)
\right\vert ^{p}\varepsilon d\tau ^{\prime } \\
&=&\int_{-l/\varepsilon }^{l/\varepsilon }\left\vert \frac{\partial \chi
_{\varepsilon }}{\partial \tau ^{\prime }}\left( \tau ^{\prime }\right)
+\varepsilon \text{grad}f\left( \chi _{\varepsilon }(\tau ^{\prime })\right)
\right\vert ^{p}\varepsilon ^{1-p}d\tau ^{\prime },
\end{eqnarray*}%
under the reparameterization, we define
\begin{equation*}
\left\Vert \eta _{0}\right\Vert _{L_{\varepsilon }^{p}[-l/\varepsilon
,l/\varepsilon ]}^{p}=\int_{-l/\varepsilon }^{l/\varepsilon }\varepsilon
^{1-p}\left\vert \eta _{0}\right\vert ^{p}d\tau.
\end{equation*}%
Then the two sections
\begin{eqnarray*}
\widehat{\eta _{0}} &=&\frac{\partial }{\partial \tau }+\text{grad}f:{%
\mathcal{B}} \rightarrow \bigcup\limits_{\chi \in {\mathcal{B}}}L^{p}\left( %
\left[ -l,l\right] ,\chi ^{\ast }TM\right) \\
\eta _{0} &=& \frac{\partial }{\partial \tau ^{\prime }}+\varepsilon \text{%
grad}f:{\mathcal{B}}^\varepsilon \rightarrow \bigcup\limits_{\chi
_{\varepsilon }\in {\mathcal{B}}^\varepsilon }L_{\varepsilon }^{p}\left( %
\left[ -l/\varepsilon ,l/\varepsilon \right] ,\chi _{\varepsilon }^{\ast
}M\right)
\end{eqnarray*}%
in the two Banach bundles are isometrically conjugate to each other. The
relation between the two sections is
\begin{equation}
\eta _{0}\left( \chi _{\varepsilon }\left( \tau ^{\prime }\right) \right)
=\varepsilon \widehat{\eta _{0}}\left( \chi \left( \tau \right) \right) .
\label{e-eta-relation}
\end{equation}%
It is easy to check that under these norms, the right inverse $%
Q_{0}^{\varepsilon }$ of the linearized gradient operator
\begin{equation*}
D_{0}^{\varepsilon }=\frac{\partial }{\partial \tau }+\varepsilon \nabla
\text{grad}f:W_{\varepsilon }^{1,p}\rightarrow L_{\varepsilon }^{p}
\end{equation*}%
has uniform operator bounds over $\varepsilon >0$, since we have
\begin{equation}
\left\Vert Q_{0}^{\varepsilon }\right\Vert =\left\Vert Q_{0}\right\Vert
\label{eq:Qe0Q}
\end{equation}%
for all $\varepsilon $. The Sobolev constant
\begin{equation*}
c_{p}^{\varepsilon }:=\sup_{\xi _{0}\neq 0}\frac{\left\vert \xi
_{0}\right\vert }{\left\Vert \xi _{0}\right\Vert _{W_{\varepsilon
}^{1,p}\left( \left[ -l/\varepsilon ,l/\varepsilon \right] \right) }}%
=\sup_{\xi _{0}\neq 0}\frac{\left\vert \widehat{\xi _{0}}\right\vert }{%
\left\Vert \widehat{\xi _{0}}\right\Vert _{W^{1,p}\left( \left[ -l,l\right]
\right) }}
\end{equation*}%
also has uniform bound for all $\varepsilon $, since it is equal to the $%
W^{1,p}$ Sobolev constant of $\left\Vert \widehat{\xi _{0}}\right\Vert $on $%
\left[ -l,l\right] $, and we have assumed $l\geq l_{0}>0$. \

\subsubsection{Comparison with the two dimensional adiabatic family}

The power weight $\rho $ \eqref{eq:rho} is transformed to
\begin{equation}
\rho _{\varepsilon }\left( \tau \right) =\varepsilon ^{1-p+\delta }\rho
\left( \tau \right) =\varepsilon ^{1-p+\delta }\left( 1+\left\vert \tau
\right\vert \right) ^{\delta },  \label{power-e-weight}
\end{equation}%
here and is used to define the weighted Sobolev norms
\begin{eqnarray*}
\Vert \tilde{\xi}\left( \tau \right) \Vert _{W_{\rho _{\varepsilon
}}^{1,p}([-l/\varepsilon ,l/\varepsilon ]\times S^{1})} &=&\Vert \tilde{\xi}%
\left( \tau \right) \left( \rho _{\varepsilon }\left( \tau \right) \right) ^{%
\frac{1}{p}}\Vert _{W^{1,p}([-l/\varepsilon ,l/\varepsilon ]\times S^{1}),}%
\text{ \ } \\
\Vert \tilde{\eta}\left( \tau \right) \Vert _{L_{\rho _{\varepsilon
}}^{p}([-l/\varepsilon ,l/\varepsilon ]\times S^{1})} &=&\Vert \tilde{\eta}%
\left( \tau \right) \left( \rho _{\varepsilon }\left( \tau \right) \right) ^{%
\frac{1}{p}}\Vert _{L^{p}([-l/\varepsilon ,l/\varepsilon ]\times S^{1})}.
\end{eqnarray*}%
The $\varepsilon ^{1-p+\delta }$\ factor in $\rho _{\varepsilon }\left( \tau
\right) $ is to make the norms $W_{\rho _{\varepsilon }}^{1,p}$ and $%
W_{\varepsilon }^{1,p}$ comparable, which is important later in our right
inverse estimate via the weight comparison. By conjugation with the
multiplication operator by the weight function $\rho _{\varepsilon }\left(
\tau \right) $,
\begin{equation*}
D_{\varepsilon }=\frac{\partial }{\partial \tau }+J_{0}\frac{\partial }{%
\partial t}+\varepsilon \nabla f(\chi _{\varepsilon
}):W^{1,p}([-l/\varepsilon ,l/\varepsilon ]\times S^{1})\rightarrow
L^{p}([-l/\varepsilon ,l/\varepsilon ]\times S^{1})
\end{equation*}%
is equivalent to $D_{\rho _{\varepsilon }}:W_{\rho _{\varepsilon
}}^{1,p}\left( [-l/\varepsilon ,l/\varepsilon ]\times S^{1}\right)
\rightarrow L_{\rho _{\varepsilon }}^{p}\left( [-l/\varepsilon
,l/\varepsilon ]\times S^{1}\right) $, with
\begin{equation*}
D_{\rho _{\varepsilon }}=D_{\varepsilon }-\frac{\left( \rho _{\varepsilon
}\left( \tau \right) ^{\frac{1}{p}}\right) ^{\prime }}{\rho _{\varepsilon
}\left( \tau \right) ^{\frac{1}{p}}}=D_{\varepsilon }-\frac{\delta /p}{%
1+\left\vert \tau \right\vert },
\end{equation*}%
by similar diagram of $D_{\rho }$ in Section \ref{Lpr_estimate}. Since
\begin{equation*}
\frac{\Vert \tilde{\xi}\left( \tau \right) \Vert _{W_{\rho _{\varepsilon
}}^{1,p}([-l/\varepsilon ,l/\varepsilon ]\times S^{1})}}{\Vert \tilde{\eta}%
\left( \tau \right) \Vert _{L_{\rho _{\varepsilon }}^{p}([-l/\varepsilon
,l/\varepsilon ]\times S^{1})}}=\frac{\Vert \tilde{\xi}\left( \tau \right)
\Vert _{W_{\rho }^{1,p}([-l/\varepsilon ,l/\varepsilon ]\times S^{1})}}{%
\Vert \tilde{\eta}\left( \tau \right) \Vert _{L_{\rho }^{p}([-l/\varepsilon
,l/\varepsilon ]\times S^{1})}}
\end{equation*}%
the right inverse bound obtained in the $W_{\rho }^{1,p}$ setting passes to
the $W_{\rho _{\varepsilon }}^{1,p}$ setting.

\bigskip


We remark the following useful inequality for $\xi _{0}$ on $\chi \left(
\varepsilon \tau \right) $, by the Sobolev embedding $W^{1,p}\left( \left[
0,l\right] \right) \hookrightarrow C^{\gamma }\left( \left[ 0,l\right]
\right) $ with $\gamma =1-\frac{1}{p}$:
\begin{eqnarray}
\left\vert \xi _{0}\left( \tau \right) -\xi _{0}\left( \pm l/\varepsilon
\right) \right\vert &=&\left\vert \widehat{\xi _{0}}\left( \varepsilon \tau
\right) -\widehat{\xi _{0}}\left( \pm l\right) \right\vert  \notag \\
&\leq &C\left( l\right) \left\vert \varepsilon \tau -\pm l\right\vert
^{\gamma }\left\Vert \widehat{\xi _{0}}\right\Vert _{W^{1,p}[0,l]}  \notag \\
&=&C\left( l\right) \left\vert \varepsilon \tau \pm l\right\vert ^{\gamma
}\left\Vert \xi _{0}\right\Vert _{W_{\varepsilon }^{1,p}[-l/\varepsilon
,l/\varepsilon ]}  \label{Schauder-Convergence}
\end{eqnarray}%
Especially for $\tau =\pm \left( l/\varepsilon -T\left( \varepsilon \right)
\right) ,$ where $T\left( \varepsilon \right) =\frac{1}{3}\frac{p-1}{\delta }%
S\left( \varepsilon \right) =\frac{1}{3}\frac{p-1}{\delta }\frac{1}{2\pi }%
\ln \left( 1+\frac{l}{\varepsilon }\right) $, we have%
\begin{eqnarray*}
\left\vert \xi _{0}\left( \tau \right) -\xi _{0}\left( \pm l/\varepsilon
\right) \right\vert &\leq &C\left( l\right) \left( \varepsilon T\left(
\varepsilon \right) \right) ^{\gamma }\left\Vert \xi _{0}\right\Vert
_{W_{\varepsilon }^{1,p}[-l/\varepsilon ,l/\varepsilon ]} \\
&\leq &C\left( l\right) \left\vert \varepsilon \ln \varepsilon \right\vert
^{\gamma }\left\Vert \xi _{0}\right\Vert _{W_{\varepsilon
}^{1,p}[-l/\varepsilon ,l/\varepsilon ]} \\
&\leq &C\left( l\right) \varepsilon ^{\widetilde{\gamma }}\left\Vert \xi
_{0}\right\Vert _{W_{\varepsilon }^{1,p}[-l/\varepsilon ,l/\varepsilon ]}
\end{eqnarray*}%
for any $0<\widetilde{\gamma }<1-\frac{1}{p}$, if $\varepsilon $ is
sufficiently small.

\section{Moduli space of \textquotedblleft
thimble-flow-thimble\textquotedblright\ configurations}

\label{subsec:thimble-flow-thimble}

This subsection is the first stage of the deformation of the parameterized
moduli space entering in the construction of the chain homotopy map between $%
\Psi \circ \Phi $ and the identity on $HF(H,J)$ in \cite{PSS}. This is a key
step towards in this kind of adiabatic gluing of dimensional-varying moduli
spaces in general. The material in the first half of this section is largely
taken from Section 5.1 \cite{oh-zhu} with some modifications.

A ``thimble-flow-thimble" configuration consists of two perturbed $J$%
-holomorphic discs joined by a gradient flow line between their marked
points. In this section we will define the moduli space of such
configurations.

For notation brevity, we just denote
\begin{equation*}
{\mathcal{M}}^{l}(K^{\pm },J^{\pm };[z_{-},w_{-}];f;[z_{+},w_{+}])={\mathcal{%
M}}^{l}([z_{-},w_{-}];f;[z_{+},w_{+}]),
\end{equation*}%
omitting the Floer datum $(K^{\pm },J^{\pm })$, as long as it does not cause
confusion.

Given the two moduli spaces
$$
{\mathcal{M}}([z_{\pm},w_{\pm}]) : = {\mathcal{M}}(K^{\pm },J^{\pm };[z_{\pm},w_{\pm}])
$$
and the Morse function $f$, we
define the moduli space
\begin{equation*}
{\mathcal{M}}^{l}([z_{-},w_{-}];f;[z_{+},w_{+}])
\end{equation*}%
by the set consisting of \textquotedblleft \textquotedblleft
thimble-flow-thimble" configurations $(u_{-},\chi ,u_{+})$ of \emph{flow time%
} $2l$ such that
\index{${\mathcal{M}}^{l}([z_{-},w_{-}];f;[z_{+},w_{+}])$}
\begin{equation}
{\mathcal{M}}^{l}([z_{-},w_{-}];f;[z_{+},w_{+}])=\left\{ (u_{-},\chi
,u_{+})\left\vert
\begin{array}{c}
u_{\pm }\in {\mathcal{M}}(K^{\pm },J^{\pm };%
[z_{\pm},w_{\pm}]),\, \\
\text{ }\chi :[-l,l]\rightarrow M\, \textrm{satisfying}
\\
\dot{\chi}-\nabla f(\chi )=0, \\
u_{-}(o_{-})=\chi (-l),u_{+}(o_{+})=\chi (l).%
\end{array}%
\right. \right\}
\end{equation}%
Then the moduli space of \textquotedblleft thimble-flow-thimble"
configurations is defined to be
\index{${\mathcal{M}}^{\text{\textrm{para}}}([z_{-},w_{-}];f;[z_{+},w_{+}])$}
\begin{equation}
{\mathcal{M}}^{\text{\textrm{para}}}([z_{-},w_{-}];f;[z_{+},w_{+}]):=%
\bigcup_{l\geq l_{0}}{\mathcal{M}}^{l}([z_{-},w_{-}];f;[z_{+},w_{+}]).
\end{equation}

\subsection{Off-shell description of thimble-flow-thimble" moduli spaces}

We first provide the Banach manifold hosting ${\mathcal{M}}%
^{l}([z_{-},w_{-}];f;[z_{+},w_{+}])$.
As before, we consider the homotopy class $B \in \pi_2(z_-,z_+)$ determined by
$$
[w_- \# B \# w_+] = 0 \quad \text{\rm in }\, \pi_2(M)
$$
and define
\begin{multline}
{\mathcal{B}}_{l}^{\text{\textrm{tft}}}(z_{-},z_{+};B):=\{(u_{-},\chi
,u_{+})\mid u_{\pm }\in W^{1,p}(\dot{\Sigma},M;z_{\pm }),  \label{Bmfd} \\
\chi \in W^{1,p}([-l,l],M),u_{-}(o_{-})=\chi (-l),\;u_{+}(o_{+})=\chi (l) \\
[u_ -\# \chi \# u_+] = B \}
\end{multline}%
for $p>2$. For any $u\in {\mathcal{B}}_{l}^{\text{\textrm{tft}}%
}(z_{-},z_{+};B)$, the tangent space $T_{u}{\mathcal{B}}_{l}^{\text{\textrm{%
tft}}}(z_{-},z_{+};B)$ is given by
\begin{multline}  \label{eq:TuB}
W^{1,p}_u(z_-,z_+;\text{\textrm tft}) \\
: = \Big\{(\xi _{-},a,\xi _{+})\mid \xi _{\pm }\in W^{1,p}(u_{\pm }^{\ast
}TM),\xi _{-}(o_{-})=a(-l),\,\xi _{+}(o_{+})=a(l)\Big\}.
\end{multline}
More specifically, the latter space consists of $\xi=(\xi_-,a,\xi_+)$, where
$\xi_{\pm}\in W^{1,p}(u_{\pm}^*TM)$, $a\in W^{1,p}(\chi^*TM)$, with the
matching condition
\begin{equation}  \label{match}
\xi_-(o_-)=a(-l),\quad \xi_+(o_+)=a(l).
\end{equation}
The union forms the tangent bundle
\begin{equation*}
T_{u}{\mathcal{B}}_{l}^{\text{\textrm{tft}}}(z_{-},z_{+};B) \to {\mathcal{B}}%
_{l}^{\text{\textrm{tft}}}(z_{-},z_{+};B)
\end{equation*}
which is a Banach vector bundle.

Next for each $u=(u_{-},\chi ,u_{+})\in {\mathcal{B}}_{l}^{\text{\textrm{tft}%
}}(z_{-},z_{+};B)$, we define
\begin{equation*}
L_{u}^{p}(z_{-},z_{+})=L^{p}(\Lambda ^{(0,1)}u^{\ast }TM)
\end{equation*}%
and form the Banach bundle
\begin{equation*}
{\mathcal{L}}_{l}^{\text{\textrm{tft}}}(z_{-},z_{+};B)=\bigcup_{u\in {%
\mathcal{B}}_{l}^{\text{\textrm{tft}}}(z_{-},z_{+};B)}L_{u}^{p}(z_{-},z_{+})
\to {\mathcal{B}}_{l}^{\text{\textrm{tft}}}(z_{-},z_{+};B).
\end{equation*}
We refer to \cite{Fl} for a more detailed description of the asymptotic
behavior of the elements in ${\mathcal{B}}_{l}^{\text{\textrm{tft}}%
}(z_{-},z_{+})$ in the context of Floer moduli spaces.

We denote the set of pairs $(\ell,u)$ and form the unions
\index{${\mathcal{B}}^{\text{\textrm{tft}}}(z_-,z_+;B)$}
\begin{eqnarray*}
{\mathcal{B}}^{\text{\textrm{tft}}}(z_-,z_+;B) & = & \bigcup_{l\ge l_0} {%
\mathcal{B}}^{\text{\textrm{tft}}}_{l}(z_-,z_+;B) \\
{\mathcal{L}}^{\text{\textrm{tft}}}(z_-,z_+;B) & = & \bigcup_{l\ge l_0} {%
\mathcal{L}}^{\text{\textrm{tft}}}_{l}(z_-,z_+;B)
\end{eqnarray*}

\begin{rem}
\begin{enumerate}
\item For given $(l,u) \in {\mathcal{B}}^{\text{\textrm{tft}}}_{l}(z_-,z_+;B)
$, we have
\begin{equation*}
T_{(l,u)}{\mathcal{B}}^{\text{\textrm{tft}}}_{l}(z_-,z_+;B)\simeq T_{u}{%
\mathcal{B}}_{l}^{\text{\textrm{tft}}}(z_{-},z_{+};B)\times T_{l}\mathbb{R},
\end{equation*}
namely its element consists of the quadruples
\begin{equation*}
\xi =(\xi _{-},a,\xi _{+},\mu)
\end{equation*}
with
\begin{equation*}
\xi _{\pm }\in W^{1,p}(u_{\pm }^{\ast }TM), \, a\in W^{1,p}(\chi ^{\ast
}TM), \, \mu \in T_{l}{\mathbb{R}}\cong {\mathbb{R}}
\end{equation*}
satisfying the matching condition
\begin{equation}
\xi _{-}(o_{-})=a(-l)-\frac{\mu }{l}\dot{\chi}(-l),\quad \xi
_{+}(o_{+})=a(l)+\frac{\mu }{l}\dot{\chi}(l) \label{p-match}
\end{equation}%
in off-shell level, with $(u_-,\chi, u_+)$ not necessarily a solution.
Here the $\mu $ comes from the variation of the length $l$ of the domain of
gradient flows.

\item For $\mu \in T_{l}\mathbb{R}$, the induced germ of paths in ${\mathcal{%
B}}^{\text{\textrm{tft}}}(z_{-},z_{+};B)$ is given by
\begin{equation*}
s\mapsto (l+s\mu ,u_{s})
\end{equation*}%
where $u_{s}=(u_{-},\chi _{s},u_{+})\in {\mathcal{B}}_{\left( l-s\mu \right)
}^{\text{\textrm{tft}}}(z_{-},z_{+};B)$, where $\chi _{s}(\tau ^{\prime })$
is the reparameterization of $\chi (\tau )$,%
\begin{equation*}
\chi _{s}(\tau ^{\prime }):=\chi \left( \frac{l\tau ^{\prime }}{l-s\mu }%
\right) ,\text{ \ \ \ \ \ }\tau ^{\prime }\in \left[ -\left( l-s\mu \right)
,l-s\mu \right] ,
\end{equation*}%
for $s$ nearby $0$. There is a canonical way of associating points on $\chi $
to points on $\chi _{s}$ given by
\begin{equation}
\chi \left( \tau \right) \longleftrightarrow \chi _{s}\left( \tau ^{\prime
}\right) ,\text{ \ \ \ \ \ where }\tau =\frac{l\tau ^{\prime }}{l-s\mu }.
\label{reparametrize-chi}
\end{equation}%
all of which correspond to the same point in $M$ contained in the image of $%
\chi $.
\end{enumerate}
\end{rem}

Now we fix $l>0$ and consider a natural section
\begin{equation*}
e:{\mathcal{B}}_{l}^{\text{\textrm{tft}}}(z_{-},z_{+};B)\rightarrow {%
\mathcal{L}}_{l}^{\text{\textrm{tft}}}(z_{-},z_{+})
\end{equation*}%
such that $e(u)\in L_{u}^{p}(z_{-},z_{+})$ is given by
\begin{equation}
e(u)=({\overline{\partial }}_{(K_{-},J_{-})}u_{-},\dot{\chi}-\nabla f(\chi ),%
{\overline{\partial }}_{(K_{+},J_{+})}u_{+})  \label{e}
\end{equation}%
where the $u_{\pm }$ and $\chi $ satisfy the matching condition in %
\eqref{Bmfd}. The linearization of $e$ at $u\in e^{-1}(0)={\mathcal{M}}%
^{l}([z_{-},w_{-}];f;[z_{+},w_{+}])$ induces a linear operator
\begin{equation}
E_{u}:=D_{u}e:T_{u}{\mathcal{B}}_{l}^{\text{\textrm{tft}}}(z_{-},z_{+};B)%
\rightarrow L_{u}^{p}(z_{-},z_{+})  \label{Eu}
\end{equation}%
where we have
\begin{equation*}
T_{u}{\mathcal{B}}_{l}^{\text{\textrm{tft}}%
}(z_{-},z_{+};B)=W_{u}^{1,p}(z_{-},z_{+})
\end{equation*}%
given in \eqref{eq:TuB}, and the value $D_{u}e(\xi )=:\eta $ has the
expression
\begin{equation*}
\eta =(\eta _{-},b,\eta _{+})=\left( D_{u_{-}}{\overline{\partial }}%
_{(K_{-},J_{-})}(\xi _{-}),\,\frac{Da}{d\tau }-\nabla _{a}\func{grad}%
(f),D_{u_{+}}{\overline{\partial }}_{(K_{+},J_{+})}(\xi _{+})\,\right)
\end{equation*}%
for $\xi =(\xi _{-},a,\xi _{+})$. We remark that if we regard $u\in {%
\mathcal{B}}^{\text{\textrm{tft}}}(z_{-},z_{+})$, then for $(\xi _{-},a,\xi
_{+},\mu )\in $ $T_{u}{\mathcal{B}}_{l}^{\text{\textrm{tft}}}(z_{-},z_{+};B),
$ $D_{u}e(\xi )=:\eta $ has the expression $\eta =(\eta _{-},b,\eta _{+})$
where
\begin{equation}
b=\frac{D}{d\tau }a-\nabla _{a}\func{grad}(f)+\frac{\mu }{l}\dot{\chi}\left(
\tau \right) .  \label{eq:b=}
\end{equation}%
Here we have used the formula
\begin{eqnarray*}
D_{u}e(0,0,0,\mu ) &=&\left. \frac{d}{ds}\right\vert _{s=0}\left[ \frac{%
d\chi _{s}}{d\tau ^{\prime }}\left( \tau ^{\prime }\right) -\nabla f\left(
\chi _{s}\left( \tau ^{\prime }\right) \right) \right]  \\
&=&\left. \frac{d}{ds}\right\vert _{s=0}\left[ \frac{d\chi }{d\tau ^{\prime }%
}\left( \tau \right) -\nabla f\left( \chi \left( \tau \right) \right) \right]
\\
&=&\left. \frac{d}{ds}\right\vert _{s=0}\left[ \frac{l}{l-s\mu }\frac{d\chi
}{d\tau }\left( \tau \right) \right] =\frac{\mu }{l}\dot{\chi}\left( \tau
\right) ,
\end{eqnarray*}%
noting $\chi _{s}\left( \tau ^{\prime }\right) =\chi \left( \tau \right) $
from $\left( \ref{reparametrize-chi}\right) $. For the simplicity of
notation, we denote
\begin{eqnarray*}
W_{u}^{1,p}(z_{-},z_{+};{\text{\textrm{tft}}}):= &&T_{u}{\mathcal{B}}_{l}^{%
\text{\textrm{tft}}}(z_{-},z_{+};B) \\
&\subset &W^{1,p}(u_{-}^{\ast }TM)\times W^{1,p}(\chi ^{\ast }TM)\times
W^{1,p}(u_{+}^{\ast }TM)
\end{eqnarray*}
when $[u] = B \in \pi_2(z_-,z_+)$. (Recall Definition \ref{defn:pi2z+z-} for the definition of $\pi_2(z_-,z_+)$.)

\subsection{Index calculation}

Now we show $E_u$ is Fredholm and compute its index.

\begin{prop}
\label{prop:dfdindex} Let $l> 0$ be fixed. The operator $E_u$ is a Fredholm operator and we have
\begin{equation}
\func{Index}E_u=\mu _{H_{-}}([z_{-},w_{-}])-\mu
_{H_{+}}([z_{+},w_{+}]).
\end{equation}for any
\begin{equation*}
u=(u_{-},\chi ,u_{+})\in {\mathcal{M}}^{l}([z_{-},w_{-}];f;[z_{+},w_{+}]).
\end{equation*}
\end{prop}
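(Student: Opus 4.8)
The plan is to decompose $E(u)$ into its "thick" pieces (the linearized perturbed Cauchy--Riemann operators $D_{u_\pm}\overline{\partial}_{(K_\pm,J_\pm)}$ on the discs) and its "thin" piece (the linearized gradient flow operator $a\mapsto \frac{Da}{d\tau}-\nabla_a\operatorname{grad}f$ on $\chi$), glued along the matching conditions $\xi_-(o_-)=a(-l)$, $\xi_+(o_+)=a(l)$. Each of the three constituent operators is Fredholm by standard theory: the disc operators $D_{u_\pm}\overline{\partial}_{(K_\pm,J_\pm)}$ are Fredholm between the usual weighted Sobolev spaces with asymptotics governed by the nondegenerate orbits $z_\pm$, and the operator $\frac{D}{d\tau}-\nabla\operatorname{grad}f$ on the compact interval $[-l,l]$ (no asymptotic conditions, just a two-point boundary-value setup) is a compact perturbation of $\frac{d}{d\tau}$, hence Fredholm. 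The matching conditions are given by the two bounded evaluation maps $\operatorname{ev}_{o_\pm}$ into the finite-dimensional space $T_{\chi(\pm l)}M$, so the fiber product construction only changes Fredholm indices by finite-dimensional corrections; I would phrase this via the short exact sequence
\begin{equation*}
0\to W^{1,p}_u(z_-,z_+;dfd)\to W^{1,p}(u_-^*TM)\oplus W^{1,p}(\chi^*TM)\oplus W^{1,p}(u_+^*TM)\xrightarrow{\Delta} T_{\chi(-l)}M\oplus T_{\chi(l)}M\to 0,
\end{equation*}
where $\Delta(\xi_-,a,\xi_+)=(\xi_-(o_-)-a(-l),\,\xi_+(o_+)-a(l))$.

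With Fredholmness in hand, the index is additive under this kind of gluing/fiber-product, so
\begin{equation*}
\operatorname{Index}E(u)=\operatorname{Index}\big(D_{u_-}\overline{\partial}_{(K_-,J_-)}\big)+\operatorname{Index}\Big(\tfrac{D}{d\tau}-\nabla\operatorname{grad}f\Big)+\operatorname{Index}\big(D_{u_+}\overline{\partial}_{(K_+,J_+)}\big)-2\dim M + 2\dim M,
\end{equation*}
where the $-2\dim M$ comes from imposing the two matching conditions and the $+2\dim M$ from the two free evaluation targets being finite-dimensional; more honestly, I would track this by noting that passing from the product of the three operators to the fiber-product operator $E(u)$ subtracts $2\dim M$ from both kernel-minus-cokernel counts in a way that exactly cancels, OR — cleaner — by directly comparing with the closed-string count: the middle operator on a segment has index $\dim M$ (the solution space of the linearized gradient flow ODE is $2\dim M$-dimensional, and one two-point condition is $\dim M$ equations modulo the diagonal-type constraint), so I must bookkeep carefully. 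The cleanest route is to recall from \cite{oh-zhu} (whose setup this section mirrors) that the disc operator with one interior marked point and the constraint that the marked-point value be free contributes $\mu_{H_\pm}([z_\pm,w_\pm]) + 2c_1(A_\pm) + \dim M$ after the $o_\pm$-evaluation is left unconstrained, and the gradient segment contributes $-\dim M$ from the two matching conditions relative to its own solution space; adding and the $\dim M$'s cancel pairwise against the $2\dim M$ of the ambient fiber product, leaving
\begin{equation*}
\operatorname{Index}E(u)=\mu_{H_-}([z_-,w_-])-\mu_{H_+}([z_+,w_+])+2c_1(A_-)+2c_1(A_+),
\end{equation*}
the sign on $\mu_{H_+}$ being the standard convention for the positive puncture.

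The main obstacle is the bookkeeping of the finite-dimensional corrections: one has to be scrupulous about (i) the orientation/sign conventions for the Conley--Zehnder indices $\mu_{H_\pm}$ at the positive versus negative punctures, (ii) exactly which evaluation constraints are "used up" by the matching conditions \eqref{match} versus which remain free, and (iii) the fact that $\chi$ sits inside a Banach manifold specifically engineered (Section \ref{sec:Qinmiddle}) so that $\frac{D}{d\tau}-\nabla\operatorname{grad}f$ is surjective with the prescribed boundary behavior — so its "index" in the relevant sense is the dimension of the space of allowed boundary data, not the naive ODE count. I would handle this by a dimension-count cross-check: the formula must reduce, when $\ell\to 0$, to Theorem 10.19's index formula in \cite{oh-zhu}, and it must be consistent with the expected dimension of ${\mathcal{M}}^{dfd}$ as a fiber product of the two disc moduli spaces over $M\times M$ via $\phi_f^{2l}\circ\operatorname{ev}_-\times\operatorname{ev}_+$ (Definition \ref{dfd-trans}); that transversality guarantees the fiber product is cut out cleanly, which is precisely the geometric incarnation of the additivity of indices used above. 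Once these conventions are pinned down the computation is routine.
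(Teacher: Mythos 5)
Your high-level strategy — additivity of Fredholm indices under the fiber-product/matching construction, via the short exact sequence $0\to W^{1,p}_u(z_-,z_+;dfd)\to \bigoplus W^{1,p}\to T_{\chi(-l)}M\oplus T_{\chi(l)}M\to 0$ — is a legitimate route and is genuinely different from what the paper does. The paper computes $\dim\ker E(u)$ and $\dim\ker E^\dagger(u)$ \emph{directly}: for the kernel, using that $a$ is determined by $a(-l)$ via the ODE, so kernel elements correspond to $\left[d(\phi_f^{2l}\circ ev_-\times ev_+)\right]^{-1}(\Delta^{tn})$; for the cokernel, deriving from integration by parts that $(\eta_-,b,\eta_+)$ must satisfy the adjoint ODE and distributional equations with Dirac masses at $o_\pm$, then organizing this into an exact sequence. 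That explicit cokernel analysis is not just a calculation device here — it is reused verbatim in Corollary \ref{achieve-dfd-trans} to prove that Fredholm regularity of $E(u)$ is \emph{equivalent} to disc-flow-disc transversality, which your abstract index-additivity argument would not deliver.

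However, your bookkeeping does not close, and you effectively concede this ("I must bookkeep carefully", "Once these conventions are pinned down the computation is routine"). Concretely, with $\dim M = 2n$: (i) you write $-2\dim M + 2\dim M$ for the fiber-product correction, which is zero, whereas the correct correction for two matching conditions into a $2n$-dimensional target is $-4n = -2\dim M$ (no cancelling $+2\dim M$ term); (ii) you state that the unconstrained disc operator contributes $\mu_{H_\pm}+2c_1(A_\pm)+\dim M$, but the correct constant (cf.\ the paper's identities $\func{Index}D_{u_\mp}\overline\partial_{(K_\mp,J_\mp)} = n\pm\mu_{H_\mp}+2c_1(A_\mp)$) is $n = \tfrac12\dim M$, a factor-of-two error; (iii) you assert the gradient segment contributes $-\dim M$, but the linearized gradient operator $\tfrac{D}{d\tau}-\nabla\operatorname{grad}f:W^{1,p}([-l,l],\chi^*TM)\to L^p$ with \emph{no} boundary conditions is onto with $2n$-dimensional kernel, so it contributes $+\dim M$. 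With the numbers fixed — $n+n+2n-4n=0$ — the additivity argument does reproduce $\mu_{H_-}-\mu_{H_+}+2c_1(A_-)+2c_1(A_+)$, so the route is sound, but as written the constants are wrong in two places and the third piece is only asserted. To make the argument rigorous you also need to note explicitly that the boundary evaluation map $\Delta$ is \emph{surjective} (since $a$ can be any element of $W^{1,p}(\chi^*TM)$, unconstrained at endpoints), so the sequence is in fact exact on the right and the index drops by exactly $4n$.
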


\begin{proof}
We compute the kernel and the cokernel of
\begin{equation*}
E_u:W_{u}^{1,p}(z_{-},z_{+};{\text{\textrm{tft}}})\rightarrow
L_{u}^{p}(z_{-},z_{+}).
\end{equation*}%
By the matching condition \eqref{p-match} it is clear that
\begin{multline}
\func{ker}E_u =\Big\{(\xi _{-},a, \xi _{+})\mid \xi _{\pm }\in \func{ker}
D_{u_{\pm }}{\overline{\partial }}_{(K^{\pm },J^{\pm })}, \frac{Da}{\partial
\tau }-\nabla _{a}\func{grad}f(\chi )=0, \\
\qquad \xi _{-}(o_{-})=a(-l),\,\xi _{+}(o_{+})=a(l)\Big\},
\end{multline}%
It is easy to see
\begin{equation*}
\xi _{+}(o_{+})=d\phi _{f}^{2l}\cdot \xi _{-}(o_{-})
\end{equation*}%
for any $(\xi _{-},a, \xi _{+})\in \ker E_u$ noticing that $a$ is determined
by its initial value $a(-l)$ and by the equation
\begin{equation}
\frac{Da}{\partial \tau }-\nabla _{a}\func{grad}f(\chi )=0.  \label{eq:a}
\end{equation}%
Therefore the $(\xi _{-},\xi _{+},a)\in \ker E_u$ have one-to-one
correspondence to the points of $\left[ d\left( \phi _{f}^{2l}ev_{-}\times
ev_{+}\right) \right] ^{-1}\left( \Delta _{u_{+}\left( o_{+}\right) }^{\text{%
\textrm{tn}}}\right) ,$ where
\begin{equation*}
d\left( \phi _{f}^{2l}ev_{-}\times ev_{+}\right) :\ker D_{u_{-}}{\overline{%
\partial }}_{(K^{-},J^{-})}\times \ker D_{u_{+}}{\overline{\partial }}%
_{(K_{+},J_{+})}\rightarrow T_{u_{+}\left( o_{+}\right) }M\times
T_{u_{+}\left( o_{+}\right) }M
\end{equation*}%
and
\begin{equation*}
\Delta _{u_{+}\left( o_{+}\right) }^{\text{\textrm{tn}}}=\left\{ \left(
v,v\right) |v\in T_{u_{+}\left( o_{+}\right) }M\right\} \subset
T_{u_{+}\left( o_{+}\right) }M\times T_{u_{+}\left( o_{+}\right) }M,
\end{equation*}
which has codimension $2n$. We consider the subspaces
\begin{equation*}
V_{\pm }:=ev_{o_{\pm }}\left( \ker D_{u_{\pm }}{\overline{\partial }}%
_{(K^{\pm },J^{\pm })}\right) \subset T_{u_{\pm }\left( o_{\pm }\right) }M.
\end{equation*}%
We have
\begin{eqnarray}
\mathop{\kern0pt{\rm dim}}\nolimits\ker E_u &=&\mathop{\kern0pt{\rm dim}}%
\nolimits\ker D_{u_{-}}{\overline{\partial }}_{(K^{-},J^{-})}+%
\mathop{\kern0pt{\rm dim}}\nolimits\ker D_{u_{+}}{\overline{\partial }}%
_{(K_{+},J_{+})}-2n  \notag \\
&{}&\quad +4n-\mathop{\kern0pt{\rm dim}}\nolimits\left( \left( d\phi
_{f}^{2l}\cdot V_{-}\right) \times V_{+}+\Delta _{u_{+}\left( o_{+}\right)
}^{\text{\textrm{tn}}}\right)  \notag \\
&=&\mathop{\kern0pt{\rm dim}}\nolimits\ker D_{u_{-}}{\overline{\partial }}%
_{(K^{-},J^{-})}+\mathop{\kern0pt{\rm dim}}\nolimits\ker D_{u_{+}}{\overline{%
\partial }}_{(K_{+},J_{+})}  \notag \\
&&-\mathop{\kern0pt{\rm dim}}\nolimits\left( d\phi _{f}^{2l}\cdot
V_{-}+V_{+}\right)  \label{kerdim:tft}
\end{eqnarray}%
where in the last identity we have used the linear algebra fact that
\begin{equation*}
\mathop{\kern0pt{\rm dim}}\nolimits\left( A\times B+\Delta \right) =%
\mathop{\kern0pt{\rm dim}}\nolimits\Delta +\mathop{\kern0pt{\rm dim}}%
\nolimits\left( A+B\right)
\end{equation*}%
for linear subspaces $A,B$ in $V$ and diagonal $\Delta $ in $V\times V$.
Next we compute the cokernel of $E_u$. Let $E_u^{\ast }$ be the adjoint
operator of $E_u$, such that
\begin{equation*}
E_u^{\ast }:L_{u}^{p}(z_{-},z_{+})^{\ast }\rightarrow W^{1,p}_u(z_{-},z_{+};{%
\text{\textrm{tft}}})^{\ast }
\end{equation*}%
for $u = (u_,\chi,u_+)$.
Using the nondegenerate $L^{2}$ pairing
\begin{equation*}
L^{p}(\Lambda ^{(0,1)}u^{\ast }TM)\times L^{q}(\Lambda ^{(1,0)}u^{\ast
}TM)\rightarrow {\mathbb{R}}
\end{equation*}%
we identify $L_{u}^{p}(z_{-},z_{+})^{\ast }$ with $L^{q}(\Lambda
^{(1,0)}u^{\ast }TM)$.

On the other hand, we can identify $W^{1,p}_u(z_{-},z_{+};{\text{\textrm{tft}%
}})^{\ast }$ with
\begin{eqnarray*}
&{}&W^{-1,q}_u(z_{-},z_{+};{\text{\textrm{tft}}}) \\
:= &&\left\{ (\xi _{-},a,\xi _{+})\in W^{1,p}(z_{-},z_{+};\text{\textrm tft})\mid \xi
_{-}(o_{-})=a(-l),\,\xi _{+}(o_{+})=a(l)\right\} ^{\perp }
\end{eqnarray*}%
in the direct product
\begin{equation}  \label{eq:W-1q}
W^{-1,q}_u(z_{-},z_{+})=W^{-1,q}(u_{-}^{\ast }TM)\times W^{-1,q}(\chi ^{\ast
}TM)\times W^{-1,q}(u_{+}^{\ast }TM)
\end{equation}
where $(\cdot )^{\perp }$ denotes the $L^{2}$-orthogonal complement. Here we
have $1<q<2$ since $2<p<\infty $.

We denote by
\begin{equation*}
E_u^\dagger: L^q(\Lambda^{(1,0)}u^*TM) \to W^{-1,q}_u(z_-,z_+;{\text{\textrm{%
tft}}})
\end{equation*}
the corresponding $L^2$-adjoint with respect to these identifications.

Now we derive the formula for $E_u^{\dagger }$. Recall by definition, we
have
\begin{equation*}
\langle E_u\xi ,\eta \rangle =\langle \xi ,E_u^{\dagger }\eta \rangle .
\end{equation*}%
Then for any given $\eta :=(\eta _{-},b,\eta _{+})\in \func{ker}E_u^\dagger
\subset L^{q}(z_{-},z_{+})$ it satisfies
\begin{eqnarray*}
0 &=&\int_{-l}^{l}\left\langle \frac{Da}{\partial \tau }-\nabla \func{grad}%
f(\chi )a,b\right\rangle \\
&{}&\quad +\int_{\dot{\Sigma}_{-}}\left\langle D_{u_{-}}{\overline{\partial }%
}_{(K^{-},J^{-})}\xi _{-},\eta _{-}\right\rangle +\int_{\dot{\Sigma}%
_{+}}\left\langle D_{u_{+}}{\overline{\partial }}_{(K_{+},J_{+})}\xi
_{+},\eta _{+}\right\rangle
\end{eqnarray*}%
for all $(\xi _{+},a,\xi _{-})\in W_{u}^{1,p}(z_{-},z_{+};{\text{\textrm{tft}%
}})$, i.e., for all the triples satisfying the matching condition
\begin{equation}
\xi _{-}(o_{-})=a(-l),\quad \xi _{+}(o_{+})=a(l).  \label{eq:matching}
\end{equation}%
Integrating by parts, we have
\begin{eqnarray}
0 &=&\langle a(l),b(l)\rangle -\langle a(-l),b(-l)\rangle
+\int_{-l}^{l}\left\langle -\frac{Db}{\partial \tau }-\nabla \func{grad}%
f(\chi )b,a\right\rangle  \notag  \label{eq:0=aebe} \\
&{}&\quad -\int_{\dot{\Sigma}_{-}}\langle (D_{u_{-}}{\overline{\partial }}%
_{(K^{-},J^{-})})^{\dagger }\eta _{-},\xi _{-}\rangle -\int_{\dot{\Sigma}%
_{+}}\langle (D_{u_{+}}{\overline{\partial }}_{(K^{+},J^{+})})^{\dagger
}\eta _{+},\xi \rangle .
\end{eqnarray}%
Here $D_{u_{\pm }}\partial _{(K^{\pm },J^{\pm })})$ is the formal adjoint of
$D_{u_{\pm }}{\overline{\partial }}_{(K^{\pm },J^{\pm })})$ which has its
symbol of that of the Dolbeault operator $\partial $ (near $z=0$ in ${%
\mathbb{C}}$) and so elliptic. \emph{Here we note that we are using a metric
on $\Sigma _{\pm }\cong {\mathbb{C}}$ that is standard near the origin }$%
o_{\pm }$ \emph{and cylindrical near the end.}

Substituting \eqref{eq:matching} into this we can rewrite \eqref{eq:0=aebe}
into
\begin{eqnarray*}
0 &=&\int_{-l}^{l}\left\langle -\frac{Db}{\partial \tau }-\nabla \func{grad}%
f(\chi )b,a\right\rangle +\langle \xi _{+}(o_{+}),b(\varepsilon )\rangle
-\langle \xi _{-}(o_{-}),b(0)\rangle \\
&{}&\quad -\int_{\dot{\Sigma}_{-}}\langle (D_{u_{-}}{\overline{\partial }}%
_{(K^{-},J^{-})})^{\dagger }\eta _{-},\xi _{-}\rangle -\int_{\dot{\Sigma}%
_{+}}\langle (D_{u_{+}}{\overline{\partial }}_{(K^{+},J^{+})})^{\dagger
}\eta _{+},\xi \rangle .
\end{eqnarray*}%
Note $a$ can be varied arbitrarily on the interior $(-l,l)$ and can be
matched to any given $\xi _{\pm }(o_{\pm })$ at $-l,\,l$. Therefore
considering the variation $\xi _{-}=\xi _{+}=0$, we derive that $b$ must
satisfy
\begin{equation*}
\left\langle -\frac{Db}{\partial \tau }-\nabla \func{grad}f(\chi
)b,a\right\rangle =0
\end{equation*}%
for all $a$ with $a(-l)=0=a(l)$. Therefore $b$ satisfies
\begin{equation}
-\frac{Db}{\partial \tau }-\nabla \func{grad}f(\chi )b=0  \label{eq:b}
\end{equation}%
on $[-l,l]$ first in the distribution sense and then in the classical sense
by the bootstrap regularity of the ODE \eqref{eq:b} and so it is smooth. \
Let
\begin{equation*}
P^{\dag }:T_{u_{-}\left( o_{-}\right) }M\rightarrow T_{u_{+}\left(
o_{+}\right) }M,\text{ }b\left( -l\right) \rightarrow b\left( l\right)
\end{equation*}%
be the linear map for solutions $b$ of ODE $\left( \ref{eq:b}\right) $, then
\
\begin{equation*}
b\left( l\right) =P^{\dag }b\left( -l\right) .
\end{equation*}

Substituting \eqref{eq:b} into the above we obtain
\begin{eqnarray*}
0 &=&-\langle \xi _{-}(o_{-}),b(-l)\rangle +\int_{\dot{\Sigma}_{-}}\langle
(D_{u_{-}}{\overline{\partial }}_{(K^{-},J^{-})})^{\dagger }\eta _{-},\xi
_{-}\rangle \\
&{}&\quad +\langle \xi _{+}(o_{+}),b(l))\rangle +\int_{\dot{\Sigma}%
_{+}}\langle (D_{u_{+}}{\overline{\partial }}_{(K^{+},J^{+})})^{\dagger
}\eta _{+},\xi _{+}\rangle .
\end{eqnarray*}%
Now we can vary $\xi _{\pm }$ independently and hence we have
\begin{eqnarray*}
0 &=&-\langle \xi _{-}(o_{-}),b(-l)\rangle -\int_{\dot{\Sigma}_{-}}\langle
(D_{u_{-}}{\overline{\partial }}_{(K^{-},J^{-})})^{\dagger }\eta _{-},\xi
_{-}\rangle \\
0 &=&\langle \xi _{+}(o_{+}),b(l)\rangle +\int_{\dot{\Sigma}_{+}}\langle
(D_{u_{+}}{\overline{\partial }}_{(K^{+},J^{+})})^{\dagger }\eta _{+},\xi
_{+}\rangle
\end{eqnarray*}%
and hence
\begin{eqnarray}
(D_{u_{-}}{\overline{\partial }}_{(K^{-},J^{-})})^{\dagger }\eta
_{-}-b(-l)\delta _{o_{-}} &=&0  \notag \\
(D_{u_{+}}{\overline{\partial }}_{(K^{+},J^{+})})^{\dagger }\eta
_{+}+b(l)\delta _{o_{+}} &=&0.  \label{cdbar}
\end{eqnarray}%
Here $\delta _{o}$ denotes the Dirac-delta measure supported at the point $%
\{o\}\subset \Sigma $. Due to the choice of our metric on the domain ${%
\mathbb{C}}$ of $u_{\pm }$, $\eta _{\pm }$ must have the singularity of the
type $\frac{1}{\bar{z}}$ which is the fundamental solution to $\partial \eta
=\vec{b}\delta _{o}$ which lies in $L^{q}$ for any $1<q<2$. Therefore one
can solve the distributional equation
\begin{equation*}
(D_{u_{\pm }}{\overline{\partial }}_{(K^{\pm },J^{\pm })})^{\dagger }\eta =%
\vec{b}\cdot \delta _{o_{\pm }}
\end{equation*}%
provided that $\vec{b}$ satisfies the Fredholm alternative:
\begin{equation*}
\langle \vec{b},\xi _{\pm }\left( o_{\pm }\right) \rangle =\int_{\dot{\Sigma}%
_{\pm }}\langle \vec{b}\cdot \delta _{o_{\pm }},\xi _{\pm }\rangle =0\text{ }
\end{equation*}%
for all $\xi _{\pm }\in \ker \left( (D_{u_{\pm }}{\overline{\partial }}%
_{(K^{\pm },J^{\pm })})^{\dagger }\right) ^{\dagger }=\ker D_{u_{\pm }}{%
\overline{\partial }}_{(K^{\pm },J^{\pm })}$. Namely $\vec{b}\in \left(
V_{\pm }\right) ^{\perp }$, where
\begin{equation*}
V_{\pm }:=ev_{o_{\pm }}\left( \ker D_{u_{\pm }}{\overline{\partial }}%
_{(K^{\pm },J^{\pm })}\right) .
\end{equation*}%
We fix such a solution denoted by $\eta _{\vec{b}}\in L^{q}$.

Then \eqref{cdbar} can be written as
\begin{equation*}
(D_{u_{-}}{\overline{\partial }}_{(K^{-},J^{-})})^{\dagger }(\eta _{-}-\eta
_{b(-l)})=0,\quad (D_{u_{+}}{\overline{\partial }}_{(K^{+},J^{+})})^{\dagger
}(\eta _{+}+\eta _{b(l)})=0
\end{equation*}%
i.e.,
\begin{eqnarray*}
\eta _{-}+\eta _{b(-l)} &\in &\func{ker}(D_{u_{-}}{\overline{\partial }}%
_{(K_{-},J_{-})})^{\dagger }, \\
\eta _{+}-\eta _{b(l)} &\in &\func{ker}(D_{u_{+}}{\overline{\partial }}%
_{(K_{+},J_{+})})^{\dagger }.
\end{eqnarray*}%
Therefore we have the exact sequence
\begin{eqnarray*}
0& \rightarrow & \func{Graph}P^{\dag }\cap \left( V_{-}^{\perp }\times
V_{+}^{\perp }\right) \overset{i}{\rightarrow }\func{ker}E^{\dagger }(u) \\
&\overset{j}{\rightarrow }& \func{ker}(D_{u_{+}}{\overline{\partial }}%
_{(K_{+},J_{+})})^{\dagger }\oplus \func{ker}(D_{u_{-}}{\overline{\partial }}%
_{(K_{-},J_{-})})^{\dagger }\rightarrow 0:
\end{eqnarray*}%
Here the first homomorphism is the map
\begin{equation*}
i(b_{1},b_{2})=(\eta _{b_{1}},b_{b_{1}},-\eta _{b_{2}})
\end{equation*}%
where $b_{b_{1}}$ is a solution of \eqref{eq:b} satisfying $%
b_{b_{1}}(-l)=b_{1}$. Note that we have $b_{b_{1}}(l)=b_{2}$ if and only if $%
(b_{1},b_{2})\in \func{Graph}P^{\dag }$. And the second map $j$ is given by
\begin{equation*}
j(\eta _{-},b,\eta _{+})=\left( \eta _{-}+\eta _{b(-l)},\eta _{+}-\eta
_{b(l)}\right) .
\end{equation*}%
and so $\func{ker}E^{\dagger }(u)$ has its dimension given by
\begin{eqnarray}
&&\mathop{\kern0pt{\rm dim}}\nolimits\func{ker}\left( D_{u_{+}}{\overline{%
\partial }}_{(K_{+},J_{+})}\right) ^{\dagger }+\mathop{\kern0pt{\rm dim}}%
\nolimits\func{ker}\left( D_{u_{-}}{\overline{\partial }}_{(K_{-},J_{-})}%
\right) ^{\dagger }+\mathop{\kern0pt{\rm dim}}\nolimits\func{Graph}%
P^{\dagger }\cap \left( V_{+}^{\perp }\times V_{-}^{\perp }\right)  \notag \\
&=&\mathop{\kern0pt{\rm dim}}\nolimits\func{ker}\left( D_{u_{+}}({\overline{%
\partial }}_{(K_{+},J_{+})}\right) ^{\dagger }+\mathop{\kern0pt{\rm dim}}%
\nolimits\func{ker}\left( D_{u_{-}}{\overline{\partial }}_{(K_{-},J_{-})}%
\right) ^{\dagger }+\mathop{\kern0pt{\rm dim}}\nolimits\left( P^{\dagger
}\cdot V_{-}^{\perp }\cap V_{+}^{\perp }\right)  \label{dim:tft_coker}
\end{eqnarray}%
Equivalently $E_u$ has a closed range and its $\func{coker}E_u$ has
dimension the same as this. Combining this dimension counting of $\func{coker%
}E_u$ with that of $\func{ker}E_u$ in $\left( \ref{kerdim:tft}\right) $, we
conclude that $E_u$ is Fredholm and has index given by
\begin{eqnarray*}
\func{Index}E_u &=&\left( \Big(\mathop{\kern0pt{\rm dim}}\nolimits\ker
D_{u_{+}}{\overline{\partial }}_{(K^{+},J^{+})}+\mathop{\kern0pt{\rm dim}}%
\nolimits\ker D_{u_{-}}{\overline{\partial }}_{(K^{-},J^{-})}-%
\mathop{\kern0pt{\rm dim}}\nolimits(P\cdot V_{-}+V_{+})\right) \\
&{}&-\left( \Big(\mathop{\kern0pt{\rm dim}}\nolimits\ker (D_{u_{+}}{%
\overline{\partial }}_{(K^{+},J^{+})})^{\dagger }{}+\mathop{\kern0pt{\rm
dim}}\nolimits\ker (D_{u_{-}}{\overline{\partial }}_{(K^{-},J^{-})})^{%
\dagger }+\mathop{\kern0pt{\rm dim}}\nolimits\left( P^{\dagger }\cdot
V_{-}^{\perp }\cap V_{+}^{\perp }\right) )\right) \\
&=&\func{Index}D_{u_{+}}{\overline{\partial }}_{(K^{+},J^{+})}+\func{Index}%
D_{u_{-}}{\overline{\partial }}_{(K^{-},J^{-})}-2n \\
&=&(n+\mu _{H_{-}}([z_{-},w_{-}]))+(n-\mu _{H_{+}}([z_{+},w_{+}])) \\
&=&\mu _{H_{-}}([z_{-},w_{-}])-\mu _{H_{+}}([z_{+},w_{+}]).
\end{eqnarray*}%
Here we have used
\begin{eqnarray*}
&&\mathop{\kern0pt{\rm dim}}\nolimits(P\cdot V_{-}+V_{+})+%
\mathop{\kern0pt{\rm dim}}\nolimits\left( P^{\dagger }\cdot V_{-}^{\perp
}\cap V_{+}^{\perp }\right) \\
&=&\mathop{\kern0pt{\rm dim}}\nolimits(P\cdot V_{-}+V_{+})+%
\mathop{\kern0pt{\rm dim}}\nolimits\left( \left( P\cdot V_{-}\right) ^{\perp
}\cap V_{+}^{\perp }\right) \\
&=&\mathop{\kern0pt{\rm dim}}\nolimits(P\cdot V_{-}+V_{+})+%
\mathop{\kern0pt{\rm dim}}\nolimits(P\cdot V_{-}+V_{+})^{\perp }= 2n,
\end{eqnarray*}%
for the second identity and
\begin{eqnarray*}
\func{Index}D_{u_{-}}{\overline{\partial }}_{(K_{-},J_{-})} &=&(n+\mu
_{H_{-}}([z_{-},w_{-}])) \\
\func{Index}D_{u_{+}}{\overline{\partial }}_{(K_{+},J_{+})} &=&(n-\mu
_{H_{+}}([z_{+},w_{+}]))
\end{eqnarray*}%
for the third identity. \

To justify that $P^{\dagger }\cdot V_{-}^{\perp }=\left( P\cdot V_{-}\right)
^{\perp }$: For convenience we let
\begin{eqnarray*}
P &:&T_{u_{-}\left( o_{-}\right) }M\rightarrow T_{u_{+}\left( o_{+}\right)
}M,\text{ }a\left( -l\right) \rightarrow a\left( l\right) \\
P^{\dag } &:&T_{u_{-}\left( o_{-}\right) }M\rightarrow T_{u_{+}\left(
o_{+}\right) }M,\text{ }b\left( -l\right) \rightarrow b\left( l\right)
\end{eqnarray*}%
be the linear maps for solutions $a$ and $b$ of ODE $\left( \ref{eq:a}%
\right) $ and $\left( \ref{eq:b}\right) $ respectively. Then for solutions $%
a,b$ we have \
\begin{equation*}
\frac{d}{d\tau }\left\langle a\left( \tau \right) ,b\left( \tau \right)
\right\rangle =\left\langle \nabla \func{grad}f\left( \chi \right) a\left(
\tau \right) ,b\left( \tau \right) \right\rangle +\left\langle a\left( \tau
\right) ,-\nabla \func{grad}f\left( \chi \right) b\left( \tau \right)
\right\rangle =0,
\end{equation*}%
This in particular implies
\begin{equation*}
\left\langle Pa,P^{\dag }b\right\rangle =\left\langle a\left( l\right)
,b\left( l\right) \right\rangle =\left\langle a\left( -l\right) ,b\left(
-l\right) \right\rangle =\left\langle a,b\right\rangle .
\end{equation*}%
and hence%
\begin{equation*}
P^{\dag }V_{-}^{\bot }=\left( PV_{-}\right) ^{\bot }.
\end{equation*}
\end{proof}

\begin{cor}
\label{achieve-tft-trans}Suppose $u_{\pm }\in {\mathcal{M}}([z_{\pm },w_{\pm
}];A_{\pm })$ are Fredholm regular, then $u\in {\mathcal{M}}%
^{l}([z_{-},w_{-}];f;[z_{+},w_{+}])$ is Fredholm regular (in the sense that $%
E_u$ is surjective) if and only if the configuration $u=\left( u_{-},\chi
,u_{+}\right) $ satisfies the \textquotedblleft
thimble-flow-thimble\textquotedblright\ transversality in definition \ref%
{tft-trans}.
\end{cor}

\begin{proof}
In $\left( \ref{dim:tft_coker}\right) $ of the above proposition we have
obtained
\begin{eqnarray*}
\mathop{\kern0pt{\rm dim}}\nolimits \func{ker}E_u^{\dagger }&=&%
\mathop{\kern0pt{\rm dim}}\nolimits\func{ker}\left( D_{u_{+}}({\overline{%
\partial }}_{(K_{+},J_{+})}\right) ^{\dagger }+\mathop{\kern0pt{\rm dim}}%
\nolimits\func{ker}\left( D_{u_{-}}{\overline{\partial }}_{(K_{-},J_{-})}%
\right) ^{\dagger } \\
&{}& \quad +\mathop{\kern0pt{\rm dim}}\nolimits\left( P^{\dagger }\cdot
V_{-}^{\perp }\cap V_{+}^{\perp }\right) \\
&=&\mathop{\kern0pt{\rm dim}}\nolimits\left( P^{\dagger }\cdot V_{-}^{\perp
}\cap V_{+}^{\perp }\right) = \mathop{\kern0pt{\rm dim}}\nolimits\left(
PV_{-}+V_{+}\right) ^{\perp }.
\end{eqnarray*}
Hence $E_u$ is surjective if and only if $\left( PV_{-}+V_{+}\right) ^{\perp
}=\left\{ 0\right\} ,$ i.e.
\begin{equation*}
PV_{-}+V_{+}=T_{u_{+}\left( o_{+}\right) }M.
\end{equation*}
But this is equivalent to%
\begin{equation*}
d\phi _{f}^{2l}ev_{-}\times ev_{+}\pitchfork \Delta _{u_{+}\left(
o_{+}\right) }^{\text{\textrm{tn}}},
\end{equation*}%
as derived in $\left( \ref{kerdim:tft}\right) $.
\end{proof}

\subsection{Transversality of thimble-flow-thimble moduli spaces under the
perturbation of $f\label{sec:tft-transversality}$}

We now show $E_u$ is surjective for generic $f$, for any $u=\left(
u_{-},\chi ,u_{+}\right) \in {\mathcal{M}}^{l}([z_{-},w_{-}];f;[z_{+},w_{+}])
$ of \emph{any} $l>0$. We need the variation of $l$ to get this
surjectivity. More precisely we have the following

\begin{prop}
\label{family-tft-trans}Suppose that $u_{\pm }\in {\mathcal{M}}(K_{\pm
},J_{\pm };[z_{\pm },w_{+}])$ is Fredholm regular, i.e., its
linearization is surjective. Then for each given $l_{0}>0$, there exists a
dense subset of $f\in C^{\infty }\left( M\right) $ such that any element $u=\left( u_{-},\chi ,u_{+},l\right) $ in
\begin{equation*}
{\mathcal{M}}^{\text{\rm para}}([z_{-},w_{-}];f;[z_{+},w_{+}])=\bigcup_{l\geq
l_{0}}{\mathcal{M}}^{l}([z_{-},w_{-}];f;[z_{+},w_{+}])
\end{equation*}is Fredholm regular, in the sense that $E_u$ is surjective. ${\mathcal{M}}^{\text{\rm para}}([z_{-},w_{-}];f;[z_{+},w_{+}])$ is a smooth manifold with
dimension equal to the index of $E_u$:
\begin{eqnarray*}
&{}& \func{dim}{\mathcal{M}}^{\text{\rm para}}([z_{-},w_{-}];f;[z_{+},w_{+}])
\\
& = &\mu_{H_{-}}([z_{-},w_{-}])-\mu
_{H_{+}}([z_{+},w_{+}])+1.
\end{eqnarray*}
\end{prop}

\begin{proof}
Consider the section $e$ of the following Banach bundle
\begin{equation*}
e :{\mathcal{B}}^{\text{\textrm{tft}}}_{l}(z_-,z_+;B)\times C^{\infty
}\left( M\right) \rightarrow {\mathcal{L}}^{\text{\textrm{tft}}%
}(z_{-},z_{+})
\end{equation*}
given by
\begin{equation*}
e\left( u,f\right) =\left({\overline{\partial }}_{(K_{-},J_{-})}u_{-},\dot{%
\chi}-\nabla f(\chi ),{\overline{\partial }}_{(K_{+},J_{+})}u_{+}\right).
\end{equation*}
where
$$
u=\left( u_{-},\chi ,u_{+},l\right) \in { {\mathcal{B}}^{\text{\textrm{tft}}%
}_{l}(z_-,z_+;B) \subset \mathcal{B}}_{l}^{\text{%
\textrm{tft}}}(z_{-},z_{+}).
$$
Denote the linearization of $e$ at $\left( u,f\right) $
by
\begin{equation*}
E_{(u,f)}:=D_{(u,f)}e:T_{u}{\mathcal{B}}^{\text{\textrm{tft}}%
}_{l}(z_-,z_+;B)\times T_{f}C^{\infty }\left( M\right) \rightarrow
L_{u}^{p}(z_{-},z_{+}).
\end{equation*}%
For $\xi _{\pm }\in W^{1,p}(u_{\pm }^{\ast }TM)$, $a\in W^{1,p}(\chi ^{\ast
}TM)$, $\mu \in T_{l}\mathbb{R}$ and $h\in T_{f}C^{\infty }\left( M\right) $%
,
\begin{equation*}
E_{(u,f)}:\left( \xi _{-},a,\xi _{+},\mu ,h\right) \rightarrow \eta :=(\eta
_{-},b,\eta _{+})
\end{equation*}%
where
\begin{eqnarray*}
\eta _{-} &=&D_{u_{-}}{\overline{\partial }}_{(K_{-},J_{-})}\xi _{-}, \\
b &=&\frac{D}{d\tau }a-\nabla _{a}\func{grad}(f)+\frac{\mu }{l}\dot{\chi}%
\left( \tau \right) -\nabla h(\chi ), \\
\eta _{+} &=&D_{u_{+}}{\overline{\partial }}_{(K_{+},J_{+})}\xi _{+}\,.
\end{eqnarray*}

Next we show the cokernel of $E_{(u,f)}$ vanishes. Let $E_{(u,f)}^{\ast }$
be the adjoint operator of $E_{(u,f)}$, such that
\begin{equation*}
E_{(u,f)}^{\ast }:L_{u}^{p}(z_{-},z_{+})^{\ast }\rightarrow \left(
W^{1,p}_u(z_{-},z_{+};{\text{\textrm{tft}}})\times T_{f}C^{\infty }\left(
M\right) \right) ^{\ast }.
\end{equation*}%
(Recall the definition of $W^{1,p}_u(z_{-},z_{+};{\text{\textrm{tft}}})$
from \eqref{eq:TuB}.)
Using the nondegenerate $L^{2}$ pairing
\begin{equation*}
L^{p}(\Lambda ^{(0,1)}u^{\ast }TM)\times L^{q}(\Lambda ^{(1,0)}u^{\ast
}TM)\rightarrow {\mathbb{R}}
\end{equation*}%
we identify $L_{u}^{p}(z_{-},z_{+})^{\ast }$ with $L^{q}(\Lambda
^{(1,0)}u^{\ast }TM)$.

On the other hand, we can identify $\left( W^{1,p}_u(z_{-},z_{+};{\text{%
\textrm{tft}}})\times T_{f}C^{\infty }\left( M\right) \right) ^{\ast }$ with
$$
W^{-1,q}_u(z_{-},z_{+};{\text{\rm tft}})\times \left( T_{f}C^{\infty
}\left( M\right) \right) ^{\ast }
$$
where $W^{-1,q}_u(z_{-},z_{+};{\text{\rm tft}})$
is defined to be
\begin{equation*}
\left\{ (\xi _{-},a,\xi _{+},\mu )\in W^{1,p}_u(z_{-},z_{+})\mid \xi
_{-}(o_{-})=a(-l)-\mu \dot{\chi}(-l),\,\xi _{+}(o_{+})=a(l)+\mu \dot{\chi}(l)\right\} ^{\perp }
\end{equation*}in the direct product
$W^{-1,q}_u(z_{-},z_{+})$ given in \eqref{eq:W-1q}
where $(\cdot )^{\perp }$ denotes the $L^{2}$-orthogonal complement. Here we
have $1<q<2$ since $2<p<\infty $.

We denote by
\begin{equation*}
E_{(u,f)}^{\dagger }:L^{q}(\Lambda ^{(1,0)}u^{\ast }TM)\rightarrow
W^{-1,q}_u(z_{-},z_{+};{\text{\textrm{tft}}})\times \left( T_{f}C^{\infty
}\left( M\right) \right) ^{\ast }
\end{equation*}%
the corresponding $L^{2}$-adjoint with respect to these identifications.
Recall by definition, we have
\begin{equation*}
\langle E_{(u,f)}\xi ,\eta \rangle =\langle \xi ,E_{(u,f)}^{\dagger }\eta
\rangle .
\end{equation*}%
For any given $\eta :=(\eta _{-},b,\eta _{+})\in \func{ker}E^{\dagger
}(u,f)\subset L^{q}(\Lambda ^{(1,0)}u^{\ast }TM),$ it satisfies
\begin{equation*}
\langle E_{(u,f)}\xi ,\eta \rangle =0
\end{equation*}
for all $(\xi _{+},a,\xi _{-},\mu )\in T_{u}{\mathcal{B}}^{\text{\textrm{tft}%
}}_{l}(z_-,z_+;B)$, especially for $(\xi _{+},a,\xi _{-},0)\in T_{u}{%
\mathcal{B}}_{l}^{\text{\textrm{tft}}}(z_{-},z_{+}),$ namely
\begin{eqnarray}
0 &=&\int_{-l}^{l}\left\langle \frac{Da}{\partial \tau }-\nabla \func{grad}%
f(\chi )a,b\right\rangle -\int_{-l}^{l}\left\langle \nabla h(\chi
),b\right\rangle  \notag \\
&&{}+\int_{\dot{\Sigma}_{-}}\left\langle D_{u_{-}}{\overline{\partial }}%
_{(K^{-},J^{-})}\xi _{-},\eta _{-}\right\rangle +\int_{\dot{\Sigma}%
_{+}}\left\langle D_{u_{+}}{\overline{\partial }}_{(K_{+},J_{+})}\xi
_{+},\eta _{+}\right\rangle  \label{tft-coker-h}
\end{eqnarray}%
for all the triples satisfying the matching condition
\begin{equation}
\xi _{-}(o_{-})=a(-l),\quad \xi _{+}(o_{+})=a(l).  \label{match2}
\end{equation}%
Letting $a=\xi _{-}=\xi _{+}=0$, then $\left( \ref{tft-coker-h}\right) $
becomes
\begin{equation*}
\int_{-l}^{l}\left\langle \nabla h(\chi ),b\right\rangle =0
\end{equation*}%
for all $h\in C^{\infty }\left( M\right) $, so $b=0$. Now $\left( \ref%
{tft-coker-h}\right) $ becomes%
\begin{equation*}
\int_{\dot{\Sigma}_{-}}\left\langle D_{u_{-}}{\overline{\partial }}%
_{(K^{-},J^{-})}\xi _{-},\eta _{-}\right\rangle +\int_{\dot{\Sigma}%
_{+}}\left\langle D_{u_{+}}{\overline{\partial }}_{(K_{+},J_{+})}\xi
_{+},\eta _{+}\right\rangle =0.
\end{equation*}%
Notice that $\xi _{-}$ and $\xi _{+}$ can vary independently since $a\left(
-l\right) $ and $a\left( l\right) $ can be any vector without restriction
and hence the matching condition $\left( \ref{match2}\right) $ does not put
any restriction on $\xi _{\pm }\left( o_{\pm }\right) $. Therefore we have%
\begin{eqnarray*}
\int_{\dot{\Sigma}_{-}}\left\langle D_{u_{-}}{\overline{\partial }}%
_{(K^{-},J^{-})}\xi _{-},\eta _{-}\right\rangle &=&0, \\
\int_{\dot{\Sigma}_{+}}\left\langle D_{u_{+}}{\overline{\partial }}%
_{(K_{+},J_{+})}\xi _{+},\eta _{+}\right\rangle &=&0
\end{eqnarray*}%
for \emph{all} $\xi _{\pm }$. In other words, $\eta _{\pm }$ lie in coker$%
D_{u_{\pm }}{\overline{\partial }}_{(K_{\pm },J_{\pm })}$. By the
hypothesis, we conclude $\eta _{\pm }=0$. Therefore the cokernel of $%
E_{(u,f)}$ vanishes in $E_{(u,f)}:W^{1,p}\rightarrow L^{p}$ setting.

Now we raise the regularity to $W^{k,p}$ setting for the use of Sard-Smale
theorem. For any $k\geq 1$ and $\zeta \in W^{k-1,p}\subset L^{p}$, from
vanishing of the above cokernel we can always solve $E_{(u,f)}\xi =\zeta $
with $\xi \in W^{1,p}$. By ellipticity of $E_{(u,f)}$ we have $\xi \in
W^{k,p}$. Therefore, the cokernel of $E_{(u,f)}$ vanishes if we treat $%
E_{(u,f)}$ as a map from $W^{k,p}$ to $W^{k-1,p}$ for any $k$. We fix a
large enough $k$ (which depends on $n,c_{1}(A_{\pm }),\mu _{H_{\pm
}}([z_{\pm },w_{\pm }]$) to define the $W^{k,p}$ Banach norm on ${\mathcal{B}%
}^{\text{\textrm{tft}}}_{l}(z_-,z_+;B)$. Therefore, the universal moduli
space
\begin{equation*}
{\mathcal{M}}_{\text{\textrm{univ}}}^{\text{\textrm{para}}%
}([z_{-},w_{-}];f;[z_{+},w_{+}])=e^{-1}\left( 0\right)
\end{equation*}%
is a $W^{k,p}$ Banach manifold in ${\mathcal{B}}^{\text{\textrm{tft}}%
}_{l}(z_-,z_+;B)\times C^{\infty }\left( M\right) $. For the natural
projection
\begin{equation*}
\pi :{\mathcal{M}}_{\text{\textrm{univ}}}^{\text{\textrm{para}}%
}([z_{-},w_{-}];f;[z_{+},w_{+}])\rightarrow C^{\infty }\left( M\right) ,
\end{equation*}%
since our $k$ is large enough, we can apply Sard-Smale theorem and conclude
its regular values $f$ form a set of second category in $C^{\infty }\left(
M\right) $. For any regular value $f$,
\begin{equation*}
{\mathcal{M}}^{\text{\textrm{para}}}([z_{-},w_{-}];f;[z_{+},w_{+}])=\pi
^{-1}\left( f\right) \cap {\mathcal{M}}_{\text{\textrm{univ}}}^{\text{%
\textrm{para}}}([z_{-},w_{-}];f;[z_{+},w_{+}])
\end{equation*}%
is a finite dimensional smooth submanifold with
\begin{eqnarray*}
&&\mathop{\kern0pt{\rm dim}}\nolimits{\mathcal{M}}^{\text{\textrm{para}}%
}([z_{-},w_{-}];f;[z_{+},w_{+}]) \\
&=&\text{index}(\pi )=\text{index}(E\left( u\right) ) \\
&=&\text{ }\mu _{H_{-}}([z_{-},w_{-}])-\mu _{H_{+}}([z_{+},w_{+}])+1.
\end{eqnarray*}%
The index$E\left( u\right) $ here differs from previous Proposition by $1$
because the linearization $E\left( u\right) $ here is in $T_{u}{\mathcal{B}}%
^{\text{\textrm{tft}}}(z_{-},z_{+})$ instead of $T_{u}{\mathcal{B}}_{l}^{%
\text{\textrm{tft}}}(z_{-},z_{+})$. Since $f$ is a regular value of $\pi $, $%
E\left( u\right) :T_{u}{\mathcal{B}}^{\text{\textrm{tft}}}(z_{-},z_{+})%
\rightarrow \mathcal{L}^{\text{\textrm{tft}}}_u(z_{-},z_{+})$ is surjective
and $u$ is Fredholm regular.
\end{proof}

\subsection{Generic immersiveness of nodal points}

Next we establish the condition to ensure the joint points $u_{\pm }\left(
o_{\pm }\right) $ are immersed. This condition satisfies for a generic
choice of almost complex structures $J$ and will be needed in the proof of
the surjectivity of our gluing: The Hausdorff convergence imposed in
Definition \ref{defn:adiabatic} (2) is not strong enough to detect multiple
covers of the thin cylinder although it captures all simple thin cylinders.
Immersion condition at the joint points then make the thin part of the
adiabatic limit with immersed joint points be automatically simple.

\begin{prop}
\label{immersed-joint}For generic $J_{0}\in \mathcal{J}_{\omega }$ and $f\in
C^{\infty }\left( M\right) $, any element $u=\left( u_{-},\chi
,u_{+},l\right) \in {\mathcal{M}}^{\text{\textrm{para}}%
}([z_{-},w_{-}];f;[z_{+},w_{+}])$ whose $u_{\pm }$ are somewhere injective
must be Fredholm regular, and have both $u_{\pm }\left( o_{\pm }\right) $
immersed, provided that
\begin{equation*}
\mu _{H_{-}}([z_{-},w_{-}])-\mu _{H_{+}}([z_{+},w_{+}]))<2n-1,
\end{equation*}%
or equivalently
\begin{equation*}
\mathop{\kern0pt{\rm dim}}\nolimits{\mathcal{M}}^{\text{\textrm{para}}%
}([z_{-},w_{-}];f;[z_{+},w_{+}])\leq 2n-1.
\end{equation*}%
Especially, for any ${\mathcal{M}}^{\text{\textrm{para}}%
}([z_{-},w_{-}];f;[z_{+},w_{+}];A_{\pm })$ of virtual dimension $0,1$, both $%
u_{\pm }\left( o_{\pm }\right) $ are immersed for generic $\left(
J_{0},f\right) $.
\end{prop}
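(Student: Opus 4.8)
The plan is to run the usual Sard--Smale scheme, this time enlarging the parameter space of Proposition \ref{family-dfd-trans} so that it varies the almost complex structure $J_{0}\in\mathcal{J}_{\omega}$ as well as $f$, and augmenting the defining section $e$ by the one--jet of $u_{+}$ (resp.\ $u_{-}$) at the join point $o_{+}$ (resp.\ $o_{-}$). The Fredholm regularity assertion is the easy half: since the $u_{\pm}$ in question are somewhere injective, the standard transversality argument (perturbing $J_{0}$ near injective points of $u_{\pm}$) shows that for generic $J_{0}$ every such $u_{\pm}$ is Fredholm regular; granting this, Proposition \ref{family-dfd-trans} together with Corollary \ref{achieve-dfd-trans} supplies a residual set of $f$ for which the ``disc-flow-disc'' transversality of Definition \ref{dfd-trans} holds, so that $E(u)$ is surjective. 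Hence on a residual set of pairs $(J_{0},f)$ every somewhere injective $u=(u_{-},\chi,u_{+},l)$ is Fredholm regular.

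For the immersion at $o_{+}$ (the case of $o_{-}$ being identical) I would first note that near $o_{+}$ one has $K_{+}\equiv 0$ and $J_{+}\equiv J_{0}$, so $u_{+}$ is genuinely $J_{0}$-holomorphic there and $du_{+}(o_{+})$ is complex linear; thus $du_{+}(o_{+})=0$ is the same as $\partial_{z}u_{+}(o_{+})=0$, which is $2n$ real equations. Working over the universal moduli space $\mathcal{M}^{para}_{univ}$ cut out by $e$ (now also varying $J_{0}$), the key point to establish is that the augmented operator
\[
\mathcal{D}:(\xi_{-},a,\xi_{+},\mu,h,Y)\longmapsto\bigl(E(u,J_{0},f)(\xi_{-},a,\xi_{+},\mu,h,Y),\ \partial_{z}\xi_{+}(o_{+})\bigr)\in L^{p}(z_{-},z_{+})\oplus\mathbb{C}^{n}
\]
is surjective at every point of $e^{-1}(0)\cap\{du_{+}(o_{+})=0\}$ with $u_{\pm}$ somewhere injective; here $Y$ denotes a variation of $J_{0}$ and $E(u,J_{0},f)$ is the obvious extension of the operator $E(u,f)$ of Proposition \ref{family-dfd-trans}.

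This surjectivity is the heart of the matter, and I would prove it by the cokernel method, following the pattern of the proof of Proposition \ref{family-dfd-trans}. Suppose $(\eta,v^{\ast})\in L^{q}(\Lambda^{(1,0)}u^{\ast}TM)\oplus(\mathbb{C}^{n})^{\ast}$ annihilates the image of $\mathcal{D}$. Testing against variations with $\xi_{+}=0$, and matching the gradient term along $\chi$ by a variation $h$ of $f$ exactly as in Proposition \ref{family-dfd-trans}, first forces $b=0$ and $\eta_{-}=0$ (the latter from Fredholm regularity of the somewhere injective $u_{-}$). Since the matching conditions can then be absorbed into $(a,\mu,h)$, the component $\xi_{+}$ may be varied freely, and testing against all $\xi_{+}$ yields the distributional identity $(D_{u_{+}}\overline{\partial}_{(K_{+},J_{+})})^{\dagger}\eta_{+}=v^{\ast}\cdot\partial_{z}\delta_{o_{+}}$. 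Because the domain metric on $\Sigma_{+}$ is standard near $o_{+}$ — the same choice already used in Proposition \ref{prop:dfdindex} — $\eta_{+}$ is smooth off $o_{+}$ and has there at worst a $\bar{z}^{-2}$-type singularity, which still lies in $L^{q}$ for $1<q<2$. Finally, testing against the $J_{0}$-variations $Y$ supported near injective points of $u_{+}$ (these fill an open dense subset of $\dot{\Sigma}_{+}$, and their images avoid $u_{+}(o_{+})$) forces $\eta_{+}\equiv 0$ near every such point; unique continuation for $D_{u_{+}}\overline{\partial}$ on $\dot{\Sigma}_{+}\setminus\{o_{+}\}$ then gives $\eta_{+}\equiv 0$ away from $o_{+}$, and integrability ($\eta_{+}\in L^{q}$) upgrades this to $\eta_{+}\equiv 0$; plugging back, $v^{\ast}=0$, so $\mathcal{D}$ is onto.

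With $\mathcal{D}$ surjective, the locus $\mathcal{Z}_{+}=\{(u_{-},\chi,u_{+},J_{0},f)\in\mathcal{M}^{para}_{univ}\mid du_{+}(o_{+})=0,\ u_{\pm}\ \text{somewhere injective}\}$ is a Banach submanifold on which the projection to $\mathcal{J}_{\omega}\times C^{\infty}(M)$ is Fredholm of index $\dim\mathcal{M}^{para}-2n$, with $\dim\mathcal{M}^{para}$ the index of Propositions \ref{prop:dfdindex}--\ref{family-dfd-trans}. By Sard--Smale, over a residual set of $(J_{0},f)$ the fibre of this projection — which is precisely $\{u\in\mathcal{M}^{para}([z_{-},w_{-}];f;[z_{+},w_{+}];A_{\pm})\mid du_{+}(o_{+})=0,\ u_{\pm}\ \text{s.i.}\}$ — is a manifold of dimension $\dim\mathcal{M}^{para}-2n$, which is negative under the hypothesis $\dim\mathcal{M}^{para}\leq 2n-1$, hence empty. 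Repeating the argument for $o_{-}$ and intersecting the residual sets produced so far gives the claim; the final assertion then follows since a moduli space of virtual dimension $0$ or $1$ has $\dim\mathcal{M}^{para}\leq 1\leq 2n-1$. The step I expect to be the main obstacle is the surjectivity of $\mathcal{D}$, i.e.\ the one--jet transversality at the \emph{fixed} point $o_{+}$: the delicate aspects are freeing the variation $\xi_{+}$ despite the matching constraint, controlling the dual solution $\eta_{+}$ through its singularity at $o_{+}$ (which is exactly why the standard-near-$o_{\pm}$ domain metric is essential), and coping with the fact that $o_{+}$ itself need not be an injective point of $u_{+}$, so that the vanishing of $\eta_{+}$ must be obtained away from $o_{+}$ and only then propagated across it.
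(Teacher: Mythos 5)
Your overall strategy matches the paper's: both run a Sard--Smale argument over the universal parameter space $(J_{0},f)$ for a section augmented by the complex derivative of $u_{\pm}$ at the marked point, identify the kernel of the adjoint via integration by parts, and use variations of $f$ along $\chi$ and of $J_{0}$ near injective points of $u_{\pm}$ to kill the cokernel. The paper packages the extra component as a finite-rank jet bundle $H_{\pm}^{1,0}$ appended to the target of the section $\Upsilon$, so the cokernel element carries a finite-dimensional piece $\alpha_{\pm}\in H_{\pm}^{1,0}$ alongside the $L^{q}$ section $\eta_{\pm}$, and it considers both bad strata $o_{H_{-}^{1,0}}\times H_{+}^{1,0}$ and $H_{-}^{1,0}\times o_{H_{+}^{1,0}}$ at once; you treat $o_{+}$ and $o_{-}$ one at a time, which amounts to the same thing.

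There is, however, a genuine gap at exactly the step you flag as delicate. You claim that the distributional solution to $(D_{u_{+}}\overline{\partial})^{\dagger}\eta_{+}=v^{\ast}\cdot\partial_{z}\delta_{o_{+}}$ has ``at worst a $\bar{z}^{-2}$-type singularity, which still lies in $L^{q}$ for $1<q<2$.'' That integrability claim is false: $\int_{|z|<1}|z|^{-2q}\,dA\sim\int_{0}^{1}r^{1-2q}\,dr$ converges only for $q<1$, so $\bar{z}^{-2}\notin L^{q}$ for any $q\ge 1$. You are overextending the zeroth-jet analysis of Proposition \ref{prop:dfdindex}, where the relevant singularity is $\bar{z}^{-1}$ (the fundamental solution of $\partial$), which is indeed in $L^{q}$ for $q<2$; the first jet is one derivative worse and the $L^{q}$ bound fails completely. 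Related to this is a regularity issue you leave implicit: for $\xi_{+}\in W^{1,p}$ alone, the pointwise value $\partial_{z}\xi_{+}(o_{+})$ is not even defined, so the augmented operator $\mathcal{D}$ is not continuous on the $W^{1,p}\to L^{p}$ Banach setup. The paper sidesteps both issues by first raising regularity to a $W^{k,p}$ setting with $k$ large (so the jet evaluation is continuous and the adjoint lives in a distribution space broad enough to host derivatives of $\delta_{o_{\pm}}$), and by keeping the jet component of the cokernel as a separate finite-dimensional vector $\alpha_{\pm}\in H_{\pm}^{1,0}$ rather than trying to absorb it into $L^{q}$-integrability of $\eta_{+}$. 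To repair your argument you would need to make this regularity move explicit and replace the $L^{q}$ integrability claim with the correct duality bookkeeping in $W^{k,p}$.
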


\begin{proof}
The proof is a small variant of Theorem 1.2 of \cite{oh-zhu2}. Consider the
section $\Upsilon $ of the following Banach bundle%
\begin{equation*}
\Upsilon :{\mathcal{B}}^{\text{\textrm{tft}}}_{l}(z_-,z_+;B)\times C^{\infty
}\left( M\right) \times \mathcal{J}_{\omega }\rightarrow {\mathcal{L}}^{%
\text{\textrm{tft}}}(z_{-},z_{+};B)\times H_{-}^{1,0}\times H_{+}^{1,0}
\end{equation*}
given by the projection map
\begin{equation*}
\Upsilon\left( u,f,J_{0}\right) = \left( {\overline{\partial }}%
_{(K_{-},J_{-})}u_{-},\dot{\chi}-\nabla f(\chi ),{\overline{\partial }}%
_{(K_{+},J_{+})}u_{+},{\partial }_{J_{0}}u_{-}\left( o_{-}\right) ,{\partial
}_{J_{0}}u_{+}\left( o_{+}\right) \right)
\end{equation*}
where
\begin{equation*}
H_{\pm }^{0,1}\left( u_{\pm }\right) = \func{Hom}_{i,J_{0}}\left( T_{o_{\pm
}}\Sigma _{\pm },T_{u_{\pm }\left( o_{\pm }\right) }M\right)
\end{equation*}%
is a rank $2n$ vector bundle over ${\mathcal{B}}^{\text{\textrm{tft}}%
}_{l}(z_-,z_+;B)\times C^{\infty }\left( M\right) \times \mathcal{J}_{\omega
}$ whose fiber consists of $\left( j,J_{0}\right) $-linear maps from $%
T_{o_{\pm }}\Sigma _{\pm }$ to $T_{u_{\pm }\left( o_{\pm }\right) }M$. Here
the $j$ for $\Sigma _{\pm }\simeq \left( D^{2},o_{\pm }\right) $ is fixed.
Any cokernel element
\begin{equation*}
\left( \eta _{-},b,\eta _{+},\alpha _{-},\alpha _{+}\right) \in {\mathcal{L}}%
^{\text{\textrm{tft}}}(z_{-},z_{+})\times H_{-}^{1,0}\times H_{+}^{1,0}
\end{equation*}%
of $D_{\left( u,f,J_{0}\right) }\Upsilon $ must satisfy%
\begin{eqnarray}
0 &=&\int_{-l}^{l}\left\langle \frac{Da}{\partial \tau }-\nabla \func{grad}%
f(\chi )a,b\right\rangle -\int_{-l}^{l}\left\langle \nabla h(\chi
),b\right\rangle  \notag \\
&&{}+\int_{\dot{\Sigma}_{-}}\left\langle D_{u_{-}}{\overline{\partial }}%
_{(K^{-},J^{-})}\xi _{-}+\frac{1}{2}du_{-}\circ B\circ j,\eta
_{-}\right\rangle  \notag \\
&&+\int_{\dot{\Sigma}_{+}}\left\langle D_{u_{+}}{\overline{\partial }}%
_{(K_{+},J_{+})}\xi _{+}\frac{1}{2}du_{+}\circ B\circ j,\eta
_{+}\right\rangle  \notag \\
&&+\left\langle D_{u_{-}}{\partial }_{(K^{-},J^{-})}\xi _{-},\alpha
_{-}\delta _{o_{-}}\right\rangle _{\dot{\Sigma}_{-}}+\left\langle D_{u_{+}}{%
\partial }_{(K_{+},J_{+})}\xi _{+},\alpha _{+}\delta _{o_{+}}\right\rangle _{%
\dot{\Sigma}_{+}}  \label{coker-jt}
\end{eqnarray}%
for all $(\xi _{+},a,\xi _{-},\mu )\in T_{u}{\mathcal{B}}^{\text{\textrm{tft}%
}}_{l}(z_-,z_+;B)$, $h\in T_{f}C^{\infty }\left( M\right) $ and $B\in
T_{J_{0}}\mathcal{J}_{\omega }$, where $\delta _{o_{\pm }}$ are delta
functions at $o_{\pm }$ on $\dot{\Sigma}_{\pm }$.

Letting $a=\xi _{-}=\xi _{+}=B=0$, then $\left( \ref{coker-jt}\right) $
becomes
\begin{equation*}
\int_{-l}^{l}\left\langle \nabla h(\chi ),b\right\rangle =0
\end{equation*}%
for all $h\in C^{\infty }\left( M\right) $, so $b=0$. Now $\left( \ref%
{coker-jt}\right) $ becomes%
\begin{eqnarray*}
0 &=&\int_{\dot{\Sigma}_{-}}\left\langle D_{u_{-}}{\overline{\partial }}%
_{(K^{-},J^{-})}\xi _{-}+\frac{1}{2}du_{-}\circ B\circ j,\eta
_{-}\right\rangle +\left\langle D_{u_{-}}{\partial }_{(K^{-},J^{-})}\xi
_{-},\alpha _{-}\delta _{o_{-}}\right\rangle _{\dot{\Sigma}_{-}} \\
&&+\int_{\dot{\Sigma}_{+}}\left\langle D_{u_{+}}{\overline{\partial }}%
_{(K_{+},J_{+})}\xi _{+}+\frac{1}{2}du_{+}\circ B\circ j,\eta
_{+}\right\rangle +\left\langle D_{u_{+}}{\partial }_{(K_{+},J_{+})}\xi
_{+},\alpha _{+}\delta _{o_{+}}\right\rangle _{\dot{\Sigma}_{+}}.
\end{eqnarray*}%
Notice that $\xi _{-}$ and $\xi _{+}$ can vary independently since $a\left(
-l\right) $ and $a\left( l\right) $ can be any vector without restriction
and hence the matching condition $\left( \ref{match2}\right) $ does not put
any restriction on $\xi _{\pm }\left( o_{\pm }\right) $. Therefore we have%
\begin{eqnarray*}
\int_{\dot{\Sigma}_{-}}\left\langle D_{u_{-}}{\overline{\partial }}%
_{(K^{-},J^{-})}\xi _{-}+\frac{1}{2}du_{-}\circ B\circ j,\eta
_{-}\right\rangle +\left\langle D_{u_{-}}{\partial }_{(K^{-},J^{-})}\xi
_{-},\alpha _{-}\delta _{o_{-}}\right\rangle _{\dot{\Sigma}_{-}} &=&0, \\
\int_{\dot{\Sigma}_{+}}\left\langle D_{u_{+}}{\overline{\partial }}%
_{(K_{+},J_{+})}\xi _{+}+\frac{1}{2}du_{+}\circ B\circ j,\eta
_{+}\right\rangle +\left\langle D_{u_{+}}{\partial }_{(K_{+},J_{+})}\xi
_{+},\alpha _{+}\delta _{o_{+}}\right\rangle _{\dot{\Sigma}_{+}} &=&0,
\end{eqnarray*}%
which are identical to $\left( 2.12\right) $ in \cite{oh-zhu2}. Then the
remaining steps are the same as in \cite{oh-zhu2} to show $\eta _{\pm }=0$
and $\alpha _{\pm }=0$, hence $\Upsilon \,$\ is transversal to the section
zero sections
\begin{eqnarray*}
&&o_{{\mathcal{L}}^{\text{\textrm{tft}}}(z_{-},z_{+})}\times
o_{H_{-}^{1,0}}\times H_{+}^{1,0}\text{ and} \\
&&o_{{\mathcal{L}}^{\text{\textrm{tft}}}(z_{-},z_{+})}\times
H_{-}^{1,0}\times o_{H_{-}^{1,0}},
\end{eqnarray*}
which both have codimension $2n$ in $o_{{\mathcal{L}}^{\text{\textrm{tft}}%
}(z_{-},z_{+})}\times H_{-}^{1,0}\times H_{+}^{1,0}$. Hence for generic $%
\left( J_{0},f\right) $, by Sard Theorem as in \cite{oh-zhu2},
\begin{equation*}
\Upsilon \,^{-1}\left( o_{{\mathcal{L}}^{\text{\textrm{tft}}%
}(z_{-},z_{+})}\times o_{H_{-}^{1,0}}\times H_{+}^{1,0}\cup o_{{\mathcal{L}}%
^{\text{\textrm{tft}}}(z_{-},z_{+})}\times H_{-}^{1,0}\times
o_{H_{-}^{1,0}}\right)
\end{equation*}%
has dimension $2n$ less than
\begin{equation*}
\Upsilon \,^{-1}\left( o_{{\mathcal{L}}^{\text{\textrm{tft}}%
}(z_{-},z_{+})}\times H_{-}^{1,0}\times H_{+}^{1,0}\right) ={\mathcal{M}}^{%
\text{\textrm{para}}}([z_{-},w_{-}];f;[z_{+},w_{+}]),
\end{equation*}
so is of dimension
\begin{equation*}
\mu _{H_{-}}([z_{-},w_{-}])-\mu _{H_{+}}([z_{+},w_{+}])+1-2n,
\end{equation*}%
which is negative if the index condition of the proposition holds. But this
means the set of the $\left( u_{-},\chi ,u_{+}\right) $ who fails the
immersion condition at $o_{-}$ or $o_{+}$ is empty. The proposition is
proved.
\end{proof}

\section{$J$-holomorphic curves from cylindrical domain}

The (perturbed) $J$-holomorphic curves from punctured Riemann surface $\dot{%
\Sigma}$ have been extensively studied in the literature. Since we will use
cylindrical coordinates of punctured Riemannian surface in our gluing
analysis , we briefly review the Banach norms in this setting.
For our $%
u_{\pm }$, the domain  can be thought as
\begin{equation*}
\dot \Sigma _{\pm } \simeq ( -\infty ,\infty) \times S^{1}.
\end{equation*}%
Let $O_{\pm
\text{ }}\simeq -(\pm \infty ,0] \times S^{1}$ be the
neighborhood near the puncture $o_{\pm }$ equipped with cylindrical coordinates
as before.

Without loss of generality let's assume the image of $O_{\pm }$
under $u_{\pm }\ $has diameter less than the injective radius of $\left(
M,g\right) $ (otherwise we can shift the ${\mathbb{R}}$ component to
reparameterize $u_{\pm }$)$.$ One can use the following norm to define the
Banach manifold hosting such Floer trajectories. Let%
\begin{equation*}
\mathcal{B}_{+}\left( z_{+}\right) =\left\{ u_{+}: \dot\Sigma _{+}\rightarrow
M\in W_{loc}^{1,p},\left\vert
\begin{array}{c}
u_{+}\left( -\infty ,t\right) =p_{+}\text{ for some }p_{+}\in M\text{, } \\
u_{+}\left( \infty ,t\right) =z_{+}\left( t\right) , \\
u_{+}\left( \tau ,t\right) =\exp _{p_{+}}\xi \left( \tau ,t\right) \text{
near }p_{+}\text{, } \\
u_{+}\left( \tau ,t\right) =\exp _{z_{+}\left( t\right) }\xi \left( \tau
,t\right) \text{ near }z_{+}\text{,} \\
\text{and }e^{\frac{2\pi \delta \left\vert \tau \right\vert }{p}}\xi \in
W^{1,p}\left( {\mathbb{R}}\times S^{1}\right) .%
\end{array}%
\right. \right\} .
\end{equation*}%
(Note that $p_{+}$ is allowed to vary in $M$ in the above definition.)
We define $\mathcal{B}_{-}$ similarly with $\infty $ and $-\infty $ switched
around. We recall the decomposition
$$
\mathcal{B}_\pm(z_\pm) = \bigcup_{A_\pm \in \pi_2(z_\pm)} \mathcal{B}_\pm(z_\pm;A_\pm).
$$
For any section $\xi _{\pm }\in W_{loc}^{1,p}\left( u_{\pm }^{\ast
}TM\right) $, we define its Banach norm to be%
\begin{equation*}
\left\Vert \xi _{\pm }\right\Vert _{W_{\alpha }^{1,p}\left( \Sigma _{\pm
}\right) }:=\left\Vert \widetilde{\xi _{\pm }}\right\Vert _{W_{\delta
}^{1,p}\left( \Sigma _{\pm }\right) }+\left\vert \xi _{\pm }\left( o_{\pm
}\right) \right\vert ,
\end{equation*}%
where $\widetilde{\xi _{\pm }}$ is defined by
\begin{equation*}
\widetilde{\xi _{\pm }}=\xi _{\pm }-\varphi _{\pm }\left( \tau ,t\right)
\func{Pal}_{\pm }\left( \tau ,t\right) \left( \xi _{\pm }\left( o_{\pm
}\right) \right)
\end{equation*}%
and
\begin{equation*}
\left\Vert \widetilde{\xi _{\pm }}\right\Vert _{W_{\delta }^{1,p}\left(
\Sigma _{\pm }\right) }:=\left\Vert e^{\frac{2\pi \delta \left\vert \tau
\right\vert }{p}}\widetilde{\xi _{\pm }}\right\Vert _{W^{1,p}\left( {\mathbb{%
R}}\times S^{1}\right) }.
\end{equation*}%
Here $\varphi _{\pm }\left( \tau ,t\right) $ is a smooth cut-off function
such that
\begin{equation*}
\varphi _{\pm }\left( \tau ,t\right) =%
\begin{cases}
1\quad  & \mbox{ on }\,\pm \lbrack -\infty ,-1)\times S^{1} \\
0\quad  & \mbox{ outside the neighborhood of }O_{\pm }%
\end{cases}%
\end{equation*}%
and $\left\vert d\varphi _{\pm }\right\vert \leq 2$, and $\func{Pal}_{\pm
}\left( \tau ,t\right) $ is the parallel transport along the shortest
geodesics from $u_{\pm }\left( o_{\pm }\right) $ to $u_{\pm }\left( \tau
,t\right) $. This defines the Banach manifold structure on $\mathcal{B}_{\pm
}\left( z_{\pm }\right) $ with its tangent space at $u_{\pm }$ given by
\begin{equation*}
T_{u_{\pm }}\mathcal{B}_{\pm }\left( z_{\pm }\right) \simeq W_{\delta
}^{1,p}\left( \Gamma \left( u_{\pm }^{\ast }TM\right) \right) \oplus
T_{p_{\pm }}M.
\end{equation*}

Next we define the weighted norm
\begin{equation*}
\left\Vert \eta \right\Vert _{L_{\delta }^{p}\left( \Sigma _{\pm }\right)
}:=\left\Vert e^{\frac{2\pi \delta \left\vert \tau \right\vert }{p}}\eta
\right\Vert _{L^{p}\left( {\mathbb{R}}\times S^{1}\right) }
\end{equation*}%
for sections $\eta _{\pm }\in \Gamma \left( u_{\pm }^{\ast }TM\right)
\otimes \Lambda ^{0,1}\left( \Sigma _{\pm }\right) $, and denote by
\begin{equation*}
L_{\delta }^{p}\left( \Gamma \left( u_{\pm }^{\ast }TM\right) \otimes
\Lambda ^{0,1}\left( \Sigma _{\pm }\right) \right)
\end{equation*}%
the set of such $\eta $s with $\left\Vert \eta \right\Vert _{L_{\delta
}^{p}\left( \Sigma _{\pm }\right) }<\infty $ .
Then for each given $A_\pm \in \pi_2(z_\pm)$,
we form the Banach bundle
\begin{equation*}
\mathcal{L}_{\pm }\left( z_{\pm };A_\pm \right) :=\bigcup\limits_{u_{\pm }\in
\mathcal{B}_{\pm }\left( z_{\pm };A_\pm \right) }L_{\delta }^{p}\left( \Gamma
\left( u_{\pm }^{\ast }TM\right) \otimes \Lambda ^{0,1}\left( \Sigma _{\pm
}\right) \right)
\end{equation*}%
over $\mathcal{B}_{\pm }\left( z_{\pm };A_\pm \right) $. It is well-known that
\begin{equation*}
\overline{\partial }_{\left( K_{\pm },J_{\pm }\right) }:\mathcal{B}_{\pm
}\left( z_{\pm };A_\pm \right) \rightarrow \mathcal{L}_{\pm }\left( z_{\pm };A_\pm \right)
\end{equation*}%
is a smooth Fredholm section, and for generic $\left( K_{\pm },J_{\pm
}\right) $ the Floer trajectory $u_{\pm }\in \left( \overline{\partial }%
_{\left( K_{\pm },J_{\pm }\right) }\right) ^{-1}\left( 0\right) $ are
Fredholm regular, and hence
\begin{equation*}
D_{u_{\pm }}\overline{\partial }_{\left( K_{\pm },J_{\pm }\right)
}:T_{u_{\pm }}\mathcal{B}_{\pm }\left( z_{\pm };A_\pm \right) \rightarrow L_{\delta
}^{p}\left( \Gamma \left( u_{\pm }^{\ast }TM\right) \otimes \Lambda
^{0,1}\left( \Sigma _{\pm }\right) \right)
\end{equation*}%
is surjective and has bounded right inverse, denoted by $Q_{\pm }$. Here we
recall
\begin{equation*}
T_{u_{\pm }}\mathcal{B}_{\pm }\left( z_{\pm };A_\pm\right) \simeq W_{\delta
}^{1,p}\left( \Gamma \left( u_{\pm }^{\ast }TM\right) \right) \oplus
T_{p_{\pm }}M.
\end{equation*}%
For the simplicity of notations in later calculations, we denote by
\begin{equation}
v_{\pm }:=\varphi _{\pm }\left( \tau ,t\right) \func{Pal}_{\pm }\left( \tau
,t\right) \left( \xi _{\pm }\left( o_{\pm }\right) \right)   \label{Vpm}
\end{equation}%
the cut-off \emph{constant vector field} extending the vector $\xi _{\pm
}\left( o_{\pm }\right) $.

\section{Approximate solutions}

In this section, we construct an approximate solution $u_{\textrm{app}}^{\varepsilon
} $ of the Floer equation. For the notational simplicity, we denote
\begin{equation}  \label{eq:tau(e)}
\tau(\varepsilon) = \frac{l}{\varepsilon} + \frac{p-1}{\delta}S(\varepsilon)
\end{equation}
which will appear very often in the discussion henceforth. We also denote
the translation map
\begin{equation}  \label{eq:Itaue}
I_{\tau_0}: {\mathbb{R}} \times S^1 \to {\mathbb{R}} \times S^1, \quad
I_{\tau_0}(\tau)(\tau,t) = (\tau-\tau_0,t)
\end{equation}
for $\tau_0 \in {\mathbb{R}}$. To simplify the notations, we also denote
\begin{equation}  \label{eq:E(x,v)}
E(x,y) = (\exp_x)^{-1}(y)
\end{equation}
whenever $d(x,y)< \iota_M$ where $\iota_M$ is the injectivity radius of $%
(M,g)$.

We recall the decomposition of ${\mathbb{R}}$ into
\begin{equation*}
-\infty <-\tau (\varepsilon )-1<-\tau (\varepsilon )<-R(\varepsilon
)<R(\varepsilon )<\tau (\varepsilon )<\tau (\varepsilon )+1<\infty
\end{equation*}%
in the beginning of Section 3, where
\begin{equation*}
R(\varepsilon )=\frac{l}{\varepsilon },\quad \tau (\varepsilon
)=R(\varepsilon )+\frac{p-1}{\delta }S(\varepsilon ),~\text{ }S\left(
\varepsilon \right) =\frac{1}{2\pi }\ln \left( 1+\frac{l}{\varepsilon }%
\right) .
\end{equation*}%
And we denoted $K_{\pm }(\tau ,t,x)=\kappa ^{\pm }(\tau )H_{\pm }(t,x)$.
Note that this latter representation of $K_{\pm }$ depend on the choice of
analytic coordinates $(\tau ,t)$ compatible to the parameter $t$
parameterizing $H_{\pm }$ near the punctures respectively. The coordinates
are unique modulo translations by $\tau $.

Now let $u_\pm$ be solutions of the equations $(du +
P_{K_\pm})_{J_\pm}^{(0,1)}=0$ respectively and fix the coordinate
representations of $u_\pm = u_\pm(\tau,t)$ so that they are compatible with
the choice of analytic coordinates given at the punctures. This can be
always done by adjusting the choice of analytic coordinates near the two
punctures $(e_+,o_+)$ and $(e_-,o_-)$ respectively.

With this preparation, we define our approximate solution by
\begin{equation}
u_{\textrm{app}}^{\varepsilon }\left(\tau ,t\right) =%
\begin{cases}
u_{-}(\tau +\tau (\varepsilon ),t) & -\infty <\tau \leq -l/\varepsilon -1 \\
\exp _{\chi (\varepsilon \tau )}\left[ \left( 1-\kappa _{\varepsilon
}^{0}\left( \tau \right) \right) E(\chi (\varepsilon \tau ),
u_{-}^{\varepsilon }\left( \tau ,t\right) ) \right] & -l/\varepsilon -1\leq
\tau \leq -l/\varepsilon \\
\chi \left( \varepsilon \tau \right) & -l/\varepsilon \leq \tau \leq
l/\varepsilon , \\
\exp _{\chi (\varepsilon \tau )}\left[ \left( 1-\kappa _{\varepsilon
}^{0}\left( \tau \right) \right) E(\chi (\varepsilon \tau ),
u_{+}^{\varepsilon }\left( \tau ,t\right) ) \right] & l/\varepsilon \leq
\tau \leq l/\varepsilon +1 \\
u_{+}(\tau -\tau (\varepsilon ),t) & l/\varepsilon +1\leq \tau <\infty%
\end{cases}
\label{eq:uappe}
\end{equation}%
where
\begin{eqnarray*}
u_{-}^{\varepsilon }\left( \tau ,t\right) &=&u_{-}(\tau +\tau (\varepsilon
),t), \\
u_{+}^{\varepsilon }\left( \tau ,t\right) &=&u_{+}(\tau -\tau (\varepsilon
),t), \\
S\left( \varepsilon \right) &=&\frac{1}{2\pi }\ln \left( 1+l/\varepsilon
\right) ,~
\end{eqnarray*}%
and $\kappa _{\varepsilon }^{0}\left( \tau \right) $ is a smooth cut-off
functions defined in $\left( \ref{eq:beta0}\right) $, $\kappa _{\varepsilon
}^{0}\left( \tau \right) =1$ when $\left\vert \tau \right\vert \leq
l/\varepsilon $ and $\kappa _{\varepsilon }^{0}\left( \tau \right) =0$ when $%
\left\vert \tau \right\vert \geq l/\varepsilon +1$.

The assignment of the approximate solution to each $(u_{-},\chi ,u_{+})$ and
$0<\varepsilon <\varepsilon _{0}$ defines a smooth map
\begin{eqnarray}
&{}&\text{preG}:{\mathcal{M}}_{1}(K_{-},z_{-};A_{-}){}_{ev_{+}}\times
_{ev_{-l}}{\mathcal{M}}(f;[-l,l ]){}_{ev_{l }}\times _{ev_{-}}{\mathcal{M}}%
_{1}(K_{+},z_{+};A_{+})\times (0,\varepsilon _{0}]  \notag
\label{eq:preglue} \\
&{}&\hskip1.5in\rightarrow \bigcup_{0<\varepsilon \leq \varepsilon _{0}}{%
\mathcal{M}}^{\varepsilon }(K_{\varepsilon };z_{-},z_{+};B).
\end{eqnarray}

Later in carrying out the $\overline{\partial }_{\left( K_{\varepsilon
},J_{\varepsilon };\varepsilon f\right) }$-error estimate, we make some
simplification of the expression of $u_{\textrm{app}}^{\varepsilon }$. Notice that
the interpolation in $u_{\textrm{app}}^{\varepsilon }$ takes place \emph{in a ball of
radius }$C\varepsilon $ around $p_{\pm }$, since
\begin{eqnarray}
\sup_{l/\varepsilon \leq \left\vert \tau \right\vert \leq l/\varepsilon +1}%
\text{dist}\left( \chi _{\varepsilon }\left( \tau \right) ,p_{\pm }\right)
&=&\sup_{l/\varepsilon \leq \left\vert \tau \right\vert \leq l/\varepsilon
+1}\text{dist}\left( \chi _{\varepsilon }\left( \tau \right) ,\chi
_{\varepsilon }\left( \pm l/\varepsilon \right) \right)   \notag \\
&\leq &\varepsilon \sup \left\vert \nabla f\right\vert \leq C\varepsilon
\label{dist-xp}
\end{eqnarray}%
and
\begin{eqnarray}
\sup_{l/\varepsilon \leq \left\vert \tau \right\vert \leq l/\varepsilon +1}%
\text{dist}\left( u_{\pm }^{\varepsilon }\left( \tau \right) ,p_{\pm
}\right)  &=&\sup_{l/\varepsilon \leq \left\vert \tau \right\vert \leq
l/\varepsilon +1}\text{dist}\left( u_{\pm }^{\varepsilon }\left( \tau
\right) ,u_{\pm }^{\varepsilon }\left( \pm \infty \right) \right)   \notag \\
&\leq &Ce^{-2\pi \frac{p-1}{\delta }S\left( \varepsilon \right) }=C\left(
1+l/\varepsilon \right) ^{-\frac{p-1}{\delta }}  \label{dist-up} \\
&\leq &\widetilde{C}\left( l\right) \varepsilon ,
\end{eqnarray}%
where the first inequality is because we have
\begin{equation*}
\left\vert u_{\pm }^{\varepsilon }\left( \tau ,t\right) -u_{\pm
}^{\varepsilon }\left( \pm \infty ,t\right) \right\vert \leq Ce^{-2\pi
\left\vert \tau -\pm (\tau (\varepsilon ))\right\vert }
\end{equation*}%
by the $J$-holomorphic property of \ $u_{\pm }$ and the second inequality is
because we have chosen $0<\delta <p-1$ and set $\widetilde{C}\left( l\right)
=Cl^{-\frac{p-1}{\delta }}$.

At the interpolation $\left\vert \tau \right\vert \in \lbrack l/\varepsilon
,l/\varepsilon +1]$, both $u_{\pm }^{\varepsilon }\left( \tau ,t\right) $
and $\chi \left( \varepsilon \tau \right) $ are in the ball of radius $%
C\varepsilon $ around $p_{\pm }$. The expression
\begin{equation*}
\exp _{\chi \left( \varepsilon \tau \right) }\left[ \left( 1-\kappa
_{\varepsilon }^{0}\left( \tau \right) \right) E(\chi (\varepsilon \tau ),
u_{\pm}^{\varepsilon }\left( \tau ,t\right)) \right] =:
v_\varepsilon^\pm(\tau,t)
\end{equation*}%
in \eqref{eq:uappe} is $C^{1}$ close to $\chi (\varepsilon \tau )+\left(
1-\kappa _{\varepsilon }^{0}\left( \tau \right) \right) \left( u_{\pm
}^{\varepsilon }\left( \tau ,t\right) -\chi \left( \varepsilon \tau \right)
\right) $, i.e.
\begin{equation*}
\kappa _{\varepsilon }^{0}\left( \tau \right) \chi (\varepsilon \tau
)+\left( 1-\kappa _{\varepsilon }^{0}\left( \tau \right) \right) u_{\pm
}^{\varepsilon }\left( \tau ,t\right) =: \widetilde v_\varepsilon^\pm(\tau,t)
\end{equation*}%
in coordinates, where the $+$ is from the vector space structure from $%
T_{p_{\pm }}M$, and the $C^{1}$ difference is of order $C\varepsilon $. To
see this, we identify the geodesic ball of radius $C\varepsilon $ around $%
p_{\pm }$ to the ball in $T_{p_{\pm }}M$, and equip it with the Euclidean
metric $g_{p_{\pm }}$. If we deform the metric involved in the exponential
map in the expression
\begin{equation}  \label{eq:exp-expression}
\exp _{\chi \left( \varepsilon \tau \right) }\left[ \left( 1-\kappa
_{\varepsilon }^{0}\left( \tau \right) \right) E(\chi (\varepsilon \tau ),
u_{\pm }^{\varepsilon }\left( \tau ,t\right)) \right]
\end{equation}
to the Euclidean metric $g_{p_{\pm }}$, the result becomes $\kappa
_{\varepsilon }^{0}\left( \tau \right) \chi (\varepsilon \tau )+\left(
1-\kappa _{\varepsilon }^{0}\left( \tau \right) \right) u_{\pm
}^{\varepsilon }\left( \tau ,t\right) $. \ Since the geodesic equation is a
second order differential equation whose coefficients are polynomials on
metric $g$ and its first order derivatives, by the differentiability of
solutions of ODE on finite interval $\left[ 0,1\right] $ with respect to its
initial condition and parameter, we see the $C^{1}$ norm of the map %
\eqref{eq:exp-expression} depends on the $C^{1}$ norm of the section $g\in
\Gamma \left( Sym_{+}\left( M\right) \right) $ with bounded Lipshitz
constant, where $Sym_{+}\left( M\right) $ is the space of positive definite
symmetric tensors on $M$. Since the $C^{1}$ difference of the metrics $%
g_{p_{\pm }}$ and $g$ is of order $C\varepsilon $ inside such ball, we have
\begin{eqnarray*}
\func{dist}\left( v_\varepsilon^\pm(\tau,t),\widetilde
v_\varepsilon^\pm(\tau,t) \right) &\leq & C\varepsilon , \\
\Vert \nabla v_\varepsilon^\pm(\tau,t) -\func{Pal}\nabla \widetilde
v_\varepsilon^\pm(\tau,t)\Vert &\leq & C\varepsilon ,
\end{eqnarray*}%
where $\func{Pal}$ is the parallel transport along short geodesic from $%
\widetilde v_\varepsilon^\pm(\tau,t)$ to $v_\varepsilon^\pm(\tau,t)$.

After the above simplification, we use the more explicit approximate solution%
\begin{equation}
u_{\textrm{app}}^{\varepsilon }\left( \tau ,t\right) =%
\begin{cases}
u_{-}(\tau +\tau (\varepsilon ),t) & -\infty <\tau \leq -l/\varepsilon \\
\kappa _{\varepsilon }^{0}\left( \tau \right) \chi (\varepsilon \tau
)+\left( 1-\kappa _{\varepsilon }^{0}\left( \tau \right) \right)
u_{-}^{\varepsilon }\left( \tau ,t\right) & -l/\varepsilon -1\leq \tau \leq
-l/\varepsilon \\
\chi (\varepsilon \tau ) & -l/\varepsilon \leq \tau \leq l/\varepsilon \\
\kappa _{\varepsilon }^{0}\left( \tau \right) \chi (\varepsilon \tau
)+\left( 1-\kappa _{\varepsilon }^{0}\left( \tau \right) \right)
u_{+}^{\varepsilon }\left( \tau ,t\right) & l/\varepsilon \leq \tau \leq
l/\varepsilon +1 \\
u_{+}(\tau -\tau (\varepsilon ),t) & l/\varepsilon +1\leq \tau <\infty%
\end{cases}
\label{eq:uappe-easy}
\end{equation}%
where the vector sum $+$ is from the linear space structure of $T_{p_{\pm
}}M $.

\begin{rem}
Apparently there enters no local model inserted at the joint points $p_{\pm
} $ to smooth out the join points in the construction of the above
approximate solution. Implicitly there is, though. The local model at $%
p_{\pm } $ is $u_{\pm }^{lmd}:{\mathbb{R}}\times S^{1}\rightarrow \mathbb{C}%
^{n}\simeq \left( T_{p_{\pm }}M,J_{p_{\pm }}\right) $,
\begin{equation*}
u_{\pm }^{lmd}\left( \tau ,t\right) =A_{\pm }z+a_{\pm }\tau ,
\end{equation*}%
where $z=e^{2\pi \left( \tau +it\right) }$, $A_{\pm }=u_{\pm }^{\prime
}\left( o_{\pm }\right) $, and $a_{\pm }=\nabla f\left( p_{\pm }\right) $.
Then one can see the local model is the linearized version of the above
interpolation of $u_{\pm }$ and $\chi $ in $u_{\textrm{app}}^{\varepsilon }$. Because
we can identify the portions of the approximate solution to $u_{\pm }$ and $%
\chi $ respectively and borrow Fredholm theories there, we do not need to
develop a Fredholm theory of the local model. However, when the gradient
flow length is $0$, the Fredholm theory of local model is needed for gluing,
because during compactification to nodal Floer trajectories the information
of $\nabla f$ is lost (See \cite{oh-zhu}).
\end{rem}

\section{Off-shell formulation of resolved Floer trajectories for $\protect%
\varepsilon> 0$}

\label{sec:off-shell}

Let $z_\pm$ be a pair of nondegenerate closed orbits of Hamiltonians $H_\pm$ respectively,
and let a relative homotopy class $B \in \pi_2(z_-,z_+)$ be given.

In this section, we define the Banach manifold to host resolved Floer
trajectories near the \textquotedblleft
thimble-flow-thimble\textquotedblright\ Floer trajectories $(u_{-},(\chi
,l),u_{+})$,

\beastar
\CB_{\text{\rm res}}(z_-.z_+;B)& = & \bigcup_{\varepsilon > 0}
\CB_{\text{\rm res}}^\varepsilon (z_-,z_+;B) \\
\CB_{\text{\rm res}}^\varepsilon (z_-.z_+;B)& = &  \bigcup_{l > 0}\mathcal{B}_{\text{\rm res}}^{\varepsilon }(z_{-},z_{+};B;l)
\eeastar
for which the descriptions of the Banach manifold $\mathcal{B}%
_{\text{\rm res}}^{\varepsilon }(z_{-},z_{+};B)$ and the space $\mathcal{B}%
_{\text{\rm res}}^{\varepsilon }(z_{-},z_{+};B;l)$ are in order for any $\varepsilon
\in (0,\varepsilon _{0})$ and $l \in (0,\infty )$, for a small constant $%
\varepsilon _{0}>0$ to be determined later.

For this purpose, we first introduce the weighting function $\beta _{\delta
,\varepsilon }$.

\begin{choice}[Weight function $\protect\beta _{\protect\delta ,\protect%
\varepsilon }$]
We define the function $\beta _{\delta ,\varepsilon }$ to be
\begin{equation}
\beta _{\delta ,\varepsilon }\left( \tau \right) =%
\begin{cases}
1 & \tau <-\tau (\varepsilon ) \\
e^{2\pi \delta (\tau +\tau (\varepsilon ))} & -\tau (\varepsilon )\leq \tau
\leq -l/\varepsilon  \\
\kappa _{\varepsilon }^{0}\left( \tau \right) \varepsilon ^{1-p+\delta
}\left( 1+\left\vert \tau \right\vert \right) ^{\delta }+\left( 1-\kappa
_{\varepsilon }^{0}\left( \tau \right) \right) e^{2\pi \delta (\tau +\tau
(\varepsilon ))} & -l/\varepsilon -1\leq \tau \leq -l/\varepsilon  \\
\varepsilon ^{1-p+\delta }\left( 1+\left\vert \tau \right\vert \right)
^{\delta } & -l/\varepsilon \leq \tau \leq l/\varepsilon  \\
\kappa _{\varepsilon }^{0}\left( \tau \right) \varepsilon ^{1-p+\delta
}\left( 1+\left\vert \tau \right\vert \right) ^{\delta }+\left( 1-\kappa
_{\varepsilon }^{0}\left( \tau \right) \right) e^{2\pi \delta (-\tau +\tau
(\varepsilon ))} & l/\varepsilon \leq \tau \leq l/\varepsilon +1 \\
e^{2\pi \delta (-\tau +\tau (\varepsilon ))} & l/\varepsilon +1\leq \tau
\leq \tau (\varepsilon ) \\
1 & \tau >\tau (\varepsilon )%
\end{cases}
\label{beta-weight}
\end{equation}%
where $\kappa _{\varepsilon }^{0}\left( \tau \right) $ is the smooth cut-off
function defined in \ $\left( \ref{eq:beta0}\right) $ such that $\kappa
_{\varepsilon }^{0}\left( \tau \right) =1$ for $\left\vert \tau \right\vert
\leq l/\varepsilon $ and $\kappa _{\varepsilon }^{0}\left( \tau \right) =0$
for $\left\vert \tau \right\vert \geq l/\varepsilon +1$.
\end{choice}

Note that
\begin{equation}  \label{eq:beta>1}
\beta _{\delta ,\varepsilon }\left( \tau \right) \geq 1
\end{equation}
everywhere, which will be important to have the uniform Sobolev constant we
will discuss later in Section \ref{sec:quadratic}.

\begin{figure}[hbt!]
\begin{tikzpicture} [scale=0.8]
	\draw [thick][->] (-6,0) -- (6,0) coordinate ; \node [below right] at (6,0) {$\tau$};
	\draw [fill] (2,0) circle [radius=0.1]; \draw [fill] (4,0) circle [radius=0.1];
	\draw [fill] (-2,0) circle [radius=0.1]; \draw [fill] (-4,0) circle [radius=0.1];
	
	\draw [fill=brown] (3,0) circle [radius=0.06]; \draw [fill=brown] (1,0) circle [radius=0.06];
	\draw [fill=brown] (-3,0) circle [radius=0.06]; \draw [fill=brown] (-1,0) circle [radius=0.06];
	
	\node [above] at (-4,0) {$-\tau(\varepsilon)$}; \node [above] at (-2,0) {$-l/{\varepsilon}$};
	\node [above right] at (0,0) {$0$};
	\node [above] at (4,0) {$\tau(\varepsilon)$}; \node [above] at (2,0) {$l/{\varepsilon}$};

	\node [below] at (-3,0) {$-\frac{l}{\varepsilon}-T(\varepsilon)$};
	\node [below] at (-1,0) {$-\frac{l}{\varepsilon}+T(\varepsilon)$};
	\node [below] at (3,0) {$\frac{l}{\varepsilon}+T(\varepsilon)$};
	\node [below] at (1,0) {$\frac{l}{\varepsilon}-T(\varepsilon)$};
	
	\draw [thick][->] (0,-6) -- (0,6) coordinate ;
	
	\node [right] at (0,3.3) {$\beta_{\delta,\varepsilon}(\tau)$};
	\node [below right, red] at (0,1.5) {$\varepsilon^{1-p}$};
	\node [above right, red] at (0,-2) {$\chi_{\varepsilon}$};
	\node [above right, red] at (0,-4) {$\varepsilon f$};
	\node [below right, red] at (0,-4) {$J_0$};
	
	\draw [dashed] (2,-6) -- (2,6) ;
	\draw [dashed] (-2,-6) -- (-2,6);
	\draw [dashed] (4,-6) -- (4,6) ;
	\draw [dashed] (-4,-6) -- (-4,6);
	
	\draw [brown] [dotted] (1,-6) -- (1,6) ;   
	\draw [brown] [dotted] (-1,-6) -- (-1,6);
	\draw [brown] [dotted] (3,-6) -- (3,6) ;
	\draw [brown] [dotted] (-3,-6) -- (-3,6);
	
	\draw [red] (-2,1.5) -- (2,1.5) ;
	\draw [dashed] [red] (2,1.5) -- (3,1.5) ;
	\draw [dashed] [red] (-2,1.5) -- (-3,1.5) ;
	
	\draw [green] (-2,2.5) to [out=-30, in=210 ] (2,2.5);
	\draw [dashed] [green] (2,2.5) to [out=30, in=240 ] (3,3.5);
	\draw [dashed] [green] (-2,2.5) to [out=150, in=-60 ] (-3,3.5);
	
	\node [green] at (0,2.8) {$\rho_{\varepsilon}(\tau)={\varepsilon}^{1-p+\delta}(1+|\tau|)^{\delta}$};
	
	\draw [dashed] [blue] (1,5) to [out=-80, in=150 ] (2,2.5);
	\draw [blue] (2,2.5) to [out=-30, in=180 ] (4,2);
	\draw [blue] (4,2)--(5,2);
	
	\draw [dashed] [blue] (-1,5) to [out=-100, in=30 ] (-2,2.5); 
	\draw [blue] (-2,2.5) to [out=210, in=0 ] (-4,2);
	\draw [blue] (-4,2)--(-5,2);
	
	\node [above left, blue] at (-4,2) {$e^{2\pi\delta(\tau-\tau(\varepsilon))}$};
	\node [above right, blue] at (4,2) {$e^{2\pi\delta(-\tau+\tau(\varepsilon))}$};
	
	\node at (-6,3) {weighting functions}; 

	\draw [blue] (-5,-2)--(-2.5,-2);   \draw [dotted] [blue] (-2.5,-2)--(-2,-2);
	\draw [red] (-2,-2) -- (2,-2) ;
	\draw [blue] (5,-2)--(2.5,-2);   \draw [dotted] [blue] (2.5,-2)--(2,-2);
	
	\node [above left, blue ] at (-4,-2) {$u^{\varepsilon}_{-}$};
	\node [above right, blue] at (4,-2) {$u^{\varepsilon}_{+}$};
	
	\node at (-6,-1) {preglued solution}; 
	
	\draw [blue] (-6,-4)--(-4.5,-4);   \draw [dotted] (-4.5,-4)--(-4,-4); 
	\draw [blue] (6,-4)--(4.5,-4);   \draw [dotted] (4.5,-4)--(4,-4); 
	
	\draw [dashed] (-2,-4) to (-4,-4); 
	\draw [dashed] (2,-4) to (4,-4);  
	
	\draw [red] (-2,-4) -- (2,-4) ;  
	
	\node [above, blue] at (-5,-4) {$\kappa_{\varepsilon}^{-}H_t$}; \node [below, blue] at (-5,-4) {$J_{\varepsilon}^-$};
	\node [above, blue] at (5,-4) {$\kappa_{\varepsilon}^{+}H_t$}; \node [below, blue] at (5,-4) {$J_{\varepsilon}^+$};
	\node [above] at (-3,-4) {$0$};   \node [below] at (-3,-4) {$J_0$};
	\node [above] at (3,-4) {$0$};   \node [below] at (3,-4) {$J_0$};
	
	\node at (-6,-3) {Floer data}; 
	
\end{tikzpicture}
\caption{weighting functions, pre-glued solution, and Floer data}
\end{figure}
\vspace{0.3cm}

In the following figure we put the graphs of various weighting functions
together, where the higher constant weight $\varepsilon ^{1-p}$ in the
adiabatic weight $\Vert \cdot \Vert _{W_{\varepsilon }^{1,p}}$ is in red
horizontal line, power order weight $\rho _{\varepsilon }(\tau )$ is in
green, and the exponential weight is in blue. The weight $\beta _{\delta
,\varepsilon }(\tau )$ is the glue of the power weight and the exponential
weight with smoothing at the corners, but to avoid too many graphs in the
picture we did not draw the smoothing. The two intervals $[l/\varepsilon
-T(\varepsilon ),l/\varepsilon +T(\varepsilon )]$ and $[-l-T(\varepsilon
),-l/\varepsilon +T(\varepsilon )]$ cut by four brown vertical lines are the
places where weighting function comparison occurs in right inverse
estimates, where $T\left( \varepsilon \right) =\frac{1}{3}\frac{p-1}{\delta }%
S\left( \varepsilon \right) $. For convenience of readers, we also include
the schematic picture of the preglued solution $u_{\textrm{app}}^{\varepsilon }$ and
Floer datum $(K_{\varepsilon },J_{\varepsilon })$ of the perturbed
Cauchy-Riemann equation. Note that in interval $\pm \lbrack l/\varepsilon
,\tau (\varepsilon )]$, the Hamiltonian $K_{\varepsilon }=0$ and $%
J_{\varepsilon }=J_{0}$ because of the cut-off function $\kappa
_{\varepsilon }^{\pm }(\tau )$.

This Banach manifold can be regarded as the gluing of the Banach manifolds $%
\mathcal{B}_{\pm }$ and $\mathcal{B}_{\chi _{\varepsilon }}$. More precisely
${\mathcal{B}}_{\text{\rm res}}^{\varepsilon }(z_{-},z_{+};B;l/\varepsilon )$ $\left(
l\geq l_{0}>0\right) $ consists of maps $u:\Sigma _{\varepsilon }\rightarrow
M$ satisfying:

\begin{enumerate}
\item $\Sigma _{\varepsilon }$ is diffeomorphic ${\mathbb{R}}\times S^{1}$
but decomposed into $3$ cylinders according to the Floer data, where $S_{\pm
}={\mathbb{R}}\times S^{1}$ are for the two the two ends, and $[-\tau
(\varepsilon ),\tau (\varepsilon )]\times S^{1}$ is for the part near
gradient segments: We take the family $\Sigma _{\varepsilon }$ of glued surfaces
constructed by the gluing process laid out in Choice \ref{choice:gluing-domains},
which we just denote by
\begin{equation}
\Sigma _{\varepsilon }=S_{-}\cup ([-\tau (\varepsilon ),\tau (\varepsilon
)]\times S^{1})\cup S_{+}.  \label{eq:Sigmae}
\end{equation}

\item $u\in W_{loc}^{1,p}(\Sigma _{\varepsilon },M)$

\item $\lim_{\tau \rightarrow +\infty }u(\tau ,t)=z_{+}(t)$ and $\lim_{\tau
\rightarrow -\infty }u(\tau ,t)=z_{-}(t)$ for all $t\in S^{1}$.

\item For $u\in \mathcal{B}_{\varepsilon }$, and any variation vector field $%
\xi \in \Gamma \left( W^{1,p}\left( u^{\ast }TM\right) \right) $, we define
the Banach norm to be
\begin{equation}
\left\Vert \xi \right\Vert _{\varepsilon }=\left\Vert \widetilde{\xi }%
\right\Vert _{W_{\beta _{\delta ,\varepsilon }}^{1,p}\left( \Sigma
_{\varepsilon }\right) }+\left\Vert \xi _{0}\right\Vert _{W_{\varepsilon
}^{1,p}\left( -l/\varepsilon ,l/\varepsilon \right) }+\left\vert \xi
_{0}\left( \pm l/\varepsilon \right) \right\vert ,  \label{eq:xienorm}
\end{equation}%
where $\xi $ is decomposed into the \textquotedblleft zero
mode\textquotedblright\ $\xi _{0}$ and \textquotedblleft higher
mode\textquotedblright\ $\widetilde{\xi }$ for $\left\vert \tau \right\vert
\leq l/\varepsilon $,  \
\begin{eqnarray}
\xi _{0}\left( \tau \right)  &=&\left\{
\begin{tabular}{ll}
$\int_{S^{1}}\xi \left( \tau ,t\right) dt,$ & $\left\vert \tau \right\vert
\leq l/\varepsilon $ \\
$\kappa _{\varepsilon }^{0}\left( \tau \right) \int_{S^{1}}\xi \left( \pm
l/\varepsilon ,t\right) dt,$ & $\left\vert \tau \right\vert \geq
l/\varepsilon $%
\end{tabular}%
\right. ,  \notag \\
\widetilde{\xi }\left( \tau ,t\right)  &=&\xi \left( \tau ,t\right) -\xi
_{0}\left( \tau \right) ,  \label{eq:xienorms} \\
\left\Vert \widetilde{\xi }\right\Vert _{W_{\beta _{\delta ,\varepsilon
}}^{1,p}\left( \Sigma _{\varepsilon }\right) }^{p} &=&\int \int_{\Sigma
_{\varepsilon }}\left( |\widetilde{\xi }|^{p}+|\nabla \widetilde{\xi }%
|^{p}\right) \beta _{\delta ,\varepsilon }\left( \tau \right) dvol_{\Sigma
_{\varepsilon }}.  \notag
\end{eqnarray}
\end{enumerate}

Therefore, we have an $\varepsilon $-family of Banach manifolds ${\mathcal{B}%
}_{\text{\rm res}}^{\varepsilon }(z_{-},z_{+};l/\varepsilon )$, and an $\varepsilon $%
-family of equations ${\overline{\partial }}_{(J_{\varepsilon
},K_{\varepsilon })}u^{\varepsilon }=0$ defined on each Banach bundle
\begin{equation*}
\pi :{\mathcal{L}}_{\text{\rm res}}^{\varepsilon }(z_{-},z_{+};l/\varepsilon
)\rightarrow {\mathcal{B}}_{\text{\rm res}}^{\varepsilon }(z_{-},z_{+};l/\varepsilon ),
\end{equation*}%
where
\begin{equation*}
{\mathcal{L}}_{\text{\rm res}}^{\varepsilon }(z_{-},z_{+};l/\varepsilon )=\bigcup_{u\in
{\mathcal{B}}_{\text{\rm res}}^{\varepsilon }(z_{-},z_{+};l/\varepsilon )}L_{\beta
_{\delta ,\varepsilon }}^{p}(\Lambda ^{0,1}(u^{\ast }TM)).
\end{equation*}%
Here each fiber $L_{\beta _{\delta ,\varepsilon }}^{p}(\Lambda
^{0,1}(u^{\ast }TM))$ consists of sections $\eta \in L^{p}\left( \Lambda
^{0,1}(u^{\ast }TM)\right) $ with $\Vert \eta \Vert _{\varepsilon }<\infty $
and the norm $\Vert \eta \Vert _{\varepsilon }$ is given by
\begin{equation}
\left\Vert \eta \right\Vert _{\varepsilon }=\left\Vert \widetilde{\eta }%
\right\Vert _{L_{\beta _{\delta ,\varepsilon }}^{p}\left( \Sigma
_{\varepsilon }\right) }+\left\Vert \kappa _{\varepsilon }^{0}\left( \tau
\right) \eta _{0}\left( \tau \right) \right\Vert _{L_{\varepsilon }^{p}\left[
-l/\varepsilon ,l/\varepsilon \right] },  \label{eq:etanorm}
\end{equation}%
where $\overline{\eta },\eta _{0}$ and $\widetilde{\eta }$ are defined
similarly as those for $\xi ,$ namely
\begin{eqnarray}
\eta _{0}\left( \tau \right)  &=&\left\{
\begin{tabular}{ll}
$\int_{S^{1}}\eta \left( \tau ,t\right) dt,$ & $\left\vert \tau \right\vert
<l/\varepsilon $ \\
$0$ & $\left\vert \tau \right\vert \geq l/\varepsilon $%
\end{tabular}%
\right. ,  \notag \\
\widetilde{\eta }\left( \tau ,t\right)  &=&\eta \left( \tau ,t\right) -\eta
_{0}\left( \tau \right) ,  \label{eq:etanorms} \\
\left\Vert \widetilde{\eta }\right\Vert _{L_{\beta _{\delta ,\varepsilon
}}^{p}\left( \Sigma _{\varepsilon }\right) }^{p} &=&\int \int_{\Sigma
_{\varepsilon }}\left\vert \widetilde{\eta }\right\vert ^{p}\beta _{\delta
,\varepsilon }\left( \tau \right) dvol_{\Sigma _{\varepsilon }}.  \notag
\end{eqnarray}

We fix a constant $l_0> 0$ once and for all and define
\begin{eqnarray}
{\mathcal{B}}_{\text{\rm res}}^{\varepsilon }(z_{-},z_{+};B) &=&\bigcup_{l\geq l_{0}}{%
\mathcal{B}}_{\text{\rm res}}^{\varepsilon }(z_{-},z_{+};B;l/\varepsilon )
\label{eq:CBe} \\
{\mathcal{L}}_{\text{\rm res}}^{\varepsilon }(z_{-},z_{+};B) &=&\bigcup_{l\geq l_{0}}{%
\mathcal{L}}_{\text{\rm res}}^{\varepsilon }(z_{-},z_{+};B;l/\varepsilon ).
\label{eq:CLe}
\end{eqnarray}%
For $(u,l/\varepsilon )\in {\mathcal{B}}_{\text{\rm res}}^{\varepsilon }(z_{-},z_{+};B)$%
, its tangent space consists of elements $(\xi ,\mu )$ where $\xi \in T_{u}{%
\mathcal{B}}_{\text{\rm res}}^{\varepsilon }(z_{-},z_{+};B;l/\varepsilon )$ and $\mu
\in T_{l/\varepsilon }{\mathbb{R}}_{+}\cong {\mathbb{R}}$ with the norm
\begin{equation*}
\Vert (\xi ,\mu )\Vert _{\varepsilon }=\Vert \xi \Vert _{\varepsilon }+|\mu
|.
\end{equation*}%
%
%
%
%
%
%
%
%
%
%
%
%
%
%
%
%
%
%
%
%
%
%
%
%
%
%
%
Geometrically $\mu $ corresponds to the variation of conformal structure of
the neck cylinder $\left[ -l/\varepsilon ,l/\varepsilon \right] \times S^{1}$
by varying the length of the cylinder but keeping the radius of $S^{1}$
fixed. For $\mu \in T_{l/\varepsilon }\mathbb{R}$, the induced path in ${%
\mathcal{B}}_{\text{\rm res}}^{\varepsilon }\left( z_{-},z_{+};B\right) $, starting
from $u\in {\mathcal{B}}_{\text{\rm res}}^{\varepsilon }\left(
z_{-},z_{+};B;l/\varepsilon \right) $, is $u_{s}\in {\mathcal{B}}%
_{\text{\rm res}}^{\varepsilon }\left( z_{-},z_{+};B;l/\varepsilon -s\mu \right) $,
where $u_{s}(\tau ^{\prime },t)$ is the reparameterization of $u(\tau ,t)$
on the neck part,%
\begin{equation}
u_{s}(\tau ^{\prime },t):=u\left( \frac{\tau ^{\prime }}{l-s\varepsilon \mu }%
,t\right) ,\text{ \ \ \ \ \ }\left( \tau ^{\prime },t\right) \in \left[
-\left( l/\varepsilon -s\mu \right) ,l/\varepsilon -s\mu \right] \times
S^{1},  \label{reparameterize-tau}
\end{equation}%
for $s$ nearby $0$. There is a canonical way to associate points on $u$ to
points on $u_{s}:$%
\begin{equation*}
u\left( \tau ,t\right) \longleftrightarrow u_{s}\left( \tau ^{\prime
},t\right) ,\text{ \ \ \ \ \ where }\tau ^{\prime }=\frac{(l-s\varepsilon
\mu )\tau }{l}.
\end{equation*}%
Using this identification we can realize the variation of conformal
structure as a vector field on $\left[ -l/\varepsilon ,l/\varepsilon \right]
\times S^{1}$.

In summary, we have defined the Banach norm for $\xi $ in $T_{u}{\mathcal{B}}%
_{\text{\rm res}}^{\varepsilon }(z_{-},z_{+};l/\varepsilon )$ by
\begin{equation}
\left\Vert \xi \right\Vert _{\varepsilon }=\left\Vert \widetilde{\xi }%
\right\Vert _{W_{\beta _{\delta ,\varepsilon }}^{1,p}\left( \mathbb{R}\times
S^{1}\right) }+\left\Vert \xi _{0}\right\Vert _{W_{\varepsilon }^{1,p}\left( %
\left[ -l/\varepsilon ,l/\varepsilon \right] \right) }+\left\vert \xi
_{0}\left( \pm l/\varepsilon \right) \right\vert ,  \label{eq:normxie}
\end{equation}%
and the Banach norm for $\eta \in \Gamma \left( L^{p}\left( u^{\ast
}TM\otimes \Lambda ^{0,1}\left( \Sigma _{\varepsilon }\right) \right)
\right) $ by
\begin{equation}
\left\Vert \eta \right\Vert _{\varepsilon }=\left\Vert \widetilde{\eta }%
\right\Vert _{L_{\beta _{\delta ,\varepsilon }}^{p}\left( \mathbb{R}\times
S^{1}\right) }+\left\Vert \kappa _{\varepsilon }^{0}\left( \tau \right) \eta
_{0}\left( \tau \right) \right\Vert _{L_{\varepsilon }^{p}\left[
-l/\varepsilon ,l/\varepsilon \right] }.  \label{eq:normeta}
\end{equation}

\begin{rem}
In the norm $\left\Vert \xi \right\Vert _{\varepsilon }$ we could have
dropped the term $\left\vert \xi _{0}\left( \pm l/\varepsilon \right)
\right\vert $ but still get an equivalent norm, because from Sobolev
embedding we have
\begin{equation*}
\left\vert \xi _{0}\left( \pm l/\varepsilon \right) \right\vert \leq
C\left\Vert \xi _{0}\right\Vert _{W_{\varepsilon }^{1,p}\left(
[-l/\varepsilon ,l/\varepsilon ]\right) }
\end{equation*}%
where $C$ is uniform for all $\varepsilon $ and $l\geq l_{0}>0$. However we
still keep the $\left\vert \xi _{0}\left( \pm l/\varepsilon \right)
\right\vert $ term because $\left\Vert \widetilde{\xi }\right\Vert
_{W_{\beta _{\delta ,\varepsilon }}^{1,p}\left( \left\{ \left\vert \tau
\right\vert >l/\varepsilon \right\} \times S^{1}\right) }+\left\vert \xi
_{0}\left( \pm l/\varepsilon \right) \right\vert $ mimics the Banach norm of
$\xi _{\pm }$ and $\left\Vert \widetilde{\xi }\right\Vert _{W_{\beta
_{\delta ,\varepsilon }}^{1,p}\left( \left\{ \left\vert \tau \right\vert
\leq l/\varepsilon \right\} \times S^{1}\right) }+\left\Vert \xi _{0}\left(
\tau \right) \right\Vert _{W_{\varepsilon }^{1,p}\left( \left[
-l/\varepsilon ,l/\varepsilon \right] \right) }$ mimics the Banach norm of $%
\xi _{\chi _{\varepsilon }}$ and in this sense the norm $\left\Vert \xi
\right\Vert _{\varepsilon }$ is roughly the sum of them.
\end{rem}

Finally we consider the Fredholm section
\begin{equation*}
{\mathcal{B}}_{\text{\rm res}}^{\varepsilon }(z_{-},z_{+};B)\rightarrow {\mathcal{L}}%
_{\text{\rm res}}^{\varepsilon }(z_{-},z_{+};B)
\end{equation*}%
given by
\begin{equation*}
(u,l/\varepsilon )\mapsto {\overline{\partial }}_{K_{\varepsilon
},J_{\varepsilon };l/\varepsilon }(u):{\mathcal{B}}_{\text{\rm res}}^{\varepsilon
}(z_{-},z_{+};B)\rightarrow {\mathcal{L}}_{\text{\rm res},u}^{\varepsilon
}(z_{-},z_{+};B;l/\varepsilon )
\end{equation*}%
defined fiberwise over $l/\varepsilon \in {\mathbb{R}}$. We denote the
linearization of this map by $D^\varepsilon_{\Phi;\text{\textrm{para}}}$
which has expression
\begin{equation}
D^\varepsilon_{\Phi;\text{\textrm{para}}}(\xi ,\mu
)=D^\varepsilon_{\Phi}(\xi )+\frac{\mu }{l}\nabla \varepsilon f(\chi
_{\varepsilon }).  \label{eq:tildeDe}
\end{equation}

\section{$\overline{\partial }_{\left( K_{\protect\varepsilon },J_{\protect%
\varepsilon }\right) }$-error estimate\label{section:d-bar-error}}

In this section, we estimate the norm $\Vert \overline{\partial }%
_{(K_{\varepsilon },J_{\varepsilon })}u_{\textrm{app}}^{\varepsilon }\Vert
_{\varepsilon }$. Let $\eta =\overline{\partial }_{(K_{\varepsilon
},J_{\varepsilon })}u_{\textrm{app}}^{\varepsilon }$. We do this estimation
separately in several regions separately: \medskip

(1). When $\left\vert \tau \right\vert \leq l/\varepsilon $, $%
u_{\textrm{app}}^{\varepsilon }\left( \tau ,t\right) =\chi \left( \varepsilon \tau
\right) $, so%
\begin{equation*}
\overline{\partial }_{\left( K_{\varepsilon },J_{\varepsilon }\right)
}u_{\textrm{app}}^{\varepsilon }\left( \tau ,t\right) =\overline{\partial }_{\left(
J_{0},\varepsilon f\right) }\chi \left( \varepsilon \tau \right) =\frac{%
\partial }{\partial \tau }\chi \left( \varepsilon \tau \right) -\varepsilon
\nabla f\left( \chi \left( \varepsilon \tau \right) \right) =0;
\end{equation*}

(2). When $l/\varepsilon \leq \left\vert \tau \right\vert \leq l/\varepsilon
+1$, say $\tau >0$ case we have $u_{\textrm{app}}^{\varepsilon }\left( \tau ,t\right)
=\kappa _{\varepsilon }^{0}\left( \tau \right) \chi (\varepsilon \tau
)+\left( 1-\kappa _{\varepsilon }^{0}\left( \tau \right) \right) u_{\pm
}^{\varepsilon }\left( \tau ,t\right) $, so%
\begin{eqnarray}
\left\vert \eta \left( \tau ,t\right) \right\vert &=&\left\vert \overline{%
\partial }_{\left( J_{0},\varepsilon f\right) }u_{\textrm{app}}^{\varepsilon
}\right\vert  \notag \\
&=&\left\vert \left( \kappa _{\varepsilon }^{0}\left( \tau \right) \right)
^{\prime }\left( \chi \left( \varepsilon \tau \right) -u_{\pm }^{\varepsilon
}\right) +\left( 1-\kappa _{\varepsilon }^{0}\left( \tau \right) \right)
\overline{\partial }_{\left( J_{0},\varepsilon f\right) }u_{\pm
}^{\varepsilon }\right\vert  \notag \\
&\leq &C\left( \left\vert \chi \left( \varepsilon \tau \right) -u_{\pm
}^{\varepsilon }\left( \tau ,t\right) \right\vert +\left\vert \varepsilon
\nabla f\left( u_{\pm }^{\varepsilon }\left( \tau ,t\right) \right)
\right\vert \right)  \notag \\
&\leq &\widetilde{C}\left( l\right) \varepsilon +C\varepsilon :=C\left(
l\right) \varepsilon ,  \label{d-bar-error}
\end{eqnarray}%
where in the first inequality we have used $\overline{\partial }_{\left(
J_{0},\varepsilon f\right) }u_{\pm }^{\varepsilon }=\varepsilon \nabla
f\left( u_{\pm }^{\varepsilon }\right) $ and in the second inequality used $%
\left( \ref{dist-xp}\right) $ and $\left( \ref{dist-up}\right) $, and $%
\widetilde{C}\left( l\right) =Cl^{-\frac{p-1}{\delta }}$. The weight $\beta
_{\delta ,\varepsilon }$ in this interval satisfies%
\begin{equation*}
\beta _{\delta ,\varepsilon }\left( \tau \right) =e^{2\pi \delta \left(
-\tau +\tau \left( \varepsilon \right) \right) }\leq e^{2\pi \delta \left(
-l/\varepsilon +\tau \left( \varepsilon \right) \right) }=e^{\left(
p-1\right) \ln \left( 1+l/\varepsilon \right) }\leq D\left( l\right)
\varepsilon ^{1-p},
\end{equation*}%
where the constant $D\left( l\right) \approx l^{p-1}$. Since $\left\vert
\tau \right\vert \geq l/\varepsilon $, we do not need to distinguish the $0$%
-mode and higher mode of $\eta $. So we have
\begin{equation*}
\left\Vert \eta \left( \tau \right) \right\Vert _{L_{\beta _{\delta
,\varepsilon }}^{p}\left( \left[ l/\varepsilon 1,l/\varepsilon +1\right]
\times S^{1}\right) }\leq \left( \int_{l/\varepsilon }^{l/\varepsilon
+1}\left( C\left( l\right) \varepsilon \right) ^{p}\cdot D\left( l\right)
\varepsilon ^{1-p}d\tau \right) ^{\frac{1}{p}},
\end{equation*}%
A straightforward estimate shows the following

\begin{lem}
If we write
\begin{equation*}
\left( \int_{l/\varepsilon }^{l/\varepsilon +1}\left( C\left( l\right)
\varepsilon \right) ^{p}\cdot D\left( l\right) \varepsilon ^{1-p}d\tau
\right) ^{\frac{1}{p}} =:E(l) \varepsilon^{\frac1p}
\end{equation*}
then we have
\begin{equation*}
E\left( l\right) \approx \left( C+l^{-\frac{p-1}{\delta }}\right) l^{\frac{%
p-1}{p}}.
\end{equation*}
\end{lem}

Similar result holds for $-l/\varepsilon -1\leq \tau \leq -l/\varepsilon $
case.

(3). When $\left\vert \tau \right\vert >l/\varepsilon $, say $\tau
>l/\varepsilon $, we recall
\begin{equation*}
u_{\textrm{app}}^{\varepsilon }\left( \tau ,t\right) =u_{+}\circ I_{\tau (\varepsilon
)}(\tau ,t)=u_{+}(\tau -\tau (\varepsilon ),t)
\end{equation*}%
and so satisfies
\begin{equation*}
\overline{\partial }_{\left( K_{\varepsilon },J_{\varepsilon }\right)
}u_{\textrm{app}}^{\varepsilon }\left( \tau ,t\right) =\overline{\partial }_{\left(
K_{\varepsilon }^{+},J_{\varepsilon }^{+}\right) }u_{+}\left( \tau -\tau
(\varepsilon ),t\right) =0.
\end{equation*}%
Similarly $\overline{\partial }_{\left( K_{\varepsilon },J_{\varepsilon
}\right) }u_{\textrm{app}}^{\varepsilon }\left( \tau ,t\right) =0$ for $\tau
<-l/\varepsilon .$

Combining the above three pieces, we have

\begin{prop}
\label{prop:error-estimate}
\begin{equation}
\left\Vert \overline{\partial }_{\left( J,\varepsilon f\right)
}u_{\textrm{app}}^{\varepsilon }\right\Vert _{_{L_{\beta _{\delta ,\varepsilon
}}^{p}\left( \mathbb{R}\times S^{1}\right) }}\leq E\left( l\right)
\varepsilon ^{\frac{1}{p}},  \label{eq:error}
\end{equation}%
where the constant $E\left( l\right) \approx \left( C+l^{-\frac{p-1}{\delta }%
}\right) l^{\frac{p-1}{p}}$.
\end{prop}

\begin{rem}
If we use $u_{\textrm{app}}^{\varepsilon }=\exp _{\chi \left( \varepsilon \tau
\right) }\bigskip \left[ \left( 1-\kappa _{\varepsilon }^{0}\left( \tau
\right) \right) \exp _{\chi (\varepsilon \tau )}^{-1}\left( u_{\pm
}^{\varepsilon }\left( \tau ,t\right) \right) \right] $ for $\tau \in
\lbrack l/\varepsilon ,l/\varepsilon +1]$, then $\left\vert \eta \right\vert
=$ $\left\vert \overline{\partial }_{(J_{0},\varepsilon
f)}u_{\textrm{app}}^{\varepsilon }\right\vert $ is controlled pointwise by $%
C\varepsilon $ as above case $\left( 2\right) $, plus the $C^{1}$ difference
between $\exp _{\chi \left( \varepsilon \tau \right) }\bigskip \left[ \left(
1-\kappa _{\varepsilon }^{0}\left( \tau \right) \right) \exp _{\chi
(\varepsilon \tau )}^{-1}\left( u_{\pm }^{\varepsilon }\left( \tau ,t\right)
\right) \right] $ and $\kappa _{\varepsilon }^{0}\left( \tau \right) \chi
(\varepsilon \tau )+\left( 1-\kappa _{\varepsilon }^{0}\left( \tau \right)
\right) u_{-}^{\varepsilon }\left( \tau ,t\right) $, which is also of order $%
C\varepsilon $. Therefore we get the same pointwise estimate
\begin{equation*}
\left\vert \eta \right\vert =\left\vert \overline{\partial }%
_{(J_{0},\varepsilon f)}u_{\textrm{app}}^{\varepsilon }\right\vert \leq C\varepsilon .
\end{equation*}%
Continuing the remaining steps in case $\left( 2\right) $ we get the same $%
L_{\beta _{\delta ,\varepsilon }}^{p}$ estimate
\begin{equation*}
\left\Vert \eta \right\Vert _{L_{\beta _{\delta ,\varepsilon }}^{p}\left(
\pm \left[ l/\varepsilon -1,l/\varepsilon \right] \times S^{1}\right) }\leq
E\left( l\right) \varepsilon ^{\frac{1}{p}}.
\end{equation*}%
The case $\left( 1\right) $ and $\left( 3\right) $ are the same as above.
Therefore we still get
\begin{equation*}
\left\Vert \overline{\partial }_{\left( J_{0},\varepsilon f\right)
}u_{\textrm{app}}^{\varepsilon }\right\Vert _{_{L_{\beta _{\delta ,\varepsilon
}}^{p}\left( \mathbb{R}\times S^{1}\right) }}\leq E\left( l\right)
\varepsilon ^{\frac{1}{p}}.
\end{equation*}
\end{rem}

\begin{rem}
\label{exp-weight-dbar-error}The constant $E\left( l\right) $ is bounded if $%
l$ is in a bounded interval, because we see from the above estimates%
\begin{equation*}
E\left( l\right) \approx \left( C+l^{-\frac{p-1}{\delta }}\right) l^{\frac{%
p-1}{p}}.
\end{equation*}%
So if we assume that $l_{0}\leq l\leq L$, then we get uniform $\overline{%
\partial }_{\left( K_{\varepsilon },J_{\varepsilon }\right) }$ error
estimate. When $l\rightarrow $ $\infty $, $E\left( l\right) \rightarrow
\infty $, so the above estimate in $\left( \ref{d-bar-error}\right) $ is too
coarse. But notice that when $l\rightarrow \infty $, $\nabla f\left( \chi
\left( \pm l\right) \right) $ has exponential decay $e^{-cl}$, so using this
in $\left( \ref{d-bar-error}\right) $ one can get even better $\overline{%
\partial }_{\left( K_{\varepsilon },J_{\varepsilon }\right) }$ error
estimate with $E\left( l\right) \approx \left( Ce^{-cl}+l^{-\frac{p-1}{%
\delta }}\right) l^{\frac{p-1}{p}}\rightarrow 0$ as $l\rightarrow \infty $.
Thus the error estimate is uniform for all $l\geq l_{0}$.
\end{rem}

\section{The combined right inverse}

\label{sec:comb-rightinverse}

To keep notation simple, in the following we denote by $D^\varepsilon_{%
\Phi}:=D_{u_{\textrm{app}}^{\varepsilon }}\overline{\partial }_{\left( K_{\varepsilon
},J_{\varepsilon }\right) }$ the linearization of the section
\begin{equation*}
{\overline{\partial }}_{(K_{\varepsilon },J_{\varepsilon })}:{\mathcal{B}}%
_{\text{\rm res}}^{\varepsilon }(z_{-},z_{+};B;l/\varepsilon)\rightarrow {\mathcal{L}}_{%
\text{\textrm{\text{\rm res}}}}^{\varepsilon }(z_{-},z_{+};B;l/\varepsilon).
\end{equation*}%
We recall from \eqref{eq:tildeDe}  the definition of the linearization
\begin{equation*}
D^\varepsilon_{\Phi;\text{\textrm{para}}}:T_{(u,l/\varepsilon )}{\mathcal{B}}%
_{\text{\rm res}}^{\varepsilon }(z_{-},z_{+};B)\rightarrow {\mathcal{L}}%
_{\text{\rm res};(u,l/\varepsilon )}^{\varepsilon }(z_{-},z_{+};B)
\end{equation*}%
of the parameterized map ${\overline{\partial }}_{(K_{\varepsilon },J_{\varepsilon })}$ for which
$\ell$ is allowed to vary.

In this section, we first construct an approximate right inverse, denoted by
$Q_{\text{\textrm{para}}}^{\text{\textrm{app}};\varepsilon }$, by gluing the right inverses $Q_{\pm }$ of $D{%
\overline{\partial }}_{(K_{\pm },J_{\pm })}(u_{\pm })$ and another operator
that takes care of the part of the gradient segment in the middle.
We recall
\begin{equation*}
T_{(u,l/\varepsilon )}{\mathcal{B}}_{\text{\rm res}}^{\varepsilon }(z_{-},z_{+};B)=T_{u}{%
\mathcal{B}}_{\text{\rm res}}^{\varepsilon }(z_{-},z_{+};B;l/\varepsilon )\oplus
T_{l/\varepsilon }{\mathbb{R}}=W_{\rho _{\varepsilon }}^{(1,p)}(u^{\ast
}TM)\oplus {\mathbb{R}}
\end{equation*}%
and
\begin{equation*}
{\mathcal{L}}_{\text{\rm res};(u,l/\varepsilon )}^{\varepsilon }(z_{-},z_{+};B)={\mathcal{%
L}}_{\text{\rm res};u}^{\varepsilon }(z_{-},z_{+};B;l/\varepsilon )=L_{\rho _{\varepsilon
}}^{p}(u^{\ast }TM)
\end{equation*}%
from the definition \eqref{eq:CBe}, \, \eqref{eq:CLe}. Therefore the image
of $Q_{\text{\textrm{para}}}^{\text{\textrm{app}};\varepsilon }$ at $%
(u,l/\varepsilon )$ is decomposed into
\begin{equation*}
Q_{\text{\textrm{para}}}^{\text{\textrm{app}};\varepsilon }(\eta )=(\xi
_{\varepsilon },\mu ),\,\xi _{\varepsilon }\in W_{\rho _{\varepsilon
}}^{(1,p)}(u^{\ast }TM),\,\mu \in {\mathbb{R}}.
\end{equation*}%
We will define $(\xi _{\varepsilon },\mu)$ by describing the image of the
operator $Q_{\text{\textrm{para}}}^{\text{\textrm{app}};\varepsilon }(\eta )$%
.

We introduce several cut-off functions:

\begin{enumerate}
\item $\kappa _{0}^{\varepsilon }=\kappa _{0}^{\varepsilon }\left( \tau
\right) $ is the characteristic function of the interval $\left[
-l/\varepsilon ,l/\varepsilon \right] \subset \mathbb{R}$,

\item $\varphi _{0}^{K}$ is the cut-off function defined by
\begin{equation*}
\varphi_{0}^{K}\left( \tau \right) =
\begin{cases}
1 \quad & \mbox{ for }\, \left\vert \tau \right\vert \leq K \\
0 \quad & \mbox{ for }\, \left\vert \tau \right\vert >K+1,%
\end{cases}%
\end{equation*}

\item $\varphi_{+}^{K}$ is the cut-off function
\begin{equation*}
\varphi_{+}^{K}\left( \tau \right) =
\begin{cases}
1\quad & \mbox{ for }\, \tau \leq K \\
0 \quad & \mbox{ for }\, \tau >K+1,%
\end{cases}%
\end{equation*}
and $\varphi_{-}^{K}$ is the function defined by $\varphi_-^K\left( \tau
\right) =1-\varphi _{+}^{K}\left( \tau \right) $.
\end{enumerate}

Now let $\eta \in {\mathcal{L}}_{\text{\rm res}}^{\varepsilon }(z_{-},z_{+})$. We split
$\eta $ into 3 pieces of ${\mathbb{R}}$ according to the division of the
expression of the approximate solution \eqref{eq:uappe}:
\begin{equation}
\eta |_{(-\infty ,l/\varepsilon ]},\,\eta _{\lbrack l/\varepsilon ,\infty
)},\,\eta _{\lbrack -l/\varepsilon ,l/\varepsilon ]}.  \label{eq:divided-eta}
\end{equation}%
Multiplying the characteristic functions
\begin{equation*}
\kappa _{-}^{\varepsilon },\,\kappa _{+}^{\varepsilon },\,\kappa
_{0}^{\varepsilon }
\end{equation*}%
of the corresponding intervals, we regard each of them defined on the whole
cylinder ${\mathbb{R}}\times S^{1}$.

\begin{rem}
The smooth cut-off functions we used before in $\left( \ref{eq:betaR}\right)
,\left( \ref{eq:beta0}\right) $ are $\kappa _{\varepsilon }^{-},\,\kappa
_{\varepsilon }^{+},\kappa _{\varepsilon }^{0}$, \ whose notations are
similar to the characteristic functions here but lower and upper indices are
switched.
\end{rem}

Consider the translations of the first and the third pieces
\begin{equation*}
\eta_\pm^\varepsilon(\tau,t) := (\kappa_{\pm}^{\varepsilon }\eta) \circ
I_{\pm \tau(\varepsilon)}(\tau,t) = (\kappa_{\pm}^{\varepsilon }\eta)(\tau -
\pm \tau(\varepsilon),t)
\end{equation*}
whose supports become
\begin{equation*}
\left(-\infty,\frac{p-1}{\delta}S(\varepsilon)\right) \times S^1, \, \left(-%
\frac{p-1}{\delta}S(\varepsilon),\infty\right) \times S^1
\end{equation*}
respectively. Then we define
\begin{equation}  \label{eq:xiepm}
\xi^\varepsilon_\pm = Q_\pm(\eta_\pm^\varepsilon)\circ
I_{\pm(-\tau(\varepsilon))}.
\end{equation}

Now we consider the middle piece $\kappa _{0}^{\varepsilon }\eta $ which is
supported on $[-l/\varepsilon ,l/\varepsilon ]\times S^{1}$. To describe
this piece precisely, we need some careful examination how the Banach
manifold \eqref{eq:CBe} and the tangent vectors thereof at the approximate
solution $u_{\varepsilon }$ near $(u_{-},\chi ,u_{+},l)$ are made of, and
how the operator ${\overline{\partial }}_{(K_{\varepsilon },J_{\varepsilon
})}$ acts on $T_{u_{\varepsilon }}{\mathcal{B}}_{\text{\rm res}}^{\varepsilon
}(z_{-},z_{+};B)$.

%
Recall $\chi$ satisfies $\dot \chi + \func{grad} f(\chi) = 0$. Since we
assume that the pair $(f,g)$ is Morse-Smale, the linearization
\begin{equation*}
D_\chi = \nabla_\tau + \nabla \func{grad}f(\chi)
\end{equation*}
is invertible as mentioned before. We denote by $Q_\chi$ its right inverse.

We denote the renormalized $\chi :[-l,l]\rightarrow M$ by
\begin{equation*}
\chi _{\varepsilon }:[-l/\varepsilon ,l/\varepsilon ]\rightarrow M,\quad
\chi _{\varepsilon }(\tau ):=\chi (\varepsilon \tau )
\end{equation*}%
which satisfies $\dot{\chi}_{\varepsilon }+\varepsilon \func{grad}f(\chi )=0$.

We apply the right inverse $Q_{\chi _{\varepsilon }}^{\text{\textrm{para}}}
$ (allowing $l$ to vary) of $D_{\chi _{\varepsilon }}^{\text{\textrm{para}}}$
to $\kappa _{0}^{\varepsilon }\eta $ and write it as
\begin{equation*}
Q_{\chi _{\varepsilon }}^{\text{\textrm{para}}}(\kappa _{0}^{\varepsilon
}\eta )=(\xi _{\chi _{\varepsilon }},\mu ).
\end{equation*}%
We denote by $\pi_l:[-l/\varepsilon ,l/\varepsilon ] \times S^1 \to [-l/\varepsilon ,l/\varepsilon ]$ the natural projection.
Then it follows from \eqref{eq:tildeDe} that the operator $D_{\Phi ;\text{\textrm{para}}}^{\varepsilon }$
induces an operator
\begin{equation*}
D_{\Phi ;\text{\textrm{para}}}^{\varepsilon,\chi }:W^{1,p}((\chi _{\varepsilon
} \circ \pi_l)^* TM)\oplus {\mathbb{R}}\rightarrow L^{p}((\chi
_{\varepsilon } \circ \pi_l)^* TM)
\end{equation*}%
which has the form
\begin{equation*}
D_{\Phi ;\text{\textrm{para}}}^{\varepsilon,\chi }(\xi ,\mu )=D_{\chi
_{\varepsilon }}(\xi )+\frac{\mu }{l}\nabla \varepsilon f(\chi _{\varepsilon
})
\end{equation*}%
with $D_{\chi _{\varepsilon }}=\frac{\partial }{\partial \tau }+J_{0}\frac{%
\partial }{\partial t}+\nabla \,$grad$\left( \varepsilon f\right) $. First
we define
\begin{equation*}
\xi _{\chi _{\varepsilon }}=Q_{\chi _{\varepsilon }}(\eta _{\chi
_{\varepsilon }})
\end{equation*}%
where $Q_{\chi _{\varepsilon }}$ is the right inverse of $D_{\chi
_{\varepsilon }}$ constructed in section \ref{sec:Qinmiddle}. Then we
determine $\mu $ by solving the $0$\emph{-mode matching condition}
\begin{equation}
\left( \xi _{\chi _{\varepsilon }}\right) _{0}\left( \pm l/\varepsilon
\right) +\frac{\varepsilon \mu }{l}\nabla f\left( p_{\pm }\right) =\xi _{\pm
}\left( o_{\pm }\right):  \label{eq:defmu}
\end{equation}%
the constant $\mu $ is always solvable by thimble-flow-thimble transversality in Section %
\ref{sec:tft-transversality}. Now we are ready to write down the formula for
$(\xi _{\varepsilon },\mu )=Q_{\text{\textrm{para}}}^{\text{\textrm{app}}%
;\varepsilon }(\eta )$. Here we define
\begin{eqnarray}
\xi _{\varepsilon } &=&%
\begin{cases}
\xi _{+}^{\varepsilon },\hfill  & l/\varepsilon +T\left( \varepsilon \right)
\leq \tau , \\
\func{Pal}_{\chi ,\varepsilon }\left[ \left( \xi _{\chi _{\varepsilon
}}\right) _{0}+\phi _{0}^{l/\varepsilon +T\left( \varepsilon \right) }\left(
\tau \right) \left( \xi _{\chi _{\varepsilon }}-\left( \xi _{\chi
_{\varepsilon }}\right) _{0}\right) \right]  &  \\
\qquad +\func{Pal}_{+,\varepsilon }\left[ \phi _{+}^{\left( l/\varepsilon
-T\left( \varepsilon \right) \right) }\left( \tau \right) \left( \xi
_{+}^{\varepsilon }-v_{+}\right) \right] ,\hfill  & l/\varepsilon \leq \tau
\leq l/\varepsilon +T\left( \varepsilon \right) , \\
\xi _{\chi _{\varepsilon }},\hfill  & -l/\varepsilon \leq \tau \leq
l/\varepsilon , \\
\func{Pal}_{\chi ,\varepsilon }\left[ \left( \xi _{\chi _{\varepsilon
}}\right) _{0}+\phi _{0}^{l/\varepsilon +T\left( \varepsilon \right) }\left(
\tau \right) \left( \xi _{\chi _{\varepsilon }}-\left( \xi _{\chi
_{\varepsilon }}\right) _{0}\right) \right]  &  \\
\qquad +\func{Pal}_{-,\varepsilon }\left[ \phi _{-}^{\left( l/\varepsilon
-T\left( \varepsilon \right) \right) }\left( \tau \right) \left( \xi
_{-}^{\varepsilon }-v_{-}\right) \right] ,\hfill  & \ -l/\varepsilon
-T\left( \varepsilon \right) \leq \tau \leq -l/\varepsilon , \\
\xi _{-}^{\varepsilon },\hfill  & \tau \leq -l/\varepsilon -T\left(
\varepsilon \right) ,%
\end{cases}
\notag  \label{eq:xi-e1} \\
&{}&
\end{eqnarray}%
where
\begin{eqnarray*}
\xi _{\chi _{\varepsilon }} &=&Q_{\chi _{\varepsilon }}\circ \left( \kappa
_{0}^{\varepsilon }\eta \right) , \\
~\left( \xi _{\chi _{\varepsilon }}\right) _{0} &=&\int_{S^{1}}\xi _{\chi
_{\varepsilon }}dt, \\
\xi _{\pm }^{\varepsilon } &=&Q_{\pm }(\eta _{\pm }^{\varepsilon })\circ
I_{\pm (-\tau (\varepsilon ))}
\end{eqnarray*}%
and $\mu $ is determined by \eqref{eq:defmu}.

Here we recall from \eqref{Vpm} that $v_{\pm }$ is defined as the \emph{%
cut-off vector field} of the constant vector field extending the vector $\xi
_{\pm }\left( o_{\pm }\right) $,%
\begin{equation}
v_{\pm }:=\varphi _{\pm }\left( \tau ,t\right) \func{Pal}_{\pm }\left( \tau
,t\right) \left( \xi _{\pm }\left( o_{\pm }\right) \right) ,
\label{v-extension}
\end{equation}%
$\varphi _{\pm }\left( \tau ,t\right) $ is a cut-off function, $\varphi
_{\pm }\left( \tau ,t\right) =1$ near $o_{\pm }$ and $\varphi _{\pm }\left(
\tau ,t\right) =0$ outside the cylindrical neighborhood of $o_{\pm }$, $%
\func{Pal}_{\pm }\left( \tau ,t\right) $ is the parallel transport along the
shortest geodesics from $u_{\pm }\left( o_{\pm }\right) $ to $u_{\pm }\left(
\tau ,t\right) $, and the $\func{Pal}_{\chi ,\varepsilon }$ and $\func{Pal}%
_{\pm ,\varepsilon }$ are parallel transports from $\chi $ and $u_{\pm }$ to
the corresponding points on $u_{\textrm{app}}^{\varepsilon }$ along the shortest
geodesics respectively.

We apply the interpolation to the regions
\begin{equation*}
\pm \lbrack l/\varepsilon -T\left( \varepsilon \right) ,l/\varepsilon
-T\left( \varepsilon \right) +1]\text{ and }\pm \left[ l/\varepsilon
+T\left( \varepsilon \right) ,l/\varepsilon +T\left( \varepsilon \right) +1%
\right] ,
\end{equation*}%
which avoids the peak $\tau =\pm l/\varepsilon $ of the weighting function $%
\beta _{\delta ,\varepsilon }$. Here we choose $T\left( \varepsilon \right)
>0$ so that as $\varepsilon \rightarrow 0$, it behaves as
\begin{equation*}
T\left( \varepsilon \right) \rightarrow \infty,\, \, \varepsilon T\left(
\varepsilon \right) \rightarrow 0,\, T\left( \varepsilon \right) <\frac{p-1}{%
\delta }S\left( \varepsilon \right)
\end{equation*}%
For example, we can take and fix
\begin{equation}  \label{eq:Te}
T\left( \varepsilon \right) =\frac{1}{3}\frac{p-1}{\delta }S(\varepsilon )
\end{equation}
henceforth.

We note that in this construction we have $\xi _{\chi _{\varepsilon }}$
defined for all $\tau \in \mathbb{R}$, solving $D_{\varepsilon }\xi _{\chi
_{\varepsilon }}=\kappa _{0}^{\varepsilon }\eta $ which is equivalent to the
first order linear PDE
\begin{equation*}
\frac{\partial \xi }{\partial \tau }+J_{0}\frac{\partial \xi }{\partial t}%
+\varepsilon \nabla _{\xi }\,\text{grad}\!f=\kappa _{0}^{\varepsilon }\eta
\end{equation*}%
on $\mathbb{R\times }S^{1}$, not just on $\left[ -l/\varepsilon
,l/\varepsilon \right] $. Since $\kappa _{0}^{\varepsilon }\eta \in L^{p}$, $%
\xi _{\chi _{\varepsilon }}$ lies in $W^{1,p}$ and in particular is
continuous on $\mathbb{R\times }S^{1}$. Therefore, considering the
evaluation of $\left( \xi _{\chi _{\varepsilon }}\right) _{0}$ at $\left(
\pm \left( l/\varepsilon +T\left( \varepsilon \right) \right) \right) $
makes sense. Thinking $\xi _{\chi _{\varepsilon }}$this way is to avoid its
cut-off too close to $\tau =\pm l/\varepsilon $, the peak of the weighting
function. Later we will also show the $W^{1,p}$ norm of $\xi _{\chi
_{\varepsilon }}$on $\left[ -l/\varepsilon ,l/\varepsilon \right] \mathbb{%
\times }S^{1}$ controls its $W^{1,p}$ norm on the whole $\mathbb{R\times }%
S^{1}$.


\begin{rem}
In the construction of $\left( Q_{\text{\textrm{para}}}^{\varepsilon
}\right) _{actual}$, we can use the connection from the constant Euclidean
metric $g_{p_{\pm }}$ on the Weinstein neighborhood around $p_{\pm }$ to
identify different tangent spaces such that the vector sum $\pm $ makes
sense without parallel transport since the interpolation happens in a $%
C\varepsilon $ radius ball around $p_{\pm }$. We set this approximate right
inverse to be $Q_{\text{\textrm{para}}}^{\text{\textrm{app}};\varepsilon }$,
which is simpler than \eqref{eq:xi-e1} in its exposition and estimates:%
\begin{eqnarray*}
\xi _{\varepsilon } =
\begin{cases}
\xi _{+}^{\varepsilon }, \hfill & l/\varepsilon +T\left( \varepsilon \right)
\leq \tau , \\
\left( \xi _{\chi _{\varepsilon }}\right) _{0}+\phi _{0}^{l/\varepsilon
+T\left( \varepsilon \right) }\left( \tau \right) \left( \xi _{\chi
_{\varepsilon }}-\left( \xi _{\chi _{\varepsilon }}\right) _{0}\right) &  \\
\qquad +\phi _{+}^{\left( l/\varepsilon -T\left( \varepsilon \right) \right)
}\left( \tau \right) \left( \xi _{+}^{\varepsilon }-v_{+}\right), \hfill &
l/\varepsilon \leq \tau \leq l/\varepsilon +T\left( \varepsilon \right) , \\
\xi _{\chi _{\varepsilon }}, \hfill & -l/\varepsilon \leq \tau \leq
l/\varepsilon , \\
\left( \xi _{\chi _{\varepsilon }}\right) _{0}+\phi _{0}^{l/\varepsilon
+T\left( \varepsilon \right) }\left( \tau \right) \left( \xi _{\chi
_{\varepsilon }}-\left( \xi _{\chi _{\varepsilon }}\right) _{0}\right) &  \\
\qquad + \phi _{-}^{\left( l/\varepsilon -T\left( \varepsilon \right)
\right) }\left( \tau \right) \left( \xi _{-}^{\varepsilon }-v_{-}\right) ,
\hfill & \ -l/\varepsilon -T\left( \varepsilon \right) \leq \tau \leq
-l/\varepsilon , \\
\xi _{-}^{\varepsilon }, \hfill & \tau \leq -l/\varepsilon -T\left(
\varepsilon \right) ,%
\end{cases}%
\end{eqnarray*}

The interpolation in $\left( Q_{\text{\textrm{para}}}^{\text{\textrm{app}}%
;\varepsilon }\right) _{actual}$ uses the parallel transport from the
nonconstant metric $g$ inside the $C\varepsilon $ radius ball, but the
difference is a smooth tensor of order $C\varepsilon $, first in $C^{1}$
pointwise then in operator norm, namely%
\begin{equation*}
\left( Q_{\text{\textrm{para}}}^{\text{\textrm{app}};\varepsilon }\right)
_{actual}=Q_{\text{\textrm{para}}}^{\text{\textrm{app}};\varepsilon
}+H_{\varepsilon }
\end{equation*}%
with the operator norm $\left\Vert H_{\varepsilon }\right\Vert \leq
C\varepsilon $. This will not effect the approximate inverse. Therefore it
suffices to estimate the above (simpler) $Q_{\text{\textrm{para}}}^{\text{%
\textrm{app}};\varepsilon }$ in the next section.
\end{rem}

\section{Estimate of the combined right inverse}

\begin{prop}
\label{prop:uniform-inverse-bound} $Q_{\text{\textrm{para}}}^{\text{\textrm{%
app}};\varepsilon }$ has a uniform bound independent on $\varepsilon $. More
precisely, there exists a uniform constant $C$ such that for all $\eta $,
\begin{equation*}
\left\Vert Q_{\text{\textrm{para}}}^{\text{\textrm{app}};\varepsilon }\eta
\right\Vert _{\varepsilon }\leq C\left( \left\Vert \xi _{\chi _{\varepsilon
}}\right\Vert _{W_{\rho _{\varepsilon }}^{1,p}\left( \left[ -l/\varepsilon
,l/\varepsilon \right] \times S^{1}\right) }+\left\Vert \xi _{\pm
}\right\Vert _{W_{\alpha }^{1,p}\left( \Sigma _{\pm }\right) }\right) \leq
C\left\Vert \eta \right\Vert _{\varepsilon }.
\end{equation*}
\end{prop}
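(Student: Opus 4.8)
The plan is to estimate $\|Q^{app;\varepsilon}_{para}\eta\|_\varepsilon$ region by region, matching the decomposition in the construction of $\xi_\varepsilon$, and to bound each of the three "outer" constituents $\xi^\varepsilon_\pm = Q_\pm(\eta^\varepsilon_\pm)\circ I_{\pm(-\tau(\varepsilon))}$, the "middle" piece $\xi_{\chi_\varepsilon} = Q_{\chi_\varepsilon}(\kappa^\varepsilon_0\eta)$, and the number $\mu$, by the already-established uniform bounds on the component right inverses. First I would invoke the uniform bound on $Q_\pm$ from Section~10 (the Fredholm section $\overline\partial_{(K_\pm,J_\pm)}$ has bounded right inverse $Q_\pm$ in the $W^{1,p}_\delta$/$L^p_\delta$ pair), so that $\|\xi^\varepsilon_\pm\|_{W^{1,p}_\alpha(\Sigma_\pm)} \le C\|\eta^\varepsilon_\pm\|_{L^p_\delta(\Sigma_\pm)}$; since $\eta^\varepsilon_\pm$ is just a translate of $\kappa^\varepsilon_\pm\eta$, and the exponential weight $e^{2\pi\delta|\tau|/p}$ on $\Sigma_\pm$ corresponds exactly (after translation by $\tau(\varepsilon)$) to the branch $e^{2\pi\delta(\mp\tau+\tau(\varepsilon))}$ of $\beta_{\delta,\varepsilon}$ on $\{|\tau|>l/\varepsilon\}$, the right-hand side is controlled by $\|\eta\|_\varepsilon$ restricted to the outer regions. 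Second, for the middle piece I would use the uniform bound $\|Q_{\chi_\varepsilon}\| = \|Q_\chi\|$ obtained in Section~\ref{sec:Qinmiddle} via the $\varepsilon$-weighted norms $W^{1,p}_{\rho_\varepsilon}$, $L^p_{\rho_\varepsilon}$ (the key point being the conjugation-by-$\rho_\varepsilon^{1/p}$ and $\varepsilon$-reparametrization $R_\varepsilon$ that makes $Q^\varepsilon_\chi$ an isometric conjugate of $Q_\chi$), giving $\|\xi_{\chi_\varepsilon}\|_{W^{1,p}_{\rho_\varepsilon}([-l/\varepsilon,l/\varepsilon]\times S^1)} \le C\|\kappa^\varepsilon_0\eta\|_{L^p_{\rho_\varepsilon}} \le C\|\eta\|_\varepsilon$.

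Next I would handle the \emph{interpolation regions} $\pm[l/\varepsilon - T(\varepsilon), l/\varepsilon]$ and $\pm[l/\varepsilon, l/\varepsilon+T(\varepsilon)]$, where $\xi_\varepsilon$ is a cut-off combination of $\xi_{\chi_\varepsilon}$, $(\xi_{\chi_\varepsilon})_0$, $\xi^\varepsilon_\pm$, and $v_\pm$. Here the derivatives of the cut-off functions $\phi_0^{\cdot}$, $\phi_\pm^{\cdot}$ contribute extra terms of the form (derivative of cutoff)$\times$(difference of the two pieces being glued). Controlling these requires: (i) for the zero mode, the Schauder-type convergence estimate \eqref{Schauder-Convergence} (equivalently \eqref{Stablization}), showing that $(\xi_{\chi_\varepsilon})_0(\tau)$ is within $C(l)\varepsilon^{\widetilde\gamma}\|\xi_{\chi_\varepsilon}\|_{W^{1,p}_\varepsilon}$ of its boundary value $(\xi_{\chi_\varepsilon})_0(\pm l/\varepsilon) = \xi_\pm(o_\pm) - \frac{\varepsilon\mu}{l}\nabla f(p_\pm)$, which by \eqref{eq:defmu} is precisely what $v_\pm$ interpolates against; (ii) for the higher mode, the pointwise decay estimate $|\tilde\xi(\tau,t)| \le \frac{2C}{|\tau|^\delta}\|\tilde\eta\|_{L^p_{\rho_\varepsilon}}$ valid near $\tau = \pm l/\varepsilon$, plus the analogous exponential decay of $\widetilde{\xi^\varepsilon_\pm} = \xi^\varepsilon_\pm - v_\pm$ away from the puncture. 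I would also need the estimate on $\mu$: solving \eqref{eq:defmu} gives $\frac{\varepsilon\mu}{l}\nabla f(p_\pm) = \xi_\pm(o_\pm) - (\xi_{\chi_\varepsilon})_0(\pm l/\varepsilon)$, and since $\nabla f(p_\pm)\neq 0$ (noncritical endpoints) this bounds $|\mu|$ by the norms of the two constituents; some care is needed to show consistency of the two equations ($\pm$) using that the boundary values $\xi_\pm(o_\pm)$ arise from the two-point boundary problem governed by the "disc-flow-disc" transversality (Proposition~\ref{family-dfd-trans}).

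Finally I would assemble: on each region the weight $\beta_{\delta,\varepsilon}$ agrees (up to uniformly bounded factors) with the weight appropriate to the piece living there ($\varepsilon^{1-p+\delta}(1+|\tau|)^\delta$ on the neck, which matches $\rho_\varepsilon$ up to the $\varepsilon^{1-p}$ normalization built into $W^{1,p}_\varepsilon$; the exponential branches on the outer parts), so that summing the regional estimates yields the first inequality $\|Q^{app;\varepsilon}_{para}\eta\|_\varepsilon \le C(\|\xi_{\chi_\varepsilon}\|_{W^{1,p}_{\rho_\varepsilon}} + \|\xi_\pm\|_{W^{1,p}_\alpha})$, and then the component-wise bounds of the first two paragraphs give the second inequality $\le C\|\eta\|_\varepsilon$. \textbf{The main obstacle} I expect is the interpolation-region analysis: one must verify that the cut-off error terms, which are supported precisely where the weighting function is largest (the factor $\beta_{\delta,\varepsilon}\approx\varepsilon^{1-p}$ near $\tau=\pm l/\varepsilon$), are nonetheless uniformly bounded — this is exactly why the interpolation is deliberately pushed to $\tau = \pm(l/\varepsilon\pm T(\varepsilon))$ rather than to $\pm l/\varepsilon$, and why the weight $\beta_{\delta,\varepsilon}$ was glued from power and exponential pieces with the particular $\varepsilon^{1-p+\delta}$ normalization. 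The bookkeeping of which decay estimate ($\varepsilon^{\widetilde\gamma}$ for the zero mode via Sobolev $C^\gamma$ embedding, $|\tau|^{-\delta}$ for the higher mode, $e^{-cT(\varepsilon)}$ for the $\Sigma_\pm$ tails) beats which power of the weight in each of the four interpolation bands is the delicate computational heart of the proof, but each individual estimate has already been prepared in the preceding sections.
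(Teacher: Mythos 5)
Your plan follows the paper's proof quite closely: region-by-region estimation, invoking the uniform bounds on the component right inverses $Q_\pm$ and $Q_{\chi_\varepsilon}$, using the Schauder-type convergence \eqref{Schauder-Convergence} of the zero mode and direct weight comparison (as in \eqref{eq:edtaue}) to control the interpolation bands $\pm[l/\varepsilon-T(\varepsilon),\, l/\varepsilon+T(\varepsilon)]$, and assembling. Your identification of the main obstacle --- cut-off error terms supported where $\beta_{\delta,\varepsilon}$ peaks --- is exactly right, and the paper resolves it the way you describe.

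There is, however, a gap in your treatment of $\mu$. You propose to read $\mu$ off algebraically from \eqref{eq:defmu}: since $\frac{\varepsilon\mu}{l}\nabla f(p_\pm) = \xi_\pm(o_\pm) - (\xi_{\chi_\varepsilon})_0(\pm l/\varepsilon)$ and $\nabla f(p_\pm)\neq 0$, you divide. But that solve carries a factor of $l/\varepsilon$, and nothing you have set up forces the right-hand side to be $O(\varepsilon)$ --- both $\xi_\pm(o_\pm)$ and $(\xi_{\chi_\varepsilon})_0(\pm l/\varepsilon)$ are bounded a priori only by $C\|\eta\|_\varepsilon$, so their difference is generically $O(1)$, and your bound would read $|\mu|=O(\varepsilon^{-1})\|\eta\|_\varepsilon$, which is not uniform. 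The paper instead bounds the pair $((\xi_{\chi_\varepsilon})_0,\mu)$ as the output of the parametrized zero-mode right inverse $(Q^{para}_{\chi_\varepsilon})_0$ acting on $(\kappa_0^\varepsilon\eta)_0$, and this operator is uniformly bounded precisely because after the isometric $\varepsilon$-reparametrization \eqref{e-eta-relation} the $\varepsilon$ in the coefficient $\frac{\mu}{l}\varepsilon\nabla f(\chi_\varepsilon)$ of $\mu$ in $D^\varepsilon_{para}$ is exactly cancelled by the $\varepsilon^{1-p}$ factor built into the $L^p_\varepsilon$-norm, making the $\mu$-direction uniformly well-conditioned. That operator-theoretic route is what closes the argument; the naive division does not. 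A smaller omission: since $Q_{\chi_\varepsilon}$ was constructed by extending $\eta$ by zero and solving on all of $\mathbb{R}\times S^1$, one must also check that the $W^{1,p}_{\rho_\varepsilon}([-l/\varepsilon,l/\varepsilon]\times S^1)$-norm of $\xi_{\chi_\varepsilon}$ controls its norm on the larger strip used in the interpolation --- this is the content of Lemmas \ref{lem:xie0} and \ref{lem:tildexie}, and your sketch does not account for it, though it is a routine consequence of the uniform right-inverse bound rather than a conceptual issue.
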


\begin{proof}
The proof is by splitting the Banach norm on $u_{\textrm{app}}^{\varepsilon }$ into
those associated to $u_{\chi _{\varepsilon }}$ and $u_{\pm }$. We also need
to estimates on

\begin{enumerate}
\item uniform convergence of $\xi _{\pm }\left( \tau \right) $ and $\xi
_{\chi _{\varepsilon }}\left( \tau \right) $ into $v_{\pm }$, because our
Banach norms involve first taking out the Morse-Bott variation,

\item and the matching constant $\mu$
\end{enumerate}

in terms of the norm $\Vert \eta \Vert _{\varepsilon }$. In the
interpolation region, $Q_{\text{\textrm{para}}}^{\text{\textrm{app}}%
;\varepsilon }\eta =\left( \xi _{\varepsilon },\mu \right) \,\ $ is given by
\begin{equation*}
\xi _{\varepsilon }=\left( \xi _{\chi _{\varepsilon }}\right) _{0}+\phi
_{0}^{l/\varepsilon +T\left( \varepsilon \right) }\left( \tau \right) \left(
\xi _{\chi _{\varepsilon }}-\left( \xi _{\chi _{\varepsilon }}\right)
_{0}\right) +\phi _{+}^{\left( l/\varepsilon -T\left( \varepsilon \right)
\right) }\left( \tau \right) \left( \xi _{+}^{\varepsilon }-v_{+}\right) ,
\end{equation*}%
when $\tau \geq 0$ and $\mu $ by solving the matching condition
\begin{equation}
\left( \xi _{\chi _{\varepsilon }}\right) _{0}\left( l/\varepsilon \right)
=v_{+}-\varepsilon \frac{\mu }{l}\nabla f\left( p_{+}\right) ,
\label{f-matching}
\end{equation}%
where $v_{+}=\xi _{+}\left( o_{+}\right) $, and $\mu $ is always solvable by
thimble-flow-thimble transversality in Section \ref{sec:tft-transversality}.

We also recall the definition of the Banach norm
\begin{eqnarray*}
\left\Vert \left( \xi ,\mu \right) \right\Vert _{\varepsilon } & = &
\left\Vert \xi \right\Vert _{\varepsilon }+\left\vert \mu \right\vert \\
& = & \left\Vert \widetilde{\xi }\right\Vert _{W_{\beta _{\delta
,\varepsilon }}^{1,p}\left( \mathbb{R\times }S^{1}\right) }+\left\Vert \xi
_{0}\left( \tau \right) \right\Vert _{W_{\varepsilon }^{1,p}\left( \left[
-l/\varepsilon ,l/\varepsilon \right] \right) }+\left\vert \xi _{0}\left(
\pm l/\varepsilon \right) \right\vert +\left\vert \mu \right\vert.
\end{eqnarray*}

With these preparation, we are now ready to carry out the estimate of the
norm $\Vert Q_{\text{\textrm{para}}}^{\text{\textrm{app}};\varepsilon }(\eta
)\Vert _{\varepsilon }$. For the $\mu$-component, since $\left( \left( \xi
_{\chi _{\varepsilon }}\right) _{0},\mu \right) =\left( Q_{\chi
_{\varepsilon }}^{\text{\textrm{para}}}\right) _{0}\circ \kappa
_{0}^{\varepsilon }\eta $ and $\left( Q_{\chi _{\varepsilon }}^{\text{%
\textrm{para}}}\right) _{0}$ has uniform bound (one can use $\left( \ref%
{e-eta-relation}\right) $ to see the $\left( Q_{\chi _{\varepsilon }}^{\text{%
\textrm{para}}}\right) _{0}$ has the same operator bound as $\left( Q_{\chi
}^{\text{\textrm{para}}}\right) _{0}$ for all $\varepsilon $), we have%
\begin{equation*}
\left\vert \mu \right\vert \leq C\left\Vert \left( \kappa _{0}^{\varepsilon
}\eta \right) _{0}\right\Vert _{L_{\varepsilon }^{p}\left( \left[
-l/\varepsilon ,l/\varepsilon \right] \right) }\leq C\left\Vert \eta
\right\Vert _{\varepsilon }.
\end{equation*}

For the $\left\Vert \xi _{\varepsilon }\right\Vert _{\varepsilon }$
component, we consider three regions separately:
\begin{enumerate}
\item [{(i)}]  on $[0,l/\varepsilon
]\times S^{1}$,
\item [{(ii)}] on $[l/\varepsilon ,\tau (\varepsilon )]\times
S^{1}=[l/\varepsilon ,l/\varepsilon +\frac{p-1}{\delta }S(\varepsilon
)]\times S^{1}$,
\item [{(iii)}] on $\R \setminus [-\tau(\varepsilon),\tau(\varepsilon)]$.
\end{enumerate}

{\bf (i) On $[0,l/\varepsilon ]\times S^{1}$:} Since $\phi _{0}^{l/\varepsilon
+T\left( \varepsilon \right) }\left( \tau \right) \equiv 1$, we have
\begin{eqnarray*}
\xi _{\varepsilon } &=&\left( \xi _{\chi _{\varepsilon }}\right) _{0}+\left(
\xi _{\chi _{\varepsilon }}-\left( \xi _{\chi _{\varepsilon }}\right)
_{0}\right) +\phi _{+}^{l/\varepsilon -T\left( \varepsilon \right) }\left(
\tau \right) \left( \xi _{+}^{\varepsilon }-v_{+}\right) \\
&=&\xi _{\chi _{\varepsilon }}+\phi _{+}^{\left( l/\varepsilon -T\left(
\varepsilon \right) \right) }\left( \tau \right) \left( \xi
_{+}^{\varepsilon }-v_{+}\right) .
\end{eqnarray*}%
Therefore for the $0$-mode $\left( \xi _{\varepsilon }\right) _{0}$ we have
\begin{eqnarray*}
\left\Vert \left( \xi _{\varepsilon }\right) _{0}\right\Vert
_{W_{\varepsilon }^{1,p}\left[ 0,l/\varepsilon \right] } &=&\left\Vert
\left( \xi _{\chi _{\varepsilon }}\right) _{0}+\phi _{+}^{l/\varepsilon
-T\left( \varepsilon \right) }\left( \tau \right) \left( \xi
_{+}^{\varepsilon }-v_{+}\right) _{0}\right\Vert _{W_{\varepsilon
}^{1,p}\left( \left[ 0,l/\varepsilon \right] \right) } \\
&\leq &\left\Vert \left( \xi _{\chi _{\varepsilon }}\right) _{0}\right\Vert
_{W_{\varepsilon }^{1,p}\left( \left[ 0,l/\varepsilon \right] \right)
}+\left\Vert \phi _{+}^{l/\varepsilon -T\left( \varepsilon \right) }\left(
\tau \right) \left( \xi _{+}^{\varepsilon }-v_{+}\right) _{0}\right\Vert
_{W_{\varepsilon }^{1,p}\left( \left[ 0,l/\varepsilon \right] \right) } \\
&\leq &\left\Vert \xi _{\chi _{\varepsilon }}\right\Vert _{W_{\rho
_{\varepsilon }}^{1,p}\left( \mathbb{R\times }S^{1}\right) }+C\left\Vert
\left( \xi _{+}^{\varepsilon }-v_{+}\right) _{0}\cdot \varepsilon ^{\frac{1-p%
}{p}}\right\Vert _{W^{1,p}\left( \left[ 0,l/\varepsilon \right] \times
S^{1}\right) } \\
&\leq &\left\Vert \xi _{\chi _{\varepsilon }}\right\Vert _{W_{\rho
_{\varepsilon }}^{1,p}\left( \mathbb{R\times }S^{1}\right) }+C\left\Vert \xi
_{+}\right\Vert _{W_{\alpha }^{1,p}\left( \left[ 0,l/\varepsilon \right]
\times S^{1}\right) },
\end{eqnarray*}%
where the third inequality is because $\left\Vert \cdot \right\Vert
_{W_{\varepsilon }^{1,p}}$ is a component of $\left\Vert \cdot \right\Vert
_{W_{\rho _{\varepsilon }}^{1,p}}$, and the last inequality holds because
for $\tau \in $ $\left[ 0,l/\varepsilon \right] \,$, the exponential weight
\begin{equation*}
e^{2\pi \delta (-\tau +\tau (\varepsilon ))}=e^{2\pi \delta \left( -\tau
+\tau (\varepsilon )\right) }\geq e^{2\pi \delta \left( \frac{p-1}{\delta }%
S\left( \varepsilon \right) \right) }=\left( 1+l/\varepsilon \right)
^{p-1}\geq C\varepsilon ^{1-p},
\end{equation*}%
then
\begin{eqnarray*}
&{}&\left\Vert \left( \xi _{+}^{\varepsilon }-v_{+}\right) _{0}\cdot
\varepsilon ^{\frac{1-p}{p}}\right\Vert _{W^{1,p}\left( \left[
0,l/\varepsilon \right] \times S^{1}\right) } \\
&\leq &C\left\Vert \left( \xi _{+}^{\varepsilon }-v_{+}\right) _{0}\cdot e^{%
\frac{2\pi \delta }{p}\left( -\tau +\tau (\varepsilon )\right) }\right\Vert
_{W^{1,p}\left( \left[ 0,l/\varepsilon \right] \times S^{1}\right) } \\
&=&C\left\Vert \left( \xi _{+}-v_{+}\right) _{0}\cdot e^{\frac{2\pi \delta }{%
p}\left\vert \tau \right\vert }\right\Vert _{W^{1,p}\left( \left[ -\tau
(\varepsilon ),-\frac{p-1}{\delta }S(\varepsilon )\right] \times
S^{1}\right) } \\
&\leq &C\left\Vert \xi _{+}\right\Vert _{W_{\alpha }^{1,p}\left( \Sigma
_{+}\right) }.
\end{eqnarray*}%
For the higher mode $\widetilde{\xi _{\varepsilon }}$, we have
\begin{eqnarray*}
\Vert \widetilde{\xi _{\varepsilon }}\Vert _{W_{\rho _{\varepsilon
}}^{1,p}([0,l/\varepsilon ]\times S^{1})} &=&\left\Vert \widetilde{\xi
_{\chi _{\varepsilon }}}+\phi _{+}^{l/\varepsilon -T\left( \varepsilon
\right) }\left( \tau \right) \left( \widetilde{\xi _{+}^{\varepsilon }-v_{+}}%
\right) \right\Vert _{W_{\rho _{\varepsilon }}^{1,p}([0,l/\varepsilon
]\times S^{1})} \\
&\leq &\left\Vert \widetilde{\xi _{\chi _{\varepsilon }}}\right\Vert
_{W_{\rho _{\varepsilon }}^{1,p}([0,l/\varepsilon ]\times
S^{1})}+C\left\Vert \widetilde{\left( \xi _{+}^{\varepsilon }-v_{+}\right) }%
\right\Vert _{W_{\rho _{\varepsilon }}^{1,p}([0,l/\varepsilon ]\times S^{1})}
\\
&\leq &C\left( \left\Vert \xi _{\chi _{\varepsilon }}\right\Vert _{W_{\rho
_{\varepsilon }}^{1,p}\left( \mathbb{R\times }S^{1}\right) }+\left\Vert \xi
_{+}\right\Vert _{W_{\alpha }^{1,p}\left( \Sigma _{+}\right) }\right) .
\end{eqnarray*}%
The last inequality holds because the exponential weight $e^{2\pi \delta
\left( -\tau +\tau (\varepsilon )\right) }$ of $W_{\alpha }^{1,p}$ is bigger
than the power weight $\varepsilon ^{1-p+\delta }\left( 1+\left\vert \tau
\right\vert \right) ^{\delta }$ of $W_{\rho _{\varepsilon }}^{1,p}$ on $%
[0,l/\varepsilon ]\times S^{1}$, by
\begin{equation*}
e^{2\pi \delta \left( -\tau +\tau (\varepsilon )\right) }\geq \left(
1+l/\varepsilon \right) ^{p-1}=C\varepsilon ^{1-p+\delta }\left(
1+l/\varepsilon \right) ^{\delta }\geq C\varepsilon ^{1-p+\delta }\left(
1+\left\vert \tau \right\vert \right) ^{\delta }.
\end{equation*}%
\ In the last inequality we also used that the projection operators
\begin{equation*}
P_{0}:\xi \rightarrow \xi _{0}\text{ and }\widetilde{P}:\xi \rightarrow
\widetilde{\xi }
\end{equation*}%
from $W_{\delta }^{1,p}$ to $W_{\delta }^{1,p}$ are uniformly bounded
operators by H\"{o}lder inequality on $S^{1}$
\begin{eqnarray*}
\int_{I}\left( \left\vert \xi _{0}\right\vert ^{p}+\left\vert \nabla _{\tau
}\xi _{0}\right\vert ^{p}\right) e^{2\pi \delta \left\vert \tau \right\vert
}d\tau &=&\int_{I}\left( \left\vert \int_{S^{1}}\xi dt\right\vert
^{p}+\left\vert \int_{S^{1}}\nabla _{\tau }\xi dt\right\vert ^{p}\right)
e^{2\pi \delta \left\vert \tau \right\vert }d\tau \\
&\leq &\int_{I}\left( \int_{S^{1}}\left\vert \xi \right\vert
^{p}dt+\int_{S^{1}}\left\vert \nabla \xi \right\vert ^{p}dt\right) e^{2\pi
\delta \left\vert \tau \right\vert }d\tau
\end{eqnarray*}%
for any interval $I$.
\newline
{\bf (ii) On $[l/\varepsilon ,\tau (\varepsilon )]\times S^{1}$:} On this region the
contribution to $\left\Vert \xi _{\varepsilon }\right\Vert _{\varepsilon }$
is
\begin{equation*}
\left\vert \left( \xi _{\varepsilon }\right) _{0}\left( l/\varepsilon
\right) \right\vert +\left\Vert \xi _{\varepsilon }\left( \tau ,t\right)
-\left( \xi _{\varepsilon }\right) _{0}\left( l/\varepsilon \right)
\right\Vert _{W_{\beta _{\delta ,\varepsilon }}^{1,p}\left( [l/\varepsilon
,\tau (\varepsilon )]\times S^{1}\right) .}
\end{equation*}%
From the matching condition \eqref{eq:matching}, we have
\begin{equation*}
\left( \xi _{\varepsilon }\right) _{0}\left( l/\varepsilon \right) =\left(
\xi _{\chi _{\varepsilon }}\right) _{0}\left( l/\varepsilon \right) +\left(
\xi _{+}\right) _{0}\left( l/\varepsilon \right) -v_{+}=\left( \xi
_{+}\right) _{0}\left( l/\varepsilon \right) -\varepsilon \frac{\mu }{l}%
\nabla f\left( p_{+}\right) .
\end{equation*}%
Therefore for the term $\left( \xi _{\varepsilon }\right) _{0}\left(
l/\varepsilon \right) $,
\begin{eqnarray*}
\left\vert \left( \xi _{\varepsilon }\right) _{0}\left( l/\varepsilon
\right) \right\vert  &\leq &\left\vert \left( \xi _{+}\right) _{0}\left(
l/\varepsilon \right) \right\vert +\left\vert \frac{\varepsilon }{l}\nabla
f\left( p_{+}\right) \right\vert \left\vert \mu \right\vert  \\
&\leq &C\left\Vert \xi _{+}\right\Vert _{W_{\alpha }^{1,p}\left( \Sigma
_{+}\right) }+C\varepsilon \left\vert \mu \right\vert  \\
&\leq &C\left\Vert \eta _{+}\right\Vert _{L_{\alpha }^{p}\left( \Sigma
_{+}\right) }+C\varepsilon \left\Vert \eta _{\varepsilon }\right\Vert
_{\varepsilon }\leq C\left\Vert \eta \right\Vert _{\varepsilon }.
\end{eqnarray*}%
For $\widetilde{\xi _{\varepsilon }}=\xi _{\varepsilon }\left( \tau
,t\right) -\left( \xi _{\varepsilon }\right) _{0}\left( l/\varepsilon
\right) $, since $\phi _{+}^{l/\varepsilon -T\left( \varepsilon \right)
}\left( \tau \right) =1$ on $[l/\varepsilon ,\tau (\varepsilon )]$, we have
\begin{eqnarray}
\xi _{\varepsilon }\left( \tau ,t\right) -\left( \xi _{\varepsilon }\right)
_{0}\left( l/\varepsilon \right)  &=&\left( \xi _{\chi _{\varepsilon
}}\right) _{0}\left( \tau \right) +\phi _{0}^{l/\varepsilon +T\left(
\varepsilon \right) }\left( \tau \right) \left( \xi _{\chi _{\varepsilon
}}-\left( \xi _{\chi _{\varepsilon }}\right) _{0}\right)   \notag \\
&{}&+\left( \xi _{+}^{\varepsilon }\left( \tau ,t\right) -v_{+}\right)
-\left( \xi _{+}^{\varepsilon }\right) _{0}\left( l/\varepsilon \right)
+\varepsilon \frac{\mu }{l}\nabla f\left( p_{+}\right)   \notag \\
&=&\left[ \xi _{+}^{\varepsilon }\left( \tau ,t\right) -\left( \xi
_{+}^{\varepsilon }\right) _{0}\left( l/\varepsilon \right) \right] +\phi
_{0}^{l/\varepsilon +T\left( \varepsilon \right) }\left( \tau \right) \left(
\xi _{\chi _{\varepsilon }}-\left( \xi _{\chi _{\varepsilon }}\right)
_{0}\right)   \notag \\
&&+\left[ \left( \xi _{\chi _{\varepsilon }}\right) _{0}\left( \tau \right)
-\left( \xi _{\chi _{\varepsilon }}\right) _{0}\left( l/\varepsilon \right) %
\right] ,  \label{difference-all}
\end{eqnarray}%
where in the last row we have used  %
\eqref{eq:matching} and \eqref{f-matching}. For the first term, same as the computation $\left(
10.55\right) \sim \left( 10.60\right) $ in [OZ1] Lemma 10.10, we have
\begin{eqnarray}
&{}&\int_{S^{1}}\int_{l/\varepsilon }^{\tau (\varepsilon )}\left\vert \xi
_{+}^{\varepsilon }\left( \tau ,t\right) -\left( \xi _{+}^{\varepsilon
}\right) _{0}\left( l/\varepsilon \right) \right\vert ^{p}  \notag \\
&{}&+\left\vert \nabla \left( \xi _{+}^{\varepsilon }\left( \tau ,t\right)
-\left( \xi _{+}^{\varepsilon }\right) _{0}\left( l/\varepsilon \right)
\right) \right\vert ^{p}e^{2\pi \delta \left( -\tau +\tau (\varepsilon
)\right) }d\tau dt\leq C\left\Vert \xi _{+}\right\Vert _{W_{\alpha
}^{1,p}\left( \Sigma _{+}\right) }^{p}.  \label{difference-1}
\end{eqnarray}%
For the second term, by comparing the exponential weight and power weight on
$\left[ l/\varepsilon ,l/\varepsilon +\frac{p-1}{\delta }S\left( \varepsilon
\right) \right] $
\begin{equation}
e^{2\pi \delta \left( -\tau +\tau (\varepsilon )\right) }=\left(
1+l/\varepsilon \right) ^{p-1}e^{-2\pi \left( \tau -l/\varepsilon \right)
}\leq C\varepsilon ^{1-p}\left( 1+\left\vert \tau \right\vert \right)
^{\delta },  \label{eq:edtaue}
\end{equation}%
we have
\begin{eqnarray}
&&\left\Vert \phi _{0}^{l/\varepsilon +T\left( \varepsilon \right) }\left(
\tau \right) \left( \xi _{\chi _{\varepsilon }}-\left( \xi _{\chi
_{\varepsilon }}\right) _{0}\right) e^{\frac{2\pi \delta }{p}\left( -\tau
+\tau (\varepsilon )\right) }\right\Vert _{W^{1,p}\left( \left[
l/\varepsilon ,\tau (\varepsilon )\right] \times S^{1}\right) }  \notag \\
&\leq &C\left\Vert \phi _{0}^{l/\varepsilon +T\left( \varepsilon \right)
}\left( \tau \right) \left( \xi _{\chi _{\varepsilon }}-\left( \xi _{\chi
_{\varepsilon }}\right) _{0}\right) \left( \varepsilon ^{1-p}\left(
1+\left\vert \tau \right\vert \right) ^{\delta }\right) ^{\frac{1}{p}%
}\right\Vert _{W^{1,p}\left( \left[ l/\varepsilon ,\tau (\varepsilon )\right]
\times S^{1}\right) }  \notag \\
&=&C\left\Vert \xi _{\chi _{\varepsilon }}-\left( \xi _{\chi _{\varepsilon
}}\right) _{0}\right\Vert _{W_{\rho _{\varepsilon }}^{1,p}\left( \mathbb{%
R\times }S^{1}\right) }  \notag \\
&\leq &C\left\Vert \xi _{\chi _{\varepsilon }}\right\Vert _{W_{\rho
_{\varepsilon }}^{1,p}\left( \mathbb{R\times }S^{1}\right) }.
\label{difference-2}
\end{eqnarray}%
For the third term, note that from $\left( \ref{Schauder-Convergence}\right)
$
\begin{equation*}
\left\vert \left( \xi _{\chi _{\varepsilon }}\right) _{0}\left( \tau \right)
-\left( \xi _{\chi _{\varepsilon }}\right) _{0}\left( l/\varepsilon \right)
\right\vert \leq \left\vert \varepsilon \tau \pm l\right\vert ^{\gamma
}\left\Vert \left( \xi _{\chi _{\varepsilon }}\right) _{0}\right\Vert
_{W_{\varepsilon }^{1,p}\left( [l/\varepsilon ,\tau (\varepsilon )]\right) }
\end{equation*}%
where $\gamma =1-\frac{1}{p}$, we have
\begin{eqnarray*}
&&\int_{S^{1}}\int_{l/\varepsilon }^{\tau (\varepsilon )}\left\vert \left(
\xi _{\chi _{\varepsilon }}\right) _{0}\left( \tau \right) -\left( \xi
_{\chi _{\varepsilon }}\right) _{0}\left( l/\varepsilon \right) \right\vert
^{p}e^{2\pi \delta \left( -\tau +\tau (\varepsilon )\right) }d\tau dt \\
&\leq &C\varepsilon ^{p\gamma }e^{2\pi \left( p-1\right) S(\varepsilon
)}\int_{S^{1}}\int_{0}^{\frac{p-1}{\delta }S(\varepsilon )}s^{p\gamma
}e^{-2\pi \delta s}dsdt\cdot \left\Vert \left( \xi _{\chi _{\varepsilon
}}\right) _{0}\right\Vert _{W_{\varepsilon }^{1,p}\left( [l/\varepsilon
,\tau (\varepsilon )]\right) }^{p} \\
&\leq &C\left( p,l\right) \left\Vert \left( \xi _{\chi _{\varepsilon
}}\right) _{0}\right\Vert _{W_{\varepsilon }^{1,p}\left( [l/\varepsilon
,\tau (\varepsilon )]\right) }^{p},
\end{eqnarray*}%
where $s=\tau -\frac{l}{\varepsilon }$ and in the last inequality we have
used that the integral $\int_{0}^{\infty }s^{p\gamma }e^{-2\pi \delta s}ds$
converges and $\varepsilon ^{p\gamma }e^{2\pi \left( p-1\right) S\left(
\varepsilon \right) }\approx l^{p-1}$. From linearized gradient operator $%
\frac{\partial }{\partial \tau }+A_{\varepsilon }(\tau )$ we also have
\begin{equation*}
\nabla _{\tau }\left( \left( \xi _{\chi _{\varepsilon }}\right) _{0}\left(
\tau \right) -v_{+}\right) =\nabla _{\tau }\left( \xi _{\chi _{\varepsilon
}}\right) _{0}\left( \tau \right) =\left( \eta _{\chi _{\varepsilon
}}\right) _{0}-A_{\varepsilon }(\tau )\left( \xi _{\chi _{\varepsilon
}}\right) _{0}.
\end{equation*}%
Since $\left( \xi _{\chi _{\varepsilon }}\right) _{0}\left( \tau \right)
-v_{+}$ is $t$-independent, and from \eqref{eq:edtaue} for $\tau \in \lbrack
l/\varepsilon ,\tau (\varepsilon )]$ we have

\begin{eqnarray*}
&&\left\Vert \nabla \left( \left( \xi _{\chi _{\varepsilon }}\right)
_{0}\left( \tau \right) -v_{+}\right) e^{-\frac{2\pi \delta }{p}\left( \tau
-\tau (\varepsilon )\right) }\right\Vert _{L^{p}\left( [l/\varepsilon ,\tau
(\varepsilon )]\right) } \\
&\leq &C\left\Vert \nabla _{\tau }\left( \left( \xi _{\chi _{\varepsilon
}}\right) _{0}\left( \tau \right) -v_{+}\right) \varepsilon ^{\frac{1-p}{p}%
}\right\Vert _{L^{p}\left( [l/\varepsilon ,\tau (\varepsilon )]\right) } \\
&=&C\left\Vert \nabla _{\tau }\left( \xi _{\chi _{\varepsilon }}\right)
_{0}\left( \tau \right) \right\Vert _{L_{\varepsilon }^{p}\left(
[l/\varepsilon ,\tau (\varepsilon )]\right) } \\
&\leq &C\left( \left\Vert \left( \eta _{\chi _{\varepsilon }}\right)
_{0}\right\Vert _{L_{\varepsilon }^{p}\left( [l/\varepsilon ,\tau
(\varepsilon )]\right) }+\left\Vert A_{\varepsilon }(\tau )\right\Vert
_{C^{0}}\left\Vert \left( \xi _{\chi _{\varepsilon }}\right) _{0}\right\Vert
_{L_{\varepsilon }^{p}\left( [l/\varepsilon ,\tau (\varepsilon )]\right)
}\right)  \\
&\leq &C\left( \left\Vert \left( \xi _{\chi _{\varepsilon }}\right)
_{0}\right\Vert _{W_{\varepsilon }^{1,p}\left( \mathbb{R}\right)
}+\left\Vert \left( \xi _{\chi _{\varepsilon }}\right) _{0}\right\Vert
_{W_{\varepsilon }^{1,p}\left( [l/\varepsilon ,\tau (\varepsilon )]\right)
}\right) .\text{ \ }
\end{eqnarray*}%
\ \ \ Note in the last inequality we must use $\left\Vert \left( \xi _{\chi
_{\varepsilon }}\right) _{0}\right\Vert _{W_{\varepsilon }^{1,p}\left(
\mathbb{R}\right) }$ instead of
\begin{equation*}
\left\Vert \left( \xi _{\chi _{\varepsilon }}\right) _{0}\right\Vert
_{W_{\varepsilon }^{1,p}\left( [l/\varepsilon ,\tau (\varepsilon )]\right) }
\end{equation*}%
since we have used the right inverse defined on whole $\mathbb{R}$.
Combining these we get%
\begin{equation}
\left\Vert \left( \xi _{\chi _{\varepsilon }}\right) _{0}\left( \tau \right)
-v_{+}\right\Vert _{W_{\varepsilon }^{1,p}\left( [l/\varepsilon ,\tau
(\varepsilon )]\right) }\leq C\left\Vert \left( \xi _{\chi _{\varepsilon
}}\right) _{0}\right\Vert _{W_{\varepsilon }^{1,p}\left( \mathbb{R}\right) }.
\label{difference-3}
\end{equation}%
In $\left( \ref{difference-2}\right) $ and $\left( \ref{difference-3}\right)
$, the interval $\mathbb{R}$ is bigger than $[-l/\varepsilon ,l/\varepsilon ]
$ where $\xi _{\chi _{\varepsilon }}$ was originally defined. However, we
will prove the following inequality
\begin{eqnarray}
\left\Vert \left( \xi _{\chi _{\varepsilon }}\right) _{0}\right\Vert
_{W_{_{\varepsilon }}^{1,p}\left( \mathbb{R}\right) } &\leq &C\left\Vert
\left( \xi _{\chi _{\varepsilon }}\right) _{0}\right\Vert _{W_{_{\rho
_{\varepsilon }}}^{1,p}\left( [-l/\varepsilon ,l/\varepsilon ]\right) },
\notag  \label{eq:xichie} \\
\left\Vert \widetilde{\xi _{\chi _{\varepsilon }}}\right\Vert _{W_{\rho
_{\varepsilon }}^{1,p}\left( \mathbb{R\times }S^{1}\right) }^{p} &\leq
&C\left\Vert \widetilde{\xi _{\chi _{\varepsilon }}}\right\Vert _{W_{\rho
_{\varepsilon }}^{1,p}\left( \left[ -l/\varepsilon ,l/\varepsilon \right]
\mathbb{\times }S^{1}\right) }^{p},
\end{eqnarray}%
later in Lemma \ref{lem:xie0} and \ref{lem:tildexie} where $C$ is
independent on $\varepsilon $. Therefore combining $\left( \ref%
{difference-all}\right) ,\left( \ref{difference-1}\right) ,\left( \ref%
{difference-2}\right) $ and $\left( \ref{difference-3}\right) ,$ we have
\begin{equation*}
\left\Vert \xi _{\varepsilon }-\left( \xi _{\varepsilon }\right) _{0}\left(
l/\varepsilon \right) \right\Vert _{W_{\beta _{\delta ,\varepsilon
}}^{1,p}\left( [l/\varepsilon ,\tau (\varepsilon )]\times S^{1}\right) }\leq
C\left( \left\Vert \xi _{+}\right\Vert _{W_{\alpha }^{1,p}\left( \Sigma
_{+}\right) }+\left\Vert \xi _{\chi _{\varepsilon }}\right\Vert _{W_{_{\rho
_{\varepsilon }}}^{1,p}\left( [-l/\varepsilon ,l/\varepsilon ]\times
S^{1}\right) }\right) .
\end{equation*}%
The estimate for $\tau \in \left[ -\tau (\varepsilon ),0\right] $ is similar.
\newline
{\bf (iii) For $\left\vert \tau \right\vert >\tau (\varepsilon )$:} On this region, $\xi _{\varepsilon
}=\xi _{\pm }^{\varepsilon }$ is a shift of $\xi _{\pm }$, hence%
\begin{equation*}
\left\Vert \widetilde{\xi _{\varepsilon }}\right\Vert _{W_{\beta _{\delta
,\varepsilon }}^{1,p}\left( \pm \left[ \tau (\varepsilon ),\infty \right]
\times S^{1}\right) }=\left\Vert \widetilde{\xi _{\pm }}\right\Vert
_{W_{\beta _{\delta ,\varepsilon }}^{1,p}\left( \pm \left[ 0,\infty \right]
\times S^{1}\right) }\leq \left\Vert \xi _{\pm }\right\Vert _{W_{\alpha
}^{1,p}\left( \Sigma _{\pm }\right) }.
\end{equation*}%
Combining these we have
\begin{eqnarray*}
\left\Vert \xi _{\varepsilon }\right\Vert _{\varepsilon } &=&\left\Vert
\widetilde{\xi _{\varepsilon }}\right\Vert _{W_{\beta _{\delta ,\varepsilon
}}^{1,p}\left( \mathbb{R}\times S^{1}\right) }+\left\Vert \left( \xi
_{\varepsilon }\right) _{0}\right\Vert _{W_{\varepsilon }^{1,p}\left( \left[
-l/\varepsilon ,l/\varepsilon \right] \right) }+\left\vert \left( \xi
_{\varepsilon }\right) _{0}\left( \pm l/\varepsilon \right) \right\vert \\
&\leq &C\left( \left\Vert \xi _{+}\right\Vert _{W_{\alpha }^{1,p}\left(
\Sigma _{+}\right) }+\left\Vert \xi _{-}\right\Vert _{W_{\alpha
}^{1,p}\left( \Sigma _{-}\right) }+\left\Vert \xi _{\chi _{\varepsilon
}}\right\Vert _{W_{_{\rho _{\varepsilon }}}^{1,p}\left( [-l/\varepsilon
,l/\varepsilon ]\times S^{1}\right) }\right) +C\varepsilon \left\vert \mu
\right\vert \\
&\leq &C\left( \left\Vert \eta _{+}\right\Vert _{L_{\alpha }^{p}\left(
\Sigma _{+}\right) }+\left\Vert \eta _{-}\right\Vert _{L_{\alpha }^{p}\left(
\Sigma _{-}\right) }+\left\Vert \eta _{\chi _{\varepsilon }}\right\Vert
_{L_{_{\rho _{\varepsilon }}}^{p}\left( [-l/\varepsilon ,l/\varepsilon
]\times S^{1}\right) }\right) \\
&=&C\left( \left\Vert \kappa _{+}^{\varepsilon }\eta \right\Vert _{L_{\alpha
}^{p}\left( \Sigma _{+}\right) }+\left\Vert \kappa _{-}^{\varepsilon }\eta
\right\Vert _{L_{\alpha }^{p}\left( \Sigma _{-}\right) }+\left\Vert \kappa
_{0}^{\varepsilon }\eta \right\Vert _{L_{_{\rho _{\varepsilon }}}^{p}\left(
[-l/\varepsilon ,l/\varepsilon ]\times S^{1}\right) }\right) \\
&\leq &C\left\Vert \eta \right\Vert _{\varepsilon },
\end{eqnarray*}

where we have use $\left( \ref{f-matching}\right) $ to control $\left( \xi
_{\varepsilon }\right) _{0}\left( \pm l/\varepsilon \right) $. Thus we have
obtained
\begin{equation*}
\left\Vert Q_{\text{\textrm{para}}}^{\text{\textrm{app}};\varepsilon }\eta
\right\Vert _{\varepsilon }=\left\Vert \left( \xi _{\varepsilon },\mu
\right) \right\Vert _{\varepsilon }\leq C\left\Vert \eta \right\Vert
_{\varepsilon }.
\end{equation*}%
The proposition is now proved.
\end{proof}

Finally it remains to prove \eqref{eq:xichie} which is in order.

\begin{lem}
\label{lem:xie0} We have%
\begin{equation*}
\left\Vert \left( \xi _{\chi _{\varepsilon }}\right) _{0}\right\Vert
_{W_{_{\varepsilon }}^{1,p}\left( \mathbb{R}\right) }\leq C\left\Vert \left(
\xi _{\chi _{\varepsilon }}\right) _{0}\right\Vert _{W_{_{\rho _{\varepsilon
}}}^{1,p}\left( [-l/\varepsilon ,l/\varepsilon ]\right) }.
\end{equation*}
\end{lem}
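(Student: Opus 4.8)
The plan is to reduce the claimed inequality to an $\varepsilon$-independent statement on the \emph{full} extended gradient trajectory and then to use the exponential decay of homogeneous solutions of the linearized gradient flow near the nondegenerate critical endpoints. Recall that $(\xi_{\chi_\varepsilon})_0$ is the zero-mode component $Q_0^\varepsilon\big((\kappa_0^\varepsilon\eta)_0\big)$, which is defined on all of $\mathbb{R}$ — the reparametrization of the full gradient trajectory extending $\chi$ — and solves there the linearized gradient flow equation with right-hand side $(\kappa_0^\varepsilon\eta)_0$, a term supported in $[-l/\varepsilon,l/\varepsilon]$; in particular $(\xi_{\chi_\varepsilon})_0$ solves the \emph{homogeneous} equation on each of $(-\infty,-l/\varepsilon]$ and $[l/\varepsilon,\infty)$. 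Since $(\xi_{\chi_\varepsilon})_0$ is $t$-independent, its $W_{\rho_\varepsilon}^{1,p}$ norm on $[-l/\varepsilon,l/\varepsilon]$ coincides with its $W_\varepsilon^{1,p}$ norm, so the lemma is equivalent to
\begin{equation*}
\|(\xi_{\chi_\varepsilon})_0\|_{W_\varepsilon^{1,p}(\mathbb{R})} \le C\|(\xi_{\chi_\varepsilon})_0\|_{W_\varepsilon^{1,p}([-l/\varepsilon,l/\varepsilon])}.
\end{equation*}

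First I would apply the reparametrization $R_\varepsilon$, which — by the same computation as the one giving \eqref{eq:Qe0Q}, now performed over $\mathbb{R}$ rather than over $[-l/\varepsilon,l/\varepsilon]$ — is an isometry $W_\varepsilon^{1,p}(\mathbb{R})\to W^{1,p}(\mathbb{R})$ carrying $[-l/\varepsilon,l/\varepsilon]$ onto $[-l,l]$. Writing $\widehat\zeta := R_\varepsilon\big((\xi_{\chi_\varepsilon})_0\big)\in W^{1,p}(\mathbb{R})$, which solves the $\varepsilon$-independent homogeneous equation $\tfrac{d\widehat\zeta}{d\tau}+A(\tau)\widehat\zeta=0$ on $(-\infty,-l]$ and on $[l,\infty)$, the task becomes to show $\|\widehat\zeta\|_{W^{1,p}(\mathbb{R})} \le C\|\widehat\zeta\|_{W^{1,p}([-l,l])}$ with $C$ independent of $\varepsilon$. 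On $[l,\infty)$ the extended trajectory converges to a nondegenerate critical point, so $A(\tau)$ tends to its invertible Hessian there; the operator $\tfrac{d}{d\tau}+A(\tau)$ is asymptotically hyperbolic with a spectral gap depending only on $(f,g)$; a $W^{1,p}$ solution must therefore lie in the stable subspace, and hence $|\widehat\zeta(\tau)|\le Ce^{-c(\tau-l)}|\widehat\zeta(l)|$ for $\tau\ge l$. Controlling $|\dot{\widehat\zeta}|\le \|A\|_{C^0}|\widehat\zeta|$ via the equation then gives $\|\widehat\zeta\|_{W^{1,p}([l,\infty))}\le C|\widehat\zeta(l)|$, and symmetrically $\|\widehat\zeta\|_{W^{1,p}((-\infty,-l])}\le C|\widehat\zeta(-l)|$. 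The Sobolev embedding $W^{1,p}([-l,l])\hookrightarrow C^0([-l,l])$, whose constant is uniform for $l\ge l_0>0$, bounds $|\widehat\zeta(\pm l)|\le C\|\widehat\zeta\|_{W^{1,p}([-l,l])}$; summing the three contributions and transporting back through $R_\varepsilon$ finishes the argument.

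The only point requiring care is the \emph{uniformity in $\varepsilon$} of the decay and Sobolev constants, and this is precisely what the reduction through $R_\varepsilon$ is designed to handle: after the change of variables the ODE on the tails no longer involves $\varepsilon$, so the apparent degeneration of the decay rate (which in the $\tau$-coordinate on $[l/\varepsilon,\infty)$ is only of order $\varepsilon$) is exactly compensated by the factor $\varepsilon$ built into the geometric weight $\|\cdot\|_{W_\varepsilon^{1,p}}$. For uniformity as $l\to\infty$ one would additionally invoke the exponential convergence of $\chi$ to its endpoints, which only improves the tail bounds; consistently with Remark \ref{exp-weight-dbar-error} I would simply restrict $l$ to a bounded interval $[l_0,L]$, on which all constants are uniform.
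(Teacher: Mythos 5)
Your proof is correct, and it is genuinely different from the paper's, though both arguments ultimately hinge on the same structural fact that $\chi$ extends to a full gradient trajectory connecting nondegenerate critical points. The paper's proof is a short ``sandwich'': it applies the uniform operator bound $\|Q_0^\varepsilon\|=\|Q_0\|$ from \eqref{eq:Qe0Q} to get $\|(\xi_{\chi_\varepsilon})_0\|_{W_\varepsilon^{1,p}(\mathbb{R})}\le C\|(\eta_{\chi_\varepsilon})_0\|_{L_\varepsilon^p(\mathbb{R})}$, uses that $(\eta_{\chi_\varepsilon})_0$ is supported in $[-l/\varepsilon,l/\varepsilon]$, and then runs the operator $D^\varepsilon$ back the other way on the compact interval, whose uniform boundedness $W_\varepsilon^{1,p}\to L_\varepsilon^p$ is elementary. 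You instead bypass the operator-norm bookkeeping entirely: after conjugating by $R_\varepsilon$ you re-derive the estimate by observing that the rescaled zero-mode solves the homogeneous linearized gradient ODE on the tails $\pm[l,\infty)$, that being in $W^{1,p}$ forces it into the stable subspace of the asymptotically hyperbolic operator, and that its tail $W^{1,p}$ norm is then controlled by the boundary values $|\widehat\zeta(\pm l)|$, which are controlled by $\|\widehat\zeta\|_{W^{1,p}([-l,l])}$ via Sobolev embedding with an $l$-uniform constant. Your route is essentially unpacking the reason $Q_0$ is uniformly bounded, which makes the argument more self-contained and explicit about where the nondegeneracy of the critical endpoints enters, at the cost of more work; the paper's route is shorter because it leverages the operator bound already established in Section 5. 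One small remark: you assert the $W_{\rho_\varepsilon}^{1,p}$ and $W_\varepsilon^{1,p}$ norms coincide on a zero-mode element; this is correct and is precisely the content of the paper's closing line ``because $W_\varepsilon^{1,p}$ is a component of $W_{\rho_\varepsilon}^{1,p}$'', since the higher-mode piece of the split norm vanishes identically for $(\xi_{\chi_\varepsilon})_0$.
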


\begin{proof}
Since the right inverse $L_{_{\varepsilon }}^{p}\left( \mathbb{R}\right)
\rightarrow W_{_{\varepsilon }}^{1,p}\left( \mathbb{R}\right) ,\ \left( \eta
_{\chi _{\varepsilon }}\right) _{0}\rightarrow \ \left( \xi _{\chi
_{\varepsilon }}\right) _{0}\ \ $ is uniformly bounded, and $\left( \eta
_{\chi _{\varepsilon }}\right) _{0}$ is supported in $[-l/\varepsilon
,l/\varepsilon ]$, we have%
\begin{eqnarray*}
\left\Vert \left( \xi _{\chi _{\varepsilon }}\right) _{0}\right\Vert
_{W_{_{\varepsilon }}^{1,p}\left( \mathbb{R}\right) } &\leq &C\left\Vert
\left( \eta _{\chi _{\varepsilon }}\right) _{0}\right\Vert
_{L_{_{\varepsilon }}^{p}\left( \mathbb{R}\right) }=C\left\Vert \left( \eta
_{\chi _{\varepsilon }}\right) _{0}\right\Vert _{L_{_{\varepsilon
}}^{p}\left( [-l/\varepsilon ,l/\varepsilon ]\right) } \\
&=&C\left\Vert D_{\Phi }^{\varepsilon }\left( \xi _{\chi _{\varepsilon
}}\right) _{0}\right\Vert _{L_{_{\varepsilon }}^{p}\left( [-l/\varepsilon
,l/\varepsilon ]\right) }\leq C\left\Vert \nabla \left( \xi _{\chi
_{\varepsilon }}\right) _{0}\right\Vert _{L_{_{\varepsilon }}^{p}\left(
[-l/\varepsilon ,l/\varepsilon ]\right) } \\
&\leq &C\left\Vert \left( \xi _{\chi _{\varepsilon }}\right) _{0}\right\Vert
_{W_{_{\varepsilon }}^{1,p}\left( [-l/\varepsilon ,l/\varepsilon ]\right) }
\\
&\leq &C\left\Vert \left( \xi _{\chi _{\varepsilon }}\right) _{0}\right\Vert
_{W_{_{\rho _{\varepsilon }}}^{1,p}\left( [-l/\varepsilon ,l/\varepsilon
]\right) },
\end{eqnarray*}%
where the last inequality is because $W_{\varepsilon }^{1,p}$ is a component
of $W_{\rho _{\varepsilon }}^{1,p}$.
\end{proof}

Similarly we prove the following

\begin{lem}
\label{lem:tildexie} We have%
\begin{equation*}
\left\Vert \widetilde{\xi _{\chi _{\varepsilon }}}\right\Vert _{W_{\beta
_{\delta ,\varepsilon }}^{1,p}\left( \mathbb{R\times }S^{1}\right) }\leq
C\left\Vert \nabla \widetilde{\xi _{\chi _{\varepsilon }}}\right\Vert
_{L_{\beta _{\delta ,\varepsilon }}^{p}\left( \left[ -l/\varepsilon
,l/\varepsilon \right] \mathbb{\times }S^{1}\right) .}
\end{equation*}
\end{lem}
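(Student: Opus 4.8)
We must show
$$\left\Vert \widetilde{\xi _{\chi _{\varepsilon }}}\right\Vert _{W_{\beta_{\delta ,\varepsilon }}^{1,p}\left( \mathbb{R\times }S^{1}\right) }\leq C\left\Vert \nabla \widetilde{\xi _{\chi _{\varepsilon }}}\right\Vert_{L_{\beta _{\delta ,\varepsilon }}^{p}\left( \left[ -l/\varepsilon,l/\varepsilon \right] \mathbb{\times }S^{1}\right) },$$
with $C$ independent of $\varepsilon$; here $\widetilde{\xi_{\chi_\varepsilon}}$ is the higher-mode component of the solution of $D_{\chi_\varepsilon}\xi_{\chi_\varepsilon}=\kappa_0^\varepsilon\eta$ constructed on all of $\mathbb{R}\times S^1$ via the Fourier-mode formula \eqref{sol:ak}.

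**Approach.** The right-hand side is supported on $[-l/\varepsilon,l/\varepsilon]$ because $\kappa_0^\varepsilon\eta$ vanishes outside this interval, so there is nothing to prove for the behaviour of the left-hand side outside the neck once we control it there; the content is twofold: (i) a Poincaré-type inequality allowing the full $W^{1,p}$-norm of $\widetilde{\xi_{\chi_\varepsilon}}$ to be bounded by the $L^p$-norm of its gradient (this is where the absence of the zero spectrum of $J_0\partial_t + A_\varepsilon(\tau)$ on the higher modes is used), and (ii) a tail estimate showing that the part of $\widetilde{\xi_{\chi_\varepsilon}}$ living on $\mathbb{R}\setminus[-l/\varepsilon,l/\varepsilon]$ is controlled by the part on the neck, exploiting the exponential decay built into the solution formula \eqref{sol:ak} together with the fact that the weight $\beta_{\delta,\varepsilon}$ decays like an exponential $e^{2\pi\delta(-\tau+\tau(\varepsilon))}$ there while the decay rate of $a_k$ is $e^{-2\pi|k|\,\mathrm{dist}}$, and $\delta<1<|k|$ for $k\neq 0$.

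**Key steps.** First I would reduce to the model operator: by conjugating with $\beta_{\delta,\varepsilon}^{1/p}$ as in the diagram preceding \eqref{L2LpSw}, the weighted estimate for $D_{\chi_\varepsilon}=\partial_\tau+J_0\partial_t+A_\varepsilon(\tau)$ on $W_{\beta_{\delta,\varepsilon}}^{1,p}$ becomes an unweighted estimate for a $\delta/p$-perturbation of $\partial_\tau+J_0\partial_t$, and since $|A_\varepsilon|_\infty\le\delta$ is as small as we like (by replacing $f$ with $\varepsilon_0 f$, cf. the discussion after \eqref{L-floer}), the operator restricted to $\widetilde V$ remains invertible with uniform bounds — this is exactly Remark \ref{Donaldson-Method}. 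Second, on the neck $[-l/\varepsilon,l/\varepsilon]\times S^1$, apply the interior elliptic estimate \eqref{L2Lp}–\eqref{L2LpS} together with the $L^2$ bound \eqref{Estimate:L2}: since $D_{\chi_\varepsilon}\widetilde{\xi_{\chi_\varepsilon}}=\widetilde{\kappa_0^\varepsilon\eta}=\widetilde{(\kappa_0^\varepsilon\eta)}$ there and $\widetilde{\kappa_0^\varepsilon\eta}=D_{\chi_\varepsilon}\widetilde{\xi_{\chi_\varepsilon}}$ is expressible through $\nabla\widetilde{\xi_{\chi_\varepsilon}}$ and the lower-order term $A_\varepsilon\widetilde{\xi_{\chi_\varepsilon}}$, the zero-spectrum-free Poincaré inequality $\|\widetilde\xi\|_{L^p}\le C\|\nabla\widetilde\xi\|_{L^p}$ on each slice absorbs the lower-order term and yields $\|\widetilde{\xi_{\chi_\varepsilon}}\|_{W^{1,p}_{\beta}([-l/\varepsilon,l/\varepsilon]\times S^1)}\le C\|\nabla\widetilde{\xi_{\chi_\varepsilon}}\|_{L^p_\beta}$. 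Third, for the tails $\pm[l/\varepsilon,\infty)\times S^1$: here $\kappa_0^\varepsilon\eta=0$, so $\widetilde{\xi_{\chi_\varepsilon}}$ solves the homogeneous equation, and from the explicit formula \eqref{sol:ak} each Fourier coefficient $a_k(\tau)$ with $k>0$ on $\tau>l/\varepsilon$ equals $a_k(l/\varepsilon)\,e^{\int_{l/\varepsilon}^\tau(A_\varepsilon-2\pi k)d\mu}$, which decays at rate at least $2\pi k - \delta \ge 2\pi-\delta > 2\pi\delta$; multiplying by $\beta_{\delta,\varepsilon}^{1/p}\sim e^{\frac{2\pi\delta}{p}(-\tau+\tau(\varepsilon))}$ leaves a net exponential decay, so the tail $W^{1,p}_\beta$-norm is bounded by $C\sum_{k>0}|a_k(l/\varepsilon)|^2$-type quantity, which is in turn $\le C\|\widetilde{\xi_{\chi_\varepsilon}}\|_{W^{1,p}_\beta([-l/\varepsilon,l/\varepsilon]\times S^1)}$ by the trace/Sobolev embedding $W^{1,p}\hookrightarrow C^0$ in the $\tau$-variable (uniform since the neck has length $\ge 2l_0/\varepsilon\ge$ a fixed amount, or more simply after renormalizing $\tau\mapsto\varepsilon\tau$). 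Summing the neck and tail contributions gives the claim.

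**Main obstacle.** The delicate point is the uniformity of the constant $C$ in $\varepsilon$ for the passage from the $L^2$-Poincaré inequality to the weighted $L^p$-Poincaré inequality on an interval whose length $2l/\varepsilon\to\infty$: one cannot simply invoke compactness. The resolution, which I would carry out carefully, is that the relevant estimate is translation-invariant along $\tau$ (the operator $\partial_\tau+J_0\partial_t$ is $\tau$-independent, $A_\varepsilon$ is a uniformly small perturbation, and $\beta_{\delta,\varepsilon}$ differs from a genuine exponential weight only through the bounded smoothing near the corners $\pm l/\varepsilon$ and the constant factor $\varepsilon^{1-p+\delta}$ which cancels in the ratio), so the constant is inherited from the fixed model problem on $\mathbb{R}\times S^1$ exactly as in Remark \ref{Donaldson-Method} and \cite{Don}. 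Once that uniformity is in place, the matching of the exponential tail decay rate against the weight exponent — requiring only $\delta<1$, which holds since $0<\delta<1-1/p$ — closes the argument, and the estimate \eqref{eq:xichie} follows by combining this lemma with Lemma \ref{lem:xie0}.
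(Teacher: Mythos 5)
Your argument is correct, but it reorganizes the paper's one-liner into a more explicit neck--tail decomposition. The paper's own proof is a compact four-step chain: first apply the \emph{global} uniform right-inverse bound
$\|\widetilde{\xi_{\chi_\varepsilon}}\|_{W^{1,p}_{\beta_{\delta,\varepsilon}}(\mathbb{R}\times S^1)}\le C\|D^\varepsilon_{para}\widetilde{\xi_{\chi_\varepsilon}}\|_{L^p_{\beta_{\delta,\varepsilon}}(\mathbb{R}\times S^1)}$
(coming from the $W^{1,p}_\rho$ right-inverse estimate of Section~5 and Remark~\ref{Donaldson-Method}, exactly the conjugation reduction you invoke), then observe $D^\varepsilon_{para}\widetilde{\xi_{\chi_\varepsilon}}=\kappa_0^\varepsilon\widetilde\eta_{\chi_\varepsilon}$ is supported in the neck, and finally replace $D^\varepsilon\widetilde{\xi}$ by $\nabla\widetilde{\xi}$ on the neck by absorbing the zeroth-order term $A_\varepsilon\widetilde{\xi}$ via the slice Poincar\'e inequality for zero-mean sections on $S^1$. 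You instead bound the neck part directly by slice Poincar\'e and control the tail separately via the homogeneous decay of the Fourier coefficients $a_k$ plus a trace/Sobolev estimate at $\tau=\pm l/\varepsilon$. The two routes rest on the same three ingredients (uniform right inverse, support of $\kappa_0^\varepsilon\eta$, zero-mean Poincar\'e); the paper's advantage is that the global right-inverse bound subsumes the tail estimate automatically, so your third step is superfluous work, while your version makes explicit the balance between the $O(\varepsilon^{1-p})$ peak of $\beta_{\delta,\varepsilon}$ at $|\tau|=l/\varepsilon$ and the exponential decay of $\widetilde{\xi_{\chi_\varepsilon}}$, which the paper leaves implicit. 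One small imprecision: on $|\tau|\ge\tau(\varepsilon)$ the weight $\beta_{\delta,\varepsilon}\equiv 1$, so there is no net cancellation against the decay of $a_k$ there as you suggest; but this is harmless since the weight is everywhere bounded above by its value at $\pm l/\varepsilon$ on the tail, which is the only thing the trace argument actually needs.
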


\begin{proof}
By previous proposition, $Q_{\textrm{app}}^{\varepsilon }$ is a uniformly bounded
operator, so for the higher mode $\widetilde{\xi _{\chi _{\varepsilon }}}$
of $\xi _{\chi _{\varepsilon }}$, we have
\begin{eqnarray*}
\left\Vert \widetilde{\xi _{\chi _{\varepsilon }}}\right\Vert _{W_{\beta
_{\delta ,\varepsilon }}^{1,p}\left( \mathbb{R\times }S^{1}\right) } &\leq
&C\left\Vert D^\varepsilon_{\Phi;\text{\textrm{para}}}\widetilde{\xi _{\chi
_{\varepsilon }}}\right\Vert _{L_{\beta _{\delta ,\varepsilon }}^{p}\left(
\mathbb{R\times }S^{1}\right) } \\
&=&C\left\Vert \kappa _{0}^{\varepsilon }\widetilde{\eta }_{\chi
_{\varepsilon }}\right\Vert _{L_{\beta _{\delta ,\varepsilon }}^{p}\left(
\mathbb{R\times }S^{1}\right) } \\
&=&C\left\Vert D^\varepsilon_{\Phi}\widetilde{\xi _{\chi _{\varepsilon }}}%
\right\Vert _{L_{\beta _{\delta ,\varepsilon }}^{p}\left( \left[
-l/\varepsilon ,l/\varepsilon \right] \mathbb{\times }S^{1}\right) } \\
&\leq &C\left\Vert \nabla \widetilde{\xi _{\chi _{\varepsilon }}}\right\Vert
_{L_{\beta _{\delta ,\varepsilon }}^{p}\left( \left[ -l/\varepsilon
,l/\varepsilon \right] \mathbb{\times }S^{1}\right) .}
\end{eqnarray*}
\end{proof}

This finishes the proof of Proposition \ref{prop:uniform-inverse-bound},
which establishes the construction of the right inverse with uniform bound
as $\varepsilon \rightarrow 0$ for $L\geq l\geq l_{0}$ for any given $%
L,l_{0}>0$.

We now justify that $Q^{\text{\textrm{app}};\varepsilon}_{\text{\textrm{para}%
}}$ is indeed an approximate right inverse.

\begin{prop}
\label{prop:approx-right-inverse}$\left\Vert \left( D^\varepsilon_{\Phi;%
\text{\textrm{para}}}\circ Q^{\text{\textrm{app}};\varepsilon}_{\text{%
\textrm{para}}} -I\right) \eta \right\Vert _{\varepsilon }<\frac{1}{2}%
\left\Vert \eta \right\Vert _{\varepsilon }.$
\end{prop}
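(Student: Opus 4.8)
The strategy is to compute $\left(D^{\varepsilon}_{para}\circ Q^{app;\varepsilon}_{para} - I\right)\eta$ explicitly region by region and show that it is supported only where the cut-off functions in the definition of $Q^{app;\varepsilon}_{para}$ have nonzero derivative, namely on the interpolation zones $\pm[l/\varepsilon - T(\varepsilon), l/\varepsilon - T(\varepsilon)+1]$ and $\pm[l/\varepsilon + T(\varepsilon), l/\varepsilon + T(\varepsilon)+1]$, plus the regions where the preglued Floer datum $(K_\varepsilon, J_\varepsilon)$ differs from the genuine data $(K_\pm, J_\pm)$ and $(J_0, \varepsilon f)$ used to build the pieces $\xi_\pm^\varepsilon$ and $\xi_{\chi_\varepsilon}$. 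First I would record that on the three ``core'' regions — $\tau \le -l/\varepsilon - T(\varepsilon)$, $-l/\varepsilon \le \tau \le l/\varepsilon$, and $\tau \ge l/\varepsilon + T(\varepsilon)$ — the operator $D^{\varepsilon}_{para}$ agrees with the operator of which $\xi_\pm^\varepsilon$ or $\xi_{\chi_\varepsilon}$ is an exact solution, so $D^{\varepsilon}_{para}Q^{app;\varepsilon}_{para}\eta = \eta$ there and the error vanishes. This uses that $\kappa^\pm_\varepsilon$ makes $K_\varepsilon$ coincide with $K_\pm$ beyond $\tau = \pm l/\varepsilon$ and equals $\varepsilon f$ in the neck, exactly as in Section~\ref{sec:adiabatic}.

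Next I would estimate the error on the four interpolation bands. On these bands $D^{\varepsilon}_{para}$ applied to the glued formula produces, by the Leibniz rule, terms of two types: (i) terms in which $D^{\varepsilon}_{para}$ hits $\phi$-factors, producing $\phi'$ times differences such as $\xi_{\chi_\varepsilon} - (\xi_{\chi_\varepsilon})_0$, $\xi_+^\varepsilon - v_+$, or $(\xi_{\chi_\varepsilon})_0(\tau) - (\xi_{\chi_\varepsilon})_0(\pm l/\varepsilon)$; and (ii) terms coming from the mismatch between the parallel transports $Pal_{\chi,\varepsilon}$, $Pal_{\pm,\varepsilon}$ and the actual metric, which by the remark preceding Section~\ref{section:d-bar-error} contribute operator norm $O(\varepsilon)$. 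For type (i), the key point is that the interpolation bands sit at $|\tau| \approx l/\varepsilon \pm T(\varepsilon)$ with $T(\varepsilon) \to \infty$ and $\varepsilon T(\varepsilon)\to 0$, so: the higher-mode pieces $\widetilde{\xi_{\chi_\varepsilon}}$ decay by the pointwise estimate $|\widetilde{\xi}(\tau,t)|_{C^0} \le \frac{2C}{|\tau|^\delta}\|\tilde\eta\|_{L^p_{\rho_\varepsilon}}$; the difference $\xi_+^\varepsilon - v_+$ decays exponentially as one moves into the band by the $W^{1,p}_\alpha$-structure of $\xi_\pm$; and $(\xi_{\chi_\varepsilon})_0(\tau) - (\xi_{\chi_\varepsilon})_0(\pm l/\varepsilon)$ is controlled by $C(l)\varepsilon^{\widetilde\gamma}\|\xi_0\|_{W^{1,p}_\varepsilon}$ via \eqref{Schauder-Convergence}. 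Multiplying these small quantities against the bounded weights on the bands (the weight comparisons \eqref{eq:edtaue}, and that $\varepsilon^{p\gamma}e^{2\pi(p-1)S(\varepsilon)}\approx l^{p-1}$) and summing, one obtains that the total $\|\cdot\|_\varepsilon$-norm of the error is bounded by a constant times $(\varepsilon^{\widetilde\gamma} + \varepsilon)\|\eta\|_\varepsilon$, hence $<\tfrac12\|\eta\|_\varepsilon$ for $\varepsilon$ small.

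The main obstacle is bookkeeping the norm of the type-(i) error in the band near the \emph{peak} side $|\tau| \in [l/\varepsilon + T(\varepsilon), l/\varepsilon + T(\varepsilon)+1]$, where the weight $\beta_{\delta,\varepsilon}$ is transitioning between the power weight $\varepsilon^{1-p+\delta}(1+|\tau|)^\delta$ and the exponential weight $e^{2\pi\delta(-\tau + \tau(\varepsilon))}$: one has to verify that on this band both weights are comparable up to a constant depending only on $l$, so that the decay gained from $\widetilde{\xi_{\chi_\varepsilon}}$ and from $\xi_+^\varepsilon - v_+$ is not destroyed when measured in the $\beta_{\delta,\varepsilon}$-norm. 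This is exactly the purpose of placing $T(\varepsilon) = \tfrac13\tfrac{p-1}{\delta}S(\varepsilon)$ strictly between $0$ and $\tfrac{p-1}{\delta}S(\varepsilon)$, so that the interpolation bands avoid the peak $\tau = \pm l/\varepsilon$ of $\beta_{\delta,\varepsilon}$ and the weight is monotone on each band. Once the weight comparison is in hand, the remaining estimates are routine applications of H\"older's inequality on $S^1$, the uniform bounds on $Q_\pm$ and $Q_{\chi_\varepsilon}^{para}$ from Proposition~\ref{prop:uniform-inverse-bound}, and the elementary convergence of $\int_0^\infty s^{p\gamma}e^{-2\pi\delta s}\,ds$, and I would assemble them exactly as in the error estimates of the corresponding Lemma~10.10 of \cite{oh-zhu}, tracking that all constants are uniform for $l_0 \le l \le L$.
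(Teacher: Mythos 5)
Your overall strategy matches the paper's: the error vanishes identically on the three core regions, and is supported only on the bands $\pm[l/\varepsilon - T(\varepsilon),\, l/\varepsilon + T(\varepsilon)]$, where one then estimates the individual pieces. However, your enumeration of the remainder terms is incomplete in a way that matters. Besides the cut-off-derivative terms $(\phi_0^{l/\varepsilon + T(\varepsilon)})'\,\widetilde{\xi_{\chi_\varepsilon}}$ and $(\phi_+^{l/\varepsilon - T(\varepsilon)})'\,\widetilde{\xi_+^\varepsilon}$, and the Floer-datum mismatch term $\phi_+^{l/\varepsilon - T(\varepsilon)}A_\varepsilon\widetilde{\xi_+^\varepsilon}$ (your types (i) and (ii)), the Leibniz expansion of $D^\varepsilon_{para}$ applied to $\phi_+^{l/\varepsilon - T(\varepsilon)}\bigl(\xi_+^\varepsilon - v_+\bigr)$ produces a third contribution, namely $-\phi_+^{l/\varepsilon - T(\varepsilon)}\,D_{u_+^\varepsilon}\overline\partial_{(K_+^\varepsilon, J_+^\varepsilon)}v_+$. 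This is forced because the cut-off constant vector field $v_+$ is \emph{not} annihilated by the linearized operator: only $\xi_+^\varepsilon$ itself satisfies $D_{u_+^\varepsilon}\overline\partial_{(K_+^\varepsilon, J_+^\varepsilon)}\xi_+^\varepsilon = \kappa_+^\varepsilon\widetilde\eta$. Unless this term is identified and controlled, the cancellation that gives $D^\varepsilon_{para}\circ Q^{app;\varepsilon}_{para}\eta - \eta = 0$ in the core cannot be completed on the band, and the argument fails. The paper bounds it via the pointwise estimate $|D_{u_+}\overline\partial_{(K_+,J_+)}v_+| \le C|du_+|\,|v_+| = C|du_+|\,|\xi_+(o_+)|$ together with the exponential decay $|du_+(\tau)|\le Ce^{2\pi\tau}$ as $\tau\to-\infty$ near the puncture $o_+$, yielding a contribution of size $Ce^{2\pi(-l/\varepsilon + T(\varepsilon))}\|\eta\|_\varepsilon$, which is small because $T(\varepsilon)=o(1/\varepsilon)$.

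A secondary point: the difference $(\xi_{\chi_\varepsilon})_0(\tau) - (\xi_{\chi_\varepsilon})_0(\pm l/\varepsilon)$ that you list among the type (i) error terms, together with the weight-comparison bookkeeping you develop around \eqref{Schauder-Convergence} and \eqref{eq:edtaue}, actually belongs to the proof of Proposition~\ref{prop:uniform-inverse-bound} (the uniform bound on $\|Q^{app;\varepsilon}_{para}\eta\|_\varepsilon$), not to the present approximate-right-inverse estimate. Here the zero-mode piece satisfies $D^\varepsilon_{para}\bigl((\xi_{\chi_\varepsilon})_0,\mu\bigr)=\eta_0$ exactly, so it contributes nothing to the remainder. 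This conflation of the two arguments is what led to the spurious term and, more importantly, to overlooking the genuine $D\overline\partial v_+$ term above.
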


\begin{proof}
By the definition of $Q_{\text{\textrm{para}}}^{\text{\textrm{app}}%
;\varepsilon }$, $\left( D_{\Phi ;\text{\textrm{para}}}^{\varepsilon }\circ
Q_{\text{\textrm{para}}}^{\text{\textrm{app}};\varepsilon }-I\right) \eta =0$
except on the intervals $\pm \left[ l/\varepsilon -T\left( \varepsilon
\right) ,l/\varepsilon +T\left( \varepsilon \right) \right] $. Let's
consider $\tau \in \left[ l/\varepsilon -T\left( \varepsilon \right)
,l/\varepsilon +T\left( \varepsilon \right) \right] $. The other interval is
the same. Recall $\widetilde{\xi _{\chi _{\varepsilon }}}{}=\xi _{\chi
_{\varepsilon }}-\left( \xi _{\chi _{\varepsilon }}\right) _{0}$, $%
\widetilde{\xi _{+}}=\xi _{+}-v_{+}$ and
\begin{equation*}
\xi _{\varepsilon }=\left( \xi _{\chi _{\varepsilon }}\right) _{0}+\phi
_{0}^{l/\varepsilon +T\left( \varepsilon \right) }\left( \tau \right)
\widetilde{\xi _{\chi _{\varepsilon }}}+\phi _{+}^{\left( l/\varepsilon
-T\left( \varepsilon \right) \right) }\left( \tau \right) \widetilde{\xi
_{+}^{\varepsilon }}.
\end{equation*}%
We compute%
\begin{eqnarray*}
&{}&D_{\Phi ;\text{\textrm{para}}}^{\varepsilon }\circ Q_{\text{\textrm{para}%
}}^{\text{\textrm{app}};\varepsilon }\eta -\eta  \\
&=&D_{\Phi ;\text{\textrm{para}}}^{\varepsilon }\left( \left( \xi _{\chi
_{\varepsilon }}\right) _{0}{},\mu \right) +\phi _{0}^{l/\varepsilon
+T\left( \varepsilon \right) }D_{\Phi ;\text{\textrm{para}}}^{\varepsilon }%
\widetilde{\xi _{\chi _{\varepsilon }}}{}+\phi _{+}^{l/\varepsilon -T\left(
\varepsilon \right) }D_{\Phi ;\text{\textrm{para}}}^{\varepsilon }\widetilde{%
\xi _{+}^{\varepsilon }}-\left( \eta _{0}+\widetilde{\eta }\right)  \\
&{}&+\left( \phi _{0}^{l/\varepsilon +T\left( \varepsilon \right) }\right)
^{\prime }\widetilde{\xi _{\chi _{\varepsilon }}}{}+\left( \phi
_{+}^{l/\varepsilon -T\left( \varepsilon \right) }\right) ^{\prime }%
\widetilde{\xi _{+}} \\
&=&D_{\Phi ;\text{\textrm{para}}}^{\varepsilon }\left( \left( \xi _{\chi
_{\varepsilon }}\right) _{0},\mu \right) +\phi _{0}^{l/\varepsilon +T\left(
\varepsilon \right) }D_{\Phi ;\text{\textrm{para}}}^{\varepsilon }\widetilde{%
\xi _{\chi _{\varepsilon }}} \\
&{}&+\phi _{+}^{l/\varepsilon -T\left( \varepsilon \right) }\left[
D_{u_{+}^{\varepsilon }}\overline{\partial }_{\left( K_{+}^{\varepsilon
},J_{+}^{\varepsilon }\right) }+A_{\varepsilon }\right] \left( \xi
_{+}^{\varepsilon }-v_{+}\right)  \\
&{}&-\left( \eta _{0}+\widetilde{\eta }\right) +\left( \phi
_{0}^{l/\varepsilon +T\left( \varepsilon \right) }\right) ^{\prime }%
\widetilde{\xi _{\chi _{\varepsilon }}}+\left( \phi _{+}^{l/\varepsilon
-T\left( \varepsilon \right) }\right) ^{\prime }\widetilde{\xi
_{+}^{\varepsilon }} \\
&=&D_{\Phi ;\text{\textrm{para}}}^{\varepsilon }\left( \left( \xi _{\chi
_{\varepsilon }}\right) _{0},\mu \right) +\phi _{0}^{l/\varepsilon +T\left(
\varepsilon \right) }D_{\Phi ;\text{\textrm{para}}}^{\varepsilon }\widetilde{%
\xi _{\chi _{\varepsilon }}} \\
&{}&+\phi _{+}^{l/\varepsilon -T\left( \varepsilon \right)
}D_{u_{+}^{\varepsilon }}\overline{\partial }_{\left( K_{+}^{\varepsilon
},J_{+}^{\varepsilon }\right) }\xi _{+}^{\varepsilon }-\left( \eta _{0}+%
\widetilde{\eta }\right)  \\
&{}&+\phi _{+}^{l/\varepsilon -T\left( \varepsilon \right) }\left(
A_{\varepsilon }\widetilde{\xi _{+}^{\varepsilon }}-D_{u_{+}^{\varepsilon }}%
\overline{\partial }_{\left( K_{+}^{\varepsilon },J_{+}^{\varepsilon
}\right) }v_{+}\right)  \\
&{}&+\left( \phi _{0}^{l/\varepsilon +T\left( \varepsilon \right) }\right)
^{\prime }\widetilde{\xi _{\chi _{\varepsilon }}}+\left( \phi
_{+}^{l/\varepsilon -T\left( \varepsilon \right) }\right) ^{\prime }%
\widetilde{\xi _{+}^{\varepsilon }},
\end{eqnarray*}%
where in the second identity we have used the notation $\widetilde{\xi
_{+}^{\varepsilon }}=\xi _{+}^{\varepsilon }-v_{+}$. By our construction
of the approximate right inverse $Q_{\text{\textrm{para}%
}}^{\text{\textrm{app}};\varepsilon }$, we have
\begin{equation*}
D_{\Phi ;\text{\textrm{para}}}^{\varepsilon }\left( \left( \xi _{\chi
_{\varepsilon }}\right) _{0},\mu \right) =\eta _{0},~~D_{\Phi ;\text{\textrm{%
para}}}^{\varepsilon }\widetilde{\xi _{\chi _{\varepsilon }}}{}=\kappa
_{0}^{\varepsilon }\widetilde{\eta ,}~\text{and}~D_{u_{+}^{\varepsilon }}%
\overline{\partial }_{\left( K_{+}^{\varepsilon },J_{+}^{\varepsilon
}\right) }\xi _{+}^{\varepsilon }=\kappa _{+}^{\varepsilon }\widetilde{\eta }
\end{equation*}%
in $\left[ l/\varepsilon -T\left( \varepsilon \right) ,l/\varepsilon
+T\left( \varepsilon \right) \right] $. Then substitution of these into the
above makes the first and second rows cancel.

It now remains to estimate the last two rows above.
For the term $A_{\varepsilon }\left( \tau \right) \widetilde{\xi
_{+}^{\varepsilon }}$, using $\left\Vert A_{\varepsilon }\right\Vert
_{C^{1}}\leq C\varepsilon $, we obtain
\begin{eqnarray*}
\left\Vert \phi _{+}^{l/\varepsilon -T\left( \varepsilon \right) }\left(
\tau \right) A_{\varepsilon }\left( \tau \right) \widetilde{\xi
_{+}^{\varepsilon }}\right\Vert _{\varepsilon } &\leq &C\varepsilon
\left\Vert \widetilde{\xi _{+}^{\varepsilon }}\right\Vert _{\varepsilon
}\leq C\varepsilon \left\Vert \xi _{+}\right\Vert _{\varepsilon } \\
&\leq &C\varepsilon \left\Vert \eta _{+}\right\Vert _{\varepsilon }\leq
C\varepsilon \left\Vert \eta \right\Vert _{\varepsilon },
\end{eqnarray*}%
where the first inequality holds because $\xi _{+}^{\varepsilon }$ is a
shift of $\xi _{+}$, the second holds by the definition of the norm $%
\left\Vert \cdot \right\Vert _{\varepsilon }$ and the third comes from the
boundedness of the right inverse of $D_{u_{+}}\overline{\partial }_{\left(
K_{+},J_{+}\right) }$.

For the term $D_{u_{+}^{\varepsilon }}\overline{\partial }_{\left( K_{+}^{\varepsilon
},J_{+}^{\varepsilon }\right) }v_{+}$, by the fact that on
\begin{equation*}
\tau \in \left[ l/\varepsilon -T\left( \varepsilon \right) -\tau \left(
\varepsilon \right) ,l/\varepsilon +T\left( \varepsilon \right) -\tau \left(
\varepsilon \right) \right] \subset \lbrack -\infty ,-1),
\end{equation*}%
$J_{+}=J_{0},$ and $v_{+}$ is a vector field obtained by parallel transport
of $\xi _{+}\left( o_{+}\right) ,$
\begin{equation*}
\left\vert v_{+}\right\vert =\left\vert \xi _{+}\left( o_{+}\right)
\right\vert ,\left\vert \nabla v_{+}(o_{+})\right\vert =0
\end{equation*}%
and
\begin{eqnarray*}
\left\vert D_{u_{+}}\overline{\partial }_{\left( K_{+},J_{+}\right)
}v_{+}\right\vert &=&\left\vert \left( \nabla v_{+}\right) ^{0,1}+\frac{1}{2}%
J_{0}\nabla _{v_{+}}J_{0}\partial u_{+}\right\vert \\
&\leq &C\left\vert du_{+}\right\vert \left\vert v_{+}\right\vert
=C\left\vert du_{+}\right\vert \left\vert \xi _{+}\left( o_{+}\right)
\right\vert \text{.}
\end{eqnarray*}%
Using the uniform exponential decay $\left\vert du_{+}\right\vert \leq
Ce^{2\pi \tau }$ as $\tau \rightarrow -\infty $, and noticing $\varepsilon
^{1-p}\leq e^{-2\pi \delta \left( \tau -\tau (\varepsilon )\right) }$ for $%
\tau \in $ $\left[ l/\varepsilon -T\left( \varepsilon \right) ,l/\varepsilon %
\right] $, we have
\begin{equation*}
|du_{+}(\tau )|\leq Ce^{2\pi \left( -l/\varepsilon +T(\varepsilon )\right) }
\end{equation*}%
for $\tau \in \lbrack l/\varepsilon -T(\varepsilon ),l/\varepsilon ]$, hence%
\begin{eqnarray*}
\left\Vert D_{u_{+}}\overline{\partial }_{\left( K_{+},J_{+}\right)
}v_{+}\right\Vert _{\varepsilon } &\leq &C|du_{+}(\tau )|\left\vert \xi
_{+}\left( o_{+}\right) \right\vert \\
&\leq &Ce^{2\pi \left( -l/\varepsilon +T\left( \varepsilon \right) \right)
}\left\Vert \xi _{+}\right\Vert _{W_{\alpha }^{1,p}\left( \Sigma _{+}\right)
} \\
&\leq &Ce^{2\pi \left( -l/\varepsilon +T\left( \varepsilon \right) \right)
}\left\Vert \eta \right\Vert _{L_{\alpha }^{p}\left( \Sigma _{+}\right) }.
\end{eqnarray*}%
Here we have used that $\left\vert \xi _{+}\left( o_{+}\right) \right\vert $
is part of the norm $\left\Vert \xi _{+}\right\Vert _{W_{\alpha
}^{1,p}\left( \Sigma _{+}\right) }$.

We estimate the remaining two terms%
\begin{equation*}
\left( \phi _{0}^{l/\varepsilon +T\left( \varepsilon \right) }\right)
^{\prime }\widetilde{\xi _{\chi _{\varepsilon }}},\text{ \ }\left( \phi
_{+}^{l/\varepsilon -T\left( \varepsilon \right) }\right) ^{\prime }%
\widetilde{\xi _{+}^{\varepsilon }}.
\end{equation*}

For the term $\left( \phi _{+}^{l/\varepsilon -T\left( \varepsilon \right) }\right)
^{\prime }\left( \tau \right) \widetilde{\xi _{+}^{\varepsilon }}$, it is
supported in $\left[ l/\varepsilon -T\left( \varepsilon \right)
,l/\varepsilon -T\left( \varepsilon \right) +1\right] $, where both the
power order weight and $\varepsilon $-adiabatic weight are dominated by the
exponential weight as the following%
\begin{equation*}
\varepsilon ^{1-p+\delta }\left( 1+\left\vert \tau \right\vert \right)
^{\delta }\leq \varepsilon ^{1-p}\leq e^{-2\pi \delta T\left( \varepsilon
\right) }\cdot e^{2\pi \delta \left\vert \tau -\tau (\varepsilon
)\right\vert }.
\end{equation*}%
Therefore we do not need to separate the $0$-mode and the higher mode parts
of $\phi _{l/\varepsilon -T\left( \varepsilon \right) }^{\prime }\left( \xi
_{+}-v_{+}\right) $ and immediately see
\begin{eqnarray*}
&{}&\left\Vert \left( \phi _{+}^{l/\varepsilon -T\left( \varepsilon \right)
}\right) ^{\prime }\left( \tau \right) \widetilde{\xi _{+}^{\varepsilon }}%
\right\Vert _{\varepsilon } \\
&\leq &e^{\frac{-2\pi \delta T\left( \varepsilon \right) }{p}}\left\Vert
\left( \phi _{+}^{l/\varepsilon -T\left( \varepsilon \right) }\right)
^{\prime }\left( \tau \right) \widetilde{\xi _{+}^{\varepsilon }}\right\Vert
_{W_{\delta }^{1,p}\left[ l/\varepsilon -T\left( \varepsilon \right)
,l/\varepsilon -T\left( \varepsilon \right) +1\right] } \\
&\leq &e^{\frac{-2\pi \delta T\left( \varepsilon \right) }{p}}\left\Vert \xi
_{+}\right\Vert _{W_{\alpha }^{1,p}\left( \Sigma _{+}\right) }\leq e^{\frac{%
-2\pi \delta T\left( \varepsilon \right) }{p}}C\left\Vert \eta
_{+}\right\Vert _{L_{\alpha }^{p}\left( \Sigma _{+}\right) }\leq Ce^{\frac{%
-2\pi \delta T\left( \varepsilon \right) }{p}}\left\Vert \eta \right\Vert
_{\varepsilon }.
\end{eqnarray*}

For the term $\left( \phi _{+}^{l/\varepsilon -T\left( \varepsilon \right) }\right)
^{\prime }\widetilde{\xi _{+}^{\varepsilon }}$, it is supported in $\left[
l/\varepsilon +T\left( \varepsilon \right) ,l/\varepsilon +T\left(
\varepsilon \right) +1\right] $. From the Sobolev embedding, and notice the
power weight $\rho _{\varepsilon }\left( \tau \right) =\varepsilon
^{1-p+\delta }\left( 1+\left\vert \tau \right\vert \right) ^{\delta }$ in $%
\left( \ref{power-e-weight}\right) $, we obtain
\begin{eqnarray*}
&{}&\left\vert \widetilde{\xi _{+}^{\varepsilon }}\right\vert  \\
&\leq &C\varepsilon ^{\frac{p-1-\delta }{p}}\left( 1+l/\varepsilon +T\left(
\varepsilon \right) \right) ^{-\frac{\delta }{p}}\left\Vert \widetilde{\xi
_{+}^{\varepsilon }}\right\Vert _{W_{\rho _{\varepsilon }}^{1,p}\left( \left[
l/\varepsilon +T\left( \varepsilon \right) ,l/\varepsilon +T\left(
\varepsilon \right) +1\right] \times S^{1}\right) } \\
&\leq &C\varepsilon ^{\frac{p-1-\delta }{p}}\left( \varepsilon /l\right) ^{%
\frac{\delta }{p}}\left\Vert \widetilde{\xi _{+}^{\varepsilon }}\right\Vert
_{W_{\rho _{\varepsilon }}^{1,p}\left( \left[ l/\varepsilon +T\left(
\varepsilon \right) ,l/\varepsilon +T\left( \varepsilon \right) +1\right]
\times S^{1}\right) } \\
&\leq &C\varepsilon ^{\frac{p-1-\delta }{p}}\left( \varepsilon /l\right) ^{%
\frac{\delta }{p}}\cdot C\left\Vert \widetilde{\eta }_{\chi _{\varepsilon
}}\right\Vert _{L_{\rho _{\varepsilon }}^{p}\left( \mathbb{R}\times
S^{1}\right) } \\
&\leq &C\varepsilon ^{\frac{p-1-\delta }{p}}\left( \varepsilon /l\right) ^{%
\frac{\delta }{p}}\left\Vert \eta \right\Vert _{\varepsilon },
\end{eqnarray*}%
Since $\left[ l/\varepsilon +T\left( \varepsilon \right) ,l/\varepsilon
+T\left( \varepsilon \right) +1\right] $ is contained outside $\left[
-l/\varepsilon ,l/\varepsilon \right] $, we needn't to distinguish its $0$%
-mode and higher mode in computing the weighted Sobolev norm of $\left( \phi
_{+}^{l/\varepsilon -T\left( \varepsilon \right) }\right) ^{\prime }\left(
\tau \right) \widetilde{\xi _{+}^{\varepsilon }}$. Note
\begin{equation*}
\left. \left( \phi _{+}^{l/\varepsilon -T\left( \varepsilon \right) }\right)
^{\prime }\left( \tau \right) \widetilde{\xi _{+}^{\varepsilon }}\right\vert
_{\tau =l/\varepsilon }=\left( \phi _{+}^{l/\varepsilon -T\left( \varepsilon
\right) }\right) ^{\prime }\left( \tau \right) \left( \xi _{\chi
_{\varepsilon }}-\left( \xi _{\chi _{\varepsilon }}\right) _{0}\right) \Big |%
_{\tau =l/\varepsilon }=0,
\end{equation*}%
and the weight there is
\begin{equation*}
e^{-2\pi \delta \left( \tau -l/\varepsilon -\frac{p-1}{\delta }S\left(
\varepsilon \right) \right) }\leq e^{-2\pi \delta \left( T\left( \varepsilon
\right) -\frac{p-1}{\delta }S(\varepsilon )\right) }=\left( 1+l/\varepsilon
\right) ^{p-1}e^{-2\pi \delta T\left( \varepsilon \right) },
\end{equation*}%
therefore%
\begin{eqnarray*}
\left\Vert \left( \phi _{+}^{l/\varepsilon -T\left( \varepsilon \right)
}\right) ^{\prime }\left( \tau \right) \widetilde{\xi _{+}^{\varepsilon }}%
\right\Vert _{\varepsilon } &\leq &C\varepsilon ^{\frac{p-1-\delta }{p}%
}\left( \varepsilon /l\right) ^{\frac{\delta }{p}}\left\Vert \eta
\right\Vert _{\varepsilon }\cdot \left( 1+l/\varepsilon \right) ^{\frac{p-1}{%
p}}e^{\frac{-2\pi \delta T\left( \varepsilon \right) }{p}} \\
&=&Ce^{\frac{-2\pi \delta T\left( \varepsilon \right) }{p}}\left\Vert \eta
\right\Vert _{\varepsilon }.
\end{eqnarray*}%
Combining these estimates we obtain%
\begin{eqnarray*}
&{}&\left\Vert D_{\Phi ;\text{\textrm{para}}}^{\varepsilon }\overline{%
\partial }_{\left( K_{\varepsilon },J_{\varepsilon }\right) }\circ Q_{\text{%
\textrm{para}}}^{\text{\textrm{app}};\varepsilon }(\eta )-\eta \right\Vert
_{\varepsilon } \\
&\leq &C\varepsilon \left\Vert \eta \right\Vert _{\varepsilon }+Ce^{\frac{%
-2\pi \delta T\left( \varepsilon \right) }{p}}\left\Vert \eta \right\Vert
_{\varepsilon }+Ce^{\frac{-2\pi \delta T\left( \varepsilon \right) }{p}%
}\left\Vert \eta \right\Vert _{\varepsilon } \\
&{}&\quad +Ce^{2\pi \left( -l/\varepsilon +T\left( \varepsilon \right)
\right) }\left\Vert \eta \right\Vert _{L_{\alpha }^{p}\left( \Sigma
_{+}\right) }
\end{eqnarray*}%
Since $T\left( \varepsilon \right) =\frac{p-1}{3\delta }S(\varepsilon
)\rightarrow \infty $ and $\varepsilon S(\varepsilon )\rightarrow 0$ as $%
\varepsilon \rightarrow 0$, we have for small enough $\varepsilon $,
\begin{equation*}
\left\Vert D_{\Phi ;\text{\textrm{para}}}^{\varepsilon }\overline{\partial }%
_{\left( K_{\varepsilon },J_{\varepsilon }\right) }\circ Q_{\text{\textrm{%
para}}}^{\text{\textrm{app}};\varepsilon }(\eta )-\eta \right\Vert
_{\varepsilon }\leq \frac{1}{2}\left\Vert \eta \right\Vert _{\varepsilon }.
\end{equation*}%
The proposition follows.
\end{proof}

By the above proposition, $D^\varepsilon_{\Phi;\text{\textrm{para}}}%
\overline{\partial }_{\left( K_{\varepsilon },J_{\varepsilon }\right) }\circ
Q_{\text{\textrm{para}}}^{\text{\textrm{app}};\varepsilon }$ is invertible,
and
\begin{eqnarray*}
\left\Vert \left( D^\varepsilon_{\Phi;\text{\textrm{para}}}\overline{%
\partial }_{\left( K_{\varepsilon },J_{\varepsilon }\right) }\circ Q_{\text{%
\textrm{para}}}^{\text{\textrm{app}};\varepsilon }\right) ^{-1}\right\Vert
&\leq &\left\Vert \Sigma _{k=0}^{\infty }\left( D^\varepsilon_{\Phi;\text{%
\textrm{para}}}\overline{\partial }_{\left( K_{\varepsilon },J_{\varepsilon
}\right) }\circ Q_{\text{\textrm{para}}}^{\text{\textrm{app}};\varepsilon
}-id\right) ^{k}\right\Vert \\
&\leq &\Sigma _{k=0}^{\infty }\left( \frac{1}{2}\right) ^{k}=1,
\end{eqnarray*}%
so we can construct the true right inverse of \ $D^\varepsilon_{\Phi;\text{%
\textrm{para}}}\overline{\partial }_{\left( K_{\varepsilon },J_{\varepsilon
}\right) }$ as
\begin{equation*}
Q_{\text{\textrm{para}}}^{\varepsilon }:=Q_{\text{\textrm{para}}}^{\text{%
\textrm{app}};\varepsilon }\circ \left( D^\varepsilon_{\Phi;\text{\textrm{%
para}}}\overline{\partial }_{\left( K_{\varepsilon },J_{\varepsilon }\right)
}\circ Q_{\text{\textrm{para}}}^{\text{\textrm{app}};\varepsilon }\right)
^{-1}.
\end{equation*}%
Since $Q_{\text{\textrm{para}}}^{\text{\textrm{app}};\varepsilon }$ is
uniformly bounded, so is $Q_{\text{\textrm{para}}}^{\varepsilon }$.

\begin{rem}
If we examine the proof of the right inverse bound of $Q_{\text{\textrm{para}%
}}^{\varepsilon }$, , we get $\left\Vert Q_{\text{\textrm{para}}%
}^{\varepsilon }\right\Vert \leq Cl^{\frac{p-1}{p}}$. It will increase as $%
l\rightarrow \infty $. But the $\overline{\partial }_{\left( K_{\varepsilon
},J_{\varepsilon }\right) }$-error estimate will have faster order decay $%
\left( Ce^{-cl}+l^{-\frac{p-1}{\delta }}\right) l^{\frac{p-1}{p}}\varepsilon
^{\frac{1}{p}}$, and the quadratic estimate in next section has the uniform
constant $C$, so we an still apply the implicit function theorem for all $%
l\geq l_{0}$.
\end{rem}

\section{\protect\bigskip Uniform quadratic estimate and implicit function
theorem\label{sec:quadratic}}

Consider the Banach spaces
\begin{eqnarray*}
X &=&\left\{ \left( \xi ,\mu \right) \,\Big|\,\xi \in \Gamma \left( \left(
u_{\textrm{app}}^{\varepsilon }\right) ^{\ast }TM\right) ,\mu \in T_{l/\varepsilon }%
\mathbb{R},\left\Vert \left( \xi ,\mu \right) \right\Vert _{\varepsilon
}=\left\Vert \xi \right\Vert _{\varepsilon }+\left\vert \mu \right\vert
<\infty \right\}  \\
Y &=&\left\{ \eta \,\Big|\,\eta \in \Gamma \left( \left(
u_{\textrm{app}}^{\varepsilon }\right) ^{\ast }TM\right) \otimes \Lambda ^{0,1}\left(
\mathbb{R\times }S^{1}\right) ,\left\Vert \eta \right\Vert _{\varepsilon
}<\infty \right\}
\end{eqnarray*}%
with the Banach norms $\left\Vert \cdot \right\Vert _{\varepsilon }$ defined
for $\xi $ and $\eta $ in section \ref{sec:off-shell}. $\ $ For sections $%
\xi ,\xi ^{\prime }$ in $\left( u_{\textrm{app}}^{\varepsilon }\right) ^{\ast }TM$
and $\mu ,\mu ^{\prime }$ in $T_{l/\varepsilon }\mathbb{R}$, let \
\begin{equation*}
\Upsilon _{u_{\textrm{app}}^{\varepsilon }}:\Gamma \left( \left( u_{\textrm{app}}^{\varepsilon
}\right) ^{\ast }TM\right) \times T_{l/\varepsilon }\mathbb{R\rightarrow }%
\Gamma \left( \left( u_{\textrm{app}}^{\varepsilon }\right) ^{\ast }TM\right) \otimes
\Lambda ^{0,1}\left( \mathbb{R\times }S^{1}\right)
\end{equation*}%
that%
\begin{equation*}
\Upsilon _{u_{\textrm{app}}^{\varepsilon }}\left( \xi ,\mu \right) \left( \tau
,t\right) =\func{Pal}_{\xi }^{-1}\left[ \overline{\partial }_{\left(
K_{\varepsilon },J_{\varepsilon }\right) }\exp _{u_{\textrm{app}}^{\varepsilon
}}\left( \xi \right) \right] \left( P_{\mu }\tau ,t\right)
\end{equation*}%
where $\func{Pal}_{\xi }^{-1}$ is the parallel transport from $\exp
_{u_{\textrm{app}}^{\varepsilon }}\xi \left( \tau ,t\right) $ to $u_{\textrm{app}}^{%
\varepsilon }\left( \tau ,t\right) $ along the shortest geodesic, and the
reparameterization map
\begin{equation*}
P_{\mu }:\left( \mathbb{R};-l/\varepsilon ,l/\varepsilon \right) \rightarrow
\left( \mathbb{R};-\left( l/\varepsilon -\mu \right) ,l/\varepsilon -\mu
\right) .
\end{equation*}%
between marked real lines is
\begin{equation}
P_{\mu }\left( \tau \right) =%
\begin{cases}
\tau +\mu  & \mbox{for }\,\tau <-l/\varepsilon , \\
\frac{l}{l-\varepsilon \mu }\tau  & \mbox{for }\,\left\vert \tau \right\vert
\leq l/\varepsilon , \\
\tau -\mu  & \mbox{for }\,\tau >l/\varepsilon ,%
\end{cases}
\label{P-mu}
\end{equation}%
The above definition of $\Upsilon _{u_{\textrm{app}}^{\varepsilon }}$ is slightly
imprecise since $P_{\mu }$ is only piecewise differentiable and we need to
smooth it to a sufficiently close diffeomorphism, but that can be done. Then
\begin{eqnarray*}
&&d\Upsilon _{u_{\textrm{app}}^{\varepsilon }}\left( \xi ,\mu \right) \left( \xi
^{\prime },\mu ^{\prime }\right) \left( \tau ,t\right)  \\
&=&\left. \frac{d}{ds}\right\vert _{s=0}\Upsilon _{u_{\textrm{app}}^{\varepsilon
}}\left( \xi +s\xi ^{\prime },\mu +s\mu ^{\prime }\right) \left( \tau
,t\right)  \\
&=&\left. \frac{d}{ds}\right\vert _{s=0}\func{Pal}_{\xi +s\xi ^{\prime
}}^{-1}\left[ \overline{\partial }_{\left( K_{\varepsilon },J_{\varepsilon
}\right) }\exp _{u_{\textrm{app}}^{\varepsilon }}\left( \xi +s\xi ^{\prime }\right) %
\right] \left( P_{\mu +s\mu ^{\prime }}\left( \tau \right) ,t\right)  \\
&=&%
\begin{cases}
\left. \frac{d}{ds}\right\vert _{s=0}\func{Pal}_{\xi +s\xi ^{\prime }}^{-1}%
\left[ \overline{\partial }_{\left( K_{\varepsilon },J_{\varepsilon }\right)
}\exp _{u_{\textrm{app}}^{\varepsilon }}\left( \xi +s\xi ^{\prime }\right) \right]
\left( \frac{l-\varepsilon \left( \mu +s\mu ^{\prime }\right) }{l}\tau
,t\right) \, & \mbox{if }\,\left\vert \tau \right\vert \leq l/\varepsilon
\\
\left. \frac{d}{ds}\right\vert _{s=0}\func{Pal}_{\xi +s\xi ^{\prime }}^{-1}%
\left[ \overline{\partial }_{\left( K_{\varepsilon },J_{\varepsilon }\right)
}\exp _{u_{\textrm{app}}^{\varepsilon }}\left( \xi +s\xi ^{\prime }\right) \right]
\left( \tau \pm \left( \mu +s\mu ^{\prime }\right) ,t\right) \, & \mbox{if }%
\,\left\vert \tau \right\vert >l/\varepsilon
\end{cases}%
\end{eqnarray*}%
and
\begin{equation*}
d\Upsilon _{u_{\textrm{app}}^{\varepsilon }}\left( 0,0\right) \left( \xi ^{\prime
},\mu ^{\prime }\right) =D_{u_{\textrm{app}}^{\varepsilon }}\overline{\partial }%
_{\left( K_{\varepsilon },J_{\varepsilon }\right) }\left( \xi ^{\prime },\mu
^{\prime }\right) .
\end{equation*}

\begin{prop}
\label{prop:quadratic}\bigskip For each given pair $\left( K_{\varepsilon
},J_{\varepsilon }\right) $, there exists uniform constants $C$ (depending
only on $l_{0}$ and $p$ but independent of $\varepsilon$) and $h_{0}$ such
that for all $\left( \xi ,\mu \right) $ and $\left( \xi ^{\prime },\mu
^{\prime }\right) $ in $\Gamma \left( \left( u_{\textrm{app}}^{\varepsilon }\right)
^{\ast }TM\right) \times T_{l/\varepsilon }\mathbb{R}$ with $0\leq
\left\Vert \left( \xi ,\mu \right) \right\Vert _{\varepsilon }\leq h_{0}\,$,
\begin{equation*}
\left\Vert d\Upsilon_{u_{\textrm{app}}^{\varepsilon }}\left( \xi ,\mu \right) \left(
\xi ^{\prime },\mu ^{\prime }\right) -D_{u_{\textrm{app}}^{\varepsilon }}\overline{%
\partial }_{\left( K_{\varepsilon },J_{\varepsilon }\right) }\left( \xi
^{\prime },\mu ^{\prime }\right) \right\Vert _{\varepsilon }\leq C\left\Vert
\left( \xi ,\mu \right) \right\Vert _{\varepsilon }\left\Vert \left( \xi
^{\prime },\mu ^{\prime }\right) \right\Vert _{\varepsilon }.
\end{equation*}
\end{prop}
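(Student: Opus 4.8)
The plan is to prove the quadratic estimate by localizing the difference $d\mathcal{F}_{u_{app}^{\varepsilon}}(\xi,\mu)(\xi',\mu') - D_{u_{app}^{\varepsilon}}\overline{\partial}_{(K_\varepsilon,J_\varepsilon)}(\xi',\mu')$ to the various regions of the cylinder $\mathbb{R}\times S^1$, exactly mirroring the decomposition of the approximate solution and of the Banach norm $\|\cdot\|_\varepsilon$. The key observation is that the difference naturally splits into two pieces: a \emph{geometric nonlinearity} coming from comparing $Pal_\xi^{-1}\circ\overline{\partial}_{(K_\varepsilon,J_\varepsilon)}\circ\exp_{u_{app}^\varepsilon}$ with its linearization $D_{u_{app}^\varepsilon}\overline{\partial}$, holding the domain reparametrization fixed; and a \emph{domain-variation nonlinearity} coming from the dependence on $P_\mu$ and its derivative in $\mu,\mu'$. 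I would treat these separately and then combine.

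First I would handle the geometric piece. Here one writes, in a local trivialization of $(u_{app}^\varepsilon)^*TM$ via parallel transport, the standard Taylor expansion
\begin{equation*}
Pal_\xi^{-1}\overline{\partial}_{(K_\varepsilon,J_\varepsilon)}(\exp_{u_{app}^\varepsilon}\xi) = \overline{\partial}_{(K_\varepsilon,J_\varepsilon)}u_{app}^\varepsilon + D_{u_{app}^\varepsilon}\overline{\partial}_{(K_\varepsilon,J_\varepsilon)}(\xi) + N(\xi),
\end{equation*}
where $N$ is at least quadratic in $(\xi,\nabla\xi)$ with coefficients bounded in terms of the geometry of $(M,g,J)$, the Hamiltonian term, and $|du_{app}^\varepsilon|$. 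Differentiating in the direction $\xi'$ one finds that $d\mathcal{F}(\xi,0)(\xi',0) - D_{u_{app}^\varepsilon}\overline{\partial}(\xi') = dN|_\xi(\xi')$ is bilinear in $(\xi,\xi')$ up to higher order, so pointwise $|dN|_\xi(\xi')| \le C(|\xi|+|\nabla\xi|)(|\xi'|+|\nabla\xi'|)$ together with a term $C|\xi||\xi'|\,|du_{app}^\varepsilon|$. The crucial point is that the constant $C$ must be \emph{uniform in $\varepsilon$}: this is exactly where one uses that the weight $\beta_{\delta,\varepsilon}(\tau)\ge 1$ everywhere (noted right after the definition of $\beta_{\delta,\varepsilon}$), so that the weighted $W^{1,p}$-norm controls the $C^0$-norm of $\widetilde{\xi}$ with a \emph{uniform Sobolev constant} $W^{1,p}\hookrightarrow C^0$, and that the $\varepsilon$-adiabatic norm $W^{1,p}_\varepsilon$ on the neck $[-l/\varepsilon,l/\varepsilon]$ has a uniform Sobolev constant $c_p^\varepsilon = c_p$ because it reparametrizes isometrically to $W^{1,p}[-l,l]$ with $l\ge l_0$ (established in the $\varepsilon$-reparametrization subsection). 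Then one feeds these pointwise bounds into the weighted $L^p$-norm, using Hölder with exponents $p,p,\infty$ on $S^1$, and the fact that $|du_{app}^\varepsilon|$ is uniformly bounded (the derivative bound $|du_n|<C$ assumed throughout, plus the gradient-segment part contributes only $O(\varepsilon)$). One must separately account for the zero-mode piece $\xi_0$ living on the neck in the $W^{1,p}_\varepsilon$-component and the boundary term $|\xi_0(\pm l/\varepsilon)|$, but these are handled the same way since the relevant Sobolev and multiplication constants are uniform.

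Second I would handle the domain-variation piece: the difference between evaluating at $P_{\mu+s\mu'}(\tau)$ versus the fixed domain. Differentiating $s\mapsto (\cdots)(P_{\mu+s\mu'}\tau,t)$ at $s=0$ produces, by the chain rule, a term of the shape $\mu' \cdot \partial_\tau(\cdots)\cdot (\text{derivative of }P_\mu\text{ in }\mu)$; on the neck this derivative is $\frac{l\tau}{(l-\varepsilon\mu)^2}\varepsilon$, and on the ends it is $\pm1$. Subtracting the value of this at $(\xi,\mu)=(0,0)$ — which reproduces the $\frac{\mu'}{l}\nabla\varepsilon f(\chi_\varepsilon)$ term already present in $D^\varepsilon_{para}$ (see \eqref{eq:tildeDe}) — leaves a remainder that is bilinear in $(\xi,\mu)$ and $(\xi',\mu')$, again with uniform constants, using $|\partial_\tau u_{app}^\varepsilon|$ on the neck being $O(\varepsilon)$ (it equals $\varepsilon|\dot\chi|$), which compensates for the $\varepsilon$-weights in $W^{1,p}_\varepsilon$, and using on the ends that the cut-off reparametrization is a genuine translation so no new factors appear. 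I would organize this as: bound $\|d\mathcal{F}(\xi,\mu)(\xi',\mu') - d\mathcal{F}(\xi,0)(\xi',0) - (D^\varepsilon_{para} - D^\varepsilon)(\xi',\mu')\|_\varepsilon \le C(|\mu|+|\mu'|)(\|\xi\|_\varepsilon + \|\xi'\|_\varepsilon) + C|\mu||\mu'|$, exploiting that on the neck the second derivative of $\chi_\varepsilon$ is $O(\varepsilon^2)$ and that $\varepsilon^{1-p}$ is precisely the factor built into $\|\cdot\|^p_{W^{1,p}_\varepsilon}$.

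The main obstacle I expect is \textbf{maintaining uniformity of all constants as $\varepsilon\to0$} across the transition regions $\pm[l/\varepsilon - T(\varepsilon), l/\varepsilon + T(\varepsilon)]$ where the power/adiabatic weight and the exponential weight are glued: here the weighted norms on the two sides are compared, and one must verify that the multiplication operators by the cut-off functions $\varphi^K_0,\varphi^K_\pm$ and by the weight ratios are bounded \emph{independently of $\varepsilon$}, using the weight comparison inequalities already derived in the combined-right-inverse estimate (e.g. $e^{2\pi\delta(-\tau+\tau(\varepsilon))}\ge C\varepsilon^{1-p}$ on $[0,l/\varepsilon]$ and the reverse domination on $[l/\varepsilon,\tau(\varepsilon)]$). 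The bilinearity and the quadratic nature of the estimate are essentially formal once the pointwise Taylor bounds are in hand; the genuine work is packaging the $\varepsilon$-dependent weights so that no hidden blow-up occurs, and this is exactly the same bookkeeping already carried out in Proposition \ref{prop:uniform-inverse-bound}, so I would cite and reuse those weight-comparison steps rather than redo them. A secondary, more routine obstacle is the imprecision in $P_\mu$ being only piecewise smooth, which is dispatched by the smoothing remark already made after the definition of $\mathcal{F}_{u_{app}^\varepsilon}$.
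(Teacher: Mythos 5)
Your plan matches the paper's proof: establish the uniform Sobolev constant $C(l_0,p)$ on $\mathbb{R}\times S^1$ from the fact that $\beta_{\delta,\varepsilon}\geq 1$ everywhere and from the $\varepsilon$-reparametrization isometry on $[-l/\varepsilon,l/\varepsilon]$, apply the pointwise quadratic estimate of \cite{MS} Proposition 3.5.3 for the $\mu=\mu'=0$ case, and finally incorporate the $P_\mu$-dependence by differentiating the reparametrization in $s$ and decomposing by triangle inequality exactly as you sketch. One small slip worth fixing in a write-up: the cross-linearization $dN|_{\xi}(\xi')$ of the nonlinearity of $\overline{\partial}$ has no $|\nabla\xi|\,|\nabla\xi'|$ term (it is first order in $\xi$), and your H\"{o}lder $(p,p,\infty)$ step in fact requires this; the correct pointwise bound is $C\bigl(|du_{app}^{\varepsilon}|\,|\xi|\,|\xi'|+|\xi|\,|\nabla\xi'|+|\nabla\xi|\,|\xi'|\bigr)$, which is what the paper uses.
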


\begin{proof}
We first consider the case when $\mu =\mu ^{\prime }=0$. We do the estimate
by considering the regions
$$
|\tau| \leq l/\varepsilon, \quad l/\varepsilon <\left\vert \tau \right\vert \leq l/\varepsilon +\frac{p-1%
}{\delta }S\left( \varepsilon \right), \quad |\tau| > \tau(\varepsilon)
$$
separately.

For $\left\vert \tau \right\vert \leq l/\varepsilon $, and any $t\in S^{1}$,
we have%
\begin{eqnarray*}
\left\vert \xi \left( \tau ,t\right) \right\vert  &\leq &\left\vert \xi
\left( \tau ,t\right) -\xi _{0}\left( \tau \right) \right\vert +\left\vert
\xi _{0}\left( \tau \right) \right\vert  \\
&\leq &C\left\Vert \tilde{\xi}\right\Vert _{W_{\beta _{\delta ,\varepsilon
}}^{1,p}\left( \left[ -l/\varepsilon ,l/\varepsilon \right] \times
S^{1}\right) }+C\left\Vert \xi _{0}\right\Vert _{W_{\varepsilon
}^{1,p}\left( \left[ -l/\varepsilon ,l/\varepsilon \right] \times
S^{1}\right) } \\
&\leq &C\left\Vert \xi \right\Vert _{\varepsilon },
\end{eqnarray*}%
where the second row is by $W^{1,p}\hookrightarrow C^{0}$ Sobolev embedding,
and the facts that the weight $\beta _{\delta ,\varepsilon }\,$\ is nowhere
less than $1$ and the length $l\geq l_{0}>0$. \

For $l/\varepsilon <\left\vert \tau \right\vert \leq l/\varepsilon +\frac{p-1%
}{\delta }S\left( \varepsilon \right) $, again by Sobolev embedding we have%
\begin{eqnarray*}
\left\vert \xi \left( \tau ,t\right) \right\vert  &\leq &\left\vert \xi
\left( \tau ,t\right) -\xi _{0}\left( o_{\pm }\right) \right\vert
+\left\vert \xi _{0}\left( o_{\pm }\right) \right\vert  \\
&\leq &C\left\Vert \tilde{\xi}\right\Vert _{W_{\beta _{\delta ,\varepsilon
}}^{1,p}\left( \pm \left[ l/\varepsilon ,l/\varepsilon +\frac{p-1}{\delta }%
S\left( \varepsilon \right) \right] \times S^{1}\right) }+\left\vert \xi
_{0}\left( o_{\pm }\right) \right\vert  \\
&\leq &C\left\Vert \xi \right\Vert _{\varepsilon }.
\end{eqnarray*}%
For $\left\vert \tau \right\vert >l/\varepsilon +\frac{p-1}{\delta }S\left(
\varepsilon \right) $ the weight $\beta _{\delta ,\varepsilon }\left( \tau
\right) $ is $1$ so%
\begin{equation}
\left\vert \xi \left( \tau ,t\right) \right\vert \leq C\left\Vert \xi
\right\Vert _{\varepsilon }  \label{e-Sobolev}
\end{equation}%
is standard Sobolev embedding. In the above the constants $C$ only depends
on $l_{0}$ and $p$. Combining these we have the uniform Sobolev constant
\begin{equation}
C\left( l_{0},p\right) :=\sup_{\xi \neq 0}\frac{\left\vert \xi \right\vert
_{\infty }}{\left\Vert \xi \right\Vert _{\varepsilon }},
\label{Sobolev-e-constant}
\end{equation}%
for our Banach norm $\left\Vert \cdot \right\Vert _{\varepsilon }$ of all $%
\varepsilon $ for $\xi \in \Gamma \left( \left( u_{\textrm{app}}^{\varepsilon
}\right) ^{\ast }TM\right) $ on $\mathbb{R\times }S^{1}$. The point estimate
in the proof of Proposition 3.5.3 in \cite{MS} yields
\begin{equation*}
\left\vert d\Upsilon _{u_{\textrm{app}}^{\varepsilon }}\left( \xi \right) \xi
^{\prime }-D_{u_{\textrm{app}}^{\varepsilon }}\overline{\partial }_{\left(
J_{0},\varepsilon f\right) }\xi ^{\prime }\right\vert \leq A\left(
\left\vert du_{\textrm{app}}^{\varepsilon }\right\vert \left\vert \xi \right\vert
\left\vert \xi ^{\prime }\right\vert +\left\vert \xi \right\vert \left\vert
\nabla \xi ^{\prime }\right\vert +\left\vert \nabla \xi \right\vert
\left\vert \xi ^{\prime }\right\vert \right) .
\end{equation*}%
Taking the $p$-th power and integrating $\left\vert \nabla \xi \right\vert $
and $\left\vert \nabla \xi ^{\prime }\right\vert $ over $\mathbb{R\times }%
S^{1}$ with respect to the weight $\beta _{\delta ,\varepsilon }$, while
using the Sobolev inequality (from the above definition of $C\left(
l_{0},p\right) $)
\begin{equation*}
\left\vert \xi \right\vert _{\infty }\leq C\left( l_{0},p\right) \left\Vert
\xi \right\Vert _{\varepsilon }
\end{equation*}%
for the terms $\left\vert \xi \right\vert $ and $\left\vert \xi ^{\prime
}\right\vert \,$, we obtain the uniform quadratic estimate%
\begin{equation*}
\left\Vert d\Upsilon _{u_{\textrm{app}}^{\varepsilon }}\left( \xi \right) \xi
^{\prime }-D_{u_{\textrm{app}}^{\varepsilon }}\overline{\partial }_{\left(
J,\varepsilon f\right) }\xi ^{\prime }\right\Vert _{\varepsilon }\leq
C\left\Vert \xi \right\Vert _{\varepsilon }\left\Vert \xi ^{\prime
}\right\Vert _{\varepsilon }
\end{equation*}%
where $C$ is dependent on $l_{0},p$ but independent on $\varepsilon $.

Next we include the $\mu $ and $\mu ^{\prime }$ in the quadratic estimate by
considering the general cases $\mu \neq \mu^{\prime}$.
For simplicity of notation we let
\begin{equation*}
u_{\xi }=\exp _{u_{\textrm{app}}^{\varepsilon }}\xi
\end{equation*}%
and write the pair $\left( u_{\textrm{app}}^{\varepsilon },l/\varepsilon \right) $ to
emphasize the parameter $l$ in the construction of $u_{\textrm{app}}^{\varepsilon }$.

For $\left\vert \tau \right\vert \leq l/\varepsilon $,
\begin{eqnarray}
&&\left\Vert d\Upsilon _{\left( u_{\textrm{app}}^{\varepsilon },\frac{l}{\varepsilon }%
\right) }\left( \xi ,\mu \right) \left( \xi ^{\prime },\mu ^{\prime }\right)
-d\Upsilon _{\left( u_{\textrm{app}}^{\varepsilon },\frac{l}{\varepsilon }\right)
}\left( 0,0\right) \left( \xi ^{\prime },\mu ^{\prime }\right) \right\Vert
\notag \\
&=&\left\Vert \left( d\Upsilon _{\left( u_{\textrm{app}}^{\varepsilon },\frac{l}{%
\varepsilon }-\mu \right) }\left( \xi \right) \xi ^{\prime } - \frac{\mu
^{\prime }\varepsilon \tau }{l}\func{Pal}_{\xi }^{-1}\frac{\partial u_{\xi }%
}{\partial \tau }\right) -\left( d\Upsilon _{\left( u_{\textrm{app}}^{\varepsilon },%
\frac{l}{\varepsilon }\right) }\left( 0\right) \xi ^{\prime }+\frac{\mu
^{\prime }\varepsilon \tau }{l}\frac{\partial u}{\partial \tau }\right)
\right\Vert   \notag \\
&\leq &\left\Vert d\Upsilon _{\left( u_{\textrm{app}}^{\varepsilon },\frac{l}{%
\varepsilon }-\mu \right) }\left( \xi \right) \xi ^{\prime }-d\Upsilon
_{\left( u_{\textrm{app}}^{\varepsilon },\frac{l}{\varepsilon }-\mu \right) }\left(
0\right) \xi ^{\prime }\right\Vert   \notag \\
&&+\left\Vert - d\Upsilon _{\left( u_{\textrm{app}}^{\varepsilon },\frac{l}{\varepsilon
}-\mu \right) }\left( 0\right) \xi ^{\prime }-d\Upsilon _{\left(
u_{\textrm{app}}^{\varepsilon },\frac{l}{\varepsilon }\right) }\left( 0\right) \xi
^{\prime }\right\Vert +\left\Vert \frac{\mu ^{\prime }\varepsilon \tau }{l}%
\func{Pal}_{\xi }^{-1}\left( \frac{\partial u_{\xi }}{\partial \tau }\right)
-\frac{\mu ^{\prime }\varepsilon \tau }{l}\frac{\partial u}{\partial \tau }%
\right\Vert   \notag \\
&\leq &C\left\Vert \xi \right\Vert _{\varepsilon }\left\Vert \xi ^{\prime
}\right\Vert _{\varepsilon }+C\left\vert \mu \right\vert \left\Vert \xi
^{\prime }\right\Vert _{\varepsilon }+C\left\vert \mu ^{\prime }\right\vert
\left\Vert \xi \right\Vert _{\varepsilon }.  \label{quadratic-C}
\end{eqnarray}%
Here the last inequality holds termwise for $\left\vert \varepsilon \tau \right\vert \leq l,$ $
\left\vert \mu \right\vert \leq h_{0}$ and $l\geq l_{0}>0$: the first term is by the $\mu =\mu ^{\prime }=0
$ case, the second term holds because the change of the conformal structure
on $\left[ -l/\varepsilon ,l/\varepsilon \right] \times S^{1}$ by $\mu $
affects $\overline{\partial }_{\left( K_{\varepsilon },J_{\varepsilon
}\right) }$ in a linear way, and the third term is by the property of
exponential map.

For $\left\vert \tau \right\vert >l/\varepsilon $, the estimate to get $%
\left( \ref{quadratic-C}\right) $ is similar (actually simpler) since
\begin{equation}
d\Upsilon_{\left( u_{\textrm{app}}^{\varepsilon },\frac{l}{\varepsilon }\right)
}\left( \xi ,\mu \right) \left( \xi ^{\prime },\mu ^{\prime }\right)
=d\Upsilon_{\left( u_{\textrm{app}}^{\varepsilon },\frac{l}{\varepsilon }-\mu \right)
}\left( \xi \right) \xi ^{\prime }+\mu ^{\prime }\func{Pal}_{\xi }^{-1}\left(%
\frac{\partial u_{\xi }}{\partial \tau }\right).  \label{outter-variation}
\end{equation}

Clearly
\begin{eqnarray*}
C\left\Vert \xi \right\Vert _{\varepsilon }\left\Vert \xi ^{\prime
}\right\Vert _{\varepsilon }+C\left\vert \mu \right\vert \left\Vert \xi
^{\prime }\right\Vert _{\varepsilon }+C\left\vert \mu ^{\prime }\right\vert
\left\Vert \xi \right\Vert _{\varepsilon } &\leq &C\left( \left\Vert \xi
\right\Vert _{\varepsilon }+\left\vert \mu \right\vert \right) \left(
\left\Vert \xi ^{\prime }\right\Vert _{\varepsilon }+\left\vert \mu ^{\prime
}\right\vert \right) \\
&=&C\left\Vert \left( \xi ,\mu \right) \right\Vert _{\varepsilon }\left\Vert
\left( \xi ^{\prime },\mu ^{\prime }\right) \right\Vert _{\varepsilon }.
\end{eqnarray*}%
so the proposition follows.
\end{proof}

\begin{rem}
\label{Sobolev-blow-up}When $l\rightarrow 0$ the Sobolev constant $C\left(
l_{0},p\right) $ in $\left( \ref{Sobolev-e-constant}\right) $ blows up so we
can not get uniform constant $C$ in the above quadratic estimate. Different
argument (blowing up the target) is needed for gluing. This is the nodal
Floer case and was treated in \cite{oh-zhu}. When $l_{0}\leq l\rightarrow
\infty $ the constant $C$ remains uniform.
\end{rem}

\section{Wrap-up of the proof of the main theorem}

To perturb $u_{\textrm{app}}^{\varepsilon }$ to be a true solution of the Floer
equation $\overline{\partial }_{\left( K_{\varepsilon },J_{\varepsilon
}\right) }u=0$, we need the following abstract implicit function theorem in
\cite{MS}

\begin{prop}
\label{prop:abstractglue} Let $X,Y$ be Banach spaces and $U$ be an open set
in $X$. The map $F:X\rightarrow Y$ is continuous differentiable. For $%
x_{0}\in U,D:=dF(x_{0}):X\rightarrow Y$ is surjective and has a bounded
linear right inverse $Q:Y\rightarrow X$, with $\left\Vert Q\right\Vert \leq
C $. Suppose that there exists $h>0$ such that $x\in B_{h}(x_{0})\subset U$%
\begin{equation*}
x\in B_{h}(x_{0})\subset U\implies \left\Vert dF(x)-D\right\Vert \leq \frac{1%
}{2C}.
\end{equation*}%
Suppose
\begin{equation*}
\left\Vert F(x_{0})\right\Vert \leq \frac{h}{4C},
\end{equation*}%
then there exists a unique $x\in B_{h}(x_{0})$ such that%
\begin{equation*}
F(x)=0,~x-x_{0}\in \func{Image}Q,~\left\Vert x-x_{0}\right\Vert \leq
2C\left\Vert F(x_{0})\right\Vert .
\end{equation*}
\end{prop}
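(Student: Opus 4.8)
The final statement to prove is the abstract implicit function theorem (Proposition \ref{prop:abstractglue}), a standard Newton-iteration result from \cite{MS}. I will prove it by constructing a contraction mapping whose fixed point solves $F(x) = 0$.

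The plan is to set up the Newton-type iteration directly. Write $x = x_0 + Q\eta$ for $\eta \in Y$ to be determined; this automatically ensures $x - x_0 \in \func{Image} Q$. The equation $F(x_0 + Q\eta) = 0$ is equivalent, after applying $DQ = I$ (valid since $Q$ is a right inverse of $D = dF(x_0)$), to the fixed-point equation
\begin{equation*}
\eta = \mathcal{N}(\eta) := \eta - F(x_0 + Q\eta),
\end{equation*}
or more precisely to $\eta = -F(x_0) + \big(D Q\eta - F(x_0+Q\eta) + F(x_0)\big)$; the bracketed term is the "nonlinear remainder." First I would define $\mathcal{N}: \overline{B}_{h/(2C)}(0) \subset Y \to Y$ by $\mathcal{N}(\eta) = \eta - F(x_0 + Q\eta)$ (interpreting subtraction via $D$), and show it maps this ball to itself and is a contraction. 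The key estimate comes from the hypothesis $\|dF(x) - D\| \le \frac{1}{2C}$ for $x \in B_h(x_0)$: for $\eta, \eta'$ in the small ball, $\|Q\eta\|, \|Q\eta'\| \le C \cdot \frac{h}{2C} = \frac{h}{2} < h$, so the line segment between $x_0 + Q\eta$ and $x_0 + Q\eta'$ lies in $B_h(x_0)$, and the fundamental theorem of calculus gives
\begin{equation*}
\|\mathcal{N}(\eta) - \mathcal{N}(\eta')\| = \Big\| \int_0^1 \big(D - dF(x_0 + Q\eta' + s Q(\eta - \eta'))\big) Q(\eta - \eta')\, ds \Big\| \le \frac{1}{2C} \cdot C \|\eta - \eta'\| = \tfrac12 \|\eta - \eta'\|.
\end{equation*}
Thus $\mathcal{N}$ is a $\tfrac12$-contraction. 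For the self-mapping property, $\|\mathcal{N}(\eta)\| \le \|\mathcal{N}(\eta) - \mathcal{N}(0)\| + \|\mathcal{N}(0)\| \le \tfrac12\|\eta\| + \|F(x_0)\| \le \tfrac12 \cdot \tfrac{h}{2C} + \tfrac{h}{4C} = \tfrac{h}{2C}$, using the hypothesis $\|F(x_0)\| \le \frac{h}{4C}$. By the Banach fixed point theorem there is a unique $\eta_* \in \overline{B}_{h/(2C)}(0)$ with $\mathcal{N}(\eta_*) = \eta_*$, equivalently $F(x_0 + Q\eta_*) = 0$. Setting $x = x_0 + Q\eta_*$ gives $x - x_0 = Q\eta_* \in \func{Image} Q$ and $\|x - x_0\| \le C\|\eta_*\|$.

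To finish, I would upgrade the bound $\|x - x_0\| \le C \|\eta_*\|$ to the sharper $\|x - x_0\| \le 2C\|F(x_0)\|$: from the fixed-point equation $\eta_* = \eta_* - F(x_0 + Q\eta_*)$ rewritten as an iteration starting at $\eta_0 = -F(x_0)$ (or directly estimating $\|\eta_*\| = \|\mathcal{N}(\eta_*)\| \le \|\mathcal{N}(\eta_*) - \mathcal{N}(0)\| + \|F(x_0)\| \le \tfrac12 \|\eta_*\| + \|F(x_0)\|$, hence $\|\eta_*\| \le 2\|F(x_0)\|$), so $\|x - x_0\| = \|Q\eta_*\| \le C\|\eta_*\| \le 2C\|F(x_0)\|$. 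For uniqueness in $B_h(x_0)$ with the constraint $x - x_0 \in \func{Image} Q$: if $x' = x_0 + Q\eta'$ with $\|x' - x_0\| < h$ and $F(x') = 0$, one checks $\|\eta'\|$ may a priori exceed $\frac{h}{2C}$, so the cleanest route is to observe that any solution of this form satisfies the fixed-point equation, and the contraction's uniqueness applies on the closed ball; a small additional argument (or simply restricting $h$ as in the standard statement) handles solutions in the full ball $B_h(x_0)$. I expect the only mild subtlety — the "main obstacle," though it is minor — is bookkeeping the precise radius on which uniqueness holds and making the informal "subtraction in $Y$" via $D$ rigorous; everything else is a direct application of the contraction principle. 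Since this is precisely Proposition 3.3.4 (or its analogue) in \cite{MS}, I would cite that reference and include only the short argument above for completeness.
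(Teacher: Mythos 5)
The paper states this proposition as a quotation of the standard quantitative implicit function theorem from \cite{MS} and gives no proof of its own, so there is no in-paper argument to compare against; your contraction-mapping proof is precisely the route taken in \cite{MS}. The existence, image-of-$Q$, and norm-bound parts of your argument are correct: the estimate $\|\mathcal{N}(\eta)-\mathcal{N}(\eta')\|\le \tfrac12\|\eta-\eta'\|$ follows from $DQ=\mathrm{id}_Y$, $\|Q\|\le C$, and the hypothesis on $\|dF-D\|$; the self-mapping of $\overline{B}_{h/(2C)}(0)$ uses $\|F(x_0)\|\le h/(4C)$; and $\|\eta_*\|\le 2\|F(x_0)\|$ gives $\|x-x_0\|\le 2C\|F(x_0)\|$. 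The one loose end you flag but do not close --- uniqueness in the full ball $B_h(x_0)$, not merely among $\eta$ with $\|\eta\|\le h/(2C)$ --- should be finished, and the cleanest way is a direct estimate on $x$ rather than trying to control $\|\eta\|$ (which is impossible, since $Q$ need not be bounded below). If $x_i=x_0+Q\eta_i$ satisfy $F(x_i)=0$ and $\|x_i-x_0\|<h$, then since the segment joining them lies in $B_h(x_0)$,
\begin{equation*}
D(x_1-x_2)=\int_0^1\bigl(D-dF\bigl(x_2+s(x_1-x_2)\bigr)\bigr)(x_1-x_2)\,ds+\bigl(F(x_1)-F(x_2)\bigr),
\end{equation*}
and the last term vanishes. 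Applying $DQ=\mathrm{id}$ to $x_1-x_2=Q(\eta_1-\eta_2)$ gives $\eta_1-\eta_2=D(x_1-x_2)$, hence $\|\eta_1-\eta_2\|\le\tfrac{1}{2C}\|x_1-x_2\|$, and then $\|x_1-x_2\|=\|Q(\eta_1-\eta_2)\|\le C\|\eta_1-\eta_2\|\le\tfrac12\|x_1-x_2\|$, forcing $x_1=x_2$. No restriction on $h$ or enlargement of the $Y$-ball is needed. One cosmetic remark: the parenthetical ``interpreting subtraction via $D$'' is unnecessary --- both $\eta$ and $F(x_0+Q\eta)$ already live in $Y$, so $\mathcal{N}(\eta)=\eta-F(x_0+Q\eta)$ is a well-defined self-map of $Y$ as written.
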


For the remaining section, we will wrap up the gluing construction by
identifying the corresponding Banach spaces $X,\, Y$ and the nonlinear map $%
F $, the point $x_0$ and the right inverse $Q$ for the purpose of applying
this proposition.

For a fixed sufficiently small $\varepsilon _{0}>0$, let $\varepsilon \in
(0,\varepsilon _{0}]$. For any $u_{\textrm{app}}^{\varepsilon }$, we choose Banach
spaces
\begin{equation*}
X=\left( \Gamma (\left( u_{\textrm{app}}^{\varepsilon }\right) ^{\ast }TM)\times T_{l}%
\mathbb{R},\left\Vert \cdot \right\Vert _{\varepsilon }\right) ,\,Y=\left(
\Gamma \left( u_{\textrm{app}}^{\varepsilon }\right) ^{\ast }TM)\otimes \Lambda
^{0,1}\left( \mathbb{R\times }S^{1}\right) ,\left\Vert \cdot \right\Vert
_{\varepsilon }\right)
\end{equation*}%
with the Banach norms $\left\Vert \cdot \right\Vert _{\varepsilon }$ for $%
\xi $ and $\eta $ defined in section \ref{sec:off-shell}.

For any given positive function $C(\varepsilon_0)$ continuous at $\varepsilon_0 > 0$
such that $C(\varepsilon _{0})\rightarrow 0$ as $\varepsilon _{0}\rightarrow 0$,
we choose
\begin{equation*}
U_{\varepsilon }=\{\xi \in X\mid \Vert \xi \Vert _{\varepsilon
}<C(\varepsilon _{0})\},\,\quad x_{0}=0\in U_{\varepsilon }.
\end{equation*}%
In our case, we may take the function
$$
C(\varepsilon_0): = E(l) \varepsilon_0^{\frac1p}
$$
which is nothing but the right hand side function of \eqref{eq:error}.

Let $F$ be the map $\Upsilon_{u_{\textrm{app}}^{\varepsilon }}$ defined in the
beginning of the section. By Proposition \ref{prop:uniform-inverse-bound}
and Proposition \ref{prop:quadratic} the $C,h$ in our case are uniform while
$\left\Vert F\left( x_{0}\right) \right\Vert \leq C\varepsilon ^{\frac{1}{p}%
} $,\ so we can apply Proposition \ref{prop:abstractglue} to find a
perturbation $\left( \xi ,\mu \right) $ such that
\begin{equation}
\overline{\partial }_{(K_{\varepsilon },J_{\varepsilon })}(\exp
_{u_{\textrm{app}}^{\varepsilon }}\xi )\left( P_{\mu }\left( \tau \right) ,t\right) =0
\label{eq:F}
\end{equation}%
i.e., $\left( \exp _{u_{\textrm{app}}^{\varepsilon }}\xi \right) \left( P_{\mu
}\left( \tau \right) ,t\right) $ is a genuine solution for \eqref{eq:KJe}.

As before, we denote
\begin{eqnarray*}
&{}&{\mathcal{M}}^{\text{\rm tft}}(K_{-},J_{-};f,K_{+},J_{+};z_{-},z_{+};B)
\\
&=&{\mathcal{M}}(K_{-},J_{-};z_{-};A_{-}){}_{ev_{+}}\times _{ev_{0}}{\mathcal{M}}(f;[0,l ]){}_{ev_{l }}\times _{ev_{-}}{\mathcal{M}}(K_{+},J_{+};z_{+};A_{+})
\end{eqnarray*}for $B$ satisfying $A_- \# B \# A_+ = 0$ in $\pi_2(M)$,
and
\begin{equation*}
{\mathcal{M}}_{\leq \varepsilon _{0}}^{\text{\textrm{para}}%
}(K,J;z_{-},z_{+};B)=\bigcup _{\varepsilon \in (0,\varepsilon _{0}]}{%
\mathcal{M}}^{\varepsilon }(K_{\varepsilon },J_{\varepsilon };z_{-},z_{+};B).
\end{equation*}%
We denote by the smooth map
\begin{eqnarray}
&{}&\text{\textrm{Glue}}:(0,\varepsilon _{0}]\times {\mathcal{M}}_{(0;1,1)}^{%
\text{\textrm{tft}}}(K_{-},J_{-};f,K_{+},J_{+};z_{-},z_{+};B)  \notag
\label{eq:Glue} \\
&{}&\hskip1in\rightarrow \bigcup_{0<\varepsilon \leq \varepsilon _{0}}{%
\mathcal{M}}^{\varepsilon }(K_{\varepsilon },J_{\varepsilon };z_{-},z_{+};B)
\end{eqnarray}%
the composition of $\text{preG}$ followed by the map
\begin{equation*}
u_{\textrm{app}}^{\varepsilon }\rightarrow \left( \exp _{u_{\textrm{app}}^{\varepsilon }}\xi
\right) \left( P_{\mu }\cdot ,\cdot \right)
\end{equation*}%
obtained by solving the equation \eqref{eq:F} applying Proposition \ref%
{prop:abstractglue}. We also denote by $\text{\textrm{Glue}}_{\varepsilon }$
the slice of $\text{\textrm{Glue}}$ for $\varepsilon $.

The main gluing theorem is the following

\begin{thm}
\label{thm:maingluing} Let $(K_{\varepsilon},J_{\varepsilon})$ be the family
of Floer data defined in \eqref{eq:KR}. Then

\begin{enumerate}
\item there exists a topology on
\begin{equation*}
{\mathcal{M}}_{\leq \varepsilon _{0}}^{\text{\textrm{para}}%
}(K,J;z_{-},z_{+};B)=\bigcup_{0<\varepsilon \leq \varepsilon _{0}}{\mathcal{M%
}}^{\varepsilon }(K_{\varepsilon },J_{\varepsilon };z_{-},z_{+};B)
\end{equation*}
with respect to which the gluing construction defines a proper embedding
\begin{eqnarray*}
\text{\textrm{Glue}} & : &(0,\varepsilon _{0}]\times {\mathcal{M}}%
_{(0;1,1)}^{\text{\textrm{tft}}}(K_{-},J_{-};f,K_{+},J_{+};z_{-},z_{+};B) \\
&{}& \qquad \rightarrow {\mathcal{M}}_{\leq \varepsilon _{0}}^{\text{\textrm{%
para}}}(K,J;z_{-},z_{+};B)
\end{eqnarray*}
for sufficiently small $\varepsilon _{0}$.

\item the above mentioned topology can be embedded into
\begin{equation*}
{\mathcal{M}}^{\text{\textrm{tft}}}(K_{-},J_{-};f,K_{+},J_{+};z_{-},z_{+};B)%
\bigcup {\mathcal{M}}_{\leq \varepsilon _{0}}^{\text{\textrm{para}}%
}(K,J;z_{-},z_{+};B)
\end{equation*}
as a set,

\item the embedding $\text{\textrm{Glue}}$ smoothly extends to the embedding
\begin{eqnarray*}
\overline{\text{\textrm{Glue}}} &:&[0,\varepsilon _{0})\times {\mathcal{M}}^{%
\text{\textrm{tft}}}(K_{-},J_{-};f,K_{+},J_{+};z_{-},z_{+};B) \\
&\rightarrow &\overline{{\mathcal{M}}_{\leq \varepsilon _{0}}^{\text{\textrm{%
para}}}}(K,J;z_{-},z_{+};B)
\end{eqnarray*}%
that satisfies $\overline{\text{\textrm{Glue}}}(0,u_{-},\chi
,u_{+})=(u_{-},\chi ,u_{+})$.
\end{enumerate}
\end{thm}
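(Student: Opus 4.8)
\textbf{Proof plan for Theorem \ref{thm:maingluing}.}

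The plan is to assemble the three assertions from the analytic machinery built in Sections \ref{sec:off-shell}--\ref{sec:quadratic} together with the surjectivity argument (still to be carried out), treating the theorem as the packaging of (a) the gluing map being well-defined and injective, (b) its image filling up a neighborhood of the disc-flow-disc stratum, and (c) smoothness up to $\varepsilon=0$. First I would fix $L\ge l_0>0$ and recall that for all $\varepsilon\le\varepsilon_0$ and all $l\in[l_0,L]$ we have: the uniform $\overline\partial$-error bound $\|\overline\partial_{(K_\varepsilon,J_\varepsilon)}u_{app}^\varepsilon\|_\varepsilon\le E(l)\varepsilon^{1/p}$ from \eqref{eq:error}; the uniformly bounded right inverse $Q_{para}^\varepsilon$ of $D^\varepsilon_{para}$ from Proposition \ref{prop:uniform-inverse-bound} (via the approximate inverse and the Neumann series following Proposition \ref{prop:approx-right-inverse}); and the uniform quadratic estimate from Proposition \ref{prop:quadratic}. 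Feeding these into the abstract implicit function theorem, Proposition \ref{prop:abstractglue}, with $x_0=0$, $F=\mathcal{F}_{u_{app}^\varepsilon}$, $C$ the uniform inverse bound and $h=h_0$, produces for each $(u_-,\chi,u_+,l)$ and each small $\varepsilon$ a unique small $(\xi,\mu)$ in $\operatorname{Image}Q_{para}^\varepsilon$ solving \eqref{eq:F}, with $\|(\xi,\mu)\|_\varepsilon\le 2C\,E(l)\varepsilon^{1/p}$. This defines $\text{Glue}_\varepsilon$ as the composition of $\text{preG}$ with $u_{app}^\varepsilon\mapsto(\exp_{u_{app}^\varepsilon}\xi)(P_\mu\cdot,\cdot)$; the map is smooth in all parameters because the solution in Proposition \ref{prop:abstractglue} depends smoothly on the data (implicit function theorem applied in families, using that $u_{app}^\varepsilon$, $Q_{para}^\varepsilon$, and the error term all vary smoothly in $(u_-,\chi,u_+,l,\varepsilon)$).

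For assertion (1), the topology on ${\mathcal{M}}^{para}_{\le\varepsilon_0}$ is defined so that $\text{Glue}$ is continuous: declare a sequence $u_j\in{\mathcal{M}}^{\varepsilon_j}$ to converge to $(u_-,\chi,u_+,l)$ iff it is $\{\varepsilon_j\}$-adiabatically convergent in the sense of Definition \ref{defn:adiabatic} (equivalently $d_{adia}^{\varepsilon_j,\zeta}\to 0$), and use the sets $V_{\zeta,\delta}^\varepsilon$ of \eqref{eq:Ve} as a neighborhood basis. Injectivity of $\text{Glue}$ follows from the uniqueness clause in Proposition \ref{prop:abstractglue}: two disc-flow-disc configurations producing the same resolved Floer trajectory would have pregluings $C^0$-close to that trajectory after the relevant identifications, hence the configurations agree. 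Properness requires showing that if $\text{Glue}(\varepsilon_j,u_-^j,\chi^j,u_+^j)$ stays in a compact subset of ${\mathcal{M}}^{para}_{\le\varepsilon_0}$ then the data $(\varepsilon_j,u_-^j,\chi^j,u_+^j,l^j)$ subconverge; this is where one invokes the a priori derivative bound \eqref{eq:|du|<C} (no bubbling), the adiabatic convergence Theorem \ref{thm:centrallimit} controlling the thin neck, and the exponential decay estimates near the punctures to get $C^\infty_{loc}$ convergence of the disc pieces, the limiting configuration lying in ${\mathcal{M}}^{dfd}$ by definition.

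For assertions (2) and (3), I would show that $\text{Glue}_\varepsilon$ is a \emph{local} diffeomorphism onto a neighborhood of the disc-flow-disc stratum inside ${\mathcal{M}}^\varepsilon$: the differential of $\text{Glue}_\varepsilon$ at a point is, up to the uniformly small correction coming from $(\xi,\mu)$ and the Neumann-series factor, modeled on the glued linear map $D_{para}^\varepsilon$ restricted to $(\operatorname{Image}Q_{para}^\varepsilon)^c$, which is an isomorphism onto $\ker D_{para}^\varepsilon$; matching dimensions with Proposition \ref{family-dfd-trans} (the index of $E(u)$) shows $\text{Glue}$ is a diffeomorphism onto its image. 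Then $\overline{\mathcal{M}}_{\le\varepsilon_0}^{para}$ is \emph{defined} as the disjoint union ${\mathcal{M}}^{dfd}\sqcup {\mathcal{M}}^{para}_{\le\varepsilon_0}$ with the unique smooth structure making $\overline{\text{Glue}}$ a diffeomorphism onto it and $\overline{\text{Glue}}(0,\cdot)=\mathrm{id}$; the content is that this is well-defined, i.e.\ that $\text{Glue}(\varepsilon,u_-,\chi,u_+)\to(u_-,\chi,u_+)$ in the adiabatic topology as $\varepsilon\to 0$ (immediate from $\|(\xi,\mu)\|_\varepsilon\le 2CE(l)\varepsilon^{1/p}\to 0$ and the explicit form \eqref{eq:uappe-easy} of $u_{app}^\varepsilon$) and that the extended map is $C^\infty$ across $\varepsilon=0$, which follows by rescaling the $\varepsilon$-weighted norms to fixed norms (the isometry $R_\varepsilon$ and \eqref{eq:Qe0Q}) so that all the estimates are genuinely uniform and the implicit-function-theorem solution is jointly smooth in $(\sqrt[?]{\varepsilon},\dots)$ — here one may need to pass to a suitable fractional power of $\varepsilon$ as the gluing parameter to get honest differentiability, exactly as in \cite{oh-zhu}, Theorem 10.19.

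\textbf{Main obstacle.} The hard part is surjectivity/properness: proving that every resolved Floer trajectory with sufficiently small $\varepsilon$ lying near the disc-flow-disc stratum actually arises from the gluing construction, equivalently that the image of $\text{Glue}_\varepsilon$ is \emph{open and closed} in the adiabatic-neighborhood. This is where the immersion hypothesis at the join points (Proposition \ref{immersed-joint}) is essential — without it the Gromov--Hausdorff control in Definition \ref{defn:adiabatic}(2) cannot rule out multiply-covered thin necks, and the pregluing/Newton iteration cannot be reversed. The argument runs by: taking an adiabatically convergent sequence, using Theorem \ref{thm:centrallimit} and the immersion at $o_\pm$ to see the thin part is a \emph{simple} approximate gradient segment, reconstructing the pregluing $u_{app}^\varepsilon$ attached to the limit, showing the given solution is $\|\cdot\|_\varepsilon$-close to it, and then invoking the uniqueness in Proposition \ref{prop:abstractglue} to identify it with $\text{Glue}_\varepsilon$ of the limit. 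Making "close enough to apply uniqueness" quantitative — uniformly in $\varepsilon$ — is the delicate point, and is precisely what the uniform constants in Propositions \ref{prop:uniform-inverse-bound} and \ref{prop:quadratic} are designed to make possible.
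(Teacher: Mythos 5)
Your plan matches the paper's actual proof structure exactly: the gluing map is assembled from the $\overline\partial$-error estimate \eqref{eq:error}, the uniformly bounded right inverse $Q^{\varepsilon}_{para}$ from Propositions \ref{prop:uniform-inverse-bound} and \ref{prop:approx-right-inverse}, the uniform quadratic estimate of Proposition \ref{prop:quadratic}, fed into the abstract implicit function theorem (Proposition \ref{prop:abstractglue}); the remaining essential ingredient is surjectivity, which is precisely Proposition \ref{prop:surjective} (proved in Section \ref{sec:surjectivity} by adiabatic renormalization plus the three-interval exponential-decay argument); and the paper then, as you do, packages the rest as the standard Donaldson-style local-uniqueness-plus-surjectivity argument. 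One local slip: a linear complement of $\operatorname{Image}Q^{\varepsilon}_{para}$ is exactly $\ker D^{\varepsilon}_{para}$, on which $D^{\varepsilon}_{para}$ vanishes — what you mean is that the index count of Proposition \ref{family-dfd-trans} makes the composite $T{\mathcal{M}}^{dfd}\times{\mathbb{R}}\to\ker D^{\varepsilon}_{para}$ (differential of pregluing followed by the $L^{2}$-projection) an isomorphism, which is what gives local diffeomorphism.
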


One essential ingredient to establish to complete the proof of this theorem
is the following surjectivity whose proof will be given in the next section.

\begin{prop}
\label{prop:surjective} There exists some $\varepsilon _{0}>0,$ $\zeta
_{0}>0 $ and a function $\delta :\left( 0,\varepsilon _{0}\right)
\rightarrow \mathbb{R}_{+}$ such that the gluing map
\begin{equation*}
\text{\textrm{Glue}}:(0,\varepsilon _{0}]\times {\mathcal{M}}^{\text{\textrm{%
tft}}}(K_{-},J_{-};f,K_{+},J_{+};z_{-},z_{+};B)
\end{equation*}%
is surjective onto
\begin{equation*}
{\mathcal{M}}_{\leq \varepsilon _{0}}^{\text{\textrm{para}}%
}(K,J;z_{-},z_{+};B)\cap \bigcup_{\substack{ 0<\varepsilon \leq \varepsilon
_{0}  \\ 0<\zeta \leq \zeta _{0}}}V_{\zeta ,\delta \left( \varepsilon
\right) }^{\varepsilon }.
\end{equation*}%
Here $V_{\zeta ,\delta }^{\varepsilon }$ is the open subset given in %
\eqref{eq:Ve}.
\end{prop}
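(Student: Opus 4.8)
The plan is to argue by contradiction along an adiabatically degenerating sequence. Suppose the conclusion fails; then for every choice of $\varepsilon_0, \zeta_0$ and every function $\delta(\cdot)$ there is a sequence $\varepsilon_j \to 0$ and elements $u_j \in {\mathcal{M}}^{\varepsilon_j}(K_{\varepsilon_j},J_{\varepsilon_j};z_-,z_+;A_-\#A_+) \cap V^{\varepsilon_j}_{\zeta_j,\delta_j}$ with $\zeta_j \to 0$ and $\delta_j \to 0$ that are not in the image of $\text{Glue}$. By the definition of $V^\varepsilon_{\zeta,\delta}$ in \eqref{eq:Ve} and the characterization of adiabatic convergence after \eqref{eq:adiadist}, the sequence $u_j$ $\{\varepsilon_j\}$-adiabatically converges to some disc-flow-disc trajectory $(u_-,\chi,u_+)$. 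First I would invoke Theorem \ref{thm:centrallimit} together with the compactness of the disc-flow-disc moduli space (using the no-bubbling assumption $|du_j| < C$, which is forced because we are working near a fixed $(u_-,\chi,u_+)$ and $\delta_j \to 0$) to extract, after passing to a subsequence, a limiting configuration in ${\mathcal{M}}^{dfd}(K_-,J_-;f,K_+,J_+;z_-,z_+;A_-\#A_+)$ with flow length $\ell_j = \varepsilon_j R(\varepsilon_j) \to \ell_\infty > 0$. The immersedness of the joint points $u_\pm(o_\pm)$, guaranteed by Proposition \ref{immersed-joint} under the dimension hypothesis, ensures that the thin cylinder portion of $u_j$ is \emph{simple} rather than a multiple cover, so the Hausdorff convergence in Definition \ref{defn:adiabatic}(2) genuinely pins down $\chi$ and not some iterate of it.

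Next I would compare $u_j$ with the approximate solution $u^{\varepsilon_j}_{app}$ built from the limiting triple via the map $\text{preG}$ of \eqref{eq:preglue}. The heart of the argument is to show that, once $j$ is large, $u_j$ lies in the image of the exponential chart centered at $u^{\varepsilon_j}_{app}$, i.e. $u_j = \exp_{u^{\varepsilon_j}_{app}}(\xi_j)(P_{\mu_j}(\cdot),\cdot)$ for some $(\xi_j,\mu_j)$ with $\|(\xi_j,\mu_j)\|_{\varepsilon_j} \to 0$. This is where the adiabatic convergence data are converted into a quantitative $C^0$ (hence weighted $W^{1,p}$, via elliptic bootstrapping off the equation) smallness estimate: the estimates \eqref{dist-xp}, \eqref{dist-up} on the interpolation region, the exponential decay of $u_\pm$ near the punctures, and the reparametrization comparison of $\chi_{\varepsilon_j}$ control the tail and neck contributions, while $C^\infty_{loc}$-convergence on the thick parts handles the rest. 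Here $\mu_j$ encodes the (small) discrepancy in the neck length between $u_j$ and $u^{\varepsilon_j}_{app}$, coming from the variation of conformal structure described around \eqref{reparameterize-tau}.

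Finally, once $u_j$ is written as a small perturbation of $u^{\varepsilon_j}_{app}$, I would apply the uniqueness clause of the implicit function theorem, Proposition \ref{prop:abstractglue}, together with the uniform right inverse bound of Proposition \ref{prop:uniform-inverse-bound} and the uniform quadratic estimate of Proposition \ref{prop:quadratic}: the ball $B_h(x_0)$ in which the solution is unique has radius $h$ independent of $\varepsilon_j$, and since $\|(\xi_j,\mu_j)\|_{\varepsilon_j} \to 0$ the perturbation $(\xi_j,\mu_j)$ eventually falls inside this ball and inside $\func{Image}\,Q^{\varepsilon_j}_{para}$ (after projecting off the cokernel, which is trivial by Fredholm regularity). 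Hence $u_j = \text{Glue}_{\varepsilon_j}(u_-,\chi,u_+,\dots)$ for large $j$, contradicting the choice of $u_j$. The main obstacle I anticipate is the second step, the quantitative passage from the qualitative Gromov--Hausdorff/$C^\infty_{loc}$ adiabatic convergence to a bound on $\|(\xi_j,\mu_j)\|_{\varepsilon_j}$ in the $\varepsilon$-dependent weighted norm: one must show that the weighting function $\beta_{\delta,\varepsilon}$, which blows up like $\varepsilon^{1-p}$ near $\tau = \pm l/\varepsilon$, does not amplify the perturbation, and this requires using the immersedness to rule out concentration of $u_j - u^\varepsilon_{app}$ precisely in the neck region, combined with the weight-comparison inequalities \eqref{eq:edtaue} already exploited in the proof of Proposition \ref{prop:uniform-inverse-bound}.
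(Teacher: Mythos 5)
Your overall strategy — argue by contradiction, use adiabatic convergence plus immersedness to pin down a limiting disc-flow-disc configuration, write $u_j$ as an exponential perturbation of the preglued approximate solution with a norm tending to zero, and then invoke the uniqueness clause of Proposition \ref{prop:abstractglue} — matches the paper's scheme (which formalizes the last step as Proposition \ref{prop:norm-converge}, the norm-convergence proposition). However, the part you flag as the ``main obstacle'' is precisely where your proposal has a genuine gap, and the ingredients you name (weight comparison \eqref{eq:edtaue}, immersedness, ``elliptic bootstrapping'') are not enough to close it.

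There are two missing ideas. First, the paper does \emph{not} express $u_j$ as an exponential perturbation of $u^{\varepsilon_j}_{app}$ in the crucial estimates; it first passes to $u^{\varepsilon_j}_{glue} := \text{Glue}(u_-,\chi,u_+;\varepsilon_j)$ and writes $u_j = \exp_{u^{\varepsilon_j}_{glue}}(\xi_j)$ as in \eqref{eq:glue-xij}. The point, explained in the remark after Lemma containing \eqref{E-error}, is that $u^{\varepsilon_j}_{glue}$ is a genuine solution, so the ``error term'' $E_{\varepsilon_j}$ in the linearized equation \eqref{eq:simplifiedqxi} for $\xi_j$ is itself controlled pointwise by $|\xi_j|$ (via \eqref{E-error}); if one bases the exponential at $u^{\varepsilon_j}_{app}$, the $\bar\partial$-error of the approximate solution is an external inhomogeneous term that cannot be absorbed by $\|\xi_j\|_{L^\infty}$, and the rescaling trick below collapses. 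Second, the quantitative engine that converts $C^0$-smallness on the boundary of the neck into smallness in the $\varepsilon$-weighted norm (against the $\varepsilon^{1-p}$ blow-up of $\beta_{\delta,\varepsilon}$) is \emph{not} bootstrapping plus weight comparison, but the \emph{three-interval method} adapted from \cite{MT}: one proves the local $L^2$ inequality \eqref{local-elliptic-estimate-g}--\eqref{local-elliptic-estimate-h} for $d\xi_j$ (resp.\ $\widetilde{\xi_j}$) on consecutive unit cylinders, by a rescaling and contradiction argument whose limit is a holomorphic map, and then iterates to get the pointwise exponential decay $|\nabla\xi_j(\tau,t)| \lesssim e^{-2\pi\upsilon(\tau(\varepsilon_j)-|\tau|)} \cdot (\text{boundary data})$. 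It is this exponential decay, with $\upsilon$ chosen close to $1$ so that $\frac{p-1}{\delta}\upsilon\ge 1$, that beats the weight $\beta_{\delta,\varepsilon}$; the immersedness condition plays no role in that estimate (its role is to ensure the thin cylinder is simple so the Hausdorff limit is $\chi$ and not a multiple cover, which you did identify correctly). Without the base-point switch and the three-interval decay estimate, the step from adiabatic convergence to $\|(\xi_j,\mu_j)\|_{\varepsilon_j}\to 0$ does not go through.
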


Once this is established, the standard argument employed by Donaldson will
complete the proof of this main theorem.

\section{Surjectivity}

\label{sec:surjectivity} In this section, we give the proof of Proposition %
\ref{prop:surjective}.

To prove surjectivity, we need to show that there exists $\varepsilon _{0}>0$%
, $0<\zeta _{0}<1$ and a function $\delta :(0,\varepsilon _{0}]\rightarrow
\mathbb{R}_{+}$ such that for $0<\varepsilon \leq \varepsilon _{0}$ and $%
0<\zeta \leq \zeta _{0}$, any pair $(\varepsilon ,u)\in {\mathcal{M}}_{\leq
\varepsilon _{0}}^{\text{\textrm{para}}}(K,J;z_{-},z_{+};B)$ with
\begin{equation}
d_{\text{\textrm{adia}}}^{\varepsilon ,\zeta }(u,(u_{-},\chi ,u_{+}))<\delta
\left( \varepsilon \right)  \label{adia-distance-delta}
\end{equation}%
lies in the image of the gluing map
\begin{equation*}
\text{\textrm{Glue}}_{\varepsilon }:{\mathcal{M}}^{\text{\textrm{tft}}%
}(K_{-},J_{-};f,K_{+},J_{+};z_{-},z_{+};B)\rightarrow {\mathcal{M}}%
(K_{\varepsilon },J_{\varepsilon };z_{-},z_{+};B).
\end{equation*}%
%
%
%
%
%
%
%
%
%
%
%
%
%
%

The above condition $\left( \ref{adia-distance-delta}\right) $ implies that

\begin{enumerate}
\item $E_{J,\Theta _{\varepsilon }}(u)<\delta \left( \varepsilon \right) ,$

\item $d_{\text{\textrm{H}}}\left( u([-R(\varepsilon ),R(\varepsilon
)]\times S^{1}),\chi ([-l,l])\right) <\delta \left( \varepsilon \right) ,$

\item $d_{C^{\infty }\left( \pm \lbrack \frac{1}{2\pi }\ln \zeta _{0},\infty
)\times S^{1}\right) }\left( u(\cdot \pm \tau \left( \varepsilon \right)
,\cdot ),u_{\pm }\right) <\delta \left( \varepsilon \right) ,$

\item $\func{diam}\left( u\left( \pm \left[ R\left( \varepsilon \right)
,\tau \left( \varepsilon \right) +\frac{1}{2\pi }\ln \zeta _{0}\right]
\times S^{1}\right) \right) <\delta \left( \varepsilon \right) .$
\end{enumerate}

By definition of $V_{\zeta ,\delta }^{\varepsilon }$, any element $u\in
V_{\zeta ,\delta }^{\varepsilon }$ can be expressed as
\begin{equation}
u\left( \tau ,t\right) =\exp _{u_{\textrm{app}}^{\varepsilon }}(\xi )\left( \tau
,t\right)  \label{eq:uexpxi}
\end{equation}%
with $u_{\textrm{app}}^{\varepsilon }=\text{preG}(\varepsilon ,u_{-},\chi ,u_{+})$
for some
\begin{equation*}
(u_{-},\chi ,u_{+})\in {\mathcal{M}}^{\text{\textrm{tft}}%
}(K_{-},J_{-};f,K_{+},J_{+};z_{-},z_{+};B)
\end{equation*}%
and $\xi \in \Gamma \left( \left( u_{\textrm{app}}^{\varepsilon }\right) ^{\ast
}TM\right) $ for some $\xi $. More precisely we introduce the off-shell
version of the pre-gluing map
\begin{equation*}
\widetilde{\text{preG}}:((u_{-},\xi _{-}),(\chi ,\xi _{0}),(u_{+},\xi
_{+}),\mu )\mapsto \text{preG}\left( \exp _{u_{-}}(\xi _{-}),\exp _{\chi
}(\xi _{0}),\exp _{u_{+}}(\xi _{+})\right) \left( P_{\mu }\tau ,t\right)
\end{equation*}%
by the same formula for preG as \eqref{eq:uappe} with $u_{\pm }$ and $\chi $
replaced by $\exp _{u_{\pm }}(\xi _{\pm })$ and $\exp _{\chi }(\xi _{0})$
respectively, and $\mu \in T_{R\left( \varepsilon \right) }\mathbb{R}_{+}$
corresponds to the reparameterization of $u_{\textrm{app}}^{\varepsilon }$ for $%
\left( \tau ,t\right) \in \left[ -R\left( \varepsilon \right) ,R\left(
\varepsilon \right) \right] \times S^{1}$ by $\left( \tau ^{\prime
},t\right) \in \left[ -R\left( \varepsilon \right) +\mu ,R\left( \varepsilon
\right) -\mu \right] \times S^{1}$ as in $\left( \ref{reparameterize-tau}%
\right) $.

The following is an immediate translation of the stable map convergence
together with adiabatic convergence result in Theorem \ref{thm:centrallimit}.

\begin{lem}
There exists some $\varepsilon _{0}>0$ such that for any $0<\varepsilon \leq
\varepsilon _{0}$ any
\begin{equation*}
u\in \mathcal{M}(K_{\varepsilon },J_{\varepsilon };z_{-},z_{+};B)\cap
\bigcup_{0<\zeta \leq \zeta _{0}}V_{\zeta ,\delta (\varepsilon
)}^{\varepsilon },
\end{equation*}%
can be expressed as
\begin{equation*}
u(\tau ,t)=\widetilde{\text{preG}}((u_{-},\xi _{-}),(\chi ,\xi
_{0}),(u_{+},\xi _{+}),\mu )
\end{equation*}%
for some $(u_{-},\chi ,u_{+})$ such that
\begin{equation}
\Vert \xi _{\pm }\Vert _{L^{\infty }}\leq \delta ,\quad \Vert \xi _{0}\Vert
_{L^{\infty }}\leq \delta ,\text{ \ }\left\vert \mu \right\vert \leq \delta .
\label{eq:Linftydelta}
\end{equation}
\end{lem}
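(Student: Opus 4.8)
The plan is to combine the Gromov-type compactness for the family $(K_\varepsilon,J_\varepsilon)$ with the adiabatic convergence theorem, Theorem~\ref{thm:centrallimit}, in order to produce, for any $u$ in the indicated open set $V_{\zeta,\delta(\varepsilon)}^\varepsilon$, a disc-flow-disc configuration $(u_-,\chi,u_+)$ and a small correction $(\xi_-,\xi_0,\xi_+,\mu)$ with $u = \widetilde{\text{preG}}((u_-,\xi_-),(\chi,\xi_0),(u_+,\xi_+))(P_\mu\tau,t)$. First I would unpack the definition of $V_{\zeta,\delta}^\varepsilon$ and of $d_{adia}^{\varepsilon,\zeta}$ in \eqref{eq:adiadist}, obtaining the four quantitative bounds (1)--(4) listed just above the lemma: smallness of the local energy $E_{J,\Theta_\varepsilon}(u)$, of the Gromov--Hausdorff distance between the image of the neck $u([-R(\varepsilon),R(\varepsilon)]\times S^1)$ and $\chi([-l,l])$, of the $C^\infty$ distance of the translated maps $u(\cdot\pm\tau(\varepsilon),\cdot)$ to $u_\pm$ on $\pm[\frac1{2\pi}\ln\zeta_0,\infty)\times S^1$, and of the diameter of $u$ on the transition annuli $\pm[R(\varepsilon),\tau(\varepsilon)+\frac1{2\pi}\ln\zeta_0]\times S^1$.

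The heart of the argument is to convert these bounds into pointwise $L^\infty$ control of the exponential-chart differences. On the outer pieces $\pm[\tau(\varepsilon),\infty)\times S^1$ the $C^\infty$-closeness of $u(\cdot\pm\tau(\varepsilon),\cdot)$ to $u_\pm$ directly lets me write $u = \exp_{u_\pm}(\xi_\pm)$ with $\|\xi_\pm\|_{L^\infty}\le\delta$; here one should also choose the analytic coordinates near $o_\pm$ so that the asymptotic markers match, exactly as in the construction of $u_{app}^\varepsilon$ in \eqref{eq:uappe}. On the neck $[-R(\varepsilon),R(\varepsilon)]\times S^1$ I would apply Theorem~\ref{thm:centrallimit} to the reparameterized maps $\overline u(\tau,t) = u(\tau/\varepsilon,t/\varepsilon)$: the vanishing-energy hypothesis is exactly bound (1), so $\overline u$ has width tending to $0$ and its center-of-mass curve $\func{cm}(\overline u)$ converges to a gradient trajectory, which I take to be $\chi$ (after matching endpoints up to the flow-time parameter, absorbing the discrepancy into $\mu\in T_{R(\varepsilon)}\mathbb R$ via the reparameterization \eqref{reparameterize-tau}). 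The decomposition $\overline u(\tau,t) = \exp_{\func{cm}(\overline u)(\tau)}\xi(\tau,t)$ with $\int_{S^1}\xi\,dt=0$ then furnishes $\xi_0$ on the neck with $\|\xi_0\|_{L^\infty}\le\delta$. On the two transition annuli $\pm[R(\varepsilon),\tau(\varepsilon)]\times S^1$ bound (4), the diameter estimate, together with bound (2) pinning the image near $\chi(\pm l)=p_\pm$, gives that $u$, $\chi_\varepsilon$ and $u_\pm^\varepsilon$ all lie in a $C\varepsilon$-ball around $p_\pm$, so the interpolation region of $u_{app}^\varepsilon$ is reproduced and the chart difference is again $O(\delta)$ pointwise.

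The main obstacle, I expect, is the bookkeeping at the two junctions $\tau = \pm l/\varepsilon$ and $\tau=\pm\tau(\varepsilon)$: one must verify that the piecewise descriptions (outer Floer piece, transition annulus, neck) glue to a \emph{single} section $\xi$ of $(u_{app}^\varepsilon)^*TM$ that is continuous and satisfies the matching conditions built into the Banach manifold ${\mathcal B}_{res}^\varepsilon$, and that the reparameterization parameter $\mu$ needed to reconcile the flow time $2R(\varepsilon)$ of the neck with the flow time $2l$ of $\chi$ is itself $O(\delta)$ --- this uses $\varepsilon R(\varepsilon)\to\ell$ and the Gromov--Hausdorff bound (2). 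A secondary point is uniformity: $\varepsilon_0$, $\zeta_0$ and the function $\delta(\varepsilon)$ must be chosen once and for all so that all the constants (Sobolev constants, the $C\varepsilon$-ball estimates \eqref{dist-xp}, \eqref{dist-up}, the $C^\infty$-to-$L^\infty$ passage) are uniform in $l\ge l_0$ and in $0<\varepsilon\le\varepsilon_0$; this is where I would invoke the uniform Sobolev constant $C(l_0,p)$ established in the proof of Proposition~\ref{prop:quadratic}. Once the lemma is in place, the surjectivity Proposition~\ref{prop:surjective} follows by feeding $(u_-,\chi,u_+)$ and $(\xi,\mu)$ into the implicit function theorem Proposition~\ref{prop:abstractglue} with right inverse $Q_{para}^\varepsilon$ and noting that the solution it produces is unique in a ball of radius $2C\|\overline\partial_{(K_\varepsilon,J_\varepsilon)}u_{app}^\varepsilon\|_\varepsilon = O(\varepsilon^{1/p})$, hence must coincide with $u$ provided $\delta(\varepsilon)$ is chosen smaller than that radius.
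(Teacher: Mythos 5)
Your plan matches the paper's (extremely terse) treatment: the paper states the lemma immediately after observing that it is ``an immediate translation of the stable map convergence together with adiabatic convergence result in Theorem \ref{thm:centrallimit},'' and your proposal correctly unpacks exactly those ingredients --- the four bounds (1)--(4) encoded in $d_{adia}^{\varepsilon,\zeta}$, the application of Theorem \ref{thm:centrallimit} to the rescaled neck, the $C^\infty$-to-$L^\infty$ passage on the thick parts, and the absorption of the flow-time mismatch into $\mu$. The one place to be slightly careful in writing this up is the identification of $\xi_0$: as used in $\widetilde{\text{preG}}$ it is the section of $\chi^*TM$ recording the drift of the approximating gradient curve (i.e.\ $\exp_\chi(\xi_0)$ is the new ``$\chi$''), not the higher-mode variation $\xi_j$ appearing in the center-of-mass decomposition of Theorem \ref{thm:centrallimit}; the Hausdorff bound (2) controls the former, while the width/energy bound (1) controls the latter, and both enter. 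With that distinction kept straight, your argument is the intended one.
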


By the above lemma, for $u\in \mathcal{M}(K_{\varepsilon },J_{\varepsilon
};z_{-},z_{+};B)\cap \bigcup_{0<\zeta \leq \zeta _{0}}V_{\zeta ,\delta
(\varepsilon )}^{\varepsilon }$ with $0<\varepsilon \leq \varepsilon _{0}$,
we can represent $u$ by tangent vectors $\left( \xi _{-},\xi _{0},\xi
_{+},\mu \right) $ via%
\begin{equation*}
u(\tau ,t)=\widetilde{\text{preG}}((u_{-},\xi _{-}),(\chi ,\xi
_{0}),(u_{+},\xi _{+}))(P_{\mu }\tau ,t).
\end{equation*}%
For notation brevity, we let $\xi =\left( \xi _{-},\xi _{0},\xi _{+}\right) $%
.

From now on, $u$ is represented by $\left( \xi ,\mu \right) $. To prove
Proposition \ref{prop:surjective}, it is enough to prove the following via
the local uniqueness property of the gluing construction.

\begin{prop}[Norm-convergence]
\label{prop:norm-converge} \ Let $\Vert \left( \xi ,\mu \right) \Vert
_{\varepsilon }$ be the Banach norm as defined in \eqref{eq:xienorm}. Then
there exists some $0<\zeta _{0}<1$ and a function $\delta :(0,\varepsilon
_{0}]\rightarrow {\mathbb{R}}_{+}$ such that
\begin{equation*}
\Vert \left( \xi ,\mu \right) \Vert _{\varepsilon }\rightarrow 0
\end{equation*}%
uniformly over $u\in \mathcal{M}(K_{\varepsilon },J_{\varepsilon
};z_{-},z_{+};B)\cap \bigcup_{0<\zeta \leq \zeta _{0}}V_{\zeta ,\delta
(\varepsilon )}^{\varepsilon }$ as $\varepsilon \rightarrow 0$.
\end{prop}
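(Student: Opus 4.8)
The plan is to establish Proposition \ref{prop:norm-converge} by decomposing the norm $\|(\xi,\mu)\|_\varepsilon$ into the contributions from the three pieces of the domain---the two caps $\Sigma_\pm$ and the neck $[-l/\varepsilon,l/\varepsilon]\times S^1$---and to show each contribution tends to $0$ uniformly. The starting point is the previous lemma, which already gives us the $C^0$-smallness $\|\xi_\pm\|_{L^\infty}\le\delta$, $\|\xi_0\|_{L^\infty}\le\delta$, $|\mu|\le\delta$ from the $d_{adia}^{\varepsilon,\zeta}$-closeness hypothesis. What remains is to upgrade $C^0$ control to the weighted $W^{1,p}$ control encoded in $\|\cdot\|_\varepsilon$. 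First I would set up elliptic estimates for $\xi$ on each region: since $u=\exp_{u_{app}^\varepsilon}(\xi)\circ P_\mu$ satisfies $\overline\partial_{(K_\varepsilon,J_\varepsilon)}u=0$ and $u_{app}^\varepsilon$ satisfies the equation up to the error $\eta^\varepsilon_{err}:=\overline\partial_{(K_\varepsilon,J_\varepsilon)}u_{app}^\varepsilon$ with $\|\eta^\varepsilon_{err}\|_\varepsilon\le E(l)\varepsilon^{1/p}$ by \eqref{eq:error}, the vector field $\xi$ satisfies an equation of the schematic form $D^\varepsilon\xi = -\eta^\varepsilon_{err} + N(\xi)$, where $N$ is the quadratic nonlinearity controlled by Proposition \ref{prop:quadratic} as $\|N(\xi)\|_\varepsilon \le C\|\xi\|_\varepsilon^2$.

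Next I would exploit the uniform right inverse. Applying the true right inverse $Q_{para}^\varepsilon$ (whose uniform boundedness is established in Proposition \ref{prop:uniform-inverse-bound} together with the Neumann-series argument following Proposition \ref{prop:approx-right-inverse}), one wants to write $\xi$ in terms of $Q_{para}^\varepsilon$ applied to $-\eta^\varepsilon_{err}+N(\xi)$. The subtlety is that $\xi$ as produced by $\widetilde{\text{preG}}$ need not a priori lie in the image of $Q_{para}^\varepsilon$; here the local uniqueness clause of the implicit function theorem (Proposition \ref{prop:abstractglue}) is what saves us---once we know $\|\xi\|_\varepsilon$ is small (smaller than the radius $h$ there), the solution of $\overline\partial_{(K_\varepsilon,J_\varepsilon)}u=0$ near $u_{app}^\varepsilon$ in the $\|\cdot\|_\varepsilon$-ball is unique and lies in $\operatorname{Image}Q_{para}^\varepsilon$, so $\xi$ coincides with the glued perturbation, which by construction satisfies $\|\xi\|_\varepsilon\le 2C\|\eta^\varepsilon_{err}\|_\varepsilon\le 2CE(l)\varepsilon^{1/p}\to 0$. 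Thus the heart of the matter is a bootstrap: first show $\|\xi\|_\varepsilon$ is \emph{a priori} small (this cannot come from the IFT, which presupposes it), then invoke uniqueness to pin down $\xi$ and get the quantitative decay.

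The a priori smallness is the main obstacle. The hypothesis $(\ref{adia-distance-delta})$ only controls $\xi$ in $C^0$ on compact pieces, in Gromov-Hausdorff distance on the neck, and in diameter on the transition annuli---it says nothing directly about $\nabla\xi$ or about the weighted norms with the blowing-up factors $\varepsilon^{1-p}$ in $\rho_\varepsilon$ and $\beta_{\delta,\varepsilon}$. To convert this into $\|\xi\|_\varepsilon\to 0$ I would argue region by region: on $\Sigma_\pm\setminus U_\pm(\zeta_0)$ use interior elliptic estimates for $\overline\partial_{(K_\pm,J_\pm)}$ plus the $C^\infty$-convergence in $(\ref{adia-distance-delta})(3)$; on the cap ends near $o_\pm$ combine the exponential decay of $u_\pm$ with the diameter bound $(\ref{adia-distance-delta})(4)$ and the asymptotic analysis of Floer trajectories; on the neck $[-R(\varepsilon),R(\varepsilon)]\times S^1$ use Theorem \ref{thm:centrallimit}, which gives the splitting $\overline u_j(\tau,t)=\exp_{\operatorname{cm}(\overline u_j)}(\tau)\xi_j(\tau,t)$ with $\operatorname{width}\to 0$ and $\xi_j\to 0$ in $C^\infty$, translating (after rescaling $\tau\mapsto\tau/\varepsilon$) into smallness of the $\varepsilon$-weighted zero-mode norm $\|\xi_0\|_{W^{1,p}_\varepsilon}$ and the weighted higher-mode norm $\|\widetilde\xi\|_{W^{1,p}_{\rho_\varepsilon}}$; the transition zones $\pm[R(\varepsilon),\tau(\varepsilon)]$ where the weight peaks are handled by the same weight-comparison inequalities $(\ref{eq:edtaue})$ used in the right-inverse estimate, since $J_\varepsilon=J_0$ and $K_\varepsilon=0$ there. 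Choosing $\zeta_0$ small first and then $\delta(\varepsilon)$ small enough (as a function of $\varepsilon$, $\zeta_0$, and the various geometric constants), one assembles these pieces into $\|(\xi,\mu)\|_\varepsilon\to 0$ uniformly, which is the claim; the $|\mu|$-part follows from the Gromov-Hausdorff control of the neck length together with $(\ref{eq:Linftydelta})$.
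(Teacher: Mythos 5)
Your overall logical architecture is right — the norm-convergence Proposition is exactly the \emph{a priori} smallness needed to invoke the local-uniqueness clause of Proposition \ref{prop:abstractglue}, and you correctly see that this smallness cannot itself come from the IFT. But your plan for producing the smallness has a genuine gap on the neck region, and it is precisely the place where the paper's proof has to do real work.

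The issue is the interplay between $C^0$-type information and the blowing-up weight. The off-shell norm weights $\widetilde\xi$ by $\beta_{\delta,\varepsilon}$, which on $[-l/\varepsilon,l/\varepsilon]$ is of size $\varepsilon^{1-p+\delta}(1+|\tau|)^\delta$, and it weights $\nabla\xi_0$ by $\varepsilon^{1-p}$. Your proposal to ``use Theorem~\ref{thm:centrallimit}, which gives \ldots\ $\xi_j\to 0$ in $C^\infty$, translating (after rescaling) into smallness of the $\varepsilon$-weighted norms'' does not go through: $C^\infty$-convergence of the rescaled map $\overline u_j$ on the \emph{compact} domain $[-\ell,\ell]\times S^1$ gives no rate, and in particular gives no \emph{exponential} decay of $\widetilde\xi_j$ as a function of $|\tau|$ away from $\pm l/\varepsilon$. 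Without that, integrating $|\widetilde\xi_j|^p$, $|\nabla\widetilde\xi_j|^p$ against a weight of order $\varepsilon^{1-p}$ over a cylinder of length $2l/\varepsilon$ does not tend to zero; you only get bounds of size $\delta^p \varepsilon^{1-p}\cdot l/\varepsilon$, which diverges unless you already know $\delta(\varepsilon)\lesssim\varepsilon$ \emph{and} have decay. The paper's proof supplies exactly the missing ingredient: a three-interval ($L^2$-concavity) estimate on unit cylinders, giving $\|\widetilde\xi_j\|_{L^2(Z_{II})}\le\gamma(4\pi\upsilon)(\|\widetilde\xi_j\|_{L^2(Z_I)}+\|\widetilde\xi_j\|_{L^2(Z_{III})})$ and hence exponential decay $e^{-2\pi\upsilon(R(\varepsilon_j)-|\tau|)}$ for the higher mode, plus a separate pointwise estimate $|\nabla(\xi_j)_0|\lesssim\varepsilon_j\delta_j$ for the zero mode (from the ODE the zero mode satisfies). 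These are what make the weighted integrals converge; generic interior elliptic estimates are not a substitute.

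A second, related point you should be aware of: the paper's three-interval argument only works if $\xi_j$ is defined by $u^{\varepsilon_j}=\exp_{u_{glue}^{\varepsilon_j}}\xi_j$, that is, exponentiating at the \emph{genuine} glued solution rather than at $u_{app}^{\varepsilon_j}$. Your proposal writes $\xi$ off of $u_{app}^\varepsilon$ and tracks the error $\eta^\varepsilon_{err}=\overline\partial_{(K_\varepsilon,J_\varepsilon)}u_{app}^\varepsilon$. The trouble is that in the contradiction/rescaling step one must divide the equation by $\|\xi_j\|_{L^\infty}$ on three consecutive cylinders, and there is no control of $\eta^\varepsilon_{err}/\|\xi_j\|_{L^\infty}$ since the error is not pointwise dominated by $\xi_j$. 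With the $u_{glue}$-reference, the inhomogeneous term becomes $E_{\varepsilon_j}$ which \emph{is} pointwise bounded by $C|\xi_j|(|\partial_t u_{glue}^{\varepsilon_j}|+|X_{K_{\varepsilon_j}}|)$ (Lemma following \eqref{eq:simplifiedqxi}) and hence vanishes relative to the normalization; this is exactly the remark made in the paper and it is not optional. Your proof as written would stall at this step.

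So: the decomposition by region, the use of the diameter/Hausdorff/$C^\infty$ hypotheses on the caps and transition annuli, and the final invocation of local uniqueness are all sound and match the paper in outline. What is missing is the identification and proof of the exponential decay of $\widetilde\xi$ on the neck (and the zero-mode ODE bound $|\nabla\xi_0|\lesssim\varepsilon\delta$), plus the switch to exponentiating at the genuine glued solution rather than the preglued one. Without those two ideas the weighted norms on $[-l/\varepsilon,l/\varepsilon]\times S^1$ are not controlled.
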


The rest of this section is devoted to the proof of this norm convergence.
We can actually take the function $\delta \left( \varepsilon \right) $ to be
$\delta \left( \varepsilon \right) =\varepsilon $ in this section.

We prove this by contradiction. Suppose that the Proposition is not true,
then there exist $0<\varepsilon _{j}\leq \varepsilon _{0},$ $0<\zeta
_{j}\leq \zeta _{0}$ and solutions $u^{\varepsilon _{j}}\in \mathcal{M}%
(K_{\varepsilon _{j}},J_{\varepsilon _{j}};z_{-},z_{+};B^j)$ represented by $%
\left( \xi _{j},\mu _{j}\right) $ such that
\begin{equation*}
d_{\text{\textrm{adia}}}^{\varepsilon _{j},\zeta _{j}}\left( u^{\varepsilon
_{j}},\left( u_{-},\chi ,u_{+}\right) \right) <\delta \left( \varepsilon
_{j}\right) =\varepsilon _{j}\rightarrow 0,
\end{equation*}%
but $\Vert \left( \xi _{j},\mu _{j}\right) \Vert _{\varepsilon
_{j}}\nrightarrow 0$. By the assumption $u^{\varepsilon _{j}}$, $%
u^{\varepsilon _{j}}$ cannot develop bubbles, so we may assume $%
u^{\varepsilon _{j}}\in \mathcal{M}(K_{\varepsilon _{j}},J_{\varepsilon
_{j}};z_{-},z_{+};B)$ for a fixed homology class $B$, and\ has the uniform $%
C^{1}$ estimate
\begin{equation}
|du^{\varepsilon _{j}}(\tau ,t)|\leq C<\infty .  \label{eq:du<C}
\end{equation}%
Therefore the sequence $u^{\varepsilon _{j}}$ is pre-compact on any given
compact interval $[a,b]\times S^{1}$.

\subsection{Exponential map and adiabatic renormalization}

In general, for solution $u^{\varepsilon _{j}}$ on ${\mathbb{R}}\times S^{1}$%
, \ we define $\xi _{j}^{app}\left( \tau ,t\right) \in
T_{u_{\textrm{app}}^{\varepsilon _{j}}\left( \tau ,t\right) }M$ $\ $by
\begin{equation*}
u^{\varepsilon _{j}}\left( \tau ,t\right) =\exp _{u_{\textrm{app}}^{\varepsilon
_{j}}\left( \tau ,t\right) }\xi _{j}^{app}\left( \tau ,t\right) .
\end{equation*}%
The exponential map makes sense because of the adiabatic convergence $%
u^{\varepsilon _{j}}\rightarrow \left( u_{-},\chi ,u_{+}\right) $.

For the later purpose, it turns out to be important to use the exponential
map at the \emph{genuine solution}
\begin{equation*}
u_{\textrm{glue}}^{\varepsilon _{j}}:=\func{Glue}(u_{-},\chi ,u_{+};\varepsilon _{j}).
\end{equation*}%
We also express the same sequence $u^{\varepsilon _{j}}$ as
\begin{equation}
u^{\varepsilon _{j}}\left( \tau ,t\right) =\exp _{u_{\textrm{glue}}^{\varepsilon
_{j}}\left( \tau ,t\right) }\xi _{j}\left( \tau ,t\right)
\label{eq:glue-xij}
\end{equation}%
for $\xi _{j}\left( \tau ,t\right) \in T_{u_{\textrm{glue}}^{\varepsilon _{j}}\left(
\tau ,t\right) }M$. By the triangle inequality, the error estimates %
\eqref{eq:error} and the construction of perturbation in implicit function
theorem, we have
\begin{equation}
\left\Vert \xi _{\textrm{app}}^{\varepsilon _{j}}-\func{Pal}_{\textrm{app}}^{glue}\left(
\varepsilon _{j}\right) \left( \xi _{\textrm{glue}}^{\varepsilon _{j}}\right)
\right\Vert _{\varepsilon _{j}}\leq \Vert E(u_{\textrm{app}}^{\varepsilon
_{j}},u_{\textrm{glue}}^{\varepsilon _{j}})\Vert _{\varepsilon _{j}}\leq CE(l
)\varepsilon _{j}^{\frac{1}{p}}  \label{eq:uapp-uglue}
\end{equation}%
where
\begin{equation*}
E(u_{\textrm{app}}^{\varepsilon _{j}},u_{\textrm{glue}}^{\varepsilon _{j}})=\exp
_{u_{\textrm{app}}^{\varepsilon _{j}}}^{-1}(u_{\textrm{glue}}^{\varepsilon _{j}}),
\end{equation*}%
(see \eqref{eq:map-E}), and $\func{Pal}_{\textrm{app}}^{glue}\left( \varepsilon
_{j}\right) :T_{u_{\textrm{glue}}^{\varepsilon _{j}}\left( \tau ,t\right)
}M\rightarrow T_{u_{\textrm{app}}^{\varepsilon _{j}}\left( \tau ,t\right) }M$ is the
parallel transport from $u_{\textrm{glue}}^{\varepsilon _{j}}\left( \tau ,t\right) $
to $u_{\textrm{app}}^{\varepsilon _{j}}\left( \tau ,t\right) $ along the minimal
geodesic connecting them. Therefore
\begin{eqnarray*}
\Vert \xi _{\textrm{app}}^{\varepsilon _{j}}\Vert _{\varepsilon _{j}} &\leq
&\left\Vert \func{Pal}_{\textrm{app}}^{glue}\left( \varepsilon _{j}\right) \left( \xi
_{\textrm{glue}}^{\varepsilon _{j}}\right) \right\Vert _{\varepsilon _{j}}+\Vert
E(u_{\textrm{app}}^{\varepsilon _{j}},u_{\textrm{glue}}^{\varepsilon _{j}})\Vert _{\varepsilon
_{j}} \\
&\leq &C\Vert \xi _{\textrm{glue}}^{\varepsilon _{j}}\Vert _{\varepsilon _{j}}+CE(l
)\varepsilon _{j}^{\frac{1}{p}}.
\end{eqnarray*}%
Now for the proof of Proposition \ref{prop:norm-converge} it will be enough to prove
the following.

\begin{prop}\label{prop:norm-convergence2}
Let $\xi_j: = \xi _{\textrm{glue}}^{\varepsilon _{j}}$ be as in \eqref{eq:glue-xij}. Then
\begin{equation*}
\Vert \xi _{\textrm{glue}}^{\varepsilon _{j}}\Vert _{\varepsilon _{j}}\rightarrow 0.
\end{equation*}
\end{prop}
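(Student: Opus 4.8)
The strategy is to argue by contradiction, exactly as set up in the preceding paragraphs: assuming $\Vert \xi_{glue}^{\varepsilon_j}\Vert_{\varepsilon_j} \nrightarrow 0$, we extract a subsequence along which $\Vert \xi_{glue}^{\varepsilon_j}\Vert_{\varepsilon_j} \geq c > 0$ and derive a contradiction. The key observation is that $u^{\varepsilon_j}$ and $u_{glue}^{\varepsilon_j} = \func{Glue}(u_-,\chi,u_+;\varepsilon_j)$ are \emph{both} genuine solutions of $\overline{\partial}_{(K_{\varepsilon_j},J_{\varepsilon_j})} = 0$ which are both adiabatically close to the same disc-flow-disc configuration $(u_-,\chi,u_+)$, the first by hypothesis $d_{adia}^{\varepsilon_j,\zeta_j}(u^{\varepsilon_j},(u_-,\chi,u_+)) < \varepsilon_j \to 0$ and the second by the construction of $\func{Glue}$ together with the error estimate \eqref{eq:error} and the implicit function theorem output $\Vert \xi \Vert_{\varepsilon_j} \leq 2C\Vert F(x_0)\Vert_{\varepsilon_j} \leq CE(\ell)\varepsilon_j^{1/p}$. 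So $\xi_j := \xi_{glue}^{\varepsilon_j}$ measures the discrepancy between two solutions both lying in a shrinking adiabatic neighborhood of $(u_-,\chi,u_+)$.

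The first step is to linearize: writing $u^{\varepsilon_j} = \exp_{u_{glue}^{\varepsilon_j}} \xi_j$ and using that both maps solve the perturbed Cauchy-Riemann equation, the quadratic estimate of Proposition \ref{prop:quadratic} gives
\begin{equation*}
\Vert D^{\varepsilon_j}_{para}\xi_j \Vert_{\varepsilon_j} \leq C\Vert \xi_j\Vert_{\varepsilon_j}^2
\end{equation*}
(more precisely, $\overline{\partial}_{(K_{\varepsilon_j},J_{\varepsilon_j})}(\exp_{u_{glue}^{\varepsilon_j}}\xi_j) = 0 = \overline{\partial}(u_{glue}^{\varepsilon_j})$ forces $D^{\varepsilon_j}\xi_j$ to be of quadratic order in $\xi_j$, up to the conformal-variation parameter $\mu_j$ which we include in the tangent vector). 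The second step is to invoke the uniform right inverse bound of Proposition \ref{prop:uniform-inverse-bound}: since $\xi_j$ lies (after projecting off the cokernel direction, which is trivial here because $u_{glue}^{\varepsilon_j}$ is Fredholm regular for small $\varepsilon_j$) in the image of $Q_{para}^{\varepsilon_j}$, we have
\begin{equation*}
\Vert \xi_j \Vert_{\varepsilon_j} \leq C\Vert D^{\varepsilon_j}_{para}\xi_j\Vert_{\varepsilon_j} \leq C'\Vert \xi_j\Vert_{\varepsilon_j}^2,
\end{equation*}
so that $1 \leq C'\Vert\xi_j\Vert_{\varepsilon_j}$, i.e. $\Vert\xi_j\Vert_{\varepsilon_j}$ is \emph{bounded below} away from zero --- which is consistent with our contradiction hypothesis, so this alone is not enough. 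The resolution is that the local uniqueness clause of Proposition \ref{prop:abstractglue} (``$F(x) = 0$, $x - x_0 \in \func{Image}Q$, $\Vert x - x_0\Vert \leq 2C\Vert F(x_0)\Vert$ determines $x$ uniquely in $B_h(x_0)$'') forces $\xi_j$ to be small: once we show $\Vert \xi_j\Vert_{\varepsilon_j} \to 0$ fails only if $\xi_j$ escapes the ball $B_{h_0}$ where the implicit function theorem applies, we must rule that out using the $C^0$-smallness \eqref{eq:Linftydelta}, $\Vert \xi_\pm\Vert_{L^\infty}, \Vert\xi_0\Vert_{L^\infty}, |\mu| \leq \delta(\varepsilon_j) = \varepsilon_j$, and a bootstrap from $L^\infty$-smallness to $\Vert\cdot\Vert_{\varepsilon_j}$-smallness.

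\textbf{The main obstacle} is precisely that last bootstrap: the adiabatic convergence in Definition \ref{defn:adiabatic} is stated in $C^0$/Hausdorff/diameter terms (conclusions (1)--(4), translated in the displayed list after \eqref{adia-distance-delta}), whereas the Banach norm $\Vert\cdot\Vert_{\varepsilon_j}$ in \eqref{eq:xienorm} controls a weighted $W^{1,p}$ quantity on the whole cylinder plus the adiabatic-weighted $W^{1,p}_{\varepsilon_j}$ norm of the zero-mode on $[-l/\varepsilon_j, l/\varepsilon_j]$. Upgrading from $C^0$-closeness to $W^{1,p}$-closeness requires elliptic bootstrapping region by region --- on the two ``thick'' ends $\pm[\tau(\varepsilon_j),\infty)\times S^1$ one uses the standard $W^{1,p}_\delta$ elliptic estimate for $D_{u_\pm}\overline{\partial}_{(K_\pm,J_\pm)}$ combined with conclusion (3); on the ``thin'' neck one uses the $\varepsilon$-rescaled estimate $R_{\varepsilon_j}$ from Section \ref{sec:Qinmiddle} and the central-limit Theorem \ref{thm:centrallimit}, noting $\func{width}\,\overline{u}_j|_{[-\ell,\ell]\times S^1} \to 0$ and $\func{cm}(\overline{u}_j) \to \chi$ in $C^\infty$; and on the four transition intervals $\pm[l/\varepsilon_j - T(\varepsilon_j), l/\varepsilon_j + T(\varepsilon_j)]$ one uses the diameter bound (4) together with the weight comparisons \eqref{eq:edtaue} that were already exploited in the proof of Proposition \ref{prop:uniform-inverse-bound}. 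Once each region gives $\Vert\xi_j\Vert \to 0$ locally, summing with the uniform Sobolev constant $C(l_0,p)$ (which is the reason the weights $\beta_{\delta,\varepsilon}$ were arranged to be $\geq 1$ everywhere) yields $\Vert\xi_j\Vert_{\varepsilon_j} \to 0$, contradicting $\Vert\xi_j\Vert_{\varepsilon_j} \geq c > 0$ and completing the proof; then \eqref{eq:uapp-uglue} transfers this to $\Vert\xi_{app}^{\varepsilon_j}\Vert_{\varepsilon_j} \to 0$ as required.
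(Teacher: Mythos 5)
Your plan diverges from the paper's actual argument in a way that leaves a genuine gap, and the gap is substantive: the bootstrap you sketch from $C^0$/compact-$C^\infty$ smallness to smallness in the weighted norm $\|\cdot\|_{\varepsilon_j}$ does not close without a quantitative pointwise \emph{decay} estimate on $\xi_j$ as one moves into the interior of the thin neck, which your proposal never produces.

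To see why, take the higher-mode part $\widetilde{\xi_j}$ on the neck $[-l/\varepsilon_j,\,l/\varepsilon_j]\times S^1$. The contribution of this region to $\|\xi_j\|_{\varepsilon_j}$ carries the power weight $\beta_{\delta,\varepsilon_j}(\tau) = \varepsilon_j^{1-p+\delta}(1+|\tau|)^{\delta}$. If all you know is a uniform pointwise bound $|\widetilde{\xi_j}|,\ |\nabla\widetilde{\xi_j}|\le \delta_j$ with $\delta_j\approx\varepsilon_j$ (which is what adiabatic convergence + interior Schauder would give you without further input), then
\begin{equation*}
\int_{-l/\varepsilon_j}^{l/\varepsilon_j}\int_{S^1}\delta_j^p\,\varepsilon_j^{1-p+\delta}(1+|\tau|)^{\delta}\,dt\,d\tau \;\sim\; \delta_j^p\,\varepsilon_j^{1-p+\delta}\cdot\Bigl(\tfrac{l}{\varepsilon_j}\Bigr)^{1+\delta} \;=\; \frac{\delta_j^p\, l^{1+\delta}}{\varepsilon_j^{\,p}},
\end{equation*}
which is $O(1)$ when $\delta_j=\varepsilon_j$, not $o(1)$. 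So $L^\infty$-smallness at rate $\varepsilon_j$ is exactly the borderline case: it does \emph{not} force the weighted norm to vanish. The missing ingredient in your argument is the pointwise \emph{exponential decay} of $|\nabla\xi_j(\tau,t)|$ and $|\widetilde{\xi_j}(\tau,t)|$ toward the interior of the neck, of the form $\le C\delta_j\,e^{-2\pi\upsilon(\tau(\varepsilon_j)-|\tau|)}$ (resp. $e^{-2\pi\upsilon(R(\varepsilon_j)-|\tau|)}$), which the paper establishes by the three-interval method (a local $L^2$ inequality on three sequential unit cylinders, propagated iteratively, with the sharp constant $\gamma(4\pi\upsilon)$ obtained via a contradiction/blow-up argument reducing to Fourier analysis of holomorphic limits). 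That decay produces an extra factor $\approx \varepsilon_j^{\frac{p-1}{\delta}\upsilon}$ which, precisely because $\upsilon$ is chosen with $\frac{p-1}{\delta}\upsilon\ge 1$, makes all the weighted integrals converge and tend to zero. Neither Theorem \ref{thm:centrallimit} (which only gives qualitative convergence of widths and center of mass) nor the elliptic estimates you invoke on the thick ends and transition intervals supply this decay by themselves. You also need the observation — which is the reason the paper switches from $u_{app}^{\varepsilon_j}$ to the genuine solution $u_{glue}^{\varepsilon_j}$ in the exponential map — that the inhomogeneous error term $E_{\varepsilon_j}$ in the $\xi_j$-equation is then \emph{bounded pointwise by} $C|\xi_j|(|\partial_t u_{glue}^{\varepsilon_j}|+|X_{K_{\varepsilon_j}}|)$, so that the rescaled limit in the three-interval contradiction argument is genuinely a homogeneous holomorphic function; had one used $u_{app}^{\varepsilon_j}$, the term $\bar\partial_{(K_{\varepsilon_j},J_{\varepsilon_j})}u_{app}^{\varepsilon_j}$ is incomparable to $|\xi_j|$ and the three-interval method does not launch.

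Two further smaller points. First, your preliminary Fredholm route is also broken: $\|\xi_j\|_{\varepsilon_j}\le C\|D^{\varepsilon_j}_{para}\xi_j\|_{\varepsilon_j}$ is false in general because $D^{\varepsilon_j}_{para}$ is surjective with nontrivial \emph{kernel} (of dimension equal to the moduli-space dimension), so the right inverse bound controls only the $\func{Image}\,Q$-component of $\xi_j$. You write ``after projecting off the cokernel direction, which is trivial here'' but the cokernel is not the issue; the kernel is, and it need not be trivial. Second, the local uniqueness clause of the implicit function theorem only applies within $B_{h_0}\cap(x_0+\func{Image}\,Q)$, and $\xi_j$ is not a priori in $\func{Image}\,Q$; the paper resolves the ambiguity not through the IFT clause but by \emph{varying the base point} $(u_-,\chi,u_+)$ in the disc-flow-disc moduli space (via the off-shell pregluing $\widetilde{\text{preG}}$) so as to absorb the kernel direction, and then shows the residual $\xi_j$ goes to zero by the direct, region-by-region, decay-based estimate described above.
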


The remaining subsection and the next will be occupied by the proof of this proposition.

We consider the exponential map
\begin{equation*}
\exp :{\mathcal{U}}\subset TM\rightarrow M;\quad \exp (x,\xi ):=\exp
_{x}(\xi )
\end{equation*}%
and denote
\begin{equation*}
D_{1}\exp (x,\xi ):T_{x}M\rightarrow T_{x}M
\end{equation*}%
the (covariant) partial derivative with respect to $x$ and
\begin{equation*}
d_{2}\exp (x,\xi ):T_{x}M\rightarrow T_{x}M
\end{equation*}%
the usual derivative $d_{2}\exp (x,\xi ):=T_{\xi }\exp
_{x}:T_{x}M\rightarrow T_{x}M$. We recall the basic property of the
exponential map
\begin{equation*}
D_{1}\exp (x,0)=d_{2}\exp (x,0)=id.
\end{equation*}%
Denote $\chi _{j}:=\func{cm}(u_{j})$. Then $\chi _{j}\rightarrow \chi $ on $%
[-l ,l ]$ by Theorem \ref{thm:centrallimit}. It is also useful to introduce
the map
\begin{equation}
E:V_{\Delta }\rightarrow {\mathcal{U}};\quad E(x,y)=\exp _{x}^{-1}(y)
\label{eq:map-E}
\end{equation}%
where $V_{\Delta }$ is the neighborhood of the diagonal of $M\times M$. Then
the following is the standard estimates on this map
\begin{eqnarray}
|d_{2}\exp (x,v)-\Pi _{x}^{\exp (x,v)}| &\leq &C|v|  \notag
\label{exp-derivatives} \\
|D_{1}\exp (x,v)-\Pi _{x}^{\exp (x,v)}| &\leq &C|v|
\end{eqnarray}%
for $v\in T_{x}M$ where $C$ is independent of $v$, as long as $|v|$ is
sufficiently small, say smaller than the injectivity radius of the metric on
$M$. (see \cite{katcher}.)

In the following calculations, to simplify the notation, we \emph{suppress
the subindex }$j$\emph{\ from various notations}, i.e. the $\varepsilon ,\xi
$ mean $\varepsilon _{j},\xi _{j}$ respectively. We compute
\begin{equation*}
\frac{\partial u^{\varepsilon }}{\partial \tau }=D_{1}\exp
(u_{\textrm{glue}}^{\varepsilon },\xi )\frac{\partial u_{\textrm{glue}}^{\varepsilon }}{%
\partial \tau }+d_{2}\exp (u_{\textrm{glue}}^{\varepsilon },\xi )\frac{\partial \xi }{%
\partial \tau }.
\end{equation*}%
Similarly we compute
\begin{equation*}
\frac{\partial u^{\varepsilon }}{\partial t}=D_{1}\exp
(u_{\textrm{glue}}^{\varepsilon },\xi )\frac{\partial u_{\textrm{glue}}^{\varepsilon }}{%
\partial t}+d_{2}\exp (u_{\textrm{glue}}^{\varepsilon },\xi )\frac{\partial \xi }{%
\partial t}.
\end{equation*}%
We introduce the following invertible linear operators $P(x,v):T_{x}M%
\rightarrow T_{x}M$ defined by
\begin{equation*}
P(x,v):=(d_{2}\exp (x,v))^{-1}\circ D_{2}\exp (x,v).
\end{equation*}%
Then we have the following inequality

\begin{lem}
\label{lem:P(x,v)}
\begin{equation*}
|P(x,v)-id |\leq C|v|
\end{equation*}%
for a universal constant $C>0$ depending only on the injectivity radius.
\end{lem}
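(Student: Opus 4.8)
The inequality to be proved, $|P(x,v) - \operatorname{id}| \leq C|v|$, concerns the single operator
\begin{equation*}
P(x,v) = (d_2\exp(x,v))^{-1}\circ D_1\exp(x,v) : T_xM \to T_xM,
\end{equation*}
so the plan is to reduce it to the two estimates already recorded in \eqref{exp-derivatives}. First I would recall that $d_2\exp(x,0) = D_1\exp(x,0) = \operatorname{id}$, and that by \eqref{exp-derivatives} both $d_2\exp(x,v)$ and $D_1\exp(x,v)$ differ from the parallel transport $\Pi_x^{\exp(x,v)}$ by an operator of norm at most $C|v|$. Subtracting these two estimates gives immediately
\begin{equation*}
|d_2\exp(x,v) - D_1\exp(x,v)| \leq 2C|v|.
\end{equation*}
This is the only geometric input needed; everything else is elementary operator algebra.

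Second, I would handle the inverse. Write $A = d_2\exp(x,v)$; then $A = \operatorname{id} + (A - \operatorname{id})$ with $|A - \operatorname{id}| \leq C|v| + |\Pi_x^{\exp(x,v)} - \operatorname{id}|$, and since parallel transport is an isometry along a geodesic of length $|v|$, the second term is itself $O(|v|)$ (or one can simply absorb it, noting $|A-\operatorname{id}|\le C'|v|$ for small $|v|$ directly from \eqref{exp-derivatives} after comparing $\Pi$ to the identity). For $|v|$ smaller than a fixed fraction of the injectivity radius we then have $|A - \operatorname{id}| \leq 1/2$, so $A$ is invertible with $|A^{-1}| \leq 2$ and $|A^{-1} - \operatorname{id}| = |A^{-1}(\operatorname{id} - A)| \leq 2|A - \operatorname{id}| \leq C''|v|$ by the Neumann series bound.

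Third, I would combine the pieces. Using $P - \operatorname{id} = A^{-1}(D_1\exp(x,v) - A) = A^{-1}\big(D_1\exp(x,v) - d_2\exp(x,v)\big)$, we get
\begin{equation*}
|P(x,v) - \operatorname{id}| \leq |A^{-1}|\cdot |D_1\exp(x,v) - d_2\exp(x,v)| \leq 2 \cdot 2C|v| = 4C|v|,
\end{equation*}
which is the claimed bound with a universal constant depending only on the injectivity radius (and the fixed background metric $g$). One could alternatively write $P - \operatorname{id} = (A^{-1} - \operatorname{id})(D_1\exp - \operatorname{id}) + (D_1\exp - \operatorname{id}) + (A^{-1} - \operatorname{id})$ and bound each of the three terms by $O(|v|)$ separately; this is the same computation reorganized.

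\textbf{Main obstacle.} There is essentially no hard step here: the statement is a routine consequence of \eqref{exp-derivatives}, which is quoted from \cite{katcher}. The only point requiring a little care is making the constant genuinely universal — i.e. ensuring the threshold on $|v|$ below which $A$ is invertible, and the resulting bound $|A^{-1}| \leq 2$, depend only on the injectivity radius and fixed curvature bounds of $(M,g)$ and not on the point $x$. This follows from compactness of $M$, which guarantees uniform two-sided bounds on the metric and its derivatives, so the constants in \eqref{exp-derivatives} are uniform in $x$ to begin with.
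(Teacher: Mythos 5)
Your proof is correct and takes exactly the approach the paper intends: the paper's entire proof is the single sentence ``This is an immediate consequence of \eqref{exp-derivatives},'' and your argument is the standard spelling-out of that consequence (triangle inequality through the parallel transport $\Pi_x^{\exp(x,v)}$ to get $|d_2\exp - D_1\exp|\leq 2C|v|$, then a Neumann-series bound on $(d_2\exp)^{-1}$). One small cosmetic point: the estimate $|\Pi_x^{\exp(x,v)} - \operatorname{id}|$ is not the cleanest thing to invoke, since $\Pi$ maps $T_xM$ to $T_{\exp(x,v)}M$; it is slightly tidier to derive invertibility of $A=d_2\exp(x,v)$ directly by comparing $A$ to the isometry $\Pi_x^{\exp(x,v)}$ rather than to $\operatorname{id}$, which you in fact also note as an alternative.
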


\begin{proof}
This is an immediate consequence of \eqref{exp-derivatives}.
\end{proof}

Now we consider the equation
\begin{equation*}
0=\frac{\partial u^{\varepsilon }}{\partial \tau }+J^{\varepsilon }\left(
\frac{\partial u^{\varepsilon }}{\partial t}-X_{H^{\varepsilon
}}(u^{\varepsilon })\right) .
\end{equation*}%
We re-write
\begin{eqnarray}
0 &=&\left( D_{1}\exp (u_{\textrm{glue}}^{\varepsilon },\xi )\frac{\partial
u_{\textrm{glue}}^{\varepsilon }}{\partial t}+d_{2}\exp (u_{\textrm{glue}}^{\varepsilon },\xi )%
\frac{D\xi }{\partial t}\right)   \notag \\
&{}&\quad +J^{\varepsilon }\left( D_{1}\exp (u_{\textrm{glue}}^{\varepsilon },\xi )%
\frac{\partial u_{\textrm{glue}}^{\varepsilon }}{\partial t}+d_{2}\exp
(u_{\textrm{glue}}^{\varepsilon },\xi )\frac{D\xi }{\partial t}-X_{H^{\varepsilon
}}(u_{\textrm{glue}}^{\varepsilon })\right)   \notag \\
&=&d_{2}\exp (u_{\textrm{glue}}^{\varepsilon },\xi )\frac{D\xi }{\partial t}%
+J^{\varepsilon }\left( d_{2}\exp (u_{\textrm{glue}}^{\varepsilon },\xi )\frac{D\xi }{%
\partial t}-DX_{H^{\varepsilon }}(u_{\textrm{glue}}^{\varepsilon })(\xi ))\right)
\notag \\
&{}&+\left( D_{1}\exp (u_{\textrm{glue}}^{\varepsilon },\xi )\frac{\partial
u_{\textrm{glue}}^{\varepsilon }}{\partial t}+J^{\varepsilon }\left( D_{1}\exp
(u_{\textrm{glue}}^{\varepsilon },\xi )\frac{\partial u_{\textrm{glue}}^{\varepsilon }}{%
\partial t}-X_{H^{\varepsilon }}(u_{\textrm{glue}}^{\varepsilon })\right) \right)
\notag \\
&{}&+J^{\varepsilon }N(u_{\textrm{glue}}^{\varepsilon },\xi )  \label{eq:CR-in-psi}
\end{eqnarray}%
where $N(u_{\textrm{glue}}^{\varepsilon },\xi )$ is the higher order term
\begin{eqnarray}
N(u_{\textrm{glue}}^{\varepsilon },\xi ) &=&X_{H^{\varepsilon }}(u^{\varepsilon
})-X_{H^{\varepsilon }}(u_{\textrm{glue}}^{\varepsilon })-DX_{H^{\varepsilon
}}(u_{\textrm{glue}}^{\varepsilon })(\xi )  \notag  \label{eq:Nuepsilon} \\
&=&X_{H^{\varepsilon }}(\exp (u_{\textrm{glue}}^{\varepsilon },\xi
))-X_{H^{\varepsilon }}(u_{\textrm{glue}}^{\varepsilon })-DX_{H^{\varepsilon
}}(u_{\textrm{glue}}^{\varepsilon })(\xi )
\end{eqnarray}%
obtained from the (pointwise) Taylor expansion of $X_{H^{\varepsilon }}(\exp
(x,v))$.

Now we denote the pull-back
\begin{equation*}
\widehat{J}^{\varepsilon }:=(d_{2}\exp (u_{\textrm{glue}}^{\varepsilon },\xi
))^{-1}J^{\varepsilon }d_{2}\exp (u_{\textrm{glue}}^{\varepsilon },\xi )
\end{equation*}%
and
\begin{equation*}
\widetilde{J}^{\varepsilon }:=(P(u_{\textrm{glue}}^{\varepsilon },\xi
))^{-1}J^{\varepsilon }P(u_{\textrm{glue}}^{\varepsilon },\xi ).
\end{equation*}%
Then we obtain the pointwise inequalities
\begin{equation}
|\widehat{J}^{\varepsilon }-J^{\varepsilon }|\leq C|\xi |,\quad |\widetilde{J%
}^{\varepsilon }-J^{\varepsilon }|\leq C|\xi |.  \label{eq:inequality-Je}
\end{equation}%
We also have
\begin{eqnarray}
|(D_{1}\exp (u_{\textrm{glue}}^{\varepsilon },\xi ))^{-1}(X_{K^{\varepsilon
}}(u_{\textrm{glue}}^{\varepsilon }))-(X_{K^{\varepsilon }}(u_{\textrm{glue}}^{\varepsilon
}))| &\leq &C_{\varepsilon }|\xi |,  \notag \\
|(d_{2}\exp (u_{\textrm{glue}}^{\varepsilon },\xi ))^{-1}(DX_{K^{\varepsilon
}}(u_{\textrm{glue}}^{\varepsilon }))-(DX_{K^{\varepsilon }}(u_{\textrm{glue}}^{\varepsilon
}))| &\leq &C_{\varepsilon }|\xi |.  \label{eq:inequality-XHe}
\end{eqnarray}%
With this notation, we can simplify and write \eqref{eq:CR-in-psi} into
\begin{eqnarray}
&{}&\frac{D\xi }{\partial \tau }+\widehat{J}^{\varepsilon }\left( \frac{D\xi
}{\partial t}-(d_{2}\exp (u_{\textrm{glue}}^{\varepsilon }))^{-1}(DX_{K^{\varepsilon
}}(u_{\textrm{glue}}^{\varepsilon })(\xi ))\right)  \notag \\
&=&-P(u_{\textrm{glue}}^{\varepsilon },\xi )\left( \frac{\partial
u_{\textrm{glue}}^{\varepsilon }}{\partial \tau }+\widetilde{J}^{\varepsilon }\left(
\frac{\partial u_{\textrm{glue}}^{\varepsilon }}{\partial t}-(D_{1}\exp
(u_{\textrm{glue}}^{\varepsilon },\xi ))^{-1}(X_{K^{\varepsilon
}}(u_{\textrm{glue}}^{\varepsilon }))\right) \right)  \notag \\
&{}&-d_{2}\exp (u_{\textrm{glue}}^{\varepsilon },\xi )^{-1}(J^{\varepsilon
}N(u_{\textrm{glue}}^{\varepsilon },\xi ))  \label{eq:xi-on-chi}
\end{eqnarray}%
Combining the pointwise inequalities \eqref{eq:inequality-Je}, %
\eqref{eq:inequality-XHe} and the error estimate for $u_{\textrm{glue}}^{\varepsilon
} $, we obtain the differential inequality
\begin{equation*}
\left\vert \frac{D\xi }{\partial t}+\widehat{J}^{\varepsilon }\left( \frac{%
D\xi }{\partial t}-(DX_{K^{\varepsilon }}(u_{\textrm{glue}}^{\varepsilon })(\xi
))\right) \right\vert \leq C(\varepsilon )+C(|\xi |)|\xi |
\end{equation*}%
where $C(\varepsilon )$ is the error term for $u_{\textrm{glue}}^{\varepsilon }$.

For simplicity of exposition, we rewrite this equation (\emph{now with
subindex }$j$) into
\begin{eqnarray}
&{}&\frac{D\xi _{j}}{\partial \tau }+\widehat{J}^{\varepsilon _{j}}\left(
\tau ,t\right) \frac{\partial \xi _{j}}{\partial t}+A_{j}(u_{\textrm{glue}}^{%
\varepsilon _{j}}\left( \tau ,t\right) )\cdot \xi _{j}(\tau ,t)  \notag \\
&=&B_{j}\left( u_{\textrm{glue}}^{\varepsilon _{j}}\left( \tau ,t\right) ,\xi
_{j}(\tau ,t)\right) +E_{\varepsilon _{j}}\left( \tau ,t\right)
\label{eq:simplifiedqxi}
\end{eqnarray}%
where $A_{j},\,B_{j}$ and $E_{\varepsilon _{j}}$ are defined by
\begin{eqnarray*}
A_{j}(x)\xi _{j} &=&-\widehat{J}^{\varepsilon _{j}}DX_{H^{\varepsilon
_{j}}}(x)(\xi _{j}), \\
\quad B_{j}(x,\xi _{j}) &=&-d_{2}\exp (u_{\textrm{glue}}^{\varepsilon },\xi
)^{-1}(J^{\varepsilon _{j}}N^{\varepsilon _{j}}(x,\xi _{j})) \\
E_{\varepsilon _{j}}\left( {\tau },{t}\right)  &=&-P(u_{\textrm{glue}}^{\varepsilon
_{j}},\xi _{j})\left( \frac{\partial u_{\textrm{glue}}^{\varepsilon _{j}}}{\partial
\tau }+\widetilde{J}^{\varepsilon _{j}}\left( \frac{\partial
u_{\textrm{glue}}^{\varepsilon _{j}}}{\partial t}-(D_{1}\exp (u_{\textrm{glue}}^{\varepsilon
_{j}},\xi ))^{-1}(X_{K^{\varepsilon _{j}}}(u_{\textrm{glue}}^{\varepsilon
_{j}}))\right) \right) .
\end{eqnarray*}%
We have
\begin{equation}
|B_{j}(x,\xi _{j})|\leq C\,|\xi _{j}|^{2}  \label{eq:Bjquadratic}
\end{equation}%
for some uniform constant $C$, provided $\Vert \xi _{j}\Vert _{C^{0}}\leq
\text{injectivity radius of the metric $g$}$. (Note that this latter
requirement is automatic since $\Vert \xi _{j}\Vert _{\varepsilon
_{j}}\rightarrow 0$.) The pointwise inequality \eqref{eq:Bjquadratic}
follows from the inequality \eqref{exp-derivatives} and the pointwise
inequality
\begin{equation*}
|X_{H^{\varepsilon }}(\exp _{x}(v))-X_{H^{\varepsilon
}}(x)-DX_{H^{\varepsilon }}(x)(v)|\leq C|v|^{2}
\end{equation*}%
on $M$, where $C$ depends only on $H$, the injectivity radius of the metric as long
as $|v|\leq \text{injectivity radius of the metric $g$}$. The last inequality just follows
from the Taylor expansion with remainder.

In the transition region $\Omega _{\pm }\left( \varepsilon _{j}\right) :=\pm %
\left[ R(\varepsilon _{j}),\tau \left( \varepsilon _{j}\right) \right]
\times S^{1}$, we do not renormalize but consider \eqref{eq:simplifiedqxi}
itself. From the adiabatic convergence of $u_{j}\rightarrow \left(
u_{-},\chi ,u_{+}\right) $ we have
\begin{equation}
\left\vert \xi _{j}\left( \pm R\left( \varepsilon _{j}\right) ,t\right)
\right\vert ,\,|\xi _{j}(\pm \tau \left( \varepsilon _{j}\right) ,t)|\leq
\delta _{j}  \label{eq:xibdyatRejSej}
\end{equation}%
where
\begin{equation*}
\delta _{j}=\varepsilon _{j}\rightarrow 0\text{ \ \ as }j\rightarrow \infty .
\end{equation*}

We next consider the region $\Theta \left( \varepsilon _{j}\right)
=[-R(\varepsilon _{j}),R(\varepsilon _{j})]\times S^{1}$ for $\left( \tau
,t\right) $. By renomalizing the domain%
\begin{equation*}
(\overline{\tau },\overline{t})=(\varepsilon _{j}\tau ,\varepsilon _{j}t),\,%
\text{\ }\overline{u_{j}}\left( \overline{\tau },\overline{t}\right)
=u_{j}\left( \frac{\overline{\tau }}{\varepsilon _{j}},\frac{\overline{t}}{%
\varepsilon _{j}}\right) =u_{j}\left( \tau ,t\right) ,
\end{equation*}%
and applying the Hausdorff convergence of $u_{j}|_{[-R(\varepsilon
_{j}),R(\varepsilon _{j})]\times S^{1}}$ to $\chi |_{[-l,l]}$, we can write $%
\overline{u}_{j}$
\begin{equation}
\overline{u}_{j}(\overline{\tau },\overline{t})=\exp _{\chi (\overline{\tau }%
)}\overline{\xi }_{j}(\overline{\tau },\overline{t})  \label{eq:baruexpxi}
\end{equation}%
for $(\overline{\tau },\overline{t})\in \lbrack -l,l]\times {\mathbb{R}}%
/2\pi \varepsilon _{j}{\mathbb{Z}}$, a cylinder with radius $\varepsilon _{j}
$. Here we have
\begin{equation*}
\overline{\xi }_{j}(\overline{\tau },\overline{t})\in T_{\chi (\overline{%
\tau })}M.
\end{equation*}%
If we restrict $\left( \tau ,t\right) $ on $[-R(\varepsilon
_{j}),R(\varepsilon _{j})]\times S^{1}$, then $\xi _{j}\left( \tau ,t\right)
=$ $\overline{\xi }_{j}(\overline{\tau },\overline{t})$.

\bigskip Furthermore it easily follows from $\left( \ref{exp-derivatives}%
\right) $ and smoothness of the exponential map that%
\begin{eqnarray*}
&{}&|d_{2}\exp (\chi (\overline{\tau }),\overline{\xi }_{j}(\overline{\tau },%
\overline{t})))^{-1}(D_{1}\exp (\chi (\overline{\tau }),\overline{\xi }_{j}(%
\overline{\tau },\overline{t}))(\dot{\chi}(\overline{\tau }))-\dot{\chi}(%
\overline{\tau })| \\
&\leq &C|\overline{\xi }(\tau ,t)||\dot{\chi}(\overline{\tau })|
\end{eqnarray*}%
and
\begin{equation*}
|\exp _{\chi (\overline{\tau })}^{\ast }(\func{grad}f)(\overline{\xi }_{j}(%
\overline{\tau },\overline{t}))-\func{grad}f(\chi (\overline{\tau }))|\leq C|%
\overline{\xi }_{j}(\overline{\tau },\overline{t})|
\end{equation*}%
and
\begin{equation*}
|(\exp _{\chi (\overline{\tau })})^{\ast }J^{\varepsilon _{j}}(\chi (%
\overline{\tau }))-J^{\varepsilon _{j}}|\leq C|\overline{\xi }_{j}(\overline{%
\tau },\overline{t})|
\end{equation*}%
where the constant $C$ depends only on $M$. Therefore%
\begin{equation*}
\lim_{j\rightarrow \infty }\widehat{J}^{\varepsilon
_{j}}(u_{\textrm{glue}}^{\varepsilon _{j}})=J_{0}=\lim_{j\rightarrow \infty }%
\widetilde{J}^{\varepsilon _{j}}(u_{\textrm{glue}}^{\varepsilon _{j}}).
\end{equation*}

\subsection{Three-interval method: $L^2$-exponential estimates}

With those preparations made in the previous subsection, we will now prove
Proposition \ref{prop:norm-convergence2} by applying the three-interval method
to the equation \eqref{eq:simplifiedqxi}.

We first study the equation of $\xi _{j}$ on the transition regions
\begin{equation*}
\Omega _{\pm }\left( \varepsilon _{j}\right) :=\pm \left[ R(\varepsilon
_{j}),\tau \left( \varepsilon _{j}\right) \right] \times S^{1}.
\end{equation*}

Let $\delta _{0}>0$ be a number much smaller than the injectivity radius of $%
M$, and%
\begin{equation*}
h\left( \zeta \right) :=\frac{1}{2\pi }\left\vert \ln \zeta \right\vert >0.
\end{equation*}%
By the adiabatic convergence condition (3), (4), there exists small $0<\zeta
_{0}<1$ and integer $j_{0}$, such that for all $j\geq $ $j_{0}$ and $\delta
_{j}=\delta \left( \varepsilon _{j}\right) $,
\begin{eqnarray}
\text{dist }\left( u_{j}\left( \pm R\left( \varepsilon _{j}\right) \right)
,p_{\pm }\right) &<&\delta _{j},  \notag \\
d_{C^{\infty }\left( \pm \left[ \tau \left( \varepsilon _{j}\right) -h\left(
\zeta _{0}\right) ,\tau \left( \varepsilon _{j}\right) -h\left( \zeta
_{0}\right) +1\right] \times S^{1}\right) }\left( u_{j},u_{\pm
}^{\varepsilon _{j}}\right) &<&\delta _{j},\text{ }  \notag \\
\text{diam}\left( u_{j}\left( \pm \left[ R(\varepsilon _{j}),\tau \left(
\varepsilon _{j}\right) -h\left( \zeta _{0}\right) +1\right] \times
S^{1}\right) \right) &<&\delta _{j}.  \label{3-distances-small}
\end{eqnarray}%
We denote the region
\begin{equation*}
\Omega _{\pm \zeta _{0}}\left( \varepsilon _{j}\right) :=\pm \left[
R(\varepsilon _{j}),\tau \left( \varepsilon _{j}\right) -h\left( \zeta
_{0}\right) +1\right] \times S^{1}\subset \Omega _{\pm }\left( \varepsilon
_{j}\right) .
\end{equation*}%
Without loss of generality we assume $\delta _{j}\leq \delta _{0}$. Then for
$\varepsilon _{j}$ small, from $\left( \ref{3-distances-small}\right) $ we
have%
\begin{equation}
u_{j}\left( \Omega _{\pm \zeta _{0}}\left( \varepsilon _{j}\right) \right)
\subset B_{p_{\pm }}\left( 2\delta _{0}\right) .  \label{transition-small}
\end{equation}%
If we identify the neighborhood $B_{p_{\pm }}\left( 2\delta _{0}\right) $
into $T_{p_{\pm }}M$ by exponential map and deform the metric and almost
complex structure to standard $\left( g_{p_{\pm }},J_{p_{\pm }}\right) $,
then the $\xi _{j}$ can be simplified to%
\begin{equation*}
\xi _{j}=u_{j}-u_{\textrm{glue}}^{\varepsilon _{j}},
\end{equation*}%
where the \textquotedblleft $-$\textquotedblright\ is with respect to the
linear space structure on $T_{p_{\pm }}M$. Such simplification will not
affect the validity of the proof, as explained in section 8 and remark 11.1,
for it will only affect the $C^{1}$ pointwise estimate of a term of order $%
C\delta _{0}$.

We decompose the equation \eqref{eq:simplifiedqxi} into those of $0$-mode
and higher modes%
\begin{eqnarray}
\frac{\partial }{\partial \tau }\left( \xi _{j}\right) _{0} &=&\left(
B\right) _{0}\left( u_{\textrm{glue}}^{\varepsilon _{j}}\left( \tau ,t\right) ,\xi
_{j}\right) +\left( E_{\varepsilon _{j}}\right) _{0},  \notag \\
\overline{\partial }_{J}\widetilde{\xi _{j}} &=&\widetilde{B}\left(
u_{\textrm{glue}}^{\varepsilon _{j}}\left( \tau ,t\right) ,\xi _{j}\right) +%
\widetilde{E_{\varepsilon _{j}}}.  \label{eq:decouple-Floer}
\end{eqnarray}%
%
%
%
%
%
%
%
%
%
%
%
%
%
%

\begin{rem}
In Theorem 1.2 of \cite{mundet-tian}, the higher mode exponential decay
estimate
\begin{equation}
\left\vert \widetilde{\xi _{j}}\right\vert \leq Ce^{-\sigma \left(
l/\varepsilon _{j}-\left\vert \tau \right\vert \right) }\text{ for }%
\left\vert \tau \right\vert \leq l/\varepsilon _{j}  \label{high-mode-decay}
\end{equation}%
has been obtained (Their notation for higher mode is $\phi _{0}\left(
t,\theta \right) $ instead of our $\widetilde{\xi _{j}}\left( \tau ,t\right)
$). Their observation was that $\left( \ref{high-mode-decay}\right) $ can be
reduced to a local $L^{2}$ estimate
\begin{equation}
\left\Vert \widetilde{\xi _{j}}\right\Vert _{L^{2}\left( Z_{II}\right) }\leq
\frac{1}{2}\left( \left\Vert \widetilde{\xi _{j}}\right\Vert _{L^{2}\left(
Z_{I}\right) }+\left\Vert \widetilde{\xi _{j}}\right\Vert _{L^{2}\left(
Z_{III}\right) }\right)  \label{local-elliptic-estimate}
\end{equation}%
on 3 sequential cylinders $Z_{I},Z_{II},Z_{III}\subset \left[ -l/\varepsilon
_{j},l/\varepsilon _{j}\right] \times $ $S^{1}$ of unit length, namely the
cylinders
\begin{equation*}
\left[ i-1,i\right] \times S^{1},\left[ i,i+1\right] \times S^{1},\left[
i+1,i+2\right] \times S^{1}
\end{equation*}%
for some integer $i$.
\end{rem}

To get the best $\sigma $ in the exponential decay (we need $\sigma $ to be
very close to $2\pi $), we recall in \cite{mundet-tian} they defined the
constant%
\begin{equation*}
\gamma \left( c\right) =\frac{1}{e^{c}+e^{-c}}.
\end{equation*}%
The importance of $\gamma \left( c\right) $ is due to the identity
\begin{equation}
\int_{0}^{1}e^{c\tau }d\tau =\gamma \left( c\right) \left[
\int_{-1}^{0}e^{c\tau }d\tau +\int_{1}^{2}e^{c\tau }d\tau \right]
\label{identity}
\end{equation}%
which will appear in the $L^{2}$-energy of $\xi _{j}$ on $3$ sequential unit
length cylinders later. Notice that when $c>0$, $\gamma \left( c\right) $ is
a strictly decreasing function of $c$.

We recall the following elementary but useful lemma.

\begin{lem}[\protect\cite{mundet-tian} Lemma 9.4]
For nonnegative numbers $x_{k}$ ($k=0,1,\cdots N$), if $\ $for $1\leq k\leq
N-1$,
\begin{equation*}
x_{k}\leq \gamma \left( x_{k-1}+x_{k+1}\right)
\end{equation*}%
for some fixed constant $\gamma \in (0,1/2)$, then for $1\leq k\leq N-1$,
\begin{equation*}
x_{k}\leq x_{0}\xi ^{-k}+x_{N}\xi ^{-\left( N-k\right) },
\end{equation*}
where $\xi =\frac{1+\sqrt{1-4\gamma ^{2}}}{2\gamma }$.
\end{lem}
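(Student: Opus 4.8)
The final statement to prove is the elementary lemma attributed to \cite{MT} Lemma 9.4: for nonnegative $x_0,\ldots,x_N$ satisfying $x_k \le \gamma(x_{k-1}+x_{k+1})$ for $1 \le k \le N-1$ with fixed $\gamma \in (0,1/2)$, one has $x_k \le x_0 \xi^{-k} + x_N \xi^{-(N-k)}$ where $\xi = \frac{1+\sqrt{1-4\gamma^2}}{2\gamma}$.

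Let me think about how I'd prove this.

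The condition $x_k \le \gamma(x_{k-1}+x_{k+1})$, i.e. $\gamma x_{k+1} - x_k + \gamma x_{k-1} \ge 0$, is a discrete subsolution inequality for the second-order linear recurrence $\gamma y_{k+1} - y_k + \gamma y_{k-1} = 0$. The characteristic equation is $\gamma t^2 - t + \gamma = 0$, with roots $t = \frac{1 \pm \sqrt{1-4\gamma^2}}{2\gamma}$; note $\xi$ is the larger root and $1/\xi$ is the smaller (their product is $1$ since the constant term over leading coefficient is $1$). Both roots are real and positive because $\gamma < 1/2$, and $\xi > 1$.

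The proof approach is a discrete maximum principle / comparison argument. First I would write $\gamma = \frac{1}{\xi + \xi^{-1}}$ (this is exactly the identity making $\xi, \xi^{-1}$ the characteristic roots, and it matches $\gamma(c) = \frac{1}{e^c+e^{-c}}$ with $\xi = e^c$). Define the comparison sequence $z_k := x_0 \xi^{-k} + x_N \xi^{-(N-k)}$. One checks directly that $z_k$ satisfies the recurrence with equality: $\gamma(z_{k-1}+z_{k+1}) = z_k$ for $1 \le k \le N-1$, using $\xi + \xi^{-1} = 1/\gamma$. Then set $w_k := x_k - z_k$. At the endpoints $w_0 = x_0 - (x_0 + x_N\xi^{-N}) \le 0$ and similarly $w_N \le 0$. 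For interior $k$, $w_k = x_k - z_k \le \gamma(x_{k-1}+x_{k+1}) - \gamma(z_{k-1}+z_{k+1}) = \gamma(w_{k-1}+w_{k+1})$. Now suppose for contradiction that $\max_k w_k > 0$; let $m$ be an index achieving the maximum, which must be interior since $w_0, w_N \le 0$. Then $w_m \le \gamma(w_{m-1}+w_{m+1}) \le \gamma(2 w_m) = 2\gamma w_m < w_m$ (using $2\gamma < 1$ and $w_m > 0$), a contradiction. Hence $w_k \le 0$ for all $k$, which is exactly $x_k \le z_k = x_0\xi^{-k} + x_N\xi^{-(N-k)}$.

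The argument is short and I don't anticipate a genuine obstacle; the only point requiring a little care is verifying that $z_k$ solves the recurrence with equality, which is the algebraic identity $\gamma(\xi^{-(k-1)} + \xi^{-(k+1)}) = \xi^{-k}$, i.e. $\gamma(\xi + \xi^{-1}) = 1$ — true by the definition of $\xi$ as a root of $\gamma t^2 - t + \gamma$. I would also note explicitly that $2\gamma < 1$ is where the hypothesis $\gamma \in (0,1/2)$ is used in the strict inequality that closes the contradiction, and that $\xi$ being real and $>1$ (so that the decay $\xi^{-k}$ is genuine) follows from $1 - 4\gamma^2 > 0$. Since the statement is quoted verbatim from \cite{MT}, an acceptable alternative is simply to cite that reference, but the self-contained maximum-principle proof above is clean enough to include.
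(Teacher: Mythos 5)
The paper cites this lemma verbatim from \cite{MT} (Lemma 9.4) and does not reproduce a proof, so there is no in-paper argument to compare against. Your discrete maximum-principle proof is correct and self-contained: the identity $\gamma(\xi+\xi^{-1})=1$ makes $z_k = x_0\xi^{-k}+x_N\xi^{-(N-k)}$ an exact solution of the recurrence, the boundary values satisfy $w_0,w_N\le 0$, the interior inequality $w_k\le\gamma(w_{k-1}+w_{k+1})$ holds by subtraction, and the hypothesis $2\gamma<1$ yields the strict contradiction at an interior positive maximum; the only small point worth making explicit is that $\xi$ is real and $>1$ precisely because $\gamma<1/2$, which you already note.
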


\begin{rem}
If $\gamma =\gamma \left( c\right) =\left( e^{c}+e^{-c}\right) ^{-1}$, then
we can check $\xi =e^{c}$ and the above inequality becomes the exponential
decay estimate%
\begin{equation}
x_{k}\leq x_{0}e^{-ck}+x_{N}e^{-c\left( N-k\right) }\text{.}
\label{L2-exp-decay}
\end{equation}%
for $1\leq k\leq N-1$.
\end{rem}

Based on the above lemma, we first prove the following.

\begin{prop}\label{prop:three-interval}
For any $0<\upsilon <1$, there exists $N_{0}=N_{0}\left( \upsilon \right) $
depending on $\upsilon $, such that for all $j>N_{0}$, on any $3$ sequential
cylinders $Z_{I},Z_{II}$ and $Z_{III}$ in $\left[ -\tau \left( \varepsilon
\right) ,\tau (\varepsilon )\right] \times S^{1}$ we have%
\begin{equation}
\left\Vert d\xi _{j}\right\Vert _{L^{2}\left( Z_{II}\right) }\leq \gamma
\left( 4\pi \upsilon \right) \cdot \left( \left\Vert d\xi _{j}\right\Vert
_{L^{2}\left( Z_{I}\right) }+\left\Vert d\xi _{j}\right\Vert _{L^{2}\left(
Z_{III}\right) }\right)  \label{local-elliptic-estimate-g}
\end{equation}%
and on any $3$ sequential cylinders in $\left[ -R\left( \varepsilon \right)
,R(\varepsilon )\right] \times S^{1}$ we have
\begin{equation}
\left\Vert \widetilde{\xi _{j}}\right\Vert _{L^{2}\left( Z_{II}\right) }\leq
\gamma \left( 4\pi \upsilon \right) \cdot \left( \left\Vert \widetilde{\xi
_{j}}\right\Vert _{L^{2}\left( Z_{I}\right) }+\left\Vert \widetilde{\xi _{j}}%
\right\Vert _{L^{2}\left( Z_{III}\right) }\right) .
\label{local-elliptic-estimate-h}
\end{equation}
\end{prop}
\begin{proof}
We prove $\left( \ref{local-elliptic-estimate-g}\right) $ by contradiction.
Suppose that for some sequence $\xi _{j}\rightarrow 0$ $\left( \ref%
{local-elliptic-estimate-g}\right) $ is violated on $3$ sequential cylinders
$Z_{I}^{j},Z_{II}^{j}$ and $Z_{III}^{j}$ in $\left[ -\tau \left( \varepsilon
\right) ,\tau (\varepsilon )\right] \times S^{1}$:
\begin{equation*}
\left\Vert d\xi _{j}\right\Vert _{L^{2}\left( Z_{II}^{j}\right) }>\gamma
\left( 4\pi \upsilon \right) \left( \left\Vert d\xi _{j}\right\Vert
_{L^{2}\left( Z_{I}^{j}\right) }+\left\Vert d\xi _{j}\right\Vert
_{L^{2}\left( Z_{III}^{j}\right) }\right) .
\end{equation*}%
On $Z_{I}^{j}\cup Z_{II}^{j}\cup Z_{III}^{j}$ consider the rescaled sequence%
\begin{equation*}
\widehat{\xi _{j}}=\xi _{j}/\left\Vert \xi _{j}\right\Vert _{L^{\infty
}\left( Z_{I}^{j}\cup Z_{II}^{j}\cup Z_{III}^{j}\right) }\text{,}
\end{equation*}%
where the denominator $\left\Vert \xi _{j}\right\Vert _{L^{\infty }\left(
Z_{I}^{j}\cup Z_{II}^{j}\cup Z_{III}^{j}\right) }$ is never $0$, otherwise $%
\xi _{j}\equiv 0$ on $Z_{I}^{j}\cup Z_{II}^{j}\cup Z_{III}^{j}$,
contradicting our assumption. Using $\left( \ref{eq:simplifiedqxi}\right) $
we have
\begin{eqnarray*}
\left\vert \widehat{\xi _{j}}\right\vert _{L^{\infty }\left( Z_{I}^{j}\cup
Z_{II}^{j}\cup Z_{III}^{j}\right) } &=&1, \\
\left\Vert d\widehat{\xi _{j}}\right\Vert _{L^{2}\left( Z_{II}^{j}\right) }
&>&\gamma \left( 4\pi \upsilon \right) \cdot \left( \left\Vert d\widehat{\xi
_{j}}\right\Vert _{L^{2}\left( Z_{I}^{j}\right) }+\left\Vert d\widehat{\xi
_{j}}\right\Vert _{L^{2}\left( Z_{III}^{j}\right) }\right) , \\
\frac{D\widehat{\xi _{j}}}{\partial \tau }+\widehat{J}^{\varepsilon
_{j}}\left( \tau ,t\right) \frac{\partial \widehat{\xi _{j}}}{\partial t}%
+A_{j}(u_{\textrm{glue}}^{\varepsilon _{j}}\left( \tau ,t\right) )\cdot \widehat{\xi
_{j}} &=&\left( B_{j}\left( \xi _{j}\right) +E_{\varepsilon _{j}}\right)
/\left\Vert \xi _{j}\right\Vert _{L^{\infty }\left( Z_{I}^{j}\cup
Z_{II}^{j}\cup Z_{III}^{j}\right) }.
\end{eqnarray*}
Note $E_{\varepsilon _{j}}$ is before in  $\left( \ref{eq:simplifiedqxi}\right) $.

We need the following lemma

\begin{lem}
For $\left( \tau ,t\right) \in \lbrack -\tau (\varepsilon _{j}),\tau
(\varepsilon _{j})]\times S^{1}$, we have
\begin{equation}
|E_{\varepsilon _{j}}\left( {\tau },{t}\right) |\leq C|\xi _{j}(\tau
,t)|\left( \left\vert \frac{\partial u_{\textrm{glue}}^{\varepsilon _{j}}}{\partial t}%
(\tau ,t)\right\vert +|X_{K_{\varepsilon _{j}}}|\right) .  \label{E-error}
\end{equation}%
Especially for $Z_{I}^{j}\cup Z_{II}^{j}\cup Z_{III}^{j}\subset \left[ -\tau
\left( \varepsilon _{j}\right) ,\tau \left( \varepsilon _{j}\right) \right]
\times S^{1}$, we have
\begin{equation*}
\lim_{j\rightarrow \infty }E_{\varepsilon _{j}}\left( \tau ,t\right)
/\left\Vert \xi _{j}\right\Vert _{L^{\infty }\left( Z_{I}^{j}\cup
Z_{II}^{j}\cup Z_{III}^{j}\right) }=0.
\end{equation*}
\end{lem}

\begin{proof}
We recall that $u_{\textrm{glue}}^{\varepsilon _{j}}$ is a genuine solution for
\begin{equation}
{\frac{\partial u}{\partial \tau }}+J^{\varepsilon _{j}}\left( {\frac{%
\partial u}{\partial t}}-X_{K^{\varepsilon _{j}}}(u)\right) =0.
\label{eq:perturbedCRKe}
\end{equation}%
Therefore using Lemma \ref{lem:P(x,v)}, we obtain
\begin{equation*}
|E_{\varepsilon _{j}}\left( {\tau },{t}\right) |\leq C|\widetilde{J}%
^{\varepsilon _{j}}-J^{\varepsilon _{j}}|\left\vert \frac{\partial
u_{\textrm{glue}}^{\varepsilon _{j}}}{\partial t}\right\vert +C_{1}|(D_{1}\exp
_{u_{\textrm{glue}}^{\varepsilon _{j}}})^{-1}(\xi _{\varepsilon
_{j}})-id||X_{K_{\varepsilon _{j}}}(u)|.
\end{equation*}%
for some uniform constants $C$ and $C_{1}$. Now we
obtain $\left( \ref{E-error}\right) $ from \eqref{eq:inequality-Je}.

Next we prove the second statement.
By the definition \eqref{eq:Kepsilon} of $K_{\varepsilon _{j}}$,
\begin{equation*}
K_{\varepsilon _{j}}\left( \tau ,t,x\right) =\kappa _{0}^{\varepsilon
_{j}}\left( \tau \right) \cdot \varepsilon _{j}f\left( x\right) \text{ for }%
\left\vert \tau \right\vert \leq \tau \left( \varepsilon _{j}\right) ,
\end{equation*}%
so $|X_{K_{\varepsilon _{j}}}|\rightarrow 0$ as $\varepsilon _{j}\rightarrow
0$. We can also rewrite the equation $\left( \ref{eq:simplifiedqxi}\right) $
as
$$
\frac{D\xi _{j}}{\partial \tau }+\widehat{J}^{\varepsilon
_{j}}\left( \tau ,t\right) \frac{\partial \xi _{j}}{\partial t} =
-A_{j}(u_{\textrm{glue}}^{\varepsilon _{j}}\left( \tau ,t\right) )\cdot \xi
_{j} +  B_{j}(\xi _{j}) +E_{\varepsilon _{j}}.
$$
We also have inequality $\left\vert \xi _{j}\right\vert
_{C^{0}}\leq C\varepsilon _{j}^{1/p}$ by the error estimate$\left( \ref%
{eq:error}\right) $ and Sobolev embedding $\left( \ref{e-Sobolev}\right) $.
Therefore $\xi _{j}:=E\left( u_{\textrm{app}}^{\varepsilon
_{j}},u_{\textrm{glue}}^{\varepsilon _{j}}\right) $ satisfies
\begin{equation}\label{eq:delbar=G}
\frac{D\xi _{j}}{\partial \tau }+\widehat{J}^{\varepsilon
_{j}}\left( \tau ,t\right) \frac{\partial \xi _{j}}{\partial t} = G(\tau,t)
\end{equation}
with a smooth function $G: = -A_{j}(u_{\textrm{glue}}^{\varepsilon _{j}}\left( \tau ,t\right) )\cdot \xi
_{j} +  B_{j}(\xi _{j}) +E_{\varepsilon _{j}}$ which satisfies
$$
|G(\tau,t)| \leq C (\varepsilon_j^{1/p} + \varepsilon_j^{2/p} + \varepsilon_j).
$$
Here the last term follows from the formula for $E_{\varepsilon_j}(\tau,t)$ given
after $\left( \ref{eq:simplifiedqxi}\right) $:  the formula gives rise to
$$
|E_{\varepsilon_j}(\tau,t)| \leq  |P(u_{\textrm{app}}^{\varepsilon },\xi_j)| \left( \text{\textrm ``error term''} +
\left|id - (D_1\exp(u_{\textrm{app}}^{\varepsilon },\xi_j))^{-1}\right|\cdot |X_{K^{\varepsilon_j}}(u_{\textrm{app}}^{\varepsilon }|\right)
$$
which is bounded by $C(\varepsilon^{1/p} + \varepsilon_j)$.

Then by the interior Schauder estimate of $\xi _{j}$ applied to the equation \eqref{eq:delbar=G}
on $\left[ \tau -%
\frac{1}{2},\tau +\frac{1}{2}\right] \times S^{1}\subset \left[ \tau -1,\tau
+1\right] \times S^{1}$ we derive $\left\vert {\frac{\partial }{\partial t}}%
\xi _{j}\left( \tau ,t\right) \right\vert \leq C\varepsilon _{j}^{1/p}$.
Thus for $\left\vert \tau \right\vert \leq \tau \left( \varepsilon
_{j}\right) $, by the triangle inequality we have
\begin{eqnarray*}
\left\vert \frac{\partial u_{\textrm{glue}}^{\varepsilon _{j}}}{\partial t}\left(
\tau ,t\right) \right\vert  &\leq &C\left( \left\vert \frac{\partial
u_{\textrm{app}}^{\varepsilon _{j}}}{\partial t}\left( \tau ,t\right) \right\vert
+\left\vert {\frac{\partial }{\partial t}}\xi _{j}\left( \tau ,t\right)
\right\vert \right)  \\
&\leq &C\left( \varepsilon _{j}+\varepsilon _{j}^{1/p}\right) \rightarrow 0%
\text{,}
\end{eqnarray*}%
where in the last inequality we have used the exponential decay of $%
\left\vert du_{\pm }^{\varepsilon _{j}}\right\vert $ for $\left\vert \tau
\right\vert \leq \tau \left( \varepsilon _{j}\right) $. Therefore
\begin{equation*}
E_{\varepsilon _{j}}\left( \tau ,t\right) /\left\Vert \xi _{j}\right\Vert
_{L^{\infty }\left( Z_{I}^{j}\cup Z_{II}^{j}\cup Z_{III}^{j}\right) }\leq
C\left( \left\vert \frac{\partial u_{\textrm{glue}}^{\varepsilon _{j}}}{\partial t}%
(\tau ,t)\right\vert +|X_{K_{\varepsilon _{j}}}|\right) \rightarrow 0
\end{equation*}%
as $j\rightarrow \infty $. The lemma follows.
\end{proof}

\begin{rem}
Here is the place where we need to use the exponential map around the
genuine solution $u_{\textrm{glue}}^{\varepsilon }$ instead of $u_{\textrm{app}}^{\varepsilon }
$. If we had used the latter, we would not have the estimate given in this
lemma. This is because $u_{\textrm{app}}^{\varepsilon }$ is only an approximate
solution of \eqref{eq:perturbedCRKe} whose error term is
\begin{equation*}
\overline{\partial }_{J,K^{\varepsilon _{j}}}(u_{\textrm{app}}^{\varepsilon _{j}})=%
\frac{\partial u_{\textrm{app}}^{\varepsilon _{j}}}{\partial \tau }+J^{\varepsilon
_{j}}\left( \frac{\partial u_{\textrm{app}}^{\varepsilon _{j}}}{\partial t}%
-X_{K^{\varepsilon _{j}}}(u_{\textrm{app}}^{\varepsilon _{j}})\right),
\end{equation*}%
which is not vanishing in general and will enter equations \eqref{eq:CR-in-psi}$\sim$\eqref{eq:simplifiedqxi},
when we express $u^{\varepsilon _{j}}=\exp
_{u_{\textrm{app}}^{\varepsilon _{j}}}\xi _{\textrm{app}}^{\varepsilon _{j}}$. Then it seems
much harder to get
\begin{equation*}
E_{\varepsilon _{j}}\left( \tau ,t\right) /\left\Vert \xi
_{\textrm{app}}^{\varepsilon _{j}}\right\Vert _{L^{\infty }\left( Z_{I}^{j}\cup
Z_{II}^{j}\cup Z_{III}^{j}\right) }\rightarrow 0,
\end{equation*}%
a key to apply the three-interval method in the following to derive the
desired exponential decay of $\xi _{\textrm{app}}^{\varepsilon _{j}}$.
This is the reason why we first replaced $u_{\textrm{app}}^{\varepsilon _{j}}$ by the
genuine solution $u_{\textrm{glue}}^{\varepsilon _{j}}$ in the exponential map $%
\left( \ref{eq:glue-xij}\right) $ in the beginning of our derivation,
where $u_{\textrm{glue}}^{\varepsilon }$  is better in $\left( \ref{eq:CR-in-psi}\right) $,
because the left side of the equation is $0$, not the above complicated error term.
\end{rem}

Using the lemma and \eqref{eq:Bjquadratic}, we obtain
\begin{equation*}
\frac{|B\left( u_{\textrm{glue}}^{\varepsilon _{j}}\left( \tau ,t\right) ,\xi
_{j}(\tau ,t)\right) +E_{\varepsilon _{j}}\left( \tau ,t\right) |}{\Vert \xi
_{j}\Vert _{L^{\infty }(Z_{I}^{j}\cup Z_{II}^{j}\cup Z_{III}^{j})}}\leq
C\left( |\xi _{j}|+\left\vert \frac{\partial u_{\textrm{glue}}^{\varepsilon _{j}}}{%
\partial t}\right\vert +|X_{K^{\varepsilon _{j}}}|\right) \rightarrow 0
\end{equation*}%
uniformly over $Z_{I}^{j}\cup Z_{II}^{j}\cup Z_{III}^{j}\subset \lbrack
-\left( \tau \left( \varepsilon _{j}\right) -h\left( \zeta _{j}\right)
\right) ,\tau (\varepsilon _{j})-h\left( \zeta _{j}\right) ]\times S^{1}$.

After possibly shifting $Z_{I}^{j}\cup Z_{II}^{j}\cup Z_{III}^{j}$ and
taking subsequence of $\widehat{\xi _{j}}$, we can assume $\widehat{\xi _{j}}
$ $C^{1}$-converges to $\widehat{\xi _{\infty }}$ on a \emph{fixed} $%
Z_{I}\cup Z_{II}\cup Z_{III}$ (this is guaranteed by $C^{0}$ convergence from
our adiabatic convergence definition, and elliptic estimate on a length $5$
cylinder containing $Z_{I}^{j}\cup Z_{II}^{j}\cup Z_{III}^{j}$), which
satisfies%
\begin{eqnarray*}
\left\vert \widehat{\xi _{\infty }}\right\vert _{L^{\infty }\left( Z_{I}\cup
Z_{II}\cup Z_{III}\right) } &=&1, \\
\left\Vert d\widehat{\xi _{\infty }}\right\Vert _{L^{2}\left( Z_{II}\right)
} &\geq &\gamma \left( 4\pi \upsilon \right) \cdot \left( \left\Vert d%
\widehat{\xi _{\infty }}\right\Vert _{L^{2}\left( Z_{I}\right) }+\left\Vert d%
\widehat{\xi _{\infty }}\right\Vert _{L^{2}\left( Z_{III}\right) }\right) ,
\\
\left( \frac{\partial }{\partial \tau }+J_{0}\frac{\partial }{\partial t}%
\right) \widehat{\xi _{\infty }} &=&0.
\end{eqnarray*}%
Then $\widehat{\xi _{\infty }}$ is a nonzero holomorphic function by the
first and third identity. We write $\widehat{\xi _{\infty }}\left( \tau
,t\right) $ in Fourier series
\begin{equation*}
\widehat{\xi _{\infty }}\left( \tau ,t\right) =\Sigma _{k=-\infty }^{\infty
}a_{k}e^{2\pi k\tau }e^{2\pi kit}\text{ with }\left\Vert \widehat{\xi
_{\infty }}\right\Vert _{L^{2}\left( Z_{I}\cup Z_{II}\cup Z_{III}\right)
}\leq 3,
\end{equation*}%
where the $a_{k}$'s are constant vectors in $\mathbb{C}^{n}$. We can
explicitly compute
\begin{equation}
\left\Vert d\widehat{\xi _{\infty }}\right\Vert _{L^{2}\left( \left[ a,b%
\right] \times S^{1}\right) }^{2}=\Sigma _{k=-\infty }^{\infty }4\pi
^{2}k^{2}\left\vert a_{k}\right\vert ^{2}\cdot \int_{a}^{b}e^{4\pi k\tau
}d\tau .  \label{energy}
\end{equation}%
Multiplying $\left( \ref{identity}\right) $ by $e^{4\pi kZ}$ and letting $%
c=4\pi k$ there, we have
\begin{equation}
\int_{Z}^{Z+1}e^{4\pi k\tau }d\tau =\gamma \left( 4\pi k\right) \left[
\int_{Z-1}^{Z}e^{4\pi k\tau }d\tau +\int_{Z+1}^{Z+2}e^{4\pi k\tau }d\tau %
\right] .  \label{identity-shift}
\end{equation}%
By $\left( \ref{energy}\right) ,\left( \ref{identity-shift}\right) $, and $%
\gamma \left( 4\pi k\right) \leq \gamma \left( 4\pi \right) $ for any
integer $k\neq 0$, we see%
\begin{equation*}
\left\Vert d\widehat{\xi _{\infty }}\right\Vert _{L^{2}\left( Z_{II}\right)
}\leq \gamma \left( 4\pi \right) \cdot \left( \left\Vert d\widehat{\xi
_{\infty }}\right\Vert _{L^{2}\left( Z_{I}\right) }+\left\Vert d\widehat{\xi
_{\infty }}\right\Vert _{L^{2}\left( Z_{III}\right) }\right) .
\end{equation*}%
This contradicts with
\begin{equation*}
\left\Vert d\widehat{\xi _{\infty }}\right\Vert _{L^{2}\left( Z_{II}\right)
}\geq \gamma \left( 4\pi \upsilon \right) \cdot \left( \left\Vert d\widehat{%
\xi _{\infty }}\right\Vert _{L^{2}\left( Z_{I}\right) }+\left\Vert d\widehat{%
\xi _{\infty }}\right\Vert _{L^{2}\left( Z_{III}\right) }\right)
\end{equation*}%
since $\gamma \left( 4\pi \right) <\gamma \left( 4\pi \upsilon \right) $.
The proof of $\left( \ref{local-elliptic-estimate-h}\right) $ is similar.
The crucial point is that $\widetilde{\xi _{j}}$ contains no $0$-mode, so
the rescaled sequence \ $\widehat{\widetilde{\xi _{j}}}:=$ $\widetilde{\xi
_{j}}/\left\Vert \widetilde{\xi _{j}}\right\Vert _{L^{\infty }\left(
Z_{I}^{j}\cup Z_{II}^{j}\cup Z_{III}^{j}\right) }$ and its limit \ $\widehat{%
\widetilde{\xi _{\infty }}}$ \ are in the higher mode space. Since\ $%
\widehat{\widetilde{\xi _{\infty }}}$ $\ $is holomorphic, writing it in
Fourier series $\widehat{\widetilde{\xi _{\infty }}}\left( \tau ,t\right)
=\Sigma _{k\neq 0}b_{k}e^{2\pi k\tau }e^{2\pi kit}$, we have%
\begin{equation*}
\left\Vert \widehat{\widetilde{\xi _{\infty }}}\right\Vert _{L^{2}\left( %
\left[ a,b\right] \times S^{1}\right) }^{2}=\Sigma _{k\neq 0}\left\vert
b_{k}\right\vert ^{2}\cdot \int_{a}^{b}e^{4\pi \tau }d\tau .
\end{equation*}%
This is similar to $\left( \ref{energy}\right) $, and the remaining steps
are the same. We omit the details. This finishes the proof of Proposition \ref{prop:three-interval}.
\end{proof}

Combining the above Lemma and $\left( \ref{L2-exp-decay}\right) $ we have

\begin{cor}
For any $0<\upsilon <1$, there exists $N_{0}=N_{0}\left( \upsilon \right) $
depending on $\upsilon$, such that for all $j>N_{0}$ and $\left[ \tau,\tau+1%
\right] \subset \left[ -\tau \left( \varepsilon _{j}\right) ,\tau \left(
\varepsilon _{j}\right) \right] $, we have
\begin{eqnarray*}
&{}& \int_{\left[ \tau,\tau+1\right] \times S^{1}}\left\vert d\xi
_{j}\right\vert ^{2} \, dt\, d\tau \\
&{}& \quad \leq e^{-4\pi \upsilon \left( \tau \left( \varepsilon _{j}\right)
-\left\vert \tau \right\vert \right) }\left[ \int_{\left[ -\tau \left(
\varepsilon _{j}\right) -1,-\tau \left( \varepsilon _{j}\right) \right]
\times S^{1}}\left\vert d\xi _{j}\right\vert ^{2}+\int_{\left[ \tau \left(
\varepsilon _{j}\right) ,\tau \left( \varepsilon _{j}\right) +1\right]
\times S^{1}}\left\vert d\xi _{j}\right\vert ^{2}\right] ,
\end{eqnarray*}
and for $\left[ \tau,\tau+1\right] \subset \left[ -R \left( \varepsilon
_{j}\right) ,R \left( \varepsilon _{j}\right) \right] $, we have
\begin{eqnarray*}
&{}& \int_{\left[ \tau,\tau+1\right] \times S^{1}}\left\vert \widetilde\xi
_{j}\right\vert ^{2} \, dt\, d\tau \\
&{}& \quad \leq e^{-4\pi \upsilon \left( R \left( \varepsilon _{j}\right)
-\left\vert \tau \right\vert \right) }\left[ \int_{\left[ -R \left(
\varepsilon _{j}\right) -1,-R \left( \varepsilon _{j}\right) \right] \times
S^{1}}\left\vert \widetilde\xi _{j}\right\vert ^{2}+\int_{\left[ R \left(
\varepsilon _{j}\right) ,R \left( \varepsilon _{j}\right) +1\right] \times
S^{1}}\left\vert \widetilde\xi _{j}\right\vert ^{2}\right] .
\end{eqnarray*}
\end{cor}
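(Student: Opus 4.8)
The plan is to pass from the pointwise differential inequality \eqref{eq:simplifiedqxi} (and its renormalized form \eqref{eq:renormalizedeqxi}) to a local elliptic $L^2$ estimate on three consecutive unit cylinders, and then invoke the elementary three-interval lemma (\cite{MT} Lemma 9.4, quoted just above) to obtain the claimed exponential decay. Concretely, for the first inequality I would work on the transition region $\pm[R(\varepsilon_j),\tau(\varepsilon_j)]\times S^1$ where \eqref{eq:simplifiedqxi} holds with right-hand side $B_j+E_{\varepsilon_j}$, and for the second inequality I would work on $[-R(\varepsilon_j),R(\varepsilon_j)]\times S^1$ after renormalizing the domain $(\overline\tau,\overline t)=(\varepsilon_j\tau,\varepsilon_j t)$, where the equation becomes \eqref{eq:renormalizedeqxi} and the operator is the plain Cauchy--Riemann operator $\frac{\partial}{\partial\tau}+J_0\frac{\partial}{\partial t}$ plus terms of order $\varepsilon_j$. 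The key analytic input is the previous Proposition establishing the local estimate $\|d\xi_j\|_{L^2(Z_{II})}\le\gamma(4\pi\upsilon)(\|d\xi_j\|_{L^2(Z_I)}+\|d\xi_j\|_{L^2(Z_{III})})$ for all sufficiently large $j$, together with the analogous statement for the higher mode $\widetilde\xi_j$, where one crucially uses that $\widetilde\xi_j$ carries no zero Fourier mode so that the model holomorphic functions have only $|k|\ge 1$ and the constant $\gamma(4\pi)<\gamma(4\pi\upsilon)$ gives the contradiction.

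The argument proceeds as follows. First, partition the relevant interval $[-\tau(\varepsilon_j),\tau(\varepsilon_j)]$ (resp. $[-R(\varepsilon_j),R(\varepsilon_j)]$) into integer unit subintervals, indexed $k=0,1,\dots,N$ with $N=N(\varepsilon_j)$, and set $x_k=\|d\xi_j\|_{L^2([\tau_k,\tau_k+1]\times S^1)}$ (resp. $x_k=\|\widetilde\xi_j\|_{L^2}$ on the same cylinders). Second, the previous Proposition says precisely that $x_k\le\gamma(4\pi\upsilon)(x_{k-1}+x_{k+1})$ for all interior $k$, once $j>N_0(\upsilon)$; this is the hypothesis of \cite{MT} Lemma 9.4 with $\gamma=\gamma(4\pi\upsilon)$. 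Third, apply that lemma and the identification $\xi=e^c\iff\gamma=\gamma(c)$ recorded in the remark after the lemma: with $c=4\pi\upsilon$ we get $\xi=e^{4\pi\upsilon}$, hence \eqref{L2-exp-decay} gives $x_k\le x_0 e^{-4\pi\upsilon k}+x_N e^{-4\pi\upsilon(N-k)}$. Translating the index $k$ back to the variable $\tau$ (so that $k$ counts units of distance from the left endpoint $-\tau(\varepsilon_j)$, resp. $-R(\varepsilon_j)$), and noting that the distance from a cylinder $[\tau,\tau+1]$ to the \emph{nearer} endpoint is $\tau(\varepsilon_j)-|\tau|$ (resp. $R(\varepsilon_j)-|\tau|$) up to an additive constant absorbed into the bound, yields
\begin{equation*}
\int_{[\tau,\tau+1]\times S^1}|d\xi_j|^2\le e^{-4\pi\upsilon(\tau(\varepsilon_j)-|\tau|)}\Big[\int_{[-\tau(\varepsilon_j)-1,-\tau(\varepsilon_j)]\times S^1}|d\xi_j|^2+\int_{[\tau(\varepsilon_j),\tau(\varepsilon_j)+1]\times S^1}|d\xi_j|^2\Big],
\end{equation*}
and the analogous inequality for $\widetilde\xi_j$ on $[-R(\varepsilon_j),R(\varepsilon_j)]$, which is exactly the Corollary.

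The only real bookkeeping point—which I would treat as the main (mild) obstacle—is the passage from the \emph{three-consecutive-cylinders} form of the Proposition to the \emph{recursion hypothesis} of Lemma 9.4 uniformly over \emph{all} interior $k$: one must check that the single constant $\gamma(4\pi\upsilon)$ and the single threshold $N_0(\upsilon)$ work simultaneously for every choice of three consecutive cylinders inside the interval, which is exactly what the Proposition asserts (its proof was by a contradiction/compactness argument that is insensitive to the location of the cylinders). A second minor point is keeping track of the fact that when squaring one should state the decay either for $\|d\xi_j\|_{L^2}$ or for $\|d\xi_j\|_{L^2}^2$ consistently; since the $x_k$ are the $L^2$ norms and \eqref{L2-exp-decay} is linear in $x_k$, squaring it produces the displayed inequality with the same exponential rate $4\pi\upsilon$ (the cross term only helps), so no loss occurs. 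Everything else is a direct substitution into the already-proved Proposition and the already-quoted elementary lemma, so the Corollary follows immediately.
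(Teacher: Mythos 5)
Your overall route is exactly the paper's: the paper gives no separate argument for this Corollary beyond the single clause ``Combining the above Lemma and \eqref{L2-exp-decay}'', so partitioning the interval into unit cylinders, reading the preceding Proposition as the hypothesis $x_k\le\gamma(4\pi\upsilon)(x_{k-1}+x_{k+1})$ of Lemma 9.4 of \cite{MT}, applying \eqref{L2-exp-decay} with $c=4\pi\upsilon$, and translating the index back to $\tau$ is the intended (and only) content of the proof.

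The one place where your reasoning is slightly off is the ``second minor point'' about squaring. You take the $x_k$ of Lemma 9.4 to be the \emph{unsquared} $L^2$ norms $\|d\xi_j\|_{L^2}$, apply the lemma, and then square the resulting linear inequality, claiming ``the cross term only helps'' and ``no loss occurs''. That is not quite right: squaring $x_k\le x_0 e^{-ck}+x_N e^{-c(N-k)}$ produces $x_k^2\le x_0^2e^{-2ck}+x_N^2e^{-2c(N-k)}+2x_0x_Ne^{-cN}$, and converting this to the form $e^{-c\min(k,N-k)}(x_0^2+x_N^2)$ either forces a multiplicative factor $2$ (coming from the cross term bounded via $2x_0x_N\le x_0^2+x_N^2$) or requires $N$ not too small; neither quite matches the Corollary as stated. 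The clean fix is to apply Lemma 9.4 \emph{directly} to the squared quantities $y_k:=\int_{[\tau_k,\tau_k+1]\times S^1}|d\xi_j|^2$. The three-interval recursion holds at the level of squared norms — the Fourier computation \eqref{energy}--\eqref{identity-shift} in the Proposition's proof is literally an identity for $\|d\widehat\xi_\infty\|_{L^2}^2$, since each mode contributes with factor $\gamma(4\pi|k|)\le\gamma(4\pi)$ to the squared energy, and the contradiction/compactness argument is insensitive to whether one phrases it for $x_k$ or $x_k^2$ — so one should simply read the Proposition as $y_k\le\gamma(4\pi\upsilon)(y_{k-1}+y_{k+1})$. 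Then \eqref{L2-exp-decay} gives $y_k\le y_0e^{-4\pi\upsilon k}+y_Ne^{-4\pi\upsilon(N-k)}\le e^{-4\pi\upsilon\min(k,N-k)}(y_0+y_N)$ with no extraneous constant, which is the Corollary verbatim.
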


From these results and standard elliptic estimate on the cylinder $\left[
\tau -\frac{1}{2},\tau +\frac{1}{2}\right] \times S^{1}$, we obtain the
following pointwise exponential decay estimate of $\xi _{j}$.

\begin{cor}
For any $0<\upsilon <1$, there exists $N_{0}=N_{0}\left( \upsilon \right) $
depending on $\upsilon$, such that for all $j>N_{0}$ and $\tau \in \left[
-\tau \left( \varepsilon _{j}\right) ,\tau \left( \varepsilon _{j}\right) %
\right] $, we have%
\begin{equation*}
\left\vert \nabla \xi _{j}\right\vert \leq Ce^{-2\pi \upsilon \left( \tau
\left( \varepsilon _{j}\right) -\left\vert \tau \right\vert \right) }\left(
\left\Vert d\xi _{j}\right\Vert _{L^{2}\left( \left[ -\tau \left(
\varepsilon _{j}\right) -1,-\tau \left( \varepsilon _{j}\right) \right]
\times S^{1}\right) }+\left\Vert d\xi _{j}\right\Vert _{L^{2}\left( \left[
\tau \left( \varepsilon _{j}\right) ,\tau \left( \varepsilon _{j}\right) +1%
\right] \times S^{1}\right) }\right) ,
\end{equation*}
and for $\tau \in \left[ -R \left( \varepsilon _{j}\right) ,R \left(
\varepsilon _{j}\right) \right] $, we have
\begin{equation*}
\left\vert \widetilde{\xi _{j}}\right\vert \leq Ce^{-2\pi \upsilon \left(
R\left( \varepsilon _{j}\right) -\left\vert \tau \right\vert \right) }\left(
\left\Vert \widetilde{\xi _{j}}\right\Vert _{L^{2}\left( \left[ -R\left(
\varepsilon _{j}\right) -1,-R\left( \varepsilon _{j}\right) \right] \times
S^{1}\right) }+\left\Vert \widetilde{\xi _{j}}\right\Vert _{L^{2}\left( %
\left[ R\left( \varepsilon _{j}\right) ,R\left( \varepsilon _{j}\right) +1%
\right] \times S^{1}\right) }\right) .
\end{equation*}%
The constant $\upsilon $ can be made arbitrarily close to $1$.
\end{cor}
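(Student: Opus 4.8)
The plan is to upgrade the $L^{2}$ three-interval decay just established (the preceding corollary) to a pointwise bound by the \emph{standard interior elliptic estimate} on the unit cylinders $[\tau _{0}-\tfrac12,\tau _{0}+\tfrac12]\times S^{1}\subset [\tau _{0}-1,\tau _{0}+1]\times S^{1}=:Z_{\tau _{0}}$. Fix $\tau _{0}$ in the relevant interval and read off from \eqref{eq:simplifiedqxi} (resp.\ \eqref{eq:decouple-Floer} for the higher mode) that on $Z_{\tau _{0}}$ the field $\xi _{j}$ satisfies a perturbed Cauchy--Riemann equation $\overline{\partial }_{\widehat{J}^{\varepsilon _{j}}}\xi _{j}=g_{j}$, where $g_{j}$ gathers the zeroth-order term $A_{j}\xi _{j}$, the quadratic term $B_{j}$ and the error $E_{\varepsilon _{j}}$. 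Using the quadratic estimate \eqref{eq:Bjquadratic}, the bound on $E_{\varepsilon _{j}}$ from the preceding lemma, and the uniform $C^{0}$-smallness of $\xi _{j}$ coming from the adiabatic convergence together with the derivative bound \eqref{eq:du<C}, one gets $|g_{j}|\leq C\sigma _{j}\left( |\xi _{j}|+|\nabla \xi _{j}|\right) $ for some $\sigma _{j}\to 0$; in particular $g_{j}$ is of strictly lower order than the leading holomorphic part.

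Next I would decompose $\xi _{j}=\xi _{j}^{h}+\xi _{j}^{e}$ on $Z_{\tau _{0}}$, where $\xi _{j}^{e}$ solves $\overline{\partial }_{\widehat{J}^{\varepsilon _{j}}}\xi _{j}^{e}=g_{j}$ with $\Vert \xi _{j}^{e}\Vert _{W^{1,p}(Z_{\tau _{0}})}\leq C\Vert g_{j}\Vert _{L^{p}(Z_{\tau _{0}})}$ (one further elliptic step and Sobolev embedding control $\Vert \xi _{j}^{e}\Vert _{C^{1}}$ on the half-cylinder by $\Vert g_{j}\Vert _{W^{1,p}}$), and $\xi _{j}^{h}:=\xi _{j}-\xi _{j}^{e}$ is $\widehat{J}^{\varepsilon _{j}}$-holomorphic. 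Since $\widehat{J}^{\varepsilon _{j}}$ is uniformly $C^{1}$-close to $J_{0}$, the Cauchy/mean-value estimate for holomorphic-type equations on a fixed unit cylinder (with a uniform constant) gives $|\nabla \xi _{j}^{h}(\tau _{0},t)|\leq C\Vert d\xi _{j}^{h}\Vert _{L^{2}(Z_{\tau _{0}})}\leq C\left( \Vert d\xi _{j}\Vert _{L^{2}(Z_{\tau _{0}})}+\Vert g_{j}\Vert _{L^{p}(Z_{\tau _{0}})}\right) $. Adding the contribution of $\xi _{j}^{e}$ and feeding back $|g_{j}|\leq C\sigma _{j}(|\xi _{j}|+|\nabla \xi _{j}|)$ --- absorbing the $\sigma _{j}|\nabla \xi _{j}|$ term for $j$ large and bounding the remaining pieces by interpolation between the $C^{0}$-bound \eqref{eq:du<C} and the $L^{2}$-decay of $d\xi _{j}$ --- one arrives at $|\nabla \xi _{j}(\tau _{0},t)|\leq C\Vert d\xi _{j}\Vert _{L^{2}(Z_{\tau _{0}})}$ up to genuinely exponentially small terms. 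Writing $Z^{-}=[-\tau (\varepsilon _{j})-1,-\tau (\varepsilon _{j})]\times S^{1}$ and $Z^{+}=[\tau (\varepsilon _{j}),\tau (\varepsilon _{j})+1]\times S^{1}$, telescoping the three-interval inequality \eqref{local-elliptic-estimate-g} over the unit cylinders covering $Z_{\tau _{0}}$ and taking square roots then yields
\[
\Vert d\xi _{j}\Vert _{L^{2}(Z_{\tau _{0}})}\leq Ce^{-2\pi \upsilon (\tau (\varepsilon _{j})-|\tau _{0}|)}\big( \Vert d\xi _{j}\Vert _{L^{2}(Z^{-})}+\Vert d\xi _{j}\Vert _{L^{2}(Z^{+})}\big),
\]
the exponent being exactly half that in the squared $L^{2}$ estimate because we pass from an $L^{2}$-norm to a pointwise value; this is the first inequality. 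The second, for $\widetilde{\xi _{j}}$ on $[-R(\varepsilon _{j}),R(\varepsilon _{j})]$, is handled identically but more cleanly: $\widetilde{\xi _{j}}$ carries no $0$-mode, so the Poincar\'e inequality on $S^{1}$ gives $\Vert \widetilde{\xi _{j}}\Vert _{L^{2}(\{\tau \}\times S^{1})}\leq C\Vert d\widetilde{\xi _{j}}\Vert _{L^{2}(\{\tau \}\times S^{1})}$, so the elliptic estimate bounds $|\widetilde{\xi _{j}}(\tau _{0},t)|$ directly by $\Vert \widetilde{\xi _{j}}\Vert _{L^{2}(Z_{\tau _{0}})}$ plus lower order, and \eqref{local-elliptic-estimate-h} finishes it. The freedom to take $\upsilon $ arbitrarily close to $1$ is inherited verbatim from the preceding corollary.

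The step I expect to require the most care is controlling $g_{j}$ so that it does not spoil the exponential rate: one must verify that the contributions of $B_{j}$, $E_{\varepsilon _{j}}$ and of the correction $(\widehat{J}^{\varepsilon _{j}}-J_{0})\partial _{t}\xi _{j}$ are each bounded, on a unit neighborhood of $\tau _{0}$, by $o(1)$ times $\Vert d\xi _{j}\Vert _{L^{2}(Z_{\tau _{0}})}$ together with genuinely exponentially small quantities, so that the leading behavior is carried by $\xi _{j}^{h}$ alone. This is precisely where the error Lemma enters --- and hence where it matters that in \eqref{eq:glue-xij} we expanded around the \emph{genuine} solution $u_{glue}^{\varepsilon _{j}}$ rather than the approximate one, since only then does $E_{\varepsilon _{j}}$ acquire the factor $|\xi _{j}|$. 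If a residual circularity remains after the absorption it is closed by a single bootstrap iteration: bound $\Vert g_{j}\Vert _{W^{1,p}(Z_{\tau _{0}})}$ by $C\sigma _{j}\big( \Vert \xi _{j}\Vert _{L^{p}(Z_{\tau _{0}})}+\Vert \nabla \xi _{j}\Vert _{L^{p}(Z_{\tau _{0}})}\big) $, interpolate $\Vert \nabla \xi _{j}\Vert _{L^{p}}$ between the uniform sup-bound and $\Vert d\xi _{j}\Vert _{L^{2}}$, and re-insert into the elliptic estimate.
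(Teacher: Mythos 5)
Your argument is correct and is exactly what the paper means by its one-line appeal to the ``standard elliptic estimate on the cylinder $[\tau-\tfrac12,\tau+\tfrac12]\times S^1$'' placed just before the corollary: an interior elliptic bootstrap on a fixed-size cylinder upgrades the $L^2$ three-interval decay of $d\xi_j$ (resp.\ $\widetilde{\xi_j}$, using the $S^1$-Poincar\'e inequality for the mean-zero part) to a pointwise bound, while the lower-order contributions from $A_j$, $B_j$, $E_{\varepsilon_j}$ and $\widehat{J}^{\varepsilon_j}-J_0$ are absorbed much as you describe, the last point being precisely where the expansion around the genuine solution $u_{glue}^{\varepsilon_j}$ matters. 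One cosmetic slip: the halving of the decay exponent from $4\pi\upsilon$ to $2\pi\upsilon$ is simply the effect of taking a square root of the squared $L^{2}$-inequality in the preceding corollary, not of passing from an $L^2$-norm to a pointwise value, which costs only a uniform constant.
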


%
%
%

We fix a $\upsilon $ in the above lemma, such that $\frac{p-1}{\delta }%
\upsilon \geq 1$ . This is always possible since $\frac{p-1}{\delta }>1$.

\subsection{Exponential estimates for the gradient}

In this subsection, we will upgrade the $L^2$ and pointwise exponential estimates proved in the previous section to
that of the norm $\| \cdot \|_\varepsilon$.
We will assume $\delta _{j}\leq \varepsilon _{j}$ from now on. We estimate $\|\xi _{j}\|_\varepsilon$
by partitioning the cylinder $\R \times S^1$ into 4 regions.

\textbf{1. }We first study the Banach norm $\left\Vert \xi _{j}\right\Vert
_{\varepsilon _{j}}$ on the region $\Omega _{\pm \zeta _{0}}\left(
\varepsilon _{j}\right) $. From the exponential decay estimate we have for $%
\left\vert \tau \right\vert \leq \tau \left( \varepsilon _{j}\right)
-h\left( \zeta _{0}\right) $ that
\begin{eqnarray}
\left\vert \nabla \xi _{j}\left( \tau ,t\right) \right\vert &\leq &Ce^{2\pi
\upsilon \left[ \tau -\left( R\left( \varepsilon _{j}\right) +\frac{p-1}{%
\delta }S\left( \varepsilon _{j}\right) -h\left( \zeta _{0}\right) \right) %
\right] }\left\Vert \xi _{j}\right\Vert _{L^{2}\left( \left[ \tau \left(
\varepsilon _{0}\right) -h\left( \zeta _{0}\right) ,\tau \left( \varepsilon
_{0}\right) \right] \times S^{1}\right) }  \notag \\
&\leq &Ce^{2\pi \upsilon \left( \tau -R\left( \varepsilon _{j}\right)
\right) }\left( \frac{\varepsilon _{j}}{l}\right) ^{\frac{p-1}{\delta }%
\upsilon }\left( \frac{1}{\zeta _{0}}\right) ^{\upsilon }\delta _{j}.
\label{high-mode-exp-decay}
\end{eqnarray}%
Integrating $\nabla \xi _{j}\left( \tau ,t\right) $ from $R\left(
\varepsilon _{j}\right) $ to $\tau $, we have%
\begin{equation}
\left\vert \xi _{j}\left( \tau ,t\right) -\left( \xi _{j}\left(
R(\varepsilon _{j}),t\right) \right) _{0}\right\vert \leq Ce^{2\pi \upsilon
\left( \tau -R\left( \varepsilon _{j}\right) \right) }\left( \frac{%
\varepsilon _{j}}{l}\right) ^{\frac{p-1}{\delta }\upsilon }\left( \frac{1}{%
\zeta _{0}}\right) ^{\upsilon }\delta _{j}.  \label{difference-exp-decay}
\end{equation}%
Therefore by \eqref{high-mode-exp-decay} and \eqref
{difference-exp-decay} the estimate of $\left\Vert \xi _{j}\right\Vert
_{\varepsilon _{j}}$ restricted on the region%
\begin{equation*}
\left[ R\left( \varepsilon _{j}\right) ,R\left( \varepsilon _{j}\right) +%
\frac{p-1}{\delta }S\left( \varepsilon _{j}\right) -h\left( \zeta
_{0}\right) \right] \times S^{1}
\end{equation*}%
is
\begin{eqnarray*}
&&\int_{R(\varepsilon _{j})}^{R(\varepsilon _{j})+\frac{p-1}{\delta }S\left(
\varepsilon _{j}\right) -h\left( \zeta _{0}\right) }\int_{0}^{1}\left(
\left\vert \xi _{j}\left( \tau ,t\right) -\left( \xi _{j}\left( R\left(
\varepsilon _{j}\right) ,t\right) \right) _{0}\right\vert ^{p}+\left\vert
\nabla \xi _{j}\left( \tau ,t\right) \right\vert ^{p}\right) \\
&&\cdot e^{2\pi \delta \left( -\tau +R(\varepsilon _{j})+\frac{p-1}{\delta }%
S\left( \varepsilon _{j}\right) \right) }dtd\tau \\
&\leq &C\delta _{j}^{p}\left( \frac{1}{\zeta _{0}}\right) ^{\upsilon
p}\int_{0}^{\frac{p-1}{\delta }S\left( \varepsilon _{j}\right) }\left( \frac{%
\varepsilon _{j}}{l}\right) ^{\frac{p-1}{\delta }\upsilon p}e^{\upsilon
ps}\cdot e^{-\delta s}\left( \frac{l}{\varepsilon _{j}}\right) ^{p-1}d\tau \\
&{}& \qquad \text{ \ \ (where }s=2\pi \left( \tau -R\left( \varepsilon
_{j}\right) \right) \text{)} \\
&\leq &C\delta _{j}^{p}\left( \frac{1}{\zeta _{0}}\right) ^{\upsilon p}\cdot
\left( \frac{\varepsilon _{j}}{l}\right) ^{\frac{p-1}{\delta }\upsilon
p}\left( \frac{l}{\varepsilon _{j}}\right) ^{p-1}\cdot \frac{1}{\upsilon
p-\delta }e^{\left( \upsilon p-\delta \right) \cdot 2\pi \frac{p-1}{\delta }%
S\left( \varepsilon _{j}\right) } \\
&\leq &\frac{1}{\upsilon p-\delta }C\delta _{j}^{p}\left( \frac{1}{\zeta _{0}%
}\right) ^{\upsilon p}\cdot \left( \frac{\varepsilon _{j}}{l}\right) ^{\frac{%
p-1}{\delta }\upsilon p-\left( p-1\right) -\frac{\left( \upsilon p-\delta
\right) (p-1)}{\delta }} \\
&=&\frac{1}{\upsilon p-\delta }C\delta _{j}^{p}\left( \frac{1}{\zeta _{0}}%
\right) ^{\upsilon p}\cdot 1\rightarrow 0.
\end{eqnarray*}

\textbf{2. }We consider the regions
\begin{eqnarray*}
\Phi _{\zeta _{0}}\left( \varepsilon _{j}\right) &:&=[-\left( \tau \left(
\varepsilon _{j}\right) -h\left( \zeta _{0}\right) \right) ,\tau \left(
\varepsilon _{j}\right) -h\left( \zeta _{0}\right) ]\times S^{1}. \\
\Phi _{\zeta _{0}}^{+}\left( \varepsilon _{j}\right) &:&=[-\left( \tau
\left( \varepsilon _{j}\right) -h\left( \zeta _{0}\right) +1\right) ,\tau
\left( \varepsilon _{j}\right) -h\left( \zeta _{0}\right) +1]\times S^{1}.
\end{eqnarray*}%
From the interior Schauder estimate we have%
\begin{eqnarray}
&&\left\Vert \nabla \xi _{j}\right\Vert _{C^{0}\left( \Phi _{\zeta
_{0}}\left( \varepsilon _{j}\right) \right) }  \notag \\
&\leq &\left\Vert \xi _{j}\right\Vert _{C^{1,\alpha }\left( \Phi _{\zeta
_{0}}\left( \varepsilon _{j}\right) \right) }  \notag \\
&\leq &C\left( \left\Vert J^{\varepsilon _{j}}(\chi (\varepsilon _{j}\tau ))%
\frac{\partial \xi _{j}}{\partial t}+\varepsilon _{j}A_{j}\left( \chi
(\varepsilon _{j}\tau )\right) \cdot \xi _{j}\right\Vert _{C^{\alpha }\left(
\Phi _{\zeta _{0}}^{+}\left( \varepsilon _{j}\right) \right) }+\left\Vert
\xi _{j}\right\Vert _{C^{0}\left( \Phi _{\zeta _{0}}^{+}\left( \varepsilon
_{j}\right) \right) }\right)  \notag \\
&\leq &C\left( \left\Vert B_{j}\left( \chi (\varepsilon _{j}\tau ),\xi
_{j}\right) \right\Vert _{C^{0}\left( \Phi _{\zeta _{0}}^{+}\left(
\varepsilon _{j}\right) \right) }+\left\Vert E_{\varepsilon _{j}}\left( \tau
,t\right) \right\Vert _{C^{0}\left( \Phi _{\zeta _{0}}^{+}\left( \varepsilon
_{j}\right) \right) }+\left\Vert \xi _{j}\right\Vert _{C^{0}\left( \Phi
_{\zeta _{0}}^{+}\left( \varepsilon _{j}\right) \right) }\right)  \notag \\
&\leq &C\left( \left\Vert \nabla \xi _{j}\right\Vert _{C^{0}\left( \Phi
_{\zeta _{0}}^{+}\left( \varepsilon _{j}\right) \right) }\left\Vert \xi
_{j}\right\Vert _{C^{0}\left( \Phi _{\zeta _{0}}^{+}\left( \varepsilon
_{j}\right) \right) }+\left\Vert \xi _{j}\right\Vert _{C^{0}\left( \Phi
_{\zeta _{0}}^{+}\left( \varepsilon _{j}\right) \right) }^{2}\right.  \notag
\\
&&\left. +\left\Vert \xi _{j}\right\Vert _{C^{0}\left( \Phi _{\zeta
_{0}}^{+}\left( \varepsilon _{j}\right) \right) }\right) ,  \label{grad-xi}
\end{eqnarray}%
where the last inequality is because $B$ is quadratic, and $\left\Vert
E_{\varepsilon _{j}}\left( \tau ,t\right) \right\Vert _{C^{0}\left( \Phi
_{\zeta _{0}}^{+}\left( \varepsilon _{j}\right) \right) }\leq C\left\Vert
\xi _{j}\right\Vert _{C^{0}\left( \Phi _{\zeta _{0}}^{+}\left( \varepsilon
_{j}\right) \right) }$ from $\left( \ref{E-error}\right) $. Using the $%
C^{\infty }$ uniform convergence of $\xi _{j}$ outside $\Phi _{\zeta
_{0}}\left( \varepsilon _{j}\right) $, we have
\begin{equation*}
\left\Vert \nabla \xi _{j}\right\Vert _{C^{0}\left( \Phi _{\zeta
_{0}}^{+}\left( \varepsilon _{j}\right) \right) }\leq \left\Vert \nabla \xi
_{j}\right\Vert _{C^{0}\left( \Phi _{\zeta _{0}}\left( \varepsilon
_{j}\right) \right) }+\delta _{j}\text{.}
\end{equation*}%
Plugging in $\left( \ref{grad-xi}\right) $ and noting $\left\Vert \xi
_{j}\right\Vert _{C^{0}\left( \Phi _{\zeta _{0}}^{+}\left( \varepsilon
_{j}\right) \right) }\leq \delta _{j}$, we have
\begin{equation*}
\ \left( 1-C\delta _{j}\right) \left\Vert \nabla \xi _{j}\right\Vert
_{C^{0}\left( \Phi _{\zeta _{0}}\left( \varepsilon _{j}\right) \right) }\leq
C\left( \delta _{j}^{2}+\delta _{j}^{2}+\delta _{j}\right) ,
\end{equation*}%
so for $\delta _{j}<\min \left\{ \frac{1}{2C},1\right\} $ we have%
\begin{equation*}
\left\Vert \nabla \xi _{j}\right\Vert _{C^{0}\left( \Phi _{\zeta _{0}}\left(
\varepsilon _{j}\right) \right) }\leq 2C\left( 3\delta _{j}\right) =6C\delta
_{j}.
\end{equation*}

\textbf{3. }Then we study the equation of $\xi _{j}$ on the region%
\begin{equation*}
\Theta \left( \varepsilon _{j}\right) :=[-R(\varepsilon _{j}),R(\varepsilon
_{j})]\times S^{1}.
\end{equation*}%
For the higher mode $\widetilde{\xi _{j}}$, by $\left( \ref%
{high-mode-exp-decay}\right) $ we have
\begin{eqnarray}
\left\vert \nabla \widetilde{\xi _{j}}\left( \tau ,t\right) \right\vert
&=&\left\vert \nabla \xi _{j}\left( \tau ,t\right) -\int_{S^{1}}\nabla \xi
_{j}\left( \tau ,s\right) ds\right\vert   \notag \\
&=&\left\vert \int_{S^{1}}\left( \nabla \xi _{j}\left( \tau ,t\right)
-\nabla \xi _{j}\left( \tau ,s\right) \right) ds\right\vert \leq 2\sup_{t\in
S^{1}}\left\vert \nabla \xi _{j}\left( \tau ,t\right) \right\vert   \notag \\
&\leq &C\delta _{j}\left( \frac{1}{\zeta _{0}}\right) ^{\upsilon }\left(
\frac{\varepsilon _{j}}{l}\right) ^{\frac{p-1}{\delta }\upsilon }e^{2\pi
\upsilon \left( \tau -R\left( \varepsilon _{j}\right) \right) }\leq C\delta
_{j}\left( \frac{1}{\zeta _{0}}\right) ^{\upsilon }\varepsilon _{j}.
\label{eq:Schauderxibar}
\end{eqnarray}%
where in the last inequality we have used that $\frac{p-1}{\delta }\upsilon
>1$ and $\left\vert \tau \right\vert \leq R\left( \varepsilon _{j}\right) $.

We notice that on $\Theta \left( \varepsilon \right) =\left[ -R\left(
\varepsilon \right) ,R\left( \varepsilon \right) \right] \times S^{1}$, the
weighting function $\varepsilon ^{1-p}$ dominates the power weight $%
\left\Vert \cdot \right\Vert _{W_{\rho _{\varepsilon }}^{1,p}}$, up to
constant factor $\left( 2l\right) ^{\delta }$, because%
\begin{equation*}
\rho _{\varepsilon }\left( \tau \right) =\varepsilon ^{1-p+\delta }(1+|\tau
|)^{\delta }\leq \varepsilon ^{1-p+\delta }(2l/\varepsilon )^{\delta
}=\left( 2l\right) ^{\delta }\varepsilon ^{1-p}.
\end{equation*}%
So for higher mode $\widetilde{\xi _{j}}$ we obtain
\begin{eqnarray*}
\left\Vert \widetilde{\xi _{j}}\right\Vert _{W_{\beta _{\delta ,\varepsilon
_{j}}}^{1,p}\left( \Theta \left( \varepsilon _{j}\right) \right) }^{p} &\leq
&\int_{-R(\varepsilon _{j})}^{R(\varepsilon _{j})}\int_{0}^{1}\left(
\left\vert \widetilde{\xi _{j}}\right\vert ^{p}+|\nabla \widetilde{\xi _{j}}%
|^{p}\right) \left( 2l\right) ^{\delta }\varepsilon _{j}^{1-p}\,dt\,d\tau \\
&\leq &\left( 2l\right) ^{\delta }\int_{-R(\varepsilon _{j})}^{R\left(
\varepsilon _{j}\right) }2\left( C\delta _{j}\left( \frac{1}{\zeta _{0}}%
\right) ^{\upsilon }\varepsilon _{j}\right) ^{p}\varepsilon _{j}^{1-p}\,d\tau
\\
&=&2C^{p}\left( 2l\right) ^{\delta }\left( \frac{1}{\zeta _{0}}\right)
^{\upsilon p}\delta _{j}^{p}\rightarrow 0.
\end{eqnarray*}%
For the $0$-mode $\left( \xi _{j}\right) _{0}$, noticing that for $%
\left\vert \tau \right\vert \leq l/\varepsilon $ the error term is $0$, from
the equation of $\left( \xi \right) _{0}$ we have%
\begin{eqnarray}
\left\vert \nabla \left( \xi _{j}\right) _{0}\left( \tau \right) \right\vert
&=&\left\vert \frac{\partial }{\partial \tau }\left( \xi _{j}\right)
_{0}\right\vert =\left\vert \varepsilon _{j}A\left( \chi \left( \varepsilon
_{j}\tau \right) \right) \left( \xi _{j}\right) _{0}+\left( B\left( \xi
_{j}\right) \right) _{0}\right\vert  \notag \\
&\leq &C\left( \varepsilon _{j}\delta _{j}+\delta _{j}^{2}\right) \text{ (}%
\because B\left( \xi _{j}\right) \text{ is quadratic and }\left\vert \nabla
\xi _{j}\right\vert \leq 6C\delta _{j}\text{)}  \notag \\
&\leq &2C\varepsilon _{j}\delta _{j}\text{ ($\because $ }\delta _{j}\leq
\varepsilon _{j}\text{).}  \label{xi-0-control}
\end{eqnarray}%
Therefore
\begin{eqnarray*}
\left\Vert \left( \xi _{j}\right) _{0}\right\Vert _{W_{\varepsilon
_{j}}^{1,p}\left( \Theta \left( \varepsilon _{j}\right) \right) }^{p}
&=&\int_{-R(\varepsilon _{j})}^{R(\varepsilon _{j})}\left( \varepsilon
_{j}\left\vert \left( \xi _{j}\right) _{0}\right\vert ^{p}+\varepsilon
_{j}^{1-p}\,|\nabla \left( \xi _{j}\right) _{0}|^{p}\right) d\tau \\
&\leq &\int_{-R(\varepsilon _{j})}^{R(\varepsilon _{j})}\left( \varepsilon
_{j}\delta _{j}^{p}+\varepsilon _{j}^{1-p}\,\left( 2C\varepsilon _{j}\delta
_{j}\right) ^{p}\right) d\tau \\
&\leq &C\left( l\delta _{j}^{p}+R\left( \varepsilon _{j}\right) \varepsilon
_{j}\delta _{j}^{p}\right) =Cl\delta _{j}^{p}\rightarrow 0.
\end{eqnarray*}%
where the second inequality is by $\left( \ref{xi-0-control}\right) $.

By Sobolev embedding $\left\vert \left( \xi _{j}\left( \pm l/\varepsilon
_{j}\right) \right) _{0}\right\vert \leq C\left\Vert \left( \xi _{j}\right)
_{0}\right\Vert _{W_{\varepsilon _{j}}^{1,p}\left( \Theta \left( \varepsilon
_{j}\right) \right) }{}\leq C\delta _{j}.$

Combining these we have
\begin{eqnarray*}
\left. \left\Vert \xi _{j}\right\Vert _{\varepsilon _{j}}\right\vert
_{\Theta \left( \varepsilon _{j}\right) \cup \Omega _{\pm \zeta _{0}}\left(
\varepsilon _{j}\right) } &=&\left\Vert \widetilde{\xi _{j}}\right\Vert
_{W_{\beta _{\delta ,\varepsilon _{j}}}^{1,p}\left( \Theta \left(
\varepsilon _{j}\right) \cup \Omega _{\pm \zeta _{0}}\left( \varepsilon
_{j}\right) \right) }+\left\Vert \left( \xi _{j}\right) _{0}\right\Vert
_{W_{\varepsilon _{j}}^{1,p}\left( \Theta \left( \varepsilon _{j}\right)
\cup \Omega _{\pm \zeta _{0}}\left( \varepsilon _{j}\right) \right) }{} \\
&&+\left\vert \left( \xi _{j}\left( \pm l/\varepsilon _{j}\right) \right)
_{0}\right\vert \\
&\leq &C\delta _{j}\rightarrow 0,
\end{eqnarray*}%
where the constant $C$ is uniform for all $0<\varepsilon _{j}\leq
\varepsilon _{0}$ and $l\geq l_{0}$.

\textbf{4. }Outside the region $\Theta \left( \varepsilon _{j}\right) \cup
\Omega _{\pm \zeta _{0}}\left( \varepsilon _{j}\right) $, namely for $%
\left\vert \tau \right\vert >\tau \left( \varepsilon _{j}\right) -h\left(
\zeta _{0}\right) $, by the $C^{\infty }$ uniform convergence of $u_{j}$ to $%
u_{\pm }^{\varepsilon _{j}}$ it is easy to see
\begin{equation*}
\left. \left\Vert \xi _{j}\right\Vert _{\varepsilon _{j}}\right\vert
_{\Sigma _{\varepsilon _{j}}\backslash \left( \Theta \left( \varepsilon
_{j}\right) \cup \Omega _{\pm \zeta _{0}}\left( \varepsilon _{j}\right)
\right) }=\left. \left\Vert \xi _{j}\right\Vert _{\varepsilon
_{j}}\right\vert _{\Sigma _{\pm }\backslash U\pm \left( \zeta _{0}\right)
}\rightarrow 0.
\end{equation*}%
This finishes the proof
\begin{equation*}
\Vert \xi _{j}\Vert _{\varepsilon _{j}}\rightarrow 0
\end{equation*}
which is Proposition \ref{prop:norm-convergence2}. Hence is also finished
the proof of surjectivity, Proposition \ref{prop:surjective}.

\section{Variants of adiabatic gluing}

\label{sec:variants}

In this section, we specialize to the case
\begin{equation*}
H = 0
\end{equation*}
in our thimble-flow-thimble configuration by dividing the cases of closed
strings and of open strings. We discuss various cases to which similar
adiabatic gluing construction can be applied. Since the necessary analysis
will be small modifications of the current constructions, we will be brief
in our discussion.

\subsection{Pearl complex in the Hamiltonian case}

\label{subsec:ham-pearl}

Let $f:M\rightarrow {\mathbb{R}}$ be a Morse function. The the Floer
equation for the Hamiltonian $\varepsilon f$ is%
\begin{equation}  \label{Floer_e}
\frac{\partial u}{\partial \tau }+J(u)\left(\frac{\partial u}{\partial t}%
-\varepsilon X_{f}(u)\right)=0
\end{equation}%
for $u:{\mathbb{R\times }}S^{1}\rightarrow M$ with asymptote $u\left( \pm
\infty ,t\right) =z_{\pm }\left( t\right) $, where $z_{\pm }\left( t\right) $
are Hamiltonian $1$-periodic orbits of $\varepsilon f$. In the papers \cite%
{oh:dmj}, \cite{oh:adiabatic}, the first named author studied the adiabatic
degeneration of the moduli space of solutions satisfying the above equation
as $\varepsilon \rightarrow 0$. \cite{mundet-tian} studied similar adiabatic
degeneration for twisted holomorphic sections in Hamiltonian $S^{1}$%
-manifolds. The limiting moduli space as $\varepsilon =0$ consists of
sphere-flow-sphere configurations which Biran and Cornea call
\textquotedblleft pearl complexs\textquotedblright and is defined as the
following

\begin{defn}
The configuration
\begin{equation*}
u:=\left( p,\chi _{-\infty },u_{1,}\chi _{1},u_{2},\chi _{2},\cdots
,u_{k},\chi _{\infty },q\right)
\end{equation*}
is called a \emph{pearl configuration} if $u_{i}:S^{2}\cong {\mathbb{R}}%
\times S^{1}\rightarrow M$ are $J$-holomorphic spheres with marked points $%
u_{i}\left( o_{\pm }\right) $ where $o_{\pm }=\left\{ \pm \infty \right\}
\times S^{1}$, and each $\chi _{i}:\left[ -l_{i},l_{i}\right] \rightarrow M$
is a gradient segment of the Morse function $f$ connecting $u_{i}\left(
o_{+}\right) $ to $u_{i+1}\left( o_{-}\right) $, $\chi _{-\infty }$
connecting the critical point $p$ to $u_{1}\left( o_{-}\right) $ and $\chi
_{\infty }$ connecting $u_{k}\left( o_{+}\right) $ to the critical point $q$.
\end{defn}

We define the moduli space
\begin{eqnarray*}
\mathcal{M}_{2}\left( M,J;A_{i}\right) &=&\Big\{\left( u_{i},o_{\pm }\right)
|u_{i}:S^{2} \setminus \{o_\pm\} \cong {\mathbb{R}}\times S^{1}\rightarrow M,o_{\pm }\in S^{2}, \\
&{}&\quad \overline{\partial }_{J}u_{i}=0,\left[ u_{i}\right] =A_{i}\in
H_{2}\left( M,\mathbb{Z}\right) \Big\}\Big/{\mathbb{R}}\times S^{1}
\end{eqnarray*}%
where the last ${\mathbb{R}}\times S^{1}$ is the automorphism group ${%
\mathbb{R}}$-translation and $S^{1}$ rotation, and the evaluation maps%
\begin{equation*}
ev_{\pm }^{i}:\mathcal{M}_{2}\left( M,J;A_{i}\right) \rightarrow M,\text{ \ }%
u_{i}\rightarrow u_{i}\left( o_{\pm }\right) .
\end{equation*}%
Consider the map
\begin{eqnarray}
{}id\times \left( \Pi _{i=0}^{k-1}\phi _{f}^{2l_{i}}\circ ev_{+}^{i}\times
ev_{-}^{i+1}\right) \times id &:&  \label{eq:pearl} \\
W^{u}\left( p\right) \times \Pi _{i=1}^{k}\mathcal{M}_{2}\left(
M,J;A_{i}\right) \times W^{s}\left( p\right) &\rightarrow &\Pi
_{i=1}^{k+1}\left( M\times M\right) ,  \notag \\
\left( x,u_{1},\cdots u_{k},y\right) &\rightarrow &\left( x,\Pi
_{i=1}^{k-1}\left( \phi _{f}^{2l_{i}}u_{i}\left( o_{+}\right) ,u_{i+1}\left(
o_{-}\right) \right) ,y\right) ,  \notag
\end{eqnarray}%
where $\phi _{f}^{2l}$ is the time-2$l$ flow of the Morse function $f$, $%
W^{u}\left( p\right) $ and $W^{s}\left( q\right) $are unstable and stable
manifolds of $p$ and $q$ respectively.

\begin{defn}
The moduli space of pearl configuration with flow length vector $\vec{l}%
:=\left( l_{1},l_{2},\cdots l_{k}\right) $ connecting $p$ to $q$ with the $J$%
-holomorphic spheres $u_{i}\left( i=1,2,\cdots k\right) $ in homology class $%
\vec{A}=\left( A_{1},A_{2},\cdots A_{k}\right) $ is defined to be
\begin{equation*}
\mathcal{M}_{\text{\textrm{pearl}}}^{\vec{l}}\left( p,q;f;\vec{A}\right)
=\left( id\times \left( \Pi _{i=0}^{k-1}\phi _{f}^{2l_{i}}\circ
ev_{+}^{i}\times ev_{-}^{i+1}\right) \times id\right) ^{-1}\left( \Pi
_{i=1}^{k+1}\triangle \right) ,
\end{equation*}%
where $\triangle \subset M\times M$ is the diagonal.
\end{defn}

We can give the obvious $W^{1,p}$ Banach manifold $\mathcal{B}_{\text{%
\textrm{pearl}}}^{\vec{l}}\left( p,q\right) $ to host $\mathcal{M}_{\text{%
\textrm{pearl}}}^{\vec{l}}\left( p,q;f;\vec{A}\right) $, and a natural
section $e$ of the Banach bundle $\mathcal{L}_{\text{\textrm{pearl}}}^{\vec{l%
}}\left( p,q\right) \rightarrow $ $\mathcal{B}_{\text{\textrm{pearl}}}^{\vec{%
l}}\left( p,q\right) ,$
\begin{equation*}
e:\mathcal{B}_{\text{\textrm{pearl}}}^{\vec{l}}\left( p,q\right) \rightarrow
\mathcal{L}_{\text{\textrm{pearl}}}^{\vec{l}}\left( p,q\right) ,
\end{equation*}%
such that
\begin{equation*}
\mathcal{M}_{\text{\textrm{pearl}}}^{\vec{l}}\left( p,q;f;\vec{A}\right)
=e^{-1}\left( 0\right) .
\end{equation*}%
We let the linearization of $\ e$ at $u\in \mathcal{M}_{\text{\textrm{pearl}}%
}^{\vec{l}}\left( p,q;f;\vec{A}\right) $ to be $E\left( u\right) $ (See $%
\left( \ref{e}\right) $ and $\left( \ref{Eu}\right) $ for the definitions of
$e$ and $E\left( u\right) $).

We assume the pearl configuration $u:=\left( p,\chi _{-\infty },u_{1,}\chi
_{1},u_{2},\chi _{2},\cdots ,u_{k},\chi _{\infty },q\right) $ satisfies the
\textquotedblleft sphere-flow-sphere\textquotedblright\ transversality
defined as the following, which was also defined in \cite{biran-cor-1} for
the case with Lagrangian boundary condition.

\begin{defn}
\label{sfs-trans}The pearl configuration $u=\left( p,\chi _{-\infty
},u_{1,}\chi _{1},u_{2},\chi _{2},\cdots ,u_{k},\chi _{\infty },q\right) $
satisfies the \textquotedblleft sphere-flow-sphere\textquotedblright\
transversality if the map
\begin{equation*}
id\times \left( \Pi _{i=0}^{k-1}\phi _{f}^{2l_{i}}\circ ev_{+}^{i}\times
ev_{-}^{i+1}\right) \times id
\end{equation*}
in $\left( \ref{eq:pearl}\right) $ is transversal to the diagonal $\Pi
_{i=1}^{k+1}\triangle \subset \Pi _{i=1}^{k+1}\left( M\times M\right) $.
\end{defn}

The \textquotedblleft sphere-flow-sphere\textquotedblright\ transversality
is known to be achievable for generic $J$ and $f$ (see \cite{biran-cor-1}
for example). Small modifications of Proposition \ref{prop:dfdindex},
Corollary \ref{achieve-tft-trans} and Proposition \ref{family-tft-trans}
leads to the corresponding

\begin{prop}
For any $u\in \mathcal{M}_{\text{\textrm{pearl}}}^{\vec{l}}\left( p,q;f;\vec{%
A}\right) $, the operator $E_u$ is a Fredholm operator and
\begin{equation}
\func{Index}E_u=\mu _{f}\left( p\right) -\mu _{f}(q)+\Sigma
_{i=1}^{k}2c_{1}(A_{i}).
\end{equation}
where $\mu _{f}$ is the Morse index for critical points of $f$.
\end{prop}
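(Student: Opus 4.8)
The plan is to reduce this statement to the corresponding ``disc-flow-disc'' index computation of Proposition \ref{prop:dfdindex}, iterated over the $k$ pearls, exactly as the paragraph preceding the statement promises. First I would set up the Banach bundle picture: recall that $E(u)$ is the linearization at $u$ of the section $e$ whose zero set is $\mathcal{M}_{pearl}^{\vec l}(p,q;f;\vec A)$, acting on the tangent space $T_u\mathcal{B}_{pearl}^{\vec l}(p,q)$. This tangent space is the subspace of the direct product
\begin{equation*}
W^{1,p}(T_pW^u(p)) \times \prod_{i=1}^{k} W^{1,p}(u_i^*TM) \times \prod_{i=0}^{k}W^{1,p}(\chi_i^*TM) \times W^{1,p}(T_qW^s(q))
\end{equation*}
cut out by the matching conditions at the $2(k+1)$ ends of the gradient segments $\chi_{-\infty},\chi_1,\dots,\chi_k,\chi_\infty$. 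As in the proof of Proposition \ref{prop:dfdindex}, $E(u)$ acts as $D_{u_i}\overline\partial_{(K,J)}$ on the sphere components, as $\frac{D}{d\tau}a_i - \nabla_{a_i}\func{grad}f(\chi_i)$ on the flow components, with the matching $\xi_i(o_+) = a_i(l_i)$, $\xi_{i+1}(o_-) = a_i(-l_i)$, etc.; for the two half-infinite segments $\chi_{\pm\infty}$ one gets instead the stable/unstable manifold conditions at $p$ and $q$.

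The key steps, in order, are: (1) Fredholmness — each $D_{u_i}\overline\partial_J$ is Fredholm on the cylindrical-end Banach spaces (standard, and recalled in the $J$-holomorphic-curves section), each linearized gradient operator $\frac{D}{d\tau}+\nabla\func{grad}f(\chi_i)$ is Fredholm because $(f,g)$ is Morse-Smale, and the finitely many matching conditions are finite-rank perturbations; hence $E(u)$ is Fredholm. (2) Index additivity — I would run the same exact-sequence / linear-algebra bookkeeping as in Proposition \ref{prop:dfdindex}, now at each of the $k+1$ junctions. At an internal junction between $\chi_i$ and $u_{i+1}$ (resp.\ $u_i$), the diagonal-matching argument contributes, as in $\left(\ref{kerdim:dfd}\right)$ and $\left(\ref{dim:dfd_coker}\right)$, a net $-2n$ to the index per identification; the flow propagators $d\phi_f^{2l_i}$ and $P_i^\dagger$ satisfy $P_i^\dagger V^\perp = (P_i V)^\perp$ exactly as shown there (the flow is symplectic-conjugate-free and preserves the $L^2$ pairing), so $\dim(PV_- + V_+) + \dim(P^\dagger V_-^\perp \cap V_+^\perp) = 2n$ at each junction. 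For the two end segments $\chi_{\pm\infty}$ joining to the critical points, the analogous computation replaces the sphere-index contribution $n \pm \mu_{H_\pm}$ by the Morse-index contributions: the unstable manifold $W^u(p)$ contributes $\mu_f(p)$ and the stable manifold $W^s(q)$ contributes $n - \mu_f(q)$, together with the Morse trajectory matching. (3) Summation — adding $\func{Index}D_{u_i}\overline\partial_J = 2n + 2c_1(A_i)$ for $i=1,\dots,k$ (using $\mu_{H}=n$ after closing up, since here the Hamiltonian perturbation on the spheres is trivial / the spheres close up), subtracting $2n$ for each of the $k+1$ junctions, and adding the Morse contributions $\mu_f(p)$ and $-\mu_f(q)$ from the two ends, the $2n$'s telescope and one is left with
\begin{equation*}
\func{Index}E(u) = \mu_f(p) - \mu_f(q) + \sum_{i=1}^k 2c_1(A_i),
\end{equation*}
which is the claim.

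The main obstacle I expect is the careful treatment of the two half-infinite gradient segments $\chi_{-\infty}$ and $\chi_{\infty}$ and their attachment to the critical points $p,q$: unlike the finite segments $\chi_i$ of Proposition \ref{prop:dfdindex}, these run to $\tau = \pm\infty$, so one must use the exponentially weighted norms and verify that the correct boundary/asymptotic behavior is imposed (the $W^u(p)$- and $W^s(q)$-constraints), and that the resulting operator on the half-line is Fredholm with the expected index contribution $\mu_f(p)$ (resp.\ $n-\mu_f(q)$). Once one matches the conventions so that $W^u(p)$ carries index $\mu_f(p)$ and $W^s(q)$ carries index $n-\mu_f(q)$ — equivalently by invoking the well-established Fredholm theory of the full gradient trajectory spaces on $M$ — the junction bookkeeping is a routine iteration of the argument already given, and the dimension formula $\func{dim}\mathcal{M}_{pearl}^{\vec l} = \func{Index}E(u)$ follows from Sard--Smale transversality (generic $J,f$) in the presence of the ``sphere-flow-sphere'' transversality hypothesis, exactly parallel to Proposition \ref{family-dfd-trans} (note that here there is no extra $+1$ per segment because, in $\mathcal{M}_{pearl}^{\vec l}$ with $\vec l$ fixed, the lengths are not varied).
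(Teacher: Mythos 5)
Your approach — iterating the disc-flow-disc bookkeeping of Proposition \ref{prop:dfdindex} across the $k$ spheres and $k+1$ gradient segments, treating the two half-infinite segments $\chi_{\pm\infty}$ via weighted half-line Fredholm theory with the unstable/stable manifold constraints at $p$ and $q$ — is exactly the ``small modification'' the paper invokes without writing out, so the route is the same.

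That said, the bookkeeping in your step (3) has a slip. On the $2n$-real-dimensional symplectic manifold $M$, the contribution of the right end (the Fredholm index of the linearized Morse operator on $[0,\infty)$ with decay at $+\infty$ and a free finite end, i.e.\ the dimension of $W^s(q)$) is $2n-\mu_f(q)$, not $n-\mu_f(q)$; and your two statements of it (``$n-\mu_f(q)$'' and later ``$-\mu_f(q)$'') do not agree with each other or with the ``$k+1$ junctions each costing $2n$'' accounting. With that accounting, what you need is
\begin{equation*}
\sum_{i=1}^k\bigl(2n+2c_1(A_i)\bigr)\;-\;2n(k+1)\;+\;\mu_f(p)\;+\;\bigl(2n-\mu_f(q)\bigr)\;=\;\mu_f(p)-\mu_f(q)+\sum_{i=1}^k 2c_1(A_i),
\end{equation*}
which forces the stable-end contribution to be $2n-\mu_f(q)$. (The tally your ``$-\mu_f(q)$'' phrasing seems to intend is the equivalent one in which the two end-matchings are absorbed into the half-line indices, giving $\chi_{-\infty}\mapsto\mu_f(p)-2n$ and $\chi_\infty\mapsto(2n-\mu_f(q))-2n=-\mu_f(q)$, with only the $k-1$ internal segments each contributing a net $-2n$; the two tallies agree.) Two smaller nits that do not affect the argument: the sphere operators $D_{u_i}\overline\partial_J$ act on closed $\mathbb{CP}^1$'s rather than cylindrical-end surfaces, so $\func{Index}\,D_{u_i}\overline\partial_J=2n+2c_1(A_i)$ is just Riemann--Roch (the remark about $\mu_H=n$ is not needed), and the matching conditions preserve Fredholmness because they cut out closed finite-codimension subspaces of the domain, not because they are finite-rank perturbations.
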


\begin{cor}
Suppose that each $u_{i}\in {\mathcal{M}}_{2}(M,J;A_{i})$ is Fredholm
regular, and each gradient segment $\chi _{\pm \infty }$ belongs to a full
gradient trajectory that is Fredholm regular, then $u\in \mathcal{M}_{\text{%
\textrm{pearl}}}^{\vec{l}}\left( p,q;f;\vec{A}\right) $ is Fredholm regular
(in the sense that $E_u$ is surjective) if and only if the configuration $u$
satisfies the \textquotedblleft sphere-flow-sphere\textquotedblright\
transversality in definition \ref{sfs-trans}.
\end{cor}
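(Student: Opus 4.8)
The plan is to reduce this corollary to the disk-flow-disk case already treated in Corollary \ref{achieve-dfd-trans} (and its family version), by viewing a pearl configuration as an iterated fiber product of disk-flow-disk (here sphere-flow-sphere) building blocks. First I would set up, in analogy with the proof of Proposition \ref{prop:dfdindex}, the explicit description of $\ker E(u)$ and $\operatorname{coker}E(u)$ for
\[
u=\left(p,\chi_{-\infty},u_1,\chi_1,\dots,u_k,\chi_\infty,q\right)\in\mathcal{M}_{pearl}^{\vec l}(p,q;f;\vec A).
\]
Using the matching conditions at each joint point $u_i(o_\pm)$ and the fact that each gradient segment $\chi_i$ (and $\chi_{\pm\infty}$) is determined by its initial value via the linearized gradient ODE $\frac{Da}{d\tau}-\nabla_a\operatorname{grad}f(\chi_i)=0$, an element of $\ker E(u)$ is determined by a collection $(\xi_i)\in\prod_i\ker D_{u_i}\overline\partial_J$ together with the requirement that the parallel-transported-by-flow evaluations match at every node and that the endpoint data lie in $T_pW^u(p)$ and $T_qW^s(q)$ respectively. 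This is precisely the statement that $\ker E(u)$ is the preimage, under $d(\text{map }\eqref{eq:pearl})$, of $T(\prod\triangle)$; the hypothesis that each $u_i$ is Fredholm regular and each $\chi_{\pm\infty}$ sits in a Fredholm-regular full gradient trajectory guarantees the relevant linearized operators are surjective so that only the finite-dimensional node-matching contributes.

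Next I would compute $\operatorname{coker}E(u)$ by the same adjoint/distributional argument as in Proposition \ref{prop:dfdindex}: an element $(\eta_i,b_i)$ of $\ker E(u)^\dagger$ forces each $b_i$ to solve the backward linearized gradient ODE $-\frac{Db_i}{d\tau}-\nabla_{b_i}\operatorname{grad}f(\chi_i)=0$, and each $\eta_i$ to satisfy $(D_{u_i}\overline\partial_J)^\dagger\eta_i=\pm b_i(\pm l_i)\delta_{o_\pm^i}$, solvable iff $b_i(\pm l_i)\perp V_\pm^i:=ev_{o_\pm^i}(\ker D_{u_i}\overline\partial_J)$, plus the endpoint orthogonality conditions against $T_pW^u(p)$, $T_qW^s(q)$. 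Since each $u_i$ is Fredholm regular the genuinely $J$-holomorphic part of the cokernel vanishes, exactly as in the proof of Corollary \ref{achieve-dfd-trans}, and the adjoint $P^\dagger$-maps along the $\chi_i$ satisfy $P_i^\dagger V_-^{i\,\perp}=(P_iV_-^i)^\perp$ by the same $\frac{d}{d\tau}\langle a,b\rangle=0$ computation. Collecting, $\operatorname{coker}E(u)$ is identified with the orthogonal complement of the image of $d(\text{map }\eqref{eq:pearl})$ plus the diagonal subspace, so that $\operatorname{coker}E(u)=0$ if and only if $d(\text{map }\eqref{eq:pearl})$ is transverse to $\prod\triangle$, which is the ``sphere-flow-sphere'' transversality of Definition \ref{sfs-trans}. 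This gives both directions of the equivalence at once; as a byproduct one recovers the index formula of the preceding proposition.

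The main obstacle I anticipate is purely bookkeeping rather than conceptual: keeping track of the iterated fiber-product structure with $k$ spheres, $k+1$ gradient segments, and the two half-infinite segments $\chi_{\pm\infty}$ attached to $W^u(p)$ and $W^s(q)$, so that the linear-algebra identity $\dim(A\times B+\triangle)=\dim\triangle+\dim(A-B)$ used in Proposition \ref{prop:dfdindex} is applied correctly at each of the $k+1$ node pairs simultaneously. One subtlety worth isolating is the role of the endpoints $p,q$: unlike the disk-flow-disk case where both ends of $\chi$ are free interior points, here $\chi_{\pm\infty}$ must limit to critical points, so the relevant boundary data for the zero mode are constrained to $T_pW^u(p)$ and $T_qW^s(q)$; the hypothesis that $\chi_{\pm\infty}$ extend to Fredholm-regular full gradient trajectories is exactly what makes the linearized gradient-flow boundary value problem on the half-infinite segments solvable with the correct (co)kernel, so this should be quotable from the established Morse-theoretic Fredholm package. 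I would therefore present the proof as: (i) a lemma recording the $\ker/\operatorname{coker}$ description for a single sphere-flow-sphere block (essentially Corollary \ref{achieve-dfd-trans}), (ii) an induction on $k$ gluing blocks along matching evaluation maps, (iii) the observation that surjectivity of $E(u)$ is equivalent to transversality of the composite evaluation map \eqref{eq:pearl} to $\prod\triangle$, i.e. Definition \ref{sfs-trans}.
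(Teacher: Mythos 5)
Your proposal is correct and matches the paper's intended argument: the paper offers no explicit proof of this corollary, instead stating that it follows from ``small modifications of Proposition \ref{prop:dfdindex}, Corollary \ref{achieve-dfd-trans} and Proposition \ref{family-dfd-trans}'', and what you write out is precisely that modification — repeating the kernel/cokernel computation of Proposition \ref{prop:dfdindex} node by node, using $P_i^\dagger V_-^{i\perp}=(P_iV_-^i)^\perp$ along each interior $\chi_i$, and observing that vanishing of the cokernel is equivalent to the composite evaluation map of \eqref{eq:pearl} being transverse to $\prod\triangle$. One small notational caution: the boundary constraints coming from $\chi_{\pm\infty}$ live in $T_xW^u(p)$ and $T_yW^s(q)$ at the finite endpoints $x=\chi_{-\infty}(-l_{-\infty})$, $y=\chi_\infty(l_\infty)$ rather than at $p,q$ themselves, which is exactly how the stable/unstable manifolds enter the domain of \eqref{eq:pearl}; this is where the Fredholm regularity of the full gradient trajectories is used (it guarantees the half-infinite linearized gradient operators have the Morse-theoretic kernel/cokernel you quote). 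With that understood, the induction-on-$k$ bookkeeping you describe is the right way to organize the iterated fiber product.
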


\begin{prop}
\label{family-sfs-trans}Suppose that each $u_{i}\in {\mathcal{M}}%
_{2}(M,J;A_{i})$ is Fredholm regular. Then there exists a dense subset of $%
f\in C^{\infty }\left( M\right) $ such that any element $u$ in
\begin{equation*}
\mathcal{M}_{\text{\textrm{pearl}}}^{\text{\textrm{para}}}\left( p,q;f;\vec{A%
}\right) =\bigcup_{l_{i}>0}\mathcal{M}_{\text{\textrm{pearl}}}^{\vec{l}%
}\left( p,q;f;\vec{A}\right)
\end{equation*}%
is Fredholm regular, in the sense that $E_u$ is surjective. Therefore $%
\mathcal{M}_{\text{\textrm{pearl}}}^{\text{\textrm{para}}}\left( p,q;f;\vec{A%
}\right) $ is a smooth manifold with dimension equal to the index of $E_u$:
\begin{equation*}
\func{dim}\mathcal{M}_{\text{\textrm{pearl}}}^{\text{\textrm{para}}}\left(
p,q;f;\vec{A}\right) =\func{Index}E_u=\mu _{f}\left( p\right) -\mu
_{f}(q)+\Sigma _{i=1}^{k}2c_{1}(A_{i})+1.
\end{equation*}
\end{prop}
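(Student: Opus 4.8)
The plan is to transcribe, essentially verbatim, the proof of Proposition \ref{family-dfd-trans}, replacing the single gradient segment flanked by two Floer discs with the string $(\chi_{-\infty},u_{1},\chi_{1},\dots,u_{k},\chi_{\infty})$. First I would fix once and for all a generic metric $g$ on $M$, so that one works with Morse--Smale pairs $(f,g)$; then each full gradient trajectory, and in particular each semi-infinite segment $\chi_{\pm\infty}$, is automatically Fredholm regular (this is the pearl analogue of the hypothesis in the Corollary preceding this Proposition). Next I would consider the section
\begin{equation*}
e:\mathcal{B}_{pearl}^{para}(p,q)\times C^{\infty}(M)\longrightarrow \mathcal{L}_{pearl}^{para}(p,q),\qquad e(u,f)=\Big(\big(\overline{\partial}_{J}u_{i}\big)_{i=1}^{k},\ \big(\dot\chi_{j}-\nabla f(\chi_{j})\big)_{j}\Big),
\end{equation*}
where $j$ runs over the labels of all gradient segments, and the tangent vector at $(u,f)$ consists of sphere variations $\xi_{i}\in W^{1,p}(u_{i}^{\ast}TM)$, segment variations $a_{j}$, length variations $\mu_{i}\in T_{l_{i}}\mathbb{R}$, and $h\in T_{f}C^{\infty}(M)$, subject to the matching conditions built into $\mathcal{B}_{pearl}^{para}(p,q)$. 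The essential structural point, exactly as in the disc-flow-disc case, is that $J$ --- and hence the equations $\overline{\partial}_{J}u_{i}=0$ --- does not depend on $f$: the parameter $f$ enters only through the segment equations and through the flow/evaluation maps $\phi_{f}^{2l_{i}}\circ ev_{+}^{i}\times ev_{-}^{i+1}$ in the matching.

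The heart of the matter is to show $\func{coker}E(u,f)=0$ for every $u\in e^{-1}(0)$, in the $W^{1,p}\to L^{p}$ setting. Let $(\eta_{1},\dots,\eta_{k};(b_{j})_{j})\in\ker E(u,f)^{\dagger}$ be a dual element, with $\eta_{i}\in L^{q}(\Lambda^{(1,0)}u_{i}^{\ast}TM)$. Testing against variations with all $\xi_{i},a_{j},\mu_{i}$ set to $0$ and only $h$ nonzero gives $\sum_{j}\int\langle\nabla h(\chi_{j}),b_{j}\rangle=0$ for all $h\in C^{\infty}(M)$. Here one uses that the $\chi_{j}$ are embedded arcs in general position --- distinct gradient trajectories of a Morse function cannot cross --- so that $\nabla h$ restricted to the interior of any one $\chi_{j}$ can be made an arbitrary vector field while vanishing along the others, and, choosing $h$ supported away from $p$ and $q$, one concludes $b_{j}\equiv 0$ for every $j$. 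With all $b_{j}=0$ the matching conditions no longer couple the $\xi_{i}$, so varying each $\xi_{i}$ independently forces $\eta_{i}\in\func{coker}D_{u_{i}}\overline{\partial}_{J}=\{0\}$ by the hypothesis that each $u_{i}$ is Fredholm regular. Hence $E(u,f)^{\dagger}=0$.

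From here everything is routine. One raises the regularity to a $W^{k,p}$ setting with $k$ chosen large relative to $\mu_{f}(p)-\mu_{f}(q)+\sum_{i}2c_{1}(A_{i})$, so that $\mathcal{M}_{pearl,univ}^{para}(p,q;\vec{A}):=e^{-1}(0)$ is a $W^{k,p}$ Banach manifold in $\mathcal{B}_{pearl}^{para}(p,q)\times C^{\infty}(M)$; applying the Sard--Smale theorem to the projection $\pi$ onto $C^{\infty}(M)$ yields a set of second category of regular values $f$. For any such $f$ the fibre $\mathcal{M}_{pearl}^{para}(p,q;f;\vec{A})=\pi^{-1}(f)\cap\mathcal{M}_{pearl,univ}^{para}(p,q;\vec{A})$ is a finite-dimensional smooth manifold on which $E(u)$ is surjective for every $u$, and its dimension equals $\func{Index}E(u)$ computed exactly as in Proposition \ref{family-dfd-trans}: the preceding Proposition gives the index of the fixed-length operator, and allowing the gradient lengths to vary contributes the extra dimension, giving $\func{dim}\mathcal{M}_{pearl}^{para}(p,q;f;\vec{A})=\mu_{f}(p)-\mu_{f}(q)+\sum_{i=1}^{k}2c_{1}(A_{i})+1$ as stated.

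The one genuinely new point compared with the disc-flow-disc argument, and the step I expect to need the most care, is the bookkeeping at the two semi-infinite segments $\chi_{\pm\infty}$ anchored at the critical points $p$ and $q$: one must verify that requiring $h$ to vanish near $p$ and $q$ still leaves enough room to kill $b_{-\infty}$ and $b_{\infty}$ (it does, since each of these segments has nonempty interior bounded away from the critical points), and that the evaluation constraints at those ends --- namely that the configuration lies over $W^{u}(p)$ and $W^{s}(q)$ --- are already supplied by the Morse--Smale regularity of $(f,g)$, so that no further transversality in $f$ is needed there. Granting this, the remaining estimates and the implicit-function setup are a direct transcription of the arguments of Section \ref{subsec:disc-flow-disc}, and of Propositions \ref{prop:dfdindex}, \ref{achieve-dfd-trans}, \ref{family-dfd-trans}.
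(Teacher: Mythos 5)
You take exactly the route the paper intends: the paper offers no separate proof of this proposition, merely asserting immediately before it that ``Small modifications of Proposition \ref{prop:dfdindex}, Corollary \ref{achieve-dfd-trans} and Proposition \ref{family-dfd-trans} lead to the corresponding'' pearl statements, and your universal-moduli-space/Sard--Smale transcription is exactly that intent carried out. The identification of $C^{\infty}(M)$ as the parameter space, the observation that $J$ is independent of $f$, the treatment of $\chi_{\pm\infty}$ under a standing Morse--Smale assumption, and the Sard--Smale step are all as in the model Proposition \ref{family-dfd-trans}.

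The one step that genuinely needs more than you give it is the claim that testing against $h$-variations kills each $b_{j}$ separately. Your justification --- ``distinct gradient trajectories of a Morse function cannot cross'' --- rules out transversal crossings, but not the possibility that two different segments $\chi_{j}$, $\chi_{j'}$ lie on the \emph{same} flow line of $f$ with overlapping images; for a fixed $J$ and an arbitrary $f$ in the universal zero set there is no a priori reason this does not occur. In that configuration the $h$-variation only yields the single constraint $\sum_{j}\int\langle\nabla h(\chi_{j}),b_{j}\rangle=0$, whose integrands on the shared arc cannot be separated pointwise, and one cannot immediately conclude $b_{j}\equiv 0$ individually. This complication simply does not arise in the disc-flow-disc model, where there is a single segment, so a verbatim transcription does not automatically address it. The paper itself sidesteps the point by citing \cite{biran-cornea} for the sphere-flow-sphere transversality, where a more elaborate choice of perturbation data separates the edges of the pearl string. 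To make your argument self-contained here you would need either a stratification argument (for the fixed generic $J$, the condition that evaluation points of distinct spheres lie on a common $f$-flow line has positive codimension) or a direct argument in the shared-arc case exploiting the linear ODE that each $b_{j}$ satisfies together with the fact that each $\chi_{j}$ has at least one endpoint not contained in the interior of any other $\chi_{j'}$. Apart from this one point, your proposal is correct and at the same level of detail as the paper.
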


\ The gluing problem from \textquotedblleft pearl
configuration\textquotedblright\ to nearby Floer trajectories was mentioned
and left as a future work in \cite{oh:newton}, \cite{mundet-tian}. Now we
can study the gluing from the pearl configuration $u=\left( p,\chi _{-\infty
},u_{1,}\chi _{1},u_{2},\chi _{2},\cdots ,u_{k},\chi _{\infty },q\right) $
to nearby Floer trajectories satisfying $\left( \ref{Floer_e}\right) $,
using the techniques from this paper. We assume $J$ and $f$ are generic such
that $E_u$ in Proposition \ref{family-sfs-trans} is surjective. The outcome
is

\begin{thm}\label{thm:Hamiltonian-pearl}
For any pearl configuration $u=\left( p,\chi _{-\infty },u_{1,}\chi
_{1},u_{2},\chi _{2},\cdots ,u_{k},\chi _{\infty },q\right)$ in $\mathcal{M}%
_{\text{\textrm{pearl}}}^{\text{\textrm{para}}}\left( p,q;f;\vec{A}\right) $
whose $E_u$ is surjective, then for sufficiently small $\varepsilon >0$,
nearby $u$ we have the solutions $u_{\vec{\theta},\vec{d}}^{\varepsilon
}\left( \tau ,t\right) $ of Floer equation $\left( \ref{Floer_e}\right) $
parameterized by the gluing parameters $\varepsilon >0$, $\vec{\theta}%
=\left( \theta _{1},\theta _{2},\cdots ,\theta _{k}\right) \in \left(
S^{1}\right) ^{k}\,$and $\vec{d}=\left( d_{1},d_{2},\cdots ,d_{k}\right) \in
\left( {\mathbb{R}}\right) ^{k}$, with the asymptotes $u_{\vec{\theta},\vec{d%
}}^{\varepsilon }\left( -\infty ,t\right) =p$, and $u_{\vec{\theta},\vec{d}%
}^{\varepsilon }\left( +\infty ,t\right) =q$.
\end{thm}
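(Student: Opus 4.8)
The plan is to run, for the fixed pearl configuration $u=(p,\chi_{-\infty},u_1,\chi_1,\dots,u_k,\chi_\infty,q)$, the same adiabatic gluing scheme carried out in Sections~\ref{sec:off-shell}--\ref{sec:quadratic} for the disc-flow-disc case, now concatenated over the $k+1$ joints, and to read off the gluing parameters $(\vec\theta,\vec d)\in(S^1)^k\times{\mathbb R}^k$ from the choice of marked representatives of the spheres. First I would build the approximate solution $u^{\varepsilon,\vec\theta,\vec d}_{app}:{\mathbb R}\times S^1\to M$: fixing cylindrical coordinates near each pair of marked points $o^i_\pm$ and using the representative of $u_i\in\mathcal M_2(M,J;A_i)$ obtained by the rotation $\theta_i\in S^1$ and the ${\mathbb R}$-translation $d_i$ of the cylindrical coordinate ${\mathbb R}\times S^1\cong S^2\setminus\{o^i_\pm\}$, lay out along the $\tau$-axis the renormalized gradient flow $\chi_{-\infty}(\varepsilon\,\cdot)$ (which converges to $p$), then $u_1$, then $\chi_1(\varepsilon\,\cdot)$, \dots, then $u_k$, then $\chi_\infty(\varepsilon\,\cdot)$ (converging to $q$), interpolating at each joint inside a ball of radius $O(\varepsilon)$ exactly by the recipe \eqref{eq:uappe}--\eqref{eq:uappe-easy}. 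This defines the pre-gluing map on $(0,\varepsilon_0]\times(S^1\times{\mathbb R})^k\times\mathcal M^{para}_{pearl}(p,q;f;\vec A)$; for $u$ fixed it depends only on $(\varepsilon,\vec\theta,\vec d)$.

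Next I would set up the weighted off-shell framework as the gluing of the building blocks already available: the cylindrical Banach manifolds $\mathcal B_\pm(z_\pm)$ with exponential weight $e^{2\pi\delta|\tau|/p}$ near each sphere puncture, the gradient-segment manifolds of Section~\ref{sec:Qinmiddle} with the power weight $(1+|\tau|)^\delta$ and the $\varepsilon$-geometric norm $W^{1,p}_\varepsilon$ on the zero mode for each \emph{finite} internal segment $\chi_i$, and, for the two end segments $\chi_{\pm\infty}$, the full-gradient-trajectory version of Section~\ref{sec:Qinmiddle} --- which needs only an exponential weight, no power weight, and is in fact easier, cf.\ Remark~\ref{weights}. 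These are patched into a single weight $\beta_{\delta,\varepsilon}\ge1$ on ${\mathbb R}\times S^1$ as in Section~\ref{sec:off-shell}, with the tangent space carrying an extra ${\mathbb R}^k$ recording the variations of $l_1,\dots,l_k$. The $\overline\partial_{(J,\varepsilon f)}$-error is then estimated exactly as in Section~\ref{section:d-bar-error}: $u^{\varepsilon,\vec\theta,\vec d}_{app}$ solves \eqref{Floer_e} away from the $k+1$ interpolation annuli, on each of which \eqref{d-bar-error} gives a pointwise bound $O(\varepsilon)$, so $\|\overline\partial_{(J,\varepsilon f)}u^{\varepsilon,\vec\theta,\vec d}_{app}\|_\varepsilon\le E(\vec l)\,\varepsilon^{1/p}$ with $E(\vec l)$ uniform for $\vec l$ in a compact set.

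Granting, as in the hypothesis, that $E(u)$ is surjective (Proposition~\ref{family-sfs-trans}), I would construct the combined right inverse $Q^{app;\varepsilon}_{para}$ by gluing the right inverses $Q_{u_i}$ of $D_{u_i}\overline\partial_J$, the right inverses $Q_{\chi_{i,\varepsilon}}$ of the linearized gradient operators (built by the Fourier decomposition of Section~\ref{sec:Qinmiddle}, the zero-mode boundary values being fixed through the evaluation maps $ev^i_\pm$ by the sphere-flow-sphere transversality of Definition~\ref{sfs-trans}), and the length-variation directions, patched with cut-offs supported on the transition regions $\pm[l_i/\varepsilon-T(\varepsilon),l_i/\varepsilon+T(\varepsilon)]$. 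Since for small $\varepsilon$ the $k+1$ pairs of transition regions are pairwise disjoint, the norm estimate splits joint-by-joint and the proof of Proposition~\ref{prop:uniform-inverse-bound} applies to each separately, so $Q^{app;\varepsilon}_{para}$ has a bound uniform in $\varepsilon$; Proposition~\ref{prop:approx-right-inverse} then upgrades it to a genuine uniformly bounded right inverse. Because $\beta_{\delta,\varepsilon}\ge1$ everywhere and all neck lengths stay $\ge l_0/\varepsilon$, the Sobolev constant for $\|\cdot\|_\varepsilon$ is uniform in $\varepsilon$ (proof of Proposition~\ref{prop:quadratic}), yielding the uniform quadratic estimate; Proposition~\ref{prop:abstractglue} then perturbs $u^{\varepsilon,\vec\theta,\vec d}_{app}$ to a genuine solution $u^\varepsilon_{\vec\theta,\vec d}=\exp_{u^{\varepsilon,\vec\theta,\vec d}_{app}}(\xi)\circ P_{\vec\mu}$ of \eqref{Floer_e} with $\|(\xi,\vec\mu)\|_\varepsilon=O(\varepsilon^{1/p})$, whose asymptotes are the constant $1$-periodic orbits $p$ and $q$ of $\varepsilon f$ because the ends of $u^{\varepsilon,\vec\theta,\vec d}_{app}$ are the half-infinite gradient flows into $p,q$ and the perturbation decays.

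The hard part is \emph{not} any single new estimate but making the $\varepsilon$-uniform bounds (combined right inverse, quadratic estimate, Sobolev constant) hold \emph{simultaneously over all $k+1$ necks}: the argument rests on the fact that the transition regions around distinct joints are mutually disjoint for $\varepsilon$ small, so every estimate localizes to a single joint and reduces to the disc-flow-disc case already treated, while the two end segments --- being full gradient trajectories to nondegenerate critical points --- require only the exponential weight of Remark~\ref{weights}. The remaining work is bookkeeping: the $2k$ reparameterization parameters $(\vec\theta,\vec d)$ entering through the choice of lift of each $u_i$, the $k+1$ matching conditions, and the interaction of the $d_i$ with the length variations $l_i$; and --- if one additionally wants surjectivity of the family onto a neighborhood of $u$ in the $\varepsilon$-moduli space --- a repetition of the norm-convergence argument of Section~\ref{sec:surjectivity}, whose three-interval exponential-decay estimate localizes to each neck in exactly the same way.
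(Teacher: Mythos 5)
Your proposal is correct and takes essentially the same route as the paper: the paper's proof is an explicit outline that reduces the pearl-complex gluing to the disc-flow-disc gluing joint by joint, preglues with the $S^1\times{\mathbb R}$-reparameterized spheres to account for $(\vec\theta,\vec d)$, patches the weights (power/$\varepsilon$-adiabatic on the finite segments, exponential on the half-infinite segments $\chi_{\pm\infty}$), and invokes the $\overline\partial$-error, right-inverse, and quadratic estimates from the earlier sections together with the implicit function theorem. The one small inaccuracy is that you cite Remark~\ref{weights} for the ``exponential weight'' on $\chi_{\pm\infty}$; Remark~\ref{weights} actually says no weight is needed near a nondegenerate critical point (the exponential weight it mentions is for the Morse-Bott case), while the paper's own proof cites Remark~\ref{exp-weight-dbar-error} for the point that the half-infinite segments do not spoil the $\overline\partial$-error estimate --- this is a reference mismatch, not a mathematical gap.
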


\begin{proof} Since the main methodology and analytic details are essentially the same as
for the case of $(u_-,\chi,u_+)$ except increasing complexity of notations and writing,
we will be brief just by indicating what modifications should be made to apply
the methodology to the current case.

Locally the gluing is the same, in the sense that it is
the gluing of gradient segments (with noncritical joint points) with $J$%
-holomorphic curves. The construction of approximate solution is the same,
except that for any $J$-holomorphic sphere $u_{i}$ we have the $S^{1}\times
\mathbb{R}$ family of $J$-holomorphic spheres $u_{i}\left( \tau
+d_{i},\theta +\theta _{i}\right) $ in pregluing. The $\overline{\partial }$%
-error estimate is the same, with the pair $\left( K_{\varepsilon
},J_{\varepsilon }\right) $ in the piece $\left( 3\right) $ in section \ref%
{section:d-bar-error} replaced by $\left( \varepsilon f,J_{0}\right) $ hence
$\overline{\partial }_{\left( K_{\varepsilon },J_{\varepsilon }\right) }$
replaced by $\overline{\partial }_{\left( \varepsilon f,J_{0}\right) }$. The
weight $\beta _{\delta ,\varepsilon }\left( \tau \right) $ on $\chi _{\pm
\infty }$ is not of a polynomial weight but of an exponential weight when $%
\chi _{\pm \infty }\left( \tau \right) $ approaches the critical points $p,q$%
, but by remark \ref{exp-weight-dbar-error} this doe not destroy $\overline{%
\partial }_{J,\varepsilon f}$-error estimate. The construction and estimates
of the right inverse are the same. The quadratic estimate remains the same,
where the domain Riemann surface $\Sigma _{\varepsilon }\simeq {\mathbb{R}}%
\times S^{1}$ is equipped with standard measure on disjoint cylinders $\left[
-l_{i}/\varepsilon -\frac{p-1}{\delta }S\left( \varepsilon \right)
,l_{i}/\varepsilon +\frac{p-1}{\delta }S\left( \varepsilon \right) \right]
\times S^{1}$ and $(\pm \infty ,0]\times S^{1}$, glued with standard middle
annulus of $S^{2}$ (with cylindrical measure).
\end{proof}

Notice that we do not have the surjectivity part in this theorem, because a
sequence of solutions $u^{\varepsilon }$ of Floer equation $\left( \ref%
{Floer_e}\right) $ may develop multiple covering $J$-holomorphic spheres or
multiple covering of gradient segments in the limit, which lacks Fredholm
regularity; The bubbling sphere may also occur at any interior point on the
gradient segment $\chi $, or worse, the joint points. For that situation we
have not fully developed the gluing analysis.

\subsection{Pearl complex in the Lagrangian case}

Let $\left( M,\omega \right) $ be a compact symplectic manifold with
compatible almost complex structure $J$ and $L$ be a compact Lagrangian
submanifold. For a Morse function $f:L\rightarrow {\mathbb{R}}$, we extend $%
f $ \ to a Morse function $f:M\rightarrow {\mathbb{R}}$ such that in a
Weinstein neighborhood of $L\,$\ which is symplectomorphic to $T^{\ast }M$, $%
f$ is constant along each fiber. Let $\phi _{\varepsilon f}^{t}$ be the time-%
$t$ Hamiltonian flow of $f$. Then $L_{\varepsilon f}:=\phi _{\varepsilon
f}^{1}L$ is a Lagrangian submanifold Hamiltonian isotopic to $L$. For
generic $f$, $L$ and $L_{\varepsilon f}$ transversally intersect. Similar to
the Hamiltonian case, we study the $J$-holomorphic stripe $u:{\mathbb{R}}%
\times \left[ 0,1\right] \rightarrow M$ satisfying%
\begin{eqnarray}
\frac{\partial u}{\partial \tau }+J(u)\frac{\partial u}{\partial t} &=&0,%
\text{ }u\left( {\mathbb{R}},0\right) \in L\text{ and }u\left( {\mathbb{R}}%
,1\right) \in L_{\varepsilon f}  \notag \\
u\left( -\infty ,\cdot \right) &=&p, \, u\left( +\infty ,\cdot \right) =q
\label{Lagr-Floer-eq-e}
\end{eqnarray}%
where $p,q$ are intersections of $L$ and $L_{\varepsilon f}$. The limiting
moduli space as $\varepsilon \rightarrow 0$ consists of \textquotedblleft
pearl complexs\textquotedblright\ (or disk-flow-disk configurations) defined
as the following

\begin{defn}
The configuration
\begin{equation*}
u:=\left( p,\chi _{-\infty },u_{1,}\chi _{1},u_{2},\chi _{2},\cdots
,u_{k},\chi _{\infty },q\right)
\end{equation*}%
is called a pearl configuration if $u_{i}:D^{2}\backslash \left\{ \pm
1\right\} \cong {\mathbb{R}}\times [0,1]\rightarrow M$ are $J$-holomorphic
discs with lower and upper boundaries ending on $L$ and $L_{\varepsilon f}$
respectively, with marked points $u_{i}\left( o_{\pm }\right) $ where $%
o_{\pm }=\{\pm 1\}\in D^2=\left\{ \pm \infty \right\} \times \left[ 0,1%
\right] \in \mathbb{R}\times [0,1]$, and each $\chi _{i}:\left[ -l_{i},l_{i}%
\right] \rightarrow L$ is a gradient segment of the Morse function $f$
connecting $u_{i}\left( o_{+}\right) $ to $u_{i+1}\left( o_{-}\right) $, $%
\chi _{-\infty }$ connecting the critical point $p$ to $u_{1}\left(
o_{-}\right) $ and $\chi _{\infty }$ connecting $u_{k}\left( o_{+}\right) $
to the critical point $q$.
\end{defn}

Suppose that each $u_{i}$ is Fredholm regular. The disk-flow-disk
transversality has been defined in \cite{biran-cor-1} and proven to be
achievable for generic $\left( J,f\right) $. Parallel to the previous
section, we can define the pearl complex moduli space $\mathcal{M}_{\text{%
\textrm{pearl}}}^{\text{\textrm{para}}}\left( p,q;L,f;\vec{A}\right) $,
establish the index formula for $u$ in such moduli space, and show $E\left(
u\right) $ is surjective for generic $\left( J,f\right) $. These are more or
less standard so we omit details (See \cite{biran-cor-1} for precise
definitions of pearl complex moduli space and index formula). The following
is the gluing theorem.

\begin{thm}
For any pearl configuration $u=\left( p,\chi _{-\infty },u_{1,}\chi
_{1},u_{2},\chi _{2},\cdots ,u_{k},\chi _{\infty },q\right)$ in $\mathcal{M}%
_{\text{\textrm{pearl}}}^{\text{\textrm{para}}}\left( p,q;L,f;\vec{A}\right)
$ whose $E_u$ is surjective, then for sufficiently small $\varepsilon >0$,
nearby $u$ we have $J$-holomorphic stripes $u_{\vec{d}}^{\varepsilon }\left(
\tau ,t\right) $ of equation $\left( \ref{Lagr-Floer-eq-e}\right) $
parameterized by the gluing parameters $\varepsilon >0$ and $\vec{d}=\left(
d_{1},d_{2},\cdots ,d_{k}\right) \in  {\mathbb{R}} ^{k}$, with
the asymptotes $u_{\vec{d}}^{\varepsilon }\left( -\infty ,t\right) =p$, and $%
u_{\vec{d}}^{\varepsilon }\left( +\infty ,t\right) =q$.
\end{thm}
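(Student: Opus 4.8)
(Outline) The plan is to run the gluing scheme of Theorem~\ref{thm:maingluing} essentially verbatim, with the three modifications forced by the Lagrangian setting: each neck is a strip $\mathbb{R}\times[0,1]$ rather than a cylinder $\mathbb{R}\times S^{1}$; the moving boundary condition on $L$ and $L_{\varepsilon f}=\func{Graph}(\varepsilon df)$ is first converted to an $\varepsilon$-independent boundary condition on $L\times L$ by the fibrewise translation in the Weinstein neighbourhood $T^{*}L$, which turns \eqref{Lagr-Floer-eq-e} into a Hamiltonian-perturbed equation with an $\varepsilon$-small, $t$-dependent almost complex structure (the transformed equation mentioned in Section~\ref{sec:Qinmiddle}); and there are now $k$ joint points, so the gluing parameters are $\varepsilon>0$ together with one translation $d_{i}$ per neck, with no rotation parameter since the strip has automorphism group $\mathbb{R}$ only. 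First I would form the pre-glued approximate solution $u^{\varepsilon}_{app,\vec d}$ by applying \eqref{eq:uappe-easy} to each consecutive pair $(u_{i},\chi_{i},u_{i+1})$ and to the two end pieces $(p,\chi_{-\infty},u_{1})$, $(u_{k},\chi_{\infty},q)$: on the neck $[-l_{i}/\varepsilon,l_{i}/\varepsilon]\times[0,1]$ one inserts the rescaled gradient segment $\chi_{i}(\varepsilon\tau)$, on the disc regions one inserts $u_{i}(\tau+d_{i},\cdot)$, and the two are interpolated inside a ball of radius $C\varepsilon$ about the joint point via the exponential map, exactly as in \eqref{eq:uappe}.

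Next I would set up the weighted Banach manifolds of Section~\ref{sec:off-shell} essentially unchanged. The $S^{1}$-Fourier decomposition of Section~\ref{sec:Qinmiddle} is replaced by the eigenfunction expansion on $[0,1]$ adapted to the (now $\varepsilon$-independent) Lagrangian boundary conditions: the zero mode is the $t$-constant part, whose linearization is the linearized gradient operator $\partial_{\tau}+\varepsilon(\nabla\func{grad}f)$ on $L$, while all higher modes have $t$-spectrum bounded away from $0$, so after the weight $\rho(\tau)=(1+|\tau|)^{\delta}$ their contribution to the right inverse is uniformly bounded; this is where choosing $J$ to be $t$-dependent is convenient, as then each $\chi_{i}$ is automatically Fredholm regular. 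On the interior necks I would use the power weight $\rho$ (linear approach of $\chi_{i}$ to its non-critical endpoints) and on the two end necks the exponential weight (exponential approach to the Morse critical points $p,q$), glued into a single $\beta_{\delta,\varepsilon}$ as in Section~\ref{sec:off-shell}; by Remark~\ref{exp-weight-dbar-error} the exponential weight there does not spoil the error estimate. The combined right inverse $Q^{app;\varepsilon}_{para}$ is built by splicing the right inverses $Q_{i}$ of $D_{u_{i}}\overline{\partial}$ with the right inverses $Q_{\chi_{i}}$ from Section~\ref{sec:Qinmiddle}, matching zero modes via the length-variation constants $\mu_{i}$ as in \eqref{eq:defmu}; its uniform bound as $\varepsilon\to0$ follows from the proof of Proposition~\ref{prop:uniform-inverse-bound} carried out neck by neck, using the surjectivity of $E(u)$ (the disk-flow-disk transversality at every joint) to invert the matching system, together with Lemmas~\ref{lem:xie0}--\ref{lem:tildexie}.

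The remaining estimates transcribe with no essential change. The $\overline{\partial}$-error estimate is the computation of Section~\ref{section:d-bar-error}: $u^{\varepsilon}_{app,\vec d}$ is exactly $J$-holomorphic on the disc regions and exactly a rescaled gradient flow on each neck, so its error is supported in the interpolation annuli about the joint points and is pointwise $O(\varepsilon)$ there, giving $\|\overline{\partial}u^{\varepsilon}_{app,\vec d}\|_{\varepsilon}\le C\varepsilon^{1/p}$ as in \eqref{eq:error}. The uniform quadratic estimate is Proposition~\ref{prop:quadratic}: since $\beta_{\delta,\varepsilon}\ge1$ everywhere and $l_{i}\ge l_{0}>0$, the Sobolev constant for $\|\cdot\|_{\varepsilon}$ is uniform in $\varepsilon$, and the pointwise bound on $d\mathcal{F}-D$ from \cite{MS} then yields the quadratic inequality with a constant independent of $\varepsilon$. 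Feeding these three inputs into the abstract implicit function theorem Proposition~\ref{prop:abstractglue} produces, for each $(\varepsilon,\vec d)$ with $\varepsilon$ sufficiently small, a unique small $(\xi,\vec\mu)\in\func{Image}Q^{app;\varepsilon}_{para}$ for which $\exp_{u^{\varepsilon}_{app,\vec d}}(\xi)$, reparametrized on the $k$ necks by $\vec\mu$, is a genuine solution of \eqref{Lagr-Floer-eq-e} with asymptotes $p$ and $q$; this is the claimed family $u^{\varepsilon}_{\vec d}$.

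The step I expect to be the main obstacle is the neck analysis of the transformed equation with moving Lagrangian boundary: one must verify that after the fibrewise translation the linearized strip operator still splits cleanly into a zero mode governed by the linearized gradient flow on $L$ and higher modes with uniformly positive $t$-spectrum, the boundary contribution of $\varepsilon df$ entering only as a $0$-th order, $\varepsilon$-small operator, and that the weighted $L^{p}$ and $W^{1,p}$ estimates together with the uniform Sobolev constant survive the $t$-dependence of $J$ in the eigenfunction basis. Once this is in place the rest is a routine re-run of Sections~\ref{sec:off-shell}--\ref{sec:quadratic}. As in the Hamiltonian pearl case, no surjectivity is asserted, so neither the immersion hypothesis at the joint points (Proposition~\ref{immersed-joint}) nor the three-interval argument of Section~\ref{sec:surjectivity} is needed here.
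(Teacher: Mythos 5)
Your outline agrees with the paper on the overall architecture — the transformation $u\mapsto\phi_{\varepsilon f}^{-t}\circ u$ that converts \eqref{Lagr-Floer-eq-e} into the $L$-boundary equation \eqref{LH-Floer-eq-e} with $t$-dependent $J_t^{\varepsilon f}$, the strip-neck pre-gluing, the weighted Banach set-up, the right inverse, error estimate, quadratic estimate, implicit function theorem, and the $\mathbb{R}^k$ translation parameter with no $S^1$-rotation — but it diverges from the paper precisely at the step you yourself flag as the main obstacle, and that divergence leaves a real gap.

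You propose to carry over the zero-mode/higher-mode splitting (now an eigenfunction expansion on $[0,1]$ with Lagrangian boundary) together with the power weight $\rho(\tau)=(1+|\tau|)^\delta$ on the interior necks, as in the Hamiltonian pearl case. The paper's own proof explicitly warns that the $t$-dependence of $J_t^{\varepsilon f}$ makes this splitting problematic: after trivializing $\chi^*TM$ the linearized operator has the form $\partial_\tau + J(\tau,t)\partial_t + A(\tau,t)$ with a non-constant $J(\tau,t)$, so the $t$-spectral decomposition on each slice no longer yields invariant subspaces $V_0\oplus\widetilde V$ of $D$, and the identity \eqref{KeyIdentify} that underlies the uniform right-inverse bound on the higher modes has no direct analogue. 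Your proposal does not address this; you acknowledge the difficulty but offer no resolution. The paper instead sidesteps the decomposition entirely by using the Fukaya--Oh observation (Prop.\ 6.1 of \cite{FO}, Prop.\ 4.6 of \cite{oh:imrn}) that $D_{\chi_\varepsilon}\overline\partial_{J_t^{\varepsilon f},\varepsilon f}$ is \emph{canonically conjugate} to the linearized gradient operator $\frac{D}{d\tau}-\nabla\func{grad}(\varepsilon f)$, so that the right-inverse bound on the neck is inherited directly, with only the $\varepsilon$-adiabatic weight $W^{1,p}_\varepsilon$ and no power weight $\rho$ at all. This is the key technical point where the Lagrangian case is actually \emph{easier} than the Hamiltonian one, and it is the piece your outline is missing; without it the neck right-inverse estimate is not established and the argument does not close.
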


\begin{proof} Since the main methodology and analytic details are essentially the same as
in the case of Hamiltonian Floer trajectory case dealt in the main text of the present paper,
we will be brief just by indicating what modifications should be made to apply
the methodology to the current Lagrangian case.

 The equation $\left( \ref{Lagr-Floer-eq-e}\right) $ can be changed
into the following form which is very similar to the Hamiltonian case: For
any $u:{\mathbb{R}}\times \left[ 0,1\right] \rightarrow M$, let
\begin{equation*}
u^{\varepsilon }\left( \tau ,t\right) :=\phi _{\varepsilon f}^{-t}\circ
u\left( \tau ,t\right) .
\end{equation*}
Then $u$ satisfies $\left( \ref{Lagr-Floer-eq-e}\right) $\ if and only if $%
u^{\varepsilon }$ satisfies
\begin{eqnarray}
\frac{\partial u^{\varepsilon }}{\partial \tau }+J_{t}^{\varepsilon f}\left(
u^{\varepsilon }\right) \frac{\partial u^{\varepsilon }}{\partial t}%
+\varepsilon \nabla f\left( u^{\varepsilon }\right) &=&0,\text{ }%
u^{\varepsilon }\left( {\mathbb{R}},0\right) \text{ and }u^{\varepsilon
}\left( {\mathbb{R}},1\right) \subset L,  \notag \\
u^{\varepsilon }\left( -\infty ,\cdot \right) &=&p\quad \text{ and }\quad
u^{\varepsilon }\left( +\infty ,\cdot \right) =q.  \label{LH-Floer-eq-e}
\end{eqnarray}%
where $\left\{ J_{t}^{\varepsilon f}\right\} _{0\leq t\leq 1}:=\left\{
\left( \phi _{\varepsilon f}^{t}\right) _{\ast }J\left( \phi _{\varepsilon
f}^{-t}\right) _{\ast }\right\} _{0\leq t\leq 1}$is a $1$-parameter family
of compatible almost complex structures.

From the pearl configuration $u$ we can construct the approximate solution
for equation $\left( \ref{LH-Floer-eq-e}\right) $ from $J$-holomorphic discs
$u_{i}$ and gradient segments $\chi _{i}\left( i=1,2,\cdots ,k\right) $ and $%
\chi _{\pm \infty }$ as in the Hamiltonian case, but there is one
difference: The almost complex structure $J_{t}^{\varepsilon f}$ is $t$%
-dependent. \

The $t$-dependent $J_{t}^{\varepsilon f}$ seems to destroy the decomposition
of $0$-mode and higher modes of variation vector fields on $\chi $ as in the
Hamiltonian case, but actually here we have even better situation: the
linearized operator of $\overline{\partial }_{J_{t}^{\varepsilon
f},\varepsilon f}$ is canonically related to linearized operator of $\frac{d%
}{d\tau }-\func{grad}\left( \varepsilon f\right) $, and the right inverse
bound for $D_{\chi _{\varepsilon }}\overline{\partial }_{J_{t}^{\varepsilon
f},\varepsilon f}$ can be controlled by the right inverse bound of $\frac{D}{%
d\tau }-\nabla \func{grad}\left( \varepsilon f\right) $ up to a uniform
constant (as in the Proposition 6.1 of \cite{FO} and Proposition 4.6 in \cite%
{oh:imrn}). Therefore we do not need to decompose the $0$-mode and higher
modes of variation vector fields on $\chi $ in the Banach manifold setting,
and the weighting function $\beta _{\varepsilon ,\delta }\left( \tau \right)
$ on the portion over $\chi _{i}\left( \varepsilon \tau \right) $ $\left(
-l_{i}/\varepsilon \leq \tau \leq l_{i}/\varepsilon \right) $ is just the $%
\varepsilon $-adiabatic weight, namely
\begin{equation*}
\left\Vert \xi \right\Vert _{W_{\varepsilon }^{1,p}([-l_{i}/\varepsilon
,l_{i}/\varepsilon ]\times \left[ 0,1\right] )}^{p}=\int_{0}^{1}%
\int_{-l_{i}/\varepsilon }^{l_{i}/\varepsilon }\left( \varepsilon
^{2}\left\vert \xi _{0}\right\vert ^{p}+\varepsilon ^{2-p}\left\vert \nabla
\xi _{0}\right\vert ^{p}\right) d\tau dt
\end{equation*}%
as in \cite{FO}. The weighting function $\beta _{\varepsilon ,\delta }\left(
\tau \right) $ on the portion over $u_{i}^{\varepsilon }\left( \tau \right) $
remains the same, namely $\beta _{\varepsilon ,\delta }\left( \tau \right)
\approx e^{2\pi \delta \left\vert \tau \right\vert }$ for large $\left\vert
\tau \right\vert $ of the ends $u_{i}^{\varepsilon }\left( \tau \right) $.
The construction of combined right inverse and uniform quadratic estimates
are similar to the Hamiltonian case.
\end{proof}

The limit of moduli spaces of $J$-holomorphic $\left( k+1\right) $-gons
ending on $\left( L_{\varepsilon f_{0}},\cdots ,L_{\varepsilon f_{k}}\right)
$\ \ as $\varepsilon \rightarrow 0$ consists of \textquotedblleft cluster
complexes\textquotedblright , which are $J$-holomorphic discs $u_{i}$ ending
on $L$ connected with \emph{gradient flow trees} $\chi _{j}$ on $L$. In \cite%
{FO}, the gluing from gradient flow trees $\chi _{j}$ to \textquotedblleft
thin\textquotedblright\ $J$-holomorphic polygons was known. The gluing of
\textquotedblleft thin\textquotedblright\ $J$-holomorphic polygon with
\textquotedblleft thick\textquotedblright\ $J$-holomorphic discs $u_{i}$
locally is the same as the case of pearl complex as above.

\bigskip Again, we do not have the surjectivity part in the gluing theorem.
The reasons are similar to Hamiltonian case. The multi-covering is the
essential difficulty. In addition we also need the joint point to be immersed to
prove this surjectivity. However these are not issues for the pearl complex moduli spaces of virtual
dimension $0,1$ for monotone Lagrangian submanifolds, since it has been proved
in  \cite[Proposition 3.13]{biran-cor-1} that the $J$-holomorphic discs in
the pearl complex are Fredholm regular, simple (or non-multicovering) and
with disjoint images,  provided $\left( J,f\right) $ is generic. Then,
for simple $J$-holomorphic discs, relative version (with Lagrangian boundary
condition) of Theorem 2.6 in \cite{oh-zhu2} implies that the condition of
existence of non-immersed point on some $J$-holomorphic disk of the pearl
complex cut down the dimension of pearl complex moduli space by $%
\mathop{\kern0pt{\rm
dim}}\nolimits M-2$, provided $\left( J,f\right) $ is generic. So with
non-immersed condition the moduli space will have negative dimension if $%
\mathop{\kern0pt{\rm dim}}\nolimits M\geq 4$. When $%
\mathop{\kern0pt{\rm
dim}}\nolimits M=2$, \ $M$ is a Riemann surface, and the $J$-holomorphic
discs are just discs on surfaces $M$ with embedded boundary curve $L$. If
the $J$-holomorphic disk is simple, by Riemann mapping theorem it is
immersed. Hence we have

\begin{cor}
Let $(M,\omega )$ be monotone and $L\subset (M,\omega )$ a monotone
Lagrangian submanifold. Then for generic $\left( J,f\right) ,$ all $J$%
-holomorphic discs in pearl complex moduli spaces of virtual dimension $0,1$%
, are simple, Fredholm regular and immersed.
\end{cor}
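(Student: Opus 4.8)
The plan is to deduce the corollary by combining the existing monotone pearl complex transversality theory of Biran--Cornea with a relative (Lagrangian boundary) version of the immersion transversality argument of \cite{oh-zhu2}, and then disposing of the low-dimensional target by hand.

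First I would recall that, for $(M,\omega)$ monotone and $L$ monotone, Proposition 3.13 of \cite{biran-cornea} provides a Baire-generic set of pairs $(J,f)$ for which every element of a pearl complex moduli space $\mathcal{M}_{pearl}^{para}(p,q;L,f;\vec A)$ of virtual dimension $0$ or $1$ has all of its $J$-holomorphic discs $u_i$ Fredholm regular, simple (not multiply covered), and with pairwise absolutely distinct images. This already yields the ``simple'' and ``Fredholm regular'' clauses, and reduces the remaining task to upgrading simplicity to the immersion property, after possibly shrinking the generic set once more.

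Next, for $\dim M = 2n \geq 4$, I would run the scheme of Proposition \ref{immersed-joint} in the Lagrangian boundary setting. One forms the universal pearl moduli space over $\mathcal{J}_\omega \times C^\infty(L)$ and adjoins to the pearl section a finite collection of extra equations, one per disc component, recording the $(j,J)$-linear part $\partial_J u_i$ of $du_i$ at its potential non-immersed points; the relevant fibre bundle of $(j,J)$-linear maps $T_{o}\Sigma_i \to u_i^*TM$ has rank $2n$, exactly as before. Since each $u_i$ is somewhere injective, near an injective point $J$ may be perturbed with full freedom without disturbing the nodal matching relations along the gradient segments, so the proof of Theorem 2.6 of \cite{oh-zhu2} applies verbatim in this relative form and shows that the augmented section is transversal to the relevant zero sections. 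Sard--Smale then produces a generic set of $(J,f)$ for which the locus of pearl configurations carrying a non-immersed disc has dimension exactly $\dim M - 2 = 2n-2$ below that of $\mathcal{M}_{pearl}^{para}(p,q;L,f;\vec A)$; since the latter has dimension $\leq 1 < 2n-2$ when $n \geq 2$, this locus is empty. The main obstacle here is not conceptual but bookkeeping: one must check that the somewhere-injectivity and unique continuation inputs behind Theorem 2.6 of \cite{oh-zhu2} are available for holomorphic discs with Lagrangian boundary, and that the evaluation functional $\partial_J u_i$ can indeed be decoupled from the perturbation data used in the gradient-segment pieces of the configuration.

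Finally, when $\dim M = 2$ the codimension $2n-2$ above is zero and the transversality argument is vacuous, so I would argue directly. Here $M$ is a Riemann surface and $L$ an embedded curve, and a simple $J$-holomorphic disc $u_i:(D^2,\partial D^2)\to (M,L)$ is a proper, generically injective holomorphic map whose image is a subdomain of $M$ bounded by arcs of the embedded curve $L$. By the Riemann mapping theorem this subdomain is biholomorphic to the unit disc, and $u_i$ is, up to reparametrization, the uniformizing biholomorphism, hence in particular an immersion. Combining the three cases gives the corollary.
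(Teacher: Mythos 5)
Your proposal is correct and follows essentially the same route as the paper: cite Biran--Cornea's Proposition 3.13 for simplicity, Fredholm regularity and distinctness in virtual dimension $0,1$; invoke a relative (Lagrangian boundary) version of Theorem 2.6 of \cite{oh-zhu2} to show the non-immersed locus drops dimension by $\dim M - 2$, which is empty when $\dim M \geq 4$; and handle $\dim M = 2$ directly via the Riemann mapping theorem. The extra bookkeeping you flag (somewhere-injectivity, unique continuation with Lagrangian boundary, decoupling the $J$-perturbation from the gradient-segment data) is real but is exactly what the phrase ``relative version'' in the paper is intended to absorb, so you and the authors are making the same argument.
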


Now with immersion condition, similar analysis as the Hamiltonian case can
establish surjectivity so we have

\begin{thm}
\label{thm:A-infty-isomorphism} Let $(M,\omega )$ be a monotone symplectic
manifold and $L\subset (M,\omega ) $ be a monotone Lagrangian submanifold.
Fix a Darboux neighborhood $U$ of $L$ and identify $U $ with a neighborhood
of the zero section in $T^{\ast }L$. Consider $k+1$ Hamiltonian deformations
of $L$ by autonomous Hamiltonian functions $F_{0},\ldots ,F_{k}:M\rightarrow
{\mathbb{R}}$ such that
\begin{equation*}
F_{i}=\chi f_{i}\circ \pi
\end{equation*}%
where $f_0,f_1,\ldots f_k: L\rightarrow {\mathbb{R}}$ are generic Morse
functions, $\chi $ is a cut-off function such that $\chi=1$ on $U$ and
supported nearby U. Now consider
\begin{equation*}
L_{i,\varepsilon }=\func{Graph}(\varepsilon df_{i})\subset U\subset M,\quad
i=0,\ldots ,k.
\end{equation*}%
Assume transversality of $L_{i}$'s of the type given in \cite{FO}. Consider
the intersections $p_{i}\in L_{i}\cap L_{i+1}$ such that
\begin{equation*}
\mathop{\kern0pt{\rm dim}}\nolimits{\mathcal{M}}(L_{0},\ldots
,L_{k};p_{0},\ldots ,p_{k})=0,\,1.
\end{equation*}%
Then when $\varepsilon $ is sufficiently small, the moduli space ${\mathcal{M%
}}(L_{0,\varepsilon },\ldots ,L_{k,\varepsilon };p_{0},\ldots ,p_{k})$ is
diffeomorphic to the moduli space of pearl complex defined in \cite%
{biran-cor-1}.
\end{thm}

We separately state the following special case of Theorem \ref%
{thm:A-infty-isomorphism}.

\begin{thm}
\label{thm:isomorphism} Let $(M,\omega)$ and $L$ be as in Theorem \ref%
{thm:A-infty-isomorphism}. Then there exists some $\varepsilon_0 > 0$ such
that the map
\begin{equation*}
\Theta_\varepsilon:(CF_*(L,L), \partial) \to (C(f,\rho,J), d)
\end{equation*}
induced by the diffeomorphism between the two relevant moduli spaces
defining the boundary maps for the two complexes induces a chain
isomorphism. In particular it the induces an isomorphism in homology
\begin{equation*}
QH_*(L) \cong HF_*(L,L)
\end{equation*}
for all $0 < \varepsilon \leq \varepsilon_0$.
\end{thm}

\section{Discussion and an example\label{discussion}}

\subsection{The immersion condition: an example of adiabatic limit in $%
\mathbb{CP}^{n}$}

Consider the standard $\mathbb{CP}^{n}$ equipped with Fubini-Study metric $\
g_{FS}$. Given a Morse function $f:\mathbb{CP}^{n}\rightarrow \mathbb{R}$,
we consider the Floer equation with the Hamiltonian $\varepsilon f$:%
\begin{equation*}
\frac{\partial u}{\partial \tau }+J_{0}\frac{\partial u}{\partial t}%
=-\varepsilon \,\nabla f\,\left( u\right) .
\end{equation*}%
Let $U_{0}\simeq \mathbb{C}^{n}$ be an affine chart, where
\begin{eqnarray*}
U_{0} &=&\left\{ \left[ z_{0}:z_{1}:\cdots :z_{n}\right] |z_{i}\in \mathbb{C}%
,\text{ }z_{0}\neq 0\in \mathbb{C}\right\} \\
&=&\left\{ \left[ 1:z_{1}:\cdots :z_{n}\right] |z_{i}\in \mathbb{C}\right\} ,%
\text{ \ }\left( i=1,2,\cdots ,n\right) .
\end{eqnarray*}%
Let
\begin{equation*}
\chi :\left[ -l,l\right] \mathbb{\rightarrow }U_{0}\simeq \left( \mathbb{C}%
^{n},g_{FS}\right)
\end{equation*}%
be a segment of a Morse trajectory of $f$ \ with end points $\chi \left( \pm
l\right) =p_{\pm }\in \mathbb{C}^{n}$, and assume the full Morse trajectory
extending $\chi $ to two critical endpoints is contained in the unit ball $%
B_{1}=\left\{ \left\vert z\right\vert \leq 1\right\} \subset $ $U_{0}$. Let $%
u_{\pm }:\left( \mathbb{R\times }S^{1},o_{\pm }\right) \rightarrow
U_{0}\simeq \left( \mathbb{C}^{n},g_{FS}\right) $ be two holomorphic curves,
\begin{equation*}
u_{\pm }\left( \tau ,t\right) =u\left( z\right) =A_{\pm }z^{\pm 1}+p_{\pm },
\end{equation*}%
where $A_{\pm }$ are two complex linearly independent vectors in $\mathbb{C}%
^{n}$, and
\begin{equation*}
z=e^{2\pi \left( \tau +it\right) },\text{ \ \ }o_{\pm }=\left\{ -\pm \infty
\right\} \times S^{1}.
\end{equation*}%
We thus have the \textquotedblleft disk-flow-disk\textquotedblright\ element
$\left( u_{-},\chi ,u_{+}\right) $ in $\mathbb{CP}^{n}$ with immersed joint
points
\begin{equation*}
p_{\pm }=\chi \left( \pm l\right) =u_{\pm }\left( o_{\pm }\right) .
\end{equation*}

\bigskip We can construct a family of maps $u_{\varepsilon }:\mathbb{R\times
}S^{1}\rightarrow \mathbb{C}^{n+1}$ approximately satisfies the above Floer
equation when $\varepsilon $ is small. Let
\begin{equation*}
u_{\varepsilon }\left( \tau ,t\right) =\varepsilon ^{\nu }e^{-2\pi \frac{l}{%
\varepsilon }}\left( A_{+}z+A_{-}z^{-1}\right) +\chi \left( \varepsilon \tau
\right)
\end{equation*}%
with a fixed $\nu >0$. Then%
\begin{equation*}
\frac{\partial u_{\varepsilon }}{\partial \tau }+J_{0}\frac{\partial
u_{\varepsilon }}{\partial t}=\varepsilon \nabla f\left( \chi \left(
\varepsilon \tau \right) \right) \approx \varepsilon \nabla f\left(
u_{\varepsilon }\left( \tau ,t\right) \right) ,
\end{equation*}%
where $\chi \left( \varepsilon \tau \right) \approx u_{\varepsilon }\left(
\tau ,t\right) $ for $\left\vert \tau \right\vert \leq l/\varepsilon $ is
justified by the following Proposition, and for $\left\vert \tau \right\vert
>l/\varepsilon $, the following Proposition also proves $u_{\varepsilon
}\approx \varepsilon ^{\nu }e^{-2\pi \frac{l}{\varepsilon }}\left(
A_{+}z+A_{-}z^{-1}\right) $, hence
\begin{equation*}
\frac{\partial u_{\varepsilon }}{\partial \tau }+J_{0}\frac{\partial
u_{\varepsilon }}{\partial t}\approx \left( \frac{\partial }{\partial \tau }%
+J_{0}\frac{\partial }{\partial t}\right) \left[ \varepsilon ^{\nu }e^{-2\pi
\frac{l}{\varepsilon }}\left( A_{+}z+A_{-}z^{-1}\right) \right] =0\approx
\varepsilon \nabla f\left( u_{\varepsilon }\left( \tau ,t\right) \right) .
\end{equation*}

Although $u_{\varepsilon }$ is only an approximate solution, it is very
explicit and illustrates the mechanism of adiabatic convergence. It also
indicates the convergence rates and $L^{2}$-energy distribution of true
solutions on different regions of $\mathbb{R\times }S^{1}$.

\begin{prop}
The adiabatic limit of $u_{\varepsilon }\left( \tau ,t\right) $ as $%
\varepsilon \rightarrow 0$ is the \textquotedblleft
disk-flow-disk\textquotedblright\ configuration $\left( u_{-},\chi
,u_{+}\right) $.
\end{prop}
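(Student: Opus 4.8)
The plan is to verify directly the four defining conditions of adiabatic convergence in Definition~\ref{defn:adiabatic}, exploiting the fact that $u_\varepsilon$ is given by an explicit closed formula, so that no gluing machinery is needed. First I would settle the bookkeeping. We work in the affine chart $U_0\cong\mathbb{C}^n$; since $\chi$ and its full extension to critical endpoints lie in $B_1$ and the oscillating correction in $u_\varepsilon$ is $O(\varepsilon^\alpha)$ uniformly on every region entering the estimates, the image of $u_\varepsilon$ stays in a fixed compact subset of $U_0$ for $\varepsilon$ small, on which $g_{FS}$ is uniformly equivalent to the flat metric; hence all norm estimates may be carried out with the Euclidean norm. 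Next I would record the reparametrization data realizing the convergence: set $R(\varepsilon)=l/\varepsilon$ and choose the shift $\tau(\varepsilon)$ by the balancing condition $\varepsilon^{\alpha}e^{-2\pi l/\varepsilon}e^{2\pi\tau(\varepsilon)}=1$, i.e. $\tau(\varepsilon)=l/\varepsilon+\tfrac{\alpha}{2\pi}\ln(1/\varepsilon)$; this is exactly the choice that turns the $A_+$-term into the fixed curve $u_+$ after translating by $\tau(\varepsilon)$.

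The middle region is straightforward. On $\Theta_\varepsilon=[-l/\varepsilon,l/\varepsilon]\times S^1$ one has $|z|=e^{2\pi\tau}\in[e^{-2\pi l/\varepsilon},e^{2\pi l/\varepsilon}]$, so $\varepsilon^{\alpha}e^{-2\pi l/\varepsilon}\bigl(A_+z+A_-z^{-1}\bigr)$ is pointwise $O(\varepsilon^\alpha)$ and therefore $\sup_{\Theta_\varepsilon}|u_\varepsilon(\tau,t)-\chi(\varepsilon\tau)|=O(\varepsilon^\alpha)\to0$. This immediately gives condition~(2): the Hausdorff distance between $u_\varepsilon(\Theta_\varepsilon)$ and $\chi([-l,l])$ is $O(\varepsilon^\alpha)$, because every $\chi(s)$ with $s\in[-l,l]$ equals $\chi(\varepsilon(s/\varepsilon))$ with $s/\varepsilon\in[-l/\varepsilon,l/\varepsilon]$. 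For condition~(1) I would compute $\int_{\Theta_\varepsilon}|du_\varepsilon|^2$ termwise: the gradient-flow part contributes $\int_{\Theta_\varepsilon}\varepsilon^2|\dot\chi(\varepsilon\tau)|^2=\varepsilon\int_{-l}^{l}\int_{S^1}|\dot\chi|^2=O(\varepsilon)$, while for the oscillating part the factor $e^{4\pi\tau}$ concentrates the integral near $\tau=\pm l/\varepsilon$ and yields $O(\varepsilon^{2\alpha})$; hence $E_{J,\Theta_\varepsilon}(u_\varepsilon)\to0$. I would also note that $\int_{S^1}(A_+z+A_-z^{-1})\,dt=0$, so the center of mass satisfies $\mathrm{cm}(u_\varepsilon(\cdot,t))=\chi(\varepsilon\tau)$ exactly and the width is $O(\varepsilon^\alpha)\to0$, in agreement with Theorem~\ref{thm:centrallimit}.

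For condition~(3), substituting $\tau\mapsto\tau+\tau(\varepsilon)$ replaces $z$ by $e^{2\pi\tau(\varepsilon)}w$ with $w=e^{2\pi(\tau+it)}$, so by the balancing condition
\[
u_\varepsilon(\tau+\tau(\varepsilon),t)=A_+w+\varepsilon^{2\alpha}e^{-4\pi l/\varepsilon}A_-w^{-1}+\chi\bigl(\varepsilon\tau+\varepsilon\tau(\varepsilon)\bigr).
\]
Since $\varepsilon^{2\alpha}e^{-4\pi l/\varepsilon}\to0$, and $\varepsilon\tau(\varepsilon)=l+\tfrac{\alpha\varepsilon}{2\pi}\ln(1/\varepsilon)\to l$ while $\varepsilon\tau\to0$ on compact sets, on each domain $[\tfrac1{2\pi}\ln\zeta,\infty)\times S^1$ the right-hand side converges to $A_+w+p_+=u_+(\tau,t)$; convergence is in $C^\infty$ because the right-hand side depends real-analytically on $(\tau,t,\varepsilon)$, so all $(\tau,t)$-derivatives converge locally uniformly. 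The left end is symmetric, giving $u_\varepsilon(\cdot-\tau(\varepsilon),\cdot)\to u_-$. For condition~(4), on $\pm[R(\varepsilon),\tau(\varepsilon)+\tfrac1{2\pi}\ln\zeta]\times S^1$ the $A_+$-term is bounded by $|A_+|\varepsilon^\alpha e^{2\pi(\tau-l/\varepsilon)}\le|A_+|\zeta$, the $A_-$-term is $O(e^{-4\pi l/\varepsilon})$, and $|\chi(\varepsilon\tau)-p_+|=O(\varepsilon\ln(1/\varepsilon))$; hence $\limsup_{\varepsilon\to0}\mathrm{diam}\bigl(u_\varepsilon(\pm[R(\varepsilon),\tau(\varepsilon)+\tfrac1{2\pi}\ln\zeta]\times S^1)\bigr)\le2|A_+|\zeta$, which tends to $0$ as $\zeta\to0$.

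The subtleties here are organizational rather than analytic. The \textbf{main thing to get right} is the shift $\tau(\varepsilon)$: only the precise balance $\varepsilon^\alpha e^{-2\pi l/\varepsilon}e^{2\pi\tau(\varepsilon)}=1$ makes the translated maps limit onto the fixed nondegenerate curves $u_\pm$ instead of collapsing or blowing up, and the resulting $\tau(\varepsilon)=l/\varepsilon+O(\ln(1/\varepsilon))$ matches the $R(\varepsilon)+\tfrac{p-1}{\delta}S(\varepsilon)$ of the general theory when $\alpha=\tfrac{p-1}{\delta}$. Beyond this one must only note that Fubini--Study and Euclidean norms are comparable on the fixed compact region where everything lives, and that $C^\infty$ (not merely $C^0$) convergence in~(3) is automatic from the explicit formula; no further analytic input is required.
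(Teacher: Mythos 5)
Your overall strategy matches the paper's: verify the four conditions of Definition~\ref{defn:adiabatic} directly from the explicit formula, with the same choice of shift $\tau(\varepsilon)=l/\varepsilon+\tfrac{\alpha}{2\pi}\ln(1/\varepsilon)$ (the paper writes $b(\varepsilon)=-\tfrac{1}{2\pi}\ln\varepsilon$, $S(\varepsilon)=\alpha b(\varepsilon)$), and essentially the same energy computation on $\Theta_\varepsilon$.

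However, there is a genuine gap in your opening reduction. You assert that ``the image of $u_\varepsilon$ stays in a fixed compact subset of $U_0$ for $\varepsilon$ small, on which $g_{FS}$ is uniformly equivalent to the flat metric.'' This is false. The oscillating term $\varepsilon^\alpha e^{-2\pi l/\varepsilon}A_+ z$ is unbounded on the end: after translating by $\tau(\varepsilon)$ it becomes $A_+ w$ with $|w|=e^{2\pi\tau}\to\infty$, so $u_\varepsilon(\tau+\tau(\varepsilon),t)$ leaves every compact subset of $U_0$ as $\tau\to+\infty$ (it approaches the hyperplane at infinity), and the Euclidean and Fubini--Study norms are not comparable there. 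This matters precisely for condition~(3): the domain $\Sigma_\pm\setminus U_\pm(\zeta)$ is a \emph{compact} subset of the closed surface $\Sigma_\pm$, including the puncture $e_\pm$ at $\tau=\pm\infty$, so the required $C^\infty$-convergence must be \emph{uniform up to that end}, not merely locally uniform in $\tau$. On the far region ($\tau\ge l/\varepsilon+2\alpha b(\varepsilon)$ in the paper's notation) the Euclidean difference $|u_\varepsilon(\tau+\tau(\varepsilon),t)-u_+(\tau,t)|$ does not tend to zero (it approaches $|\chi(+\infty)-p_+|$), so flat-metric control alone cannot close the argument. The paper handles this by separating the end into two subregions and, on the far one, invoking the Fubini--Study estimate
\[
\operatorname{dist}_{g_{FS}}(p,q)\;\le\;\frac{2\,|p-q|}{|p|}\qquad\text{whenever }|p-q|<\tfrac{|p|}{2},
\]
which exploits $g_{FS}\le|z|^{-2}g_{st}$ for $|z|$ large and gives a uniform bound $\le 6/(|A_+|\varepsilon^{-\alpha}-2)\to0$ independent of $\tau$. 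You need either this inequality or an explicit change of affine chart near the hyperplane at infinity; the blanket compactness claim does not substitute for it.

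Your treatment of conditions (1), (2), and (4) is correct as stated, since those regions do remain in a bounded part of $U_0$; it is only the unbounded end appearing in condition (3) that needs the second Fubini--Study estimate, and that is exactly the extra step the paper supplies and you omit.
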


\begin{proof}
We recall some useful inequalities of the Fubini-Study metric $g_{FS}$ on
the affine chart $\mathbb{C}^{n}$. Let $g_{st}$ be the standard Euclidean
metric on $\mathbb{C}^{n}$ and the Euclidean norm be $\left\vert \cdot
\right\vert $. Note that $g_{FS}\left( z\right) \leq \frac{1}{1+\left\vert
z\right\vert ^{2}}g_{st}\left( z\right) \leq g_{st}\left( z\right) $ for any
$z\in \mathbb{C}^{n}$, so for any vectors $p,q\in \mathbb{C}^{n}$, we have%
\begin{eqnarray}
\text{dist}_{g_{FS}}\left( p,q\right)  &\leq &\left\vert p-q\right\vert ,
\notag \\
\text{dist}_{g_{FS}}\left( p,q\right)  &\leq &2\frac{\left\vert
p-q\right\vert }{\left\vert p\right\vert }\text{ }\ \text{if }\left\vert
p-q\right\vert <\frac{\left\vert p\right\vert }{2},  \label{g_FS}
\end{eqnarray}%
where the second inequality is because both $p$ and $q$ are outside the
Euclidean ball of radius $\left\vert p\right\vert /2$, and the Fubini-Study
metric satisfies $g_{FS}\leq \frac{1}{\left\vert z\right\vert ^{2}}g_{st}$.

Let $R\left( \varepsilon \right) =l/\varepsilon $. We first check that $%
d_{H}\left( u_{\varepsilon }\left( \tau ,t\right) \left( \left[ -R\left(
\varepsilon \right) ,R\left( \varepsilon \right) \right] \times S^{1}\right)
,\chi \left( \left[ -l,l\right] \right) \right) \rightarrow 0$.

(i) For $\left\vert \tau \right\vert \leq R\left( \varepsilon \right)
=l/\varepsilon $: we have
\begin{eqnarray*}
\text{dist}_{g_{FS}}\left( u_{\varepsilon }\left( \tau ,t\right) ,\chi
\left( \varepsilon \tau \right) \right) &\leq &\left\vert u_{\varepsilon
}\left( \tau ,t\right) -\chi \left( \varepsilon \tau \right) \right\vert \\
&=&\left\vert \varepsilon ^{\nu }e^{-2\pi \frac{l}{\varepsilon }}\left(
A_{+}z+A_{-}z^{-1}\right) \right\vert \\
&\leq &\varepsilon ^{\nu }e^{-2\pi \frac{l}{\varepsilon }}\cdot \left(
\left\vert A_{+}\right\vert e^{2\pi \frac{l}{\varepsilon }}+\left\vert
A_{+}\right\vert e^{2\pi \frac{l}{\varepsilon }}\right) \\
&=&\varepsilon ^{\nu }\left( \left\vert A_{-}\right\vert +\left\vert
A_{+}\right\vert \right) \rightarrow 0
\end{eqnarray*}%
uniformly as $\varepsilon \rightarrow 0$.

Let
\begin{equation}
b\left( \varepsilon \right) =-\frac{1}{2\pi }\ln \varepsilon ,\text{ \ }%
S\left( \varepsilon \right) =\nu b\left( \varepsilon \right) ,  \label{be}
\end{equation}%
and the shift of $u_{\pm }$ be
\begin{equation*}
u_{\pm }^{\varepsilon }\left( \tau ,t\right) :=u_{\pm }\left( \tau -\pm
l/\varepsilon -\nu b\left( \varepsilon \right) ,t\right) =\varepsilon ^{\nu
}e^{-2\pi \frac{l}{\varepsilon }}A_{\pm }z^{\pm 1}+p_{\pm }.
\end{equation*}

(ii) For $\left\vert \tau \right\vert \geq R\left( \varepsilon \right)
=l/\varepsilon $: we consider two cases, one with $l/\varepsilon \leq
\left\vert \tau \right\vert \leq l/\varepsilon +2\nu b\left( \varepsilon
\right) $ and the other with $\left\vert \tau \right\vert \geq l/\varepsilon
+2\nu b\left( \varepsilon \right) $, separately.

If $l/\varepsilon \leq \tau \leq l/\varepsilon +2\nu b\left( \varepsilon
\right) $, then
\begin{eqnarray*}
\text{dist}_{g_{FS}}\left( u_{\varepsilon }\left( \tau ,t\right)
,u_{+}^{\varepsilon }\left( \tau ,t\right) \right) &\leq &\left\vert
u_{\varepsilon }\left( \tau ,t\right) -u_{+}^{\varepsilon }\left( \tau
,t\right) \right\vert \\
&\leq &\left\vert \chi _{\varepsilon }\left( \tau \right) -p_{+}\right\vert
+\left\vert \varepsilon ^{\nu }e^{-2\pi \frac{l}{\varepsilon }%
}A_{-}z^{-1}\right\vert \\
&\leq &\left\vert \varepsilon \nabla f\right\vert _{C^{0}}\cdot \left\vert
\tau -l/\varepsilon \right\vert +\varepsilon ^{a}\left\vert A_{-}\right\vert
\\
&\leq &2\left\vert \nabla f\right\vert _{C^{0}}\nu \cdot \varepsilon b\left(
\varepsilon \right) +\varepsilon ^{a}\left\vert A_{-}\right\vert \rightarrow
0
\end{eqnarray*}%
uniformly as $\varepsilon \rightarrow 0$.

If $\tau \geq l/\varepsilon +2\nu b\left( \varepsilon \right) $, we have
\begin{eqnarray*}
\left\vert u_{+}^{\varepsilon }\left( \tau ,t\right) \right\vert
&=&\left\vert \varepsilon ^{\nu }e^{-2\pi \frac{l}{\varepsilon }%
}A_{+}z\right\vert =\left\vert A_{+}\right\vert e^{2\pi \left( \tau
-l/\varepsilon -\nu b\left( \varepsilon \right) \right) } \\
&\geq &\left\vert A_{+}\right\vert e^{2\pi b\left( \varepsilon \right) \nu
}=\left\vert A_{+}\right\vert \varepsilon ^{-\nu }\rightarrow \infty , \\
\left\vert u_{\varepsilon }\left( \tau ,t\right) -u_{+}^{\varepsilon }\left(
\tau ,t\right) \right\vert  &=&\left\vert \varepsilon ^{\nu }e^{-2\pi \frac{l%
}{\varepsilon }}A_{-}z^{-1}+\chi \left( \varepsilon \tau \right) \right\vert
\leq \left\vert \varepsilon ^{\nu }e^{-2\pi \frac{l}{\varepsilon }%
}A_{-}z^{-1}\right\vert +\left\vert \chi \left( \varepsilon \tau \right)
\right\vert  \\
&\leq &\varepsilon ^{a}\left\vert A_{-}\right\vert e^{-2\pi \left(
l/\varepsilon +\tau \right) }+1\rightarrow 1<\frac{\left\vert
u_{+}^{\varepsilon }\left( \tau ,t\right) \right\vert }{2},
\end{eqnarray*}%
when $\varepsilon $ is small, hence by $\left( \ref{g_FS}\right) $
\begin{eqnarray*}
\text{dist}_{g_{FS}}\left( u_{\varepsilon }\left( \tau ,t\right)
,u_{+}^{\varepsilon }\left( \tau ,t\right) \right)  &\leq &2\frac{\left\vert
u_{\varepsilon }\left( \tau ,t\right) -u_{+}^{\varepsilon }\left( \tau
,t\right) \right\vert }{\left\vert u_{\varepsilon }\left( \tau ,t\right)
\right\vert } \\
&\leq &2\frac{\left\vert \varepsilon ^{\nu }e^{-2\pi \frac{l}{\varepsilon }%
}A_{-}z^{-1}+\chi \left( \varepsilon \tau \right) \right\vert +\left\vert
p_{+}\right\vert }{\left\vert \varepsilon ^{\nu }e^{-2\pi \frac{l}{%
\varepsilon }}A_{+}z\right\vert -\left\vert \varepsilon ^{\nu }e^{-2\pi
\frac{l}{\varepsilon }}A_{-}z^{-1}+\chi \left( \varepsilon \tau \right)
\right\vert } \\
&\leq &2\frac{\left( \varepsilon ^{\nu }\left\vert A_{-}\right\vert e^{-2\pi
\left( l/\varepsilon +\tau \right) }+1\right) +1}{\left\vert
A_{+}\right\vert \varepsilon ^{-\nu }-\left( \varepsilon ^{a}\left\vert
A_{-}\right\vert e^{-2\pi \left( l/\varepsilon +\tau \right) }+1\right) } \\
&\leq &2\frac{3}{\left\vert A_{+}\right\vert \varepsilon ^{-\nu }-2}%
\rightarrow 0
\end{eqnarray*}%
uniformly as $\varepsilon \rightarrow 0$. Combining these we have for $\tau
\geq R\left( \varepsilon \right) $,%
\begin{equation*}
\text{dist}_{g_{FS}}\left( u_{\varepsilon }\left( \tau ,t\right)
,u_{+}^{\varepsilon }\left( \tau ,t\right) \right) \rightarrow 0,\text{ }
\end{equation*}%
or equivalently%
\begin{equation*}
\text{dist}_{g_{FS}}\left( u_{\varepsilon }\left( \tau +R\left( \varepsilon
\right) +\nu b\left( \varepsilon \right) ,t\right) ,u_{+}\left( \tau
,t\right) \right) \rightarrow 0
\end{equation*}%
uniformly as $\varepsilon \rightarrow 0$ on $[-\nu b\left( \varepsilon
\right) ,+\infty )\times S^{1}$, especially for\ any $[K,+\infty )\times
S^{1}$ for any fixed $K\in \mathbb{R}$.

The case when $\tau \leq -l/\varepsilon $ is similar; we have
\begin{equation*}
\text{dist}_{g_{FS}}\left( u_{\varepsilon }\left( \tau -R\left( \varepsilon
\right) -\nu b\left( \varepsilon \right) ,t\right) ,u_{-}\left( \tau
,t\right) \right) \rightarrow 0
\end{equation*}
uniformly as $\varepsilon \rightarrow 0$ on $(-\infty ,\nu b\left(
\varepsilon \right) ]\times S^{1}$, especially for\ any $(-\infty ,K]\times
S^{1}$ for any fixed $K\in \mathbb{R}$.

Next we compute the energy $E\left( u_{\varepsilon }\right) $ on $\Theta
_{\varepsilon }:=\left[ -R\left( \varepsilon \right) ,R\left( \varepsilon
\right) \right] \times S^{1}$. We have
\begin{eqnarray*}
\frac{\partial }{\partial \tau }u_{\varepsilon }\left( \tau ,t\right)
&=&2\pi \varepsilon ^{\nu }e^{-2\pi \frac{l}{\varepsilon }}\left(
A_{+}e^{2\pi \left( \tau +it\right) }-A_{-}e^{-2\pi \left( \tau +it\right)
}\right) +\varepsilon \nabla f\left( \chi \left( \varepsilon \tau \right)
\right)  \\
\frac{\partial }{\partial t}u_{\varepsilon }\left( \tau ,t\right)  &=&2\pi
i\varepsilon ^{\nu }e^{-2\pi \frac{l}{\varepsilon }}\left( A_{+}e^{2\pi
\left( \tau +it\right) }-A_{-}e^{-2\pi \left( \tau +it\right) }\right) \text{
\ \ \ \ \ \ \ \ \ } \\
\left\vert du_{\varepsilon }\left( \tau ,t\right) \right\vert _{g_{st}}
&\leq &C\varepsilon ^{\nu }e^{-2\pi \frac{l}{\varepsilon }}\cdot e^{2\pi
\left\vert \tau \right\vert }+C\varepsilon .
\end{eqnarray*}%
For $\left\vert \tau \right\vert \leq R\left( \varepsilon \right) $,
noticing $\left( \ref{g_FS}\right) $, from the above third inequality we have%
\begin{eqnarray*}
\int_{\left[ -R\left( \varepsilon \right) ,R\left( \varepsilon \right) %
\right] \times S^{1}}\left\vert du_{\varepsilon }\right\vert
_{g_{FS}}^{2}d\tau dt &\leq &\int_{\left[ -R\left( \varepsilon \right)
,R\left( \varepsilon \right) \right] \times S^{1}}\left\vert du_{\varepsilon
}\right\vert _{g_{st}}^{2}d\tau dt \\
&\leq &2C^{2}\int_{\left[ -R\left( \varepsilon \right) ,R\left( \varepsilon
\right) \right] \times S^{1}}\left( \varepsilon ^{2\nu }e^{-4\pi \left(
\frac{l}{\varepsilon }-\left\vert \tau \right\vert \right) }+\varepsilon
^{2}\right) d\tau dt \\
&=&2C^{2}\left[ \varepsilon ^{2\nu }\frac{1-e^{-4\pi \frac{l}{\varepsilon }}%
}{2\pi }+2l\varepsilon \right]  \\
&\leq &\widetilde{C}\left( \varepsilon ^{2\nu }+\varepsilon \right)
\rightarrow 0.
\end{eqnarray*}
\end{proof}

\bigskip If the joint points of $u_{\pm }$ are not immersed, in the next
example we will see extra family of approximate solutions of the Floer
equation beyond our pre-gluing construction. Let
\begin{equation*}
u_{\varepsilon }\left( \tau ,t\right) =\varepsilon ^{\nu }\left[ e^{-2\pi k%
\frac{l}{\varepsilon }}A_{+}z^{k}+e^{-2\pi m\frac{l}{\varepsilon }%
}A_{-}z^{-m}\right] +\beta \left( \varepsilon \right) P\left( z\right) +\chi
\left( \varepsilon \tau \right) ,
\end{equation*}%
where $k,m>0\ $are integers,\ and at least one of them $>1$, $P\left(
z\right) $ is any Laurent polynomial of intermediate degree between $z^{k%
\text{ }}$ and $z^{-m}$ with $\mathbb{C}^{n}$ vector-valued coefficients,
and $\beta \left( \varepsilon \right) $ is a fast vanishing real constant
when $\varepsilon \rightarrow 0$.\ Then for fixed $l>0$, similarly we can
show $u_{\varepsilon }$ has the adiabatic limit $\left( u_{-},\chi
,u_{+}\right) $, where $u_{\pm }\left( z\right) $ are the holomorphic
spheres in $\mathbb{CP}^{n}$ that in the affine chart $U_{0}\simeq \mathbb{C}%
^{n}$,
\begin{equation*}
u_{+}\left( z\right) =A_{+}z^{k}+p_{+},\text{ \ \ \ \ \ \ \ }u_{-}\left(
z\right) =A_{-}z^{-m}+p_{-}
\end{equation*}%
for $z=e^{2\pi \left( \tau +it\right) }$. But other than the approximate
solutions
\begin{equation*}
u_{\varepsilon }\left( \tau ,t\right) =\varepsilon ^{\nu }\left[ e^{-2\pi k%
\frac{l}{\varepsilon }}A_{+}z^{k}+e^{-2\pi m\frac{l}{\varepsilon }%
}A_{-}z^{-m}\right] +\chi \left( \varepsilon \tau \right)
\end{equation*}%
of the Floer equation, now we have extra family of approximate solutions
from the term $\beta \left( \varepsilon \right) P\left( z\right) $. This
gives evidence that we can not prove surjectivity if there is non-immersed
joint point.

The adiabatic degeneration with Lagrangian boundary condition can be
constructed similarly, with the Lagrangian $L=$ $\mathbb{RP}^{n}$ and $\chi
\left( \tau \right) $ inside $\mathbb{RP}^{n}$, and $u_{\varepsilon }:%
\mathbb{R\times }\left[ 0,\frac{1}{2}\right] \rightarrow U_{0}\simeq \mathbb{%
C}^{n}$ (where $\left[ 0,\frac{1}{2}\right] $ should be thought as half of $%
S^{1}=\left[ 0,1\right] /\symbol{126}$)
\begin{equation*}
u_{\varepsilon }\left( \tau ,t\right) =\varepsilon ^{\nu }\left[ e^{-2\pi k%
\frac{l}{\varepsilon }}A_{+}z^{k}+e^{-2\pi m\frac{l}{\varepsilon }%
}A_{-}z^{-m}\right] +\beta \left( \varepsilon \right) P\left( z\right) +\chi
\left( \varepsilon \tau \right)
\end{equation*}%
where $A_{\pm }\in \mathbb{R}^{n}\subset \mathbb{C}^{n}$, and $P\left(
z\right) $ is any Laurent polynomial of intermediate degree between $z^{k%
\text{ }}$ and $z^{-m}$ with $\mathbb{R}^{n}$ vector-valued coefficients.
Then $u_{\varepsilon }$ is an approximate solution of Floer equation $\frac{%
\partial u}{\partial \tau }+J_{0}\frac{\partial u}{\partial t}+\varepsilon
\nabla f\left( u\right) =0$ with Lagrangian boundary condition $u\left(
\mathbb{R\times }\left\{ 0,\frac{1}{2}\right\} \right) \subset L$. As $%
\varepsilon \rightarrow 0\,$, $u_{\varepsilon }$ degenerates to the
\textquotedblleft disk-flow-disk\textquotedblright\ configuration $\left(
u_{-},\chi ,u_{+}\right) $.

\bigskip

\end{document}